\def\M{\overline{M}}
\def\De{\Delta}
\def\H{\text{H}}
\def\dra{\dashrightarrow}
\def\lra{\leftrightarrow}
\def\ra{\rightarrow}
\newcommand{\bp}{\begin{proofx}}
\newcommand{\ep}{\end{proof}}  
\def\bi{\begin{itemize}}
\def\ei{\end{itemize}}
\def\uone{\underline1}
\def\uzero{\underline0}
\def\oM{\overline{M}}
\def\bP{\Bbb P}
\def\Aut{\operatorname{Aut}}
\def\Sym{\operatorname{Sym}}
\def\de{\delta}
\def\cO{\Cal O}
\def\codim{\operatorname{codim}}
\def\bZ{\Bbb Z}
\def\bQ{\Bbb Q}
\def\bG{\Bbb G}
\def\bR{\Bbb R}
\def\bA{\Bbb A}
\def\cD{\Cal D}
\def\cE{\Cal E}
\def\tS{\tilde S}
\def\oS{\overline{S}}
\def\tS{\tilde{S}}
\def\cW{\Cal W}
\def\oN{\overline{N}}
\def\cD{\Cal D}
\def\cH{\Cal H}
\def\cM{\Cal M}
\def\Spec{\operatorname{Spec}}
\def\PGL{\operatorname{PGL}}
\def\HH{\operatorname{H}}
\def\cE{\Cal E}
\def\Cap{\operatorname{cap}}
\def\cK{\Cal K}
\def\cO{\Cal O}
\def\rk{\operatorname{rk}}
\def\O{\cO}
\def\Pic{\operatorname{Pic}}
\def\oPic{\overline{\Pic}}
\def\oPicone{\oPic^{\uone}}
\def\ocPic{\overline{\cPic}}
\def\Spec{\operatorname{Spec}}
\def\Im{\operatorname{Im}}
\def\Hom{\operatorname{Hom}}
\def\Cal{\mathcal}
\def\ov{\overline}
\def\arrow{\mathop{\longrightarrow}\limits}
\def\arrow{\mathop{\longrightarrow}\limits}
\def\oM{\overline{M}}
\def\os{\overline{S}}
\def\ocM{\overline{\Cal M}}
\def\tS{\tilde{S}}
\def\ts{\tS}
\def\ts7{\tilde{S}_7}
\def\bP{\Bbb P}
\def\cC{\Cal C}
\def\Bl{\operatorname{Bl}}
\def\Aut{\operatorname{Aut}}
\def\Sym{\operatorname{Sym}}
\def\Hom{\operatorname{Hom}}
\def\cO{\Cal O}
\def\cD{\Cal D}
\def\Pic{\operatorname{Pic}}
\def\codim{\operatorname{codim}}
\def\bZ{\Bbb Z}
\def\bQ{\Bbb Q}
\def\bG{\Bbb G}
\def\bR{\Bbb R}
\def\bA{\Bbb A}
\def\PP{\Bbb P}
\def\dra{\dashrightarrow}
\def\cD{\Cal D}
\def\cE{\Cal E}
\def\tS{\tilde S}
\def\tG{\tilde G}
\def\ep{\ddot p}
\def\oS{\overline{S}}
\def\tS{\tilde{S}}
\def\cW{\Cal W}
\def\oN{\overline{N}}
\def\cH{\Cal H}
\def\cM{\Cal M}
\def\Spec{\operatorname{Spec}}
\def\PGL{\operatorname{PGL}}
\def\SL{\operatorname{SL}}
\def\PGL{\operatorname{PGL}}
\def\cE{\Cal E}
\def\Im{\operatorname{Im
}}
\def\Exc{\operatorname{Exc}}
\def\Eff{\operatorname{Eff}}
\def\cK{\Cal K}
\def\Cl{\operatorname{Cl}}
\def\Cal{\mathcal}
\def\Pic{\operatorname{Pic}}
\def\oN{\overline{N}}
\def\os7p{\oS_7'}
\def\os7{\oS_7}
\def\on6{\oN_6}
\def\n6{\oN_6}
\def\bSi{\bold\Sigma}
\def\Mor{\operatorname{Mor}}
\def\cG{\Cal G}
\def\eset{\emptyset}
\def\cPic{\Cal P ic}
\def\oEff{\ov{\operatorname{Eff}}}
\newtheoremstyle{mystyle}{}{}{\itshape}{}{\scshape}{.}{ }{}
\theoremstyle{mystyle}
\newtheorem{Theorem}{Theorem}[section]
\newtheorem{Proposition}[Theorem]{Proposition}
\newtheorem{Lemma}[Theorem]{Lemma}
\newtheorem{Corollary}[Theorem]{Corollary}
\newtheorem{Claim}[Theorem]{Claim}
\newtheorem{Conjecture}[Theorem]{Conjecture}
\newtheoremstyle{myreview}{}{}{}{}{\scshape}{.}{ }{}
\theoremstyle{myreview}
\newtheorem{Definition}[Theorem]{Definition}
\newtheorem{Example}[Theorem]{Example}
\newtheorem{Remark}[Theorem]{Remark}
\newtheorem{Remarks}[Theorem]{Remarks}
\newtheorem{Notation}[Theorem]{Notation}
\newtheorem{Review}[Theorem]{}
\def\lra{\leftrightarrow}
\newcounter{et}[Theorem]
\def\cooltag{\tag{\arabic{section}.\arabic{Theorem}.\arabic{et}}\addtocounter{et}{1}}
\begin{document}

\title[Hypertrees, projections, and moduli of stable rational curves]{Hypertrees, projections, and \\ moduli of stable rational curves}
\author{Ana-Maria Castravet and Jenia Tevelev}

\address{\vskip -.5cm Ana-Maria Castravet: \sf Department of Mathematics, 
University of Arizona, 617 N Santa Rita Ave, Tucson, AZ 85721-0089, USA} 
\email{noni@math.arizona.edu}

\address{\vskip -.5cm Jenia Tevelev: \sf Department of Mathematics and Statistics, 
Lederle Graduate Research Tower,  University of Massachusetts at Amherst, MA 01003-9305, USA} 
\email{tevelev@math.umass.edu}

\begin{abstract}
We give a conjectural description for the cone of effective divisors of 
the Grothendieck--Knudsen moduli space $\oM_{0,n}$ of stable rational curves with $n$~marked points.
Namely, we introduce new combinatorial structures called hypertrees
and show that they give exceptional divisors on  $\oM_{0,n}$ with many remarkable properties.
\end{abstract}

\def\thefootnote{}
\footnotetext{2010 {\it{Mathematics Subject Classification}} Primary 14H10, 14E30, Secondary 14H51.}

\maketitle

\section{Introduction}

A major open problem
inspired by the pioneering work of Harris and Mumford \cite{HM} 
on the Kodaira dimension of 
the moduli space of stable curves,
is to understand geometry of its birational models, 
and in particular to describe its cone of effective divisors and a decomposition  
of this cone into Mori chambers \cite{HK} encoding ample divisors on birational models.

Here we study the genus zero case. The moduli spaces $\oM_{0,n}$ parametrize  
stable rational curves, i.e.,~nodal trees of $\bP^1$'s with $n$ marked points
and without automorphisms. For any subset $I$ of marked points, $\oM_{0,n}$
has a natural boundary divisor $\delta_I$ whose general element
parametrizes stable rational curves with two irreducible components,
one marked by points in $I$ and another marked by points in $I^c$.
We will introduce new combinatorial objects called {\em hypertrees} 
with an eye towards the following

\begin{Conjecture} The effective cone of 
$\oM_{0,n}$ is generated by boundary divisors 
and by divisors $D_\Gamma$ (defined below)
parametrized by irreducible
hypertrees.
\end{Conjecture}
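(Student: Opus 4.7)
The plan is to verify the easy containment and then attack the reverse containment via a duality with moving curves. Effectivity of $\delta_I$ and $D_\Gamma$ is immediate from their construction, so the content of the Conjecture is the completeness of the proposed list of extremal rays.

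By Boucksom--Demailly--P\u{a}un--Peternell duality, $\oEff^1(\oM_{0,n})$ coincides with the dual of the closed cone $\overline{\operatorname{Mov}}_1(\oM_{0,n})$ of movable curves. Letting $\cR$ denote the cone spanned by boundary and hypertree divisors, the inclusion $\cR \subseteq \oEff^1$ is trivial, and $\oEff^1 \subseteq \cR$ is equivalent to $\cR^\vee \subseteq \overline{\operatorname{Mov}}_1$. Concretely, for every curve class $C$ satisfying $C\cdot \delta_I \geq 0$ for all $I$ and $C\cdot D_\Gamma \geq 0$ for all irreducible hypertrees $\Gamma$, one must exhibit a covering family of curves in that numerical class. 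Standard covering families on $\oM_{0,n}$, such as suitable fiber classes of forgetful maps and images of lines in Kapranov-type blow-up models, already verify large portions of this cone; the hope is to extend the supply using the combinatorial structure of hypertrees together with the Kapranov realization $\oM_{0,n} \to \bP^{n-3}$ and the projections from its special point configuration that appear in the paper's title.

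A complementary inductive strategy uses the forgetful maps $\pi_i: \oM_{0,n} \to \oM_{0,n-1}$ together with the $S_n$-symmetry. Restricting an effective divisor $D$ to the fibers of $\pi_i$ (themselves copies of $\oM_{0,n-1}$) and applying the inductive hypothesis on $n$, one would reduce to analyzing horizontal divisors, ultimately expressing any effective class as a pullback of known generators plus corrections concentrated along a single hypertree. Such an inductive step should be matched by a combinatorial operation of contracting an edge of an irreducible hypertree, giving a recursive description of the candidate generating set compatible with the recursion on $n$.

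The main obstacle is the completeness step: ruling out exotic rigid effective divisors. The cone $\oEff^1(\oM_{0,n})$ is known to grow rapidly in $n$ and is not a priori finitely generated; even the $S_n$-equivariant subproblem is subtle. Proving that every extremal ray corresponds either to a boundary stratum or to an irreducible hypertree amounts in effect to a classification of the exceptional divisors of birational contractions from $\oM_{0,n}$, and I expect this to be the deepest step, requiring either a Mori-theoretic analysis of the Mori chamber decomposition of $\oM_{0,n}$ or a direct combinatorial/tropical enumeration of rigid effective divisors that matches the classification of irreducible hypertrees.
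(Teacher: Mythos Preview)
The statement you are attempting to prove is labeled \textsc{Conjecture} in the paper, and the paper does not prove it. The authors establish supporting evidence---that each irreducible hypertree yields an extremal, indeed exceptional, divisor (Theorem~\ref{MainThOne}), and that for $n=6$ the cone is as described---but the full generation statement is left open. There is therefore no ``paper's own proof'' to compare against.

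Your proposal is not a proof either, and you essentially acknowledge this. The BDPP duality reformulation is correct but only trades one hard problem for an equally hard one: you would need to show that every class in $\cR^\vee$ is movable, and the phrase ``the hope is to extend the supply'' of covering curves is where the argument stops. The inductive strategy via forgetful maps has a concrete defect beyond what you flag: hypertree divisors do not, in general, restrict to hypertree divisors on fibers of $\pi_i$, and the ``contracting an edge'' operation on hypertrees does not preserve irreducibility or even the hypertree axioms in a controlled way, so the recursion does not close up. Your final paragraph correctly identifies the completeness step as the crux and admits it is unresolved.

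In short, the gap is total: what you have written is a survey of plausible attack lines, each ending at the genuinely hard point, rather than a proof. This is the honest state of affairs for an open conjecture. (It may be worth knowing that subsequent work has produced counterexamples to this conjecture for $n\ge 7$, so no correct proof exists.)
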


\begin{Definition}\label{hypertree def}
A {\em hypertree} 
$\Gamma=\{\Gamma_1,\ldots,\Gamma_d\}$ on
a set~$N$
is a collection of subsets of $N$
such that the following conditions are satisfied:
\begin{itemize}
\item Any subset $\Gamma_j$ has at least three elements;
\item Any $i\in N$ 
is contained in at least
two subsets~$\Gamma_j$;
\item ({\em convexity axiom})
\begin{equation}\label{CondS}
\bigl|\bigcup_{j\in S}\Gamma_j\bigr|-2\ge\sum_{j\in S}(|\Gamma_j|-2)
\quad\hbox{\rm for any $S\subset\{1,\ldots,d\}$};
\tag{\ddag}\end{equation}
\item  ({\em normalization})
\begin{equation}\label{normalizat}
|N|-2=\sum\limits_{j\in\{1,\dots,d\}}(|\Gamma_j|-2).
\tag{\dag}\end{equation}
\end{itemize}
A~hypertree $\Gamma$ is {\em irreducible} if \eqref{CondS}
is a strict inequality for $1<|S|<d$.
\end{Definition}

\begin{figure}[htbp]
\includegraphics[width=4in]{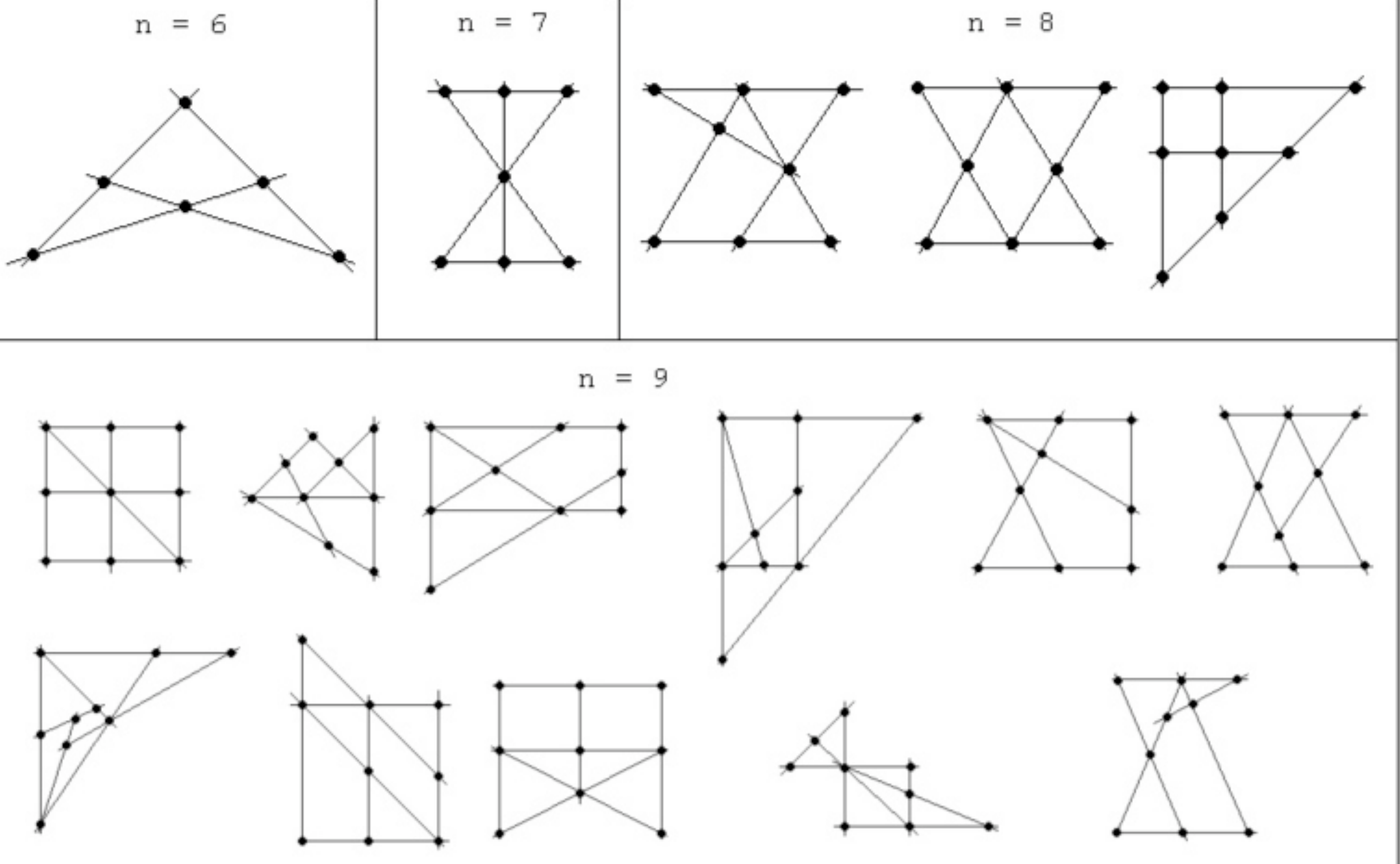}
\caption{\small Irreducible hypertrees for $n<10$. Points correspond to elements of $N$
and lines correspond to $\Gamma_1,\ldots,\Gamma_d$.}\label{IlyaPics}
\end{figure}

\begin{Remark}
The most common hypertrees are composed of triples.
In~this case \eqref{normalizat} becomes $d=n-2$ and \eqref{CondS} becomes
$$
\bigl|\bigcup_{j\in S}\Gamma_j\bigr|\ge |S|+2
\quad\hbox{\rm for any $S\subset\{1,\ldots,n-2\}$},
$$
i.e., ~$\Gamma$ is sufficiently capacious.
If  we consider pairs instead of triples, and change $2$ to $1$ in \eqref{normalizat} and \eqref{CondS}, then 
it is easy to see that $\Gamma$ will be a connected tree on vertices $\{1,\ldots,n\}$. This explains our term ``hypertree''.
\end{Remark}

\begin{Definition}\label{def D_Gamma}
For any irreducible hypertree $\Gamma$ on the set $\{1,\ldots,n\}$, 
let 
$D_\Gamma\subset\oM_{0,n}$
be the closure of the locus in $M_{0,n}$ obtained
by
\begin{figure}[htbp]
\includegraphics[width=2.5in]{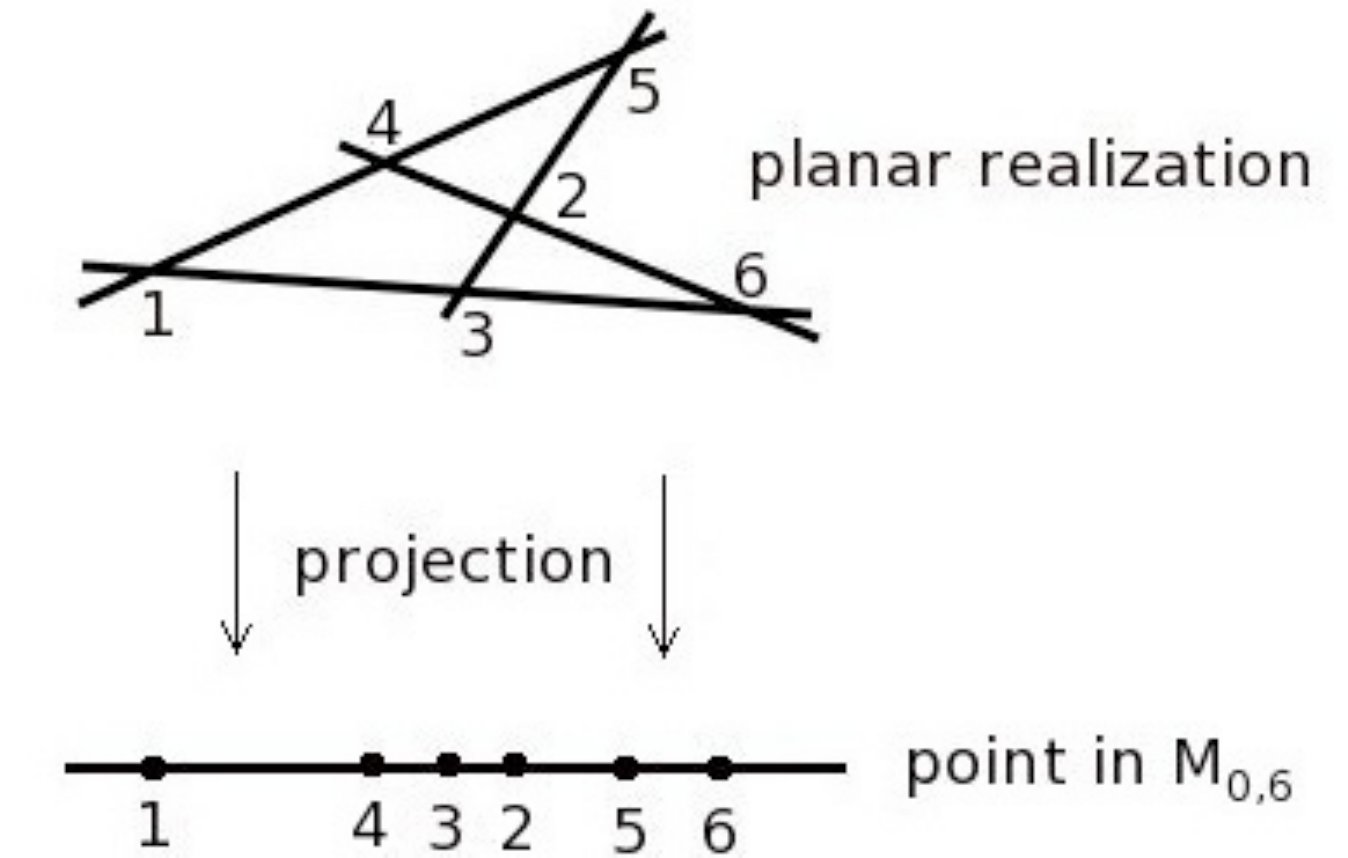}
\caption{Hypertree divisor 
as the locus of projections}
\label{KVpicture}
\end{figure}
\begin{itemize}
\item choosing a 
{\em planar realization of~$\Gamma$}:
a configuration of different points $p_1,\ldots,p_n\in\bP^2$ such that, for any subset $S\subset\{1,\ldots,n\}$
with at least three points,
$\{p_i\}_{i\in S}$ are collinear if and only if $S\subset\Gamma_j$ for some~$j$.
\item projecting $p_1,\ldots,p_n$ from a point 
$p\in\bP^2$ to points $q_1,\ldots,q_n\in\bP^1$;
\item representing 
the datum $(\bP^1;q_1,\ldots,q_n)$ by a point of $M_{0,n}$.
\end{itemize}
If $\Gamma$ is an irreducible hypertree on a subset $K\subset\{1,\ldots,n\}$,
we abuse notation and let
$D_\Gamma\subset\oM_{0,n}$ 
be the pull-back of $D_\Gamma\subset\oM_{0,K}$ with respect to 
the  forgetful map $\oM_{0,n}\to\oM_{0,K}$.
\end{Definition}

Here is our first result:

\begin{Theorem}\label{MainThOne}
For any irreducible hypertree $\Gamma$, the locus
$D_\Gamma\subset\oM_{0,n}$ 
is a non-empty irreducible divisor,
which generates an extremal ray of the effective cone of $\oM_{0,n}$. 
Moreover, this divisor is exceptional: there exists a birational contraction
$$\oM_{0,n}\dra X_\Gamma$$
onto a normal projective variety $X_\Gamma$ (see Theorem \ref{jacobianmain}),
and $D_\Gamma$ is the irreducible component of its exceptional locus that intersects $M_{0,n}$.
\end{Theorem}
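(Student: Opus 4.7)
The plan is to prove in turn that (i) $D_\Gamma$ is a non-empty irreducible divisor, (ii) it is contracted to a lower-dimensional subvariety by the map of Theorem \ref{jacobianmain}, and (iii) it generates an extremal ray of $\oEff(\oM_{0,n})$. Step (i) is where the combinatorics of irreducible hypertrees is essential; steps (ii)--(iii) use Theorem \ref{jacobianmain} as a black box.

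For (i), let $R\subset(\bP^2)^n$ be the space of planar realizations of $\Gamma$. Each collinearity ``$\{p_i\}_{i\in\Gamma_j}$ on a line'' has codimension $|\Gamma_j|-2$ in $(\bP^2)^n$. The normalization axiom \eqref{normalizat} asserts that these codimensions sum to $n-2$, and the convexity axiom \eqref{CondS} is the transversality condition guaranteeing that every sub-family of these collinearities cuts out a subvariety of the expected codimension; consequently $R$ has the expected dimension $n+2$. Non-emptiness together with irreducibility of $R$ (under irreducibility of $\Gamma$) I would prove by induction on $d$: the strict inequality in \eqref{CondS} for $1<|S|<d$ lets one remove a hyperedge $\Gamma_j$ to leave an irreducible hypertree on a smaller set, and realizations of $\Gamma$ are obtained from those of $\Gamma\setminus\{\Gamma_j\}$ by adding a line for $\Gamma_j$ through the already-placed incidences and freely choosing the remaining points of $\Gamma_j$ on it, all varying in an irreducible fibration over an irreducible base with irreducible non-empty fibers.

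Then the $\PGL_3$-equivariant rational projection map $\pi\colon R\times\bP^2\dra M_{0,n}$, sending $(p_1,\ldots,p_n;p)$ to the $n$-tuple on $\bP^1$ obtained by projecting the $p_i$ from $p$, has source of dimension $n+4$ and is generically finite modulo $\PGL_3$: a generic $(q_1,\ldots,q_n)$ in the image determines the $n$ lines through $p$ from cross-ratios of the $q_i$, then $p$ as their common intersection, and finally each $p_i$ via the hypertree's collinearities. Its image $D_\Gamma$ is therefore irreducible of dimension $n+4-8=n-4=\dim M_{0,n}-1$, i.e., an irreducible divisor meeting $M_{0,n}$. For (ii)--(iii), Theorem \ref{jacobianmain} furnishes a birational contraction $f\colon\oM_{0,n}\dra X_\Gamma$ to a normal projective variety whose construction, again through $\pi$, identifies $D_\Gamma$ as the unique irreducible component of the exceptional locus of $f$ meeting $M_{0,n}$. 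Since $f$ contracts $D_\Gamma$ to a lower-dimensional subvariety, a covering family $\{C_t\}\subset D_\Gamma$ of curves collapsed by $f$ satisfies $D_\Gamma\cdot C_t<0$ and $D\cdot C_t\ge 0$ for every effective prime divisor $D\ne D_\Gamma$; a standard covering-family argument then implies that $[D_\Gamma]$ spans an extremal ray of $\oEff(\oM_{0,n})$.

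The main obstacle will be the realization-space analysis: the strict form of \eqref{CondS} for $1<|S|<d$ must be translated into genuine geometric transversality and then used to exclude ``unwanted'' irreducible components of $R$ on which collinearities beyond those prescribed by $\Gamma$ accidentally occur. Once $R$ is understood, the generic-finiteness of $\pi$ and the deduction of extremality from the existence of the contraction are fairly standard.
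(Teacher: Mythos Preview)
Your plan has a genuine gap in step~(i), and it is precisely at the point you flag as ``the main obstacle'' but do not actually resolve.

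The induction on $d$ does not get off the ground. Removing a hyperedge $\Gamma_j$ from an irreducible hypertree does \emph{not} in general leave an irreducible hypertree on a smaller set. The normalization axiom \eqref{normalizat} fails immediately, and while you can try to repair it by deleting vertices whose valence drops below~$2$ (and shrinking the hyperedges that contained them), there is no reason the result is again irreducible---or even a hypertree. Concretely, for $n=6$ there is a unique irreducible hypertree up to symmetry and for $n=5$ there are none at all, so the induction has no base case. The ``irreducible fibration with irreducible fibres'' picture you sketch therefore has no inductive hypothesis to stand on.

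Even granting a realization space $R$ of the expected dimension, the second half of your obstacle is the real difficulty: $D_\Gamma$ is defined via configurations with \emph{no} collinearities beyond those forced by $\Gamma$, and you must rule out entire components of $R$ on which some extra triple is collinear. The paper handles this not by analyzing $R$ directly but by an indirect route. It first works with a weaker definition of $D_\Gamma$ as the closure of the Brill--Noether locus $G^2$; irreducibility and non-emptiness of that locus are proved by a Picard-rank count (Lemma~\ref{irreducible}): one compares $\rho(\oM_{0,n+1})$ with $\rho$ of the product $\prod_\alpha \oM_{0,\Gamma_\alpha\cup\{n+1\}}$, determines combinatorially exactly which boundary divisors are contracted by the product-of-forgetful-maps morphism (Lemma~\ref{contracted boundary}), and finds that exactly one exceptional component is unaccounted for. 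Only afterwards (Theorem~\ref{PlanarTh}) is it shown that a generic point of $G^2$ has no extra collinearities, and that argument uses the partial class computation of Theorem~\ref{class}---in particular the explicit coefficients \eqref{hikitty}--\eqref{hipapa}---to derive a contradiction from the assumption that some triple is always collinear. This is not a transversality argument in the realization space at all.

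Your steps (ii)--(iii) are fine in outline once (i) is known, though note that the paper's extremality argument (Theorem~\ref{coolcondition} and Lemma~\ref{basicobs}) also differs from yours: rather than invoking Theorem~\ref{jacobianmain}, it pulls back to $\oM_{0,n+1}$ and uses the genuine birational \emph{morphism} to $\prod_\alpha \oM_{0,\Gamma_\alpha\cup\{n+1\}}$, which makes the exceptional-divisor and covering-curve reasoning cleaner than working through a rational map.
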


Notice that apriori it is not at all clear that an irreducible hypertree has a planar realization,
but we will show that this is always the case. Moreover, any irreducible hypertree on any subset
$K\subset N$ gives rise, by pull-back via the forgetful map $\pi_K$ to an effective divisor 
which generates an extremal ray of the effective cone of $\oM_{0,n}$ (see Lemma \ref{pull-back}).

\begin{Review}\textsc{Spherical Hypertrees.}\label{10vertexexample} 
We discovered that any even (i.e.,~bicolored) triangulation of a $2$-sphere gives a hypertree.
Any such triangulation has
a collection of ``black'' faces and a collection
of ``white'' faces. 
\begin{figure}[htbp]
\includegraphics[width=4.5in]{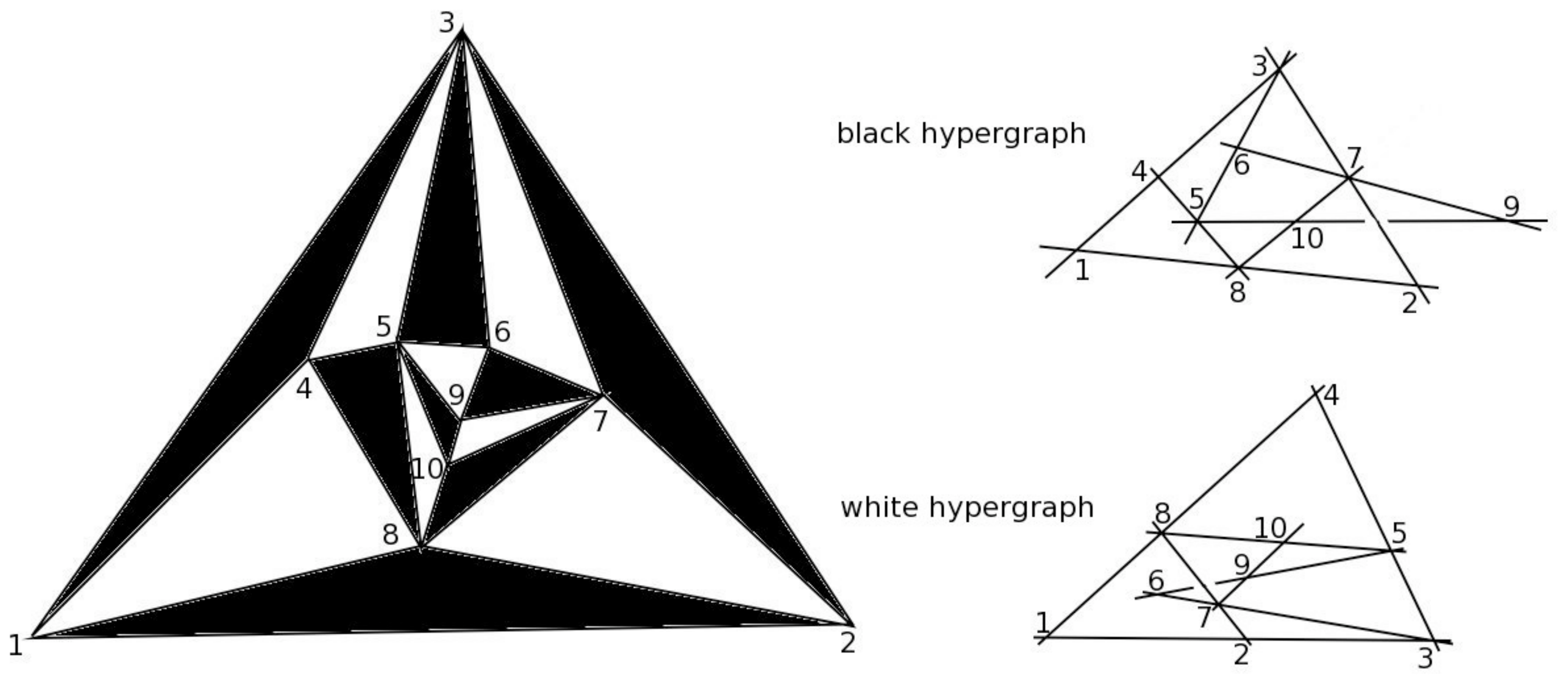}
\end{figure}
We will show that each of these collections is a hypertree.
These {\em spherical hypertrees} are irreducible unless
the triangulation is a connected sum
of two triangulations obtained by removing
a white triangle from one triangulation, a black triangle from another,
and then gluing along the cuts.
\end{Review}

One can ask if different hypertrees can give the same divisor of~$\oM_{0,n}$. 
This turns out to be a difficult question. We can prove the following

\begin{Theorem}
Let $\Gamma$ and $\Gamma'$ be \emph{generic} hypertrees (see Definition \ref{generic hypertree}). 
Then $$D_\Gamma=D_{\Gamma'}$$ if and and only if 
$\Gamma$ and $\Gamma'$ are the black and white hypertrees of an even triangulation of a sphere that is not a connected sum.
\end{Theorem}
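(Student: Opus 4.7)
The plan is to prove each implication separately, drawing heavily on the geometric content of Theorem \ref{MainThOne} and the combinatorics of even sphere triangulations described in Review \ref{10vertexexample}.

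\smallskip

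\textbf{The ``if'' direction.} Suppose $\Gamma$ (black) and $\Gamma'$ (white) come from an even triangulation of $S^2$ that is not a connected sum, so both are irreducible. To show $D_\Gamma = D_{\Gamma'}$, I would produce, from a generic planar realization $(p_1,\ldots,p_n)$ of $\Gamma$ with projection center $p$, a planar realization $(p'_1,\ldots,p'_n)$ of $\Gamma'$ together with a projection center $p'$ yielding the \emph{same} $n$-tuple $(q_1,\ldots,q_n)\in M_{0,n}$. The natural construction uses the symmetry built into the triangulation: each edge of the triangulation separates exactly one black from one white face, so the incidence matrices of the black and white hypertrees are dual in a precise sense, and a Gale/Cayley--Bacharach-style correspondence should convert collinearities of one planar realization into collinearities of the other. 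A cleaner alternative is to work on the target side: compute $\dim D_\Gamma$ and $\dim D_{\Gamma'}$ via the description in Definition \ref{def D_Gamma}, show they are equal, exhibit an explicit open subset common to both, and then invoke irreducibility of the divisors from Theorem \ref{MainThOne}. The connected-sum exclusion enters precisely at the step where we need both hypertrees to be irreducible (hence to define well-defined extremal divisors, not reducible unions).

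\smallskip

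\textbf{The ``only if'' direction.} Assume $\Gamma$, $\Gamma'$ are generic with $D_\Gamma = D_{\Gamma'}$. A generic point of this common divisor comes both from a planar realization of $\Gamma$ (with some projection center) and from a planar realization of $\Gamma'$ (with some projection center). By the contraction $\oM_{0,n}\dra X_\Gamma$ of Theorem \ref{MainThOne} and the analogous one for $\Gamma'$, the general fibers of the two realization maps are finite (this is where genericity is crucial), so the two planar realizations are essentially determined by the point of $M_{0,n}$. I would then study the combined combinatorial datum $(\Gamma,\Gamma')$ on $N$: each $i\in N$ lies in several black triples and several white triples, and the normalization identity $(\dag)$ yields
\[
|N|-2 = \sum_j(|\Gamma_j|-2) = \sum_k(|\Gamma'_k|-2),
\]
which, together with both convexity conditions $(\ddag)$, should be recognized as a disguised form of the Euler relation $V-E+F=2$ for an abstract simplicial complex whose vertex set is $N$, whose faces come in two colors (the $\Gamma_j$'s and $\Gamma'_k$'s), and whose edges are the pairwise intersections. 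The double realization by two distinct planar arrangements provides the geometric input that pins down the local structure around each vertex to be a disk (no branch points, no higher genus), upgrading the abstract complex to a bicolored triangulation of $S^2$.

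\smallskip

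\textbf{Main obstacle.} The ``only if'' direction is the hard part. The weakest link in the sketch above is the combinatorial-topological step: extracting from ``$D_\Gamma=D_{\Gamma'}$'' that the complex built from $(\Gamma,\Gamma')$ is actually a sphere triangulation, and that each triple carries a well-defined color. One needs to show that every pair of triples $\Gamma_j,\Gamma'_k$ meets in at most two elements (to form edges of a triangulation) and that every pair of elements in $N$ lies in exactly one black and one white triple. I expect this to require a careful analysis of the two planar realizations simultaneously: their incidence geometries, viewed around each $p_i=p'_i$, must be dual, and ruling out degenerate intersections is precisely where genericity is used. The hypothesis that the triangulation is not a connected sum will fall out of the \emph{irreducibility} of both hypertrees (strict inequality in $(\ddag)$), closing the loop.
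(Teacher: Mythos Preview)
Your proposal has genuine gaps in both directions, and the paper's approach is quite different from what you sketch.

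\textbf{The ``if'' direction.} Your Gale/Cayley--Bacharach suggestion and the dimension-count alternative are both too vague to constitute a proof. The paper's argument (Corollary~\ref{coinciwb}) is concrete and algebraic: it writes down a second determinantal equation for $D_\Gamma$ using a matrix $B$ whose rows are indexed by a basis of $H_1$ of the simplicial complex of black triangles. For a spherical hypertree one may take this basis to consist of cycles around the white triangles; then $B$ has rows indexed by white triangles, columns by black triangles, and entry $x_i-x_j$ when the triangles share the oriented edge $[ij]$. The matrix for the white hypertree is then $-B^t$, so the two determinantal loci coincide. No realization-swapping is needed.

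\textbf{The ``only if'' direction.} You have the right endgame (build a bicolored complex, compute Euler characteristic), but you are missing the mechanism that makes it work, and you have misidentified what genericity does. Genericity in Definition~\ref{generic hypertree} is a capacity condition on contracted collections $\Gamma_{N\setminus\{i,j,k\}}$; it has nothing to do with finiteness of fibers of realization maps. The paper does not study two planar realizations simultaneously. Instead it compares the \emph{divisor classes} of $D_\Gamma$ and $D_{\Gamma'}$ using Theorem~\ref{class}: equality of the $H$-coefficient gives $d=d'=n-2$, equality of $m_{\{i\}}=d-v_i$ gives equal valences, and---crucially---the coefficients $m_{N\setminus\{i,j,k\}}$ detect exactly which triples are hyperedges or wheels. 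Genericity (via Lemma~\ref{nextfix}) forces $m_{N\setminus\{i,j,k\}}=0$ whenever $\{i,j,k\}$ is neither a hyperedge nor a wheel, while \eqref{kitty1} and \eqref{coefficientm_I} give $m_{N\setminus\{i,j,k\}}\ge 1$ otherwise. This yields $\Gamma\cup\Xi=\Gamma'\cup\Xi'$ (hyperedges union wheels). From here, any $\Gamma_1\in\Gamma\setminus\Gamma'$ must be a wheel of $\Gamma'$, and the paper grows a bicolored complex by repeatedly attaching along edges: a black face is a hyperedge of $\Gamma$ and a wheel of $\Gamma'$, a white face the reverse, and edges are the two-element intersections dictated by the wheel structure. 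The Euler-characteristic inequality then uses the convexity axiom for both $\Gamma$ and $\Gamma'$ simultaneously to force $\chi\ge 2$, hence a sphere.

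The concrete missing ingredient in your sketch is the role of the divisor class coefficients $m_I$ as a bridge from $D_\Gamma=D_{\Gamma'}$ to combinatorics, and the notion of a \emph{wheel} as the object that lets you glue black and white faces along edges. Without these, your ``double realization'' argument has no way to certify that two triples share exactly an edge, which is precisely the obstacle you flagged.
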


In other words,  the map from the discrete ``moduli space'' of hypertrees 
to the set of vertices of the effective cone of $\oM_{0,n}$
generically looks like the normalization of the node 
(which corresponds to triangulations 
of~a $2$-sphere).
\begin{figure}[htbp]
\includegraphics[width=2.5in]{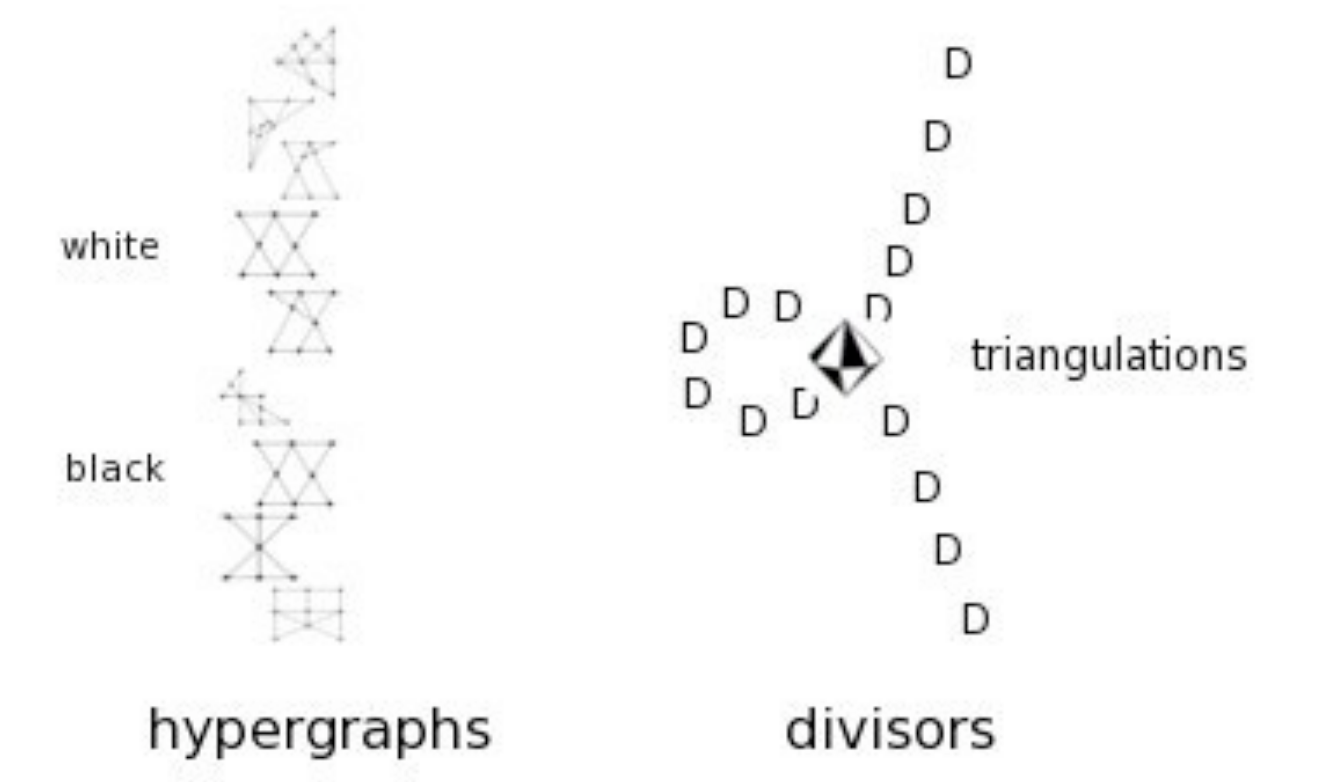}
\end{figure}
However, on the boundary of this discrete moduli space of hypertrees, the map is a more complicated ``contraction''.
For example, in \ref{HistoricalRemarks} we study the triangulation
of a bipyramid, when many hypertrees collapse to the same vertex.
This is an interesting case because the corresponding divisor $D_\Gamma$
is a pull-back of the classical Brill--Noether ``gonality'' divisor on $\oM_{2k+1}$ 
used by Harris and Mumford~\cite{HM}.

We would like to explain why divisors $D_\Gamma$ are exceptional,
i.e.,~how to construct a contracting birational map 
$f:\,\oM_{0,n}\dra X_\Gamma$ in Theorem~\ref{MainThOne}.
The map is called contracting if for one (and hence for any) resolution 
$$\begin{matrix}
&Z\cr
&\!\!\!\!\!\!\!\!\!\!\!\!\!\!\!\!\!\!\!\!\!\!\!{}^g\swarrow & \!\!\!\!\!\!\!\!\!\searrow^h\cr
\oM_{0,n} & \dra & X_\Gamma\cr
\end{matrix}$$
$g$-exceptional divisors are also $h$-exceptional.
A typical example is a composition of a small modification and a morphism.

To~explain the idea, take a general smooth curve $\Sigma$ of genus~$g$. 
By Brill--Noether theory~\cite{ACGH}, the variety 
$G^1_{g+1}$, parameterizing pencils of divisors of degree $g+1$ on~$\Sigma$, is smooth.
We have a natural morphism
$$v:\,G^1_{g+1}\to W^1_{g+1}\simeq \Pic^{g+1}(\Sigma),$$ 
which assigns to a pencil of divisors its linear equivalence class.
By Brill--Noether theory, $v$ is birational, 
and has an exceptional divisor $D$ 
over 
$$W^2_{g+1}=\{L\in\Pic^{g+1}(\Sigma)\,|\,h^0(L)\ge 3\},$$
which is non-empty and has codimension~$3$ in $\Pic^{g+1}(\Sigma)$.
So, for example, it is immediately clear that $D$ is an extremal ray of  $\Eff(G^1_{g+1})$\footnote{
Other extremal rays can be found using methods of Bauer--Szemberg \cite{BS}.}.

Generically, $G^1_{g+1}$ parameterizes globally generated pencils,
i.e.,~it contains a scheme of degree $g+1$ morphisms $\Sigma\to\bP^1$ (modulo automorphisms)
as an open subset. So $D$ generically parameterizes pencils that can be obtained by choosing a ``planar realization'',
i.e.,~a morphism $\Sigma\to\bP^2$, and then taking composition with the projection from a general point.

Next we degenerate a smooth curve to the union of rational curves
with combinatorics encoded in a hypertree.

\begin{Definition}\label{valence}
We work with schemes over an algebraically closed field~$k$.
A curve $\Sigma_\Gamma$ of genus
$$g=d-1$$ is called a {\em hypertree curve} if it has $d$ irreducible
components, each isomorphic to $\bP^1$ and marked by $\Gamma_j$, $j=1,\ldots,d$.
These components are glued at identical markings as a scheme-theoretic push-out:
at each singular point $i\in N$, $\Sigma_\Gamma$
is locally isomorphic to the union of coordinate axes in $\bA^{v_i}$, where $v_i$
is the valence of $i$, i.e., the number of subsets $\Gamma_j$ that contain~$i$.
We consider $\Sigma_\Gamma$ as a marked curve (by indexing its singularities).
\end{Definition}

The most common case is when all $\Gamma_j$'s are triples.
If this is not the case, then hypertree curves have moduli, namely
$$M_\Gamma:=\prod_{j=1,\ldots,d}M_{0,\Gamma_j}.$$
Then we have to adjust our construction a little bit:
$\Sigma_\Gamma$ will be the universal curve over the moduli space $M_\Gamma$.

By definition of the push-out,
 $M_{0,n}$ can be identified with a variety of morphisms $f:\,\Sigma_\Gamma\to\bP^1$
(modulo the free action of $PGL_2$) that send singular points
$p_1,\ldots,p_n$ of $\Sigma_\Gamma$  to different points $q_1,\ldots,q_n\in \bP^1$.
This gives a morphism 
\begin{equation}\label{vmap}
v:\,M_{0,n}\to \Pic^{\underline{1}},\quad f\mapsto f^*\cO_{\bP}(1)
\cooltag\end{equation} 
from $M_{0,n}$ to the (relative over $M_\Gamma$) Picard scheme 
$\Pic^{\underline{1}}$ of line bundles on $\Sigma$ of degree $1$ on each irreducible component.
This is the analogue of the map $G^1_{g+1}\to\Pic^{g+1}$ in the smooth case.
The locus $D_\Gamma\subset M_{0,n}$ defined above corresponds to the divisor $D$ in the smooth case.

We have to compactify the source and the target of the map~$v$.

\begin{Definition}
A nodal curve $\Sigma^s_\Gamma$, called a {\em stable hypertree curve},
is obtained by inserting a $\bP^1$ with $v_i$ markings 
instead of each singular point of $\Sigma_\Gamma$ with $v_i>2$.
\begin{figure}[htbp]
\includegraphics[width=4.5in]{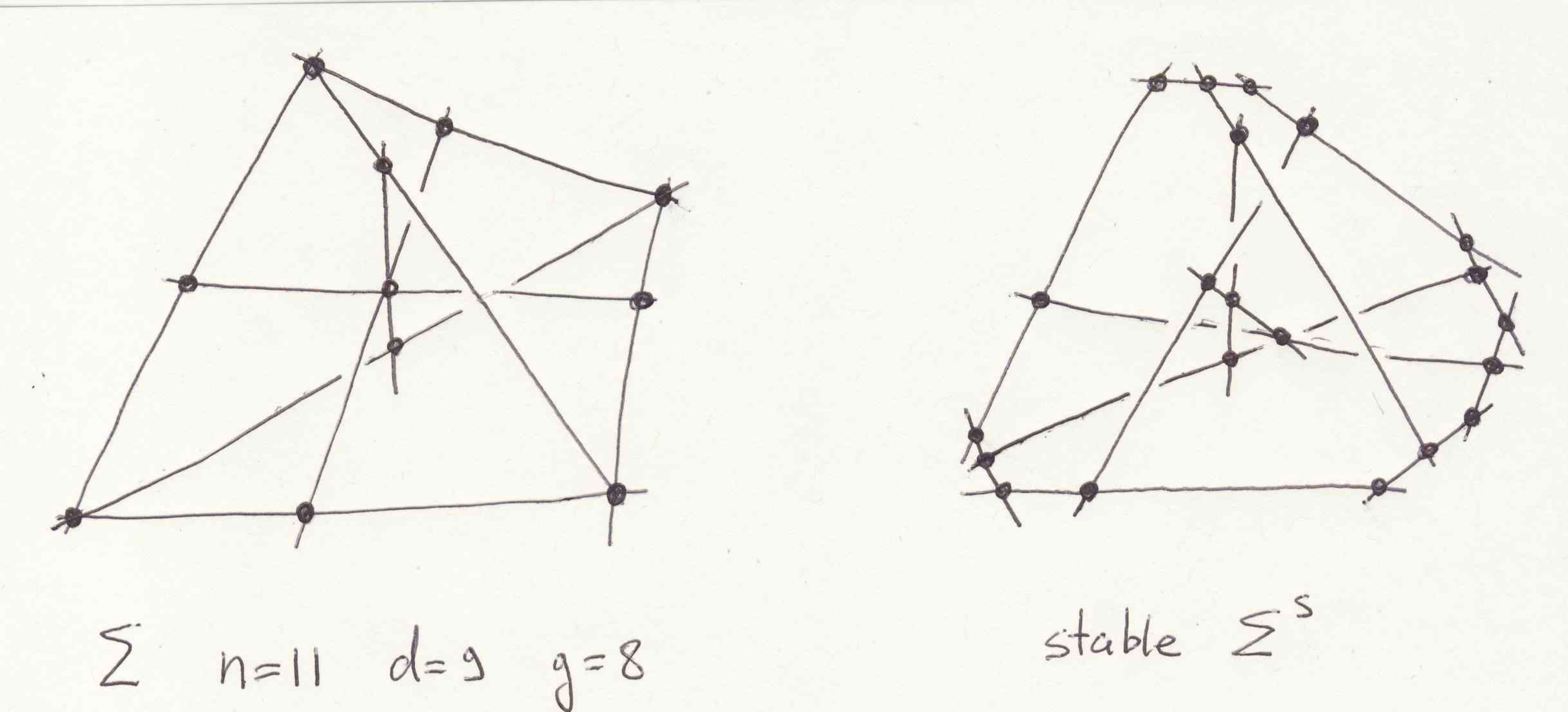}
\end{figure}
If $v_i>3$ then we do not allow extra moduli, instead we 
arbitrarily {\em fix} cross-ratios of marked points on inserted $\bP^1$'s.
Let $\Pic^{\underline{1}}$ be the 
Picard scheme of 
invertible sheaves on $\Sigma^s$ of degree $1$ on each irreducible component coming from 
$\Sigma$
and degree~$0$ on each component inserted at a non-nodal point of~$\Sigma$.
\end{Definition}

\begin{Theorem}\label{jacobianmain}
Let $\Gamma$ be an irreducible hypertree.
Any sheaf in  $\Pic^{\underline{1}}$ is Gieseker-stable w.r.t. the dualizing sheaf $\omega_{\Sigma^s}$.
Let $X_\Gamma$ be the normalization of the main component 
in the compactified Jacobian of $\Sigma^s$
relative over 
$$\oM_\Gamma=\prod_{j=1,\ldots,d}\oM_{0,\Gamma_j}$$ 
The map $v$ of \eqref{vmap}
induces a contracting birational map $v:\,\oM_{0,n}\dra X_\Gamma$
and $D_\Gamma$ is the only component of the exceptional locus that intersects $M_{0,n}$.
\end{Theorem}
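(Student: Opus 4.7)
The plan is to verify the four assertions of the theorem in sequence: (i) every sheaf in $\Pic^{\underline{1}}$ is Gieseker-stable; (ii) $X_\Gamma$ is a well-defined normal projective variety; (iii) $v$ extends to a contracting birational map $\oM_{0,n}\dra X_\Gamma$; and (iv) $D_\Gamma$ is the unique exceptional component meeting $M_{0,n}$.

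For stability (i), a proper subcurve $Y\subset\Sigma^s_\Gamma$ is determined by a subset $S\subseteq\{1,\ldots,d\}$ of main components $\Gamma_j$ together with some collection of inserted $\bP^1_i$'s at valence-$(>2)$ nodes. Using $\deg L|_{\Gamma_j}=1$, $\deg L|_{\bP^1_i}=0$, $\deg \omega|_{\Gamma_j}=|\Gamma_j|-2$, $\deg \omega|_{\bP^1_i}=v_i-2$, and counting the nodes of $Y$ in terms of the overlap pattern of $\{\Gamma_j\}_{j\in S}$, the Gieseker slope inequality $(d-2)\chi(L|_Y)<\deg_Y\omega_{\Sigma^s}$ (with $\chi(L)=2$, $\deg\omega_{\Sigma^s}=2d-4$) simplifies to the strict convexity axiom \eqref{CondS}, which is available for $1<|S|<d$ precisely because $\Gamma$ is irreducible. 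The boundary cases $|S|\in\{1,d\}$ as well as the effect of including inserted $\bP^1_i$'s in $Y$ (each such inclusion raises $\chi(L|_Y)$ by $1$ minus its number of attaching nodes in $Y$, and raises $\deg_Y\omega$ by $v_i-2$) are handled by direct inspection.

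Granted (i), the relative compactified Jacobian of $\Sigma^s\to\oM_\Gamma$ is a well-defined projective scheme by Simpson's construction, whose main component admits a normalization $X_\Gamma$, giving (ii). For (iii), on $M_{0,n}$ the rule $f\mapsto f^*\cO_{\bP^1}(1)$ produces a line bundle of multi-degree $\underline{1}$. Conversely, a generic $L\in\Pic^{\underline{1}}$ has $\chi(L)=2$ by Riemann--Roch and $h^1(L)=0$ by generic vanishing, so $h^0(L)=2$; the resulting base-point-free pencil $\Sigma_\Gamma\to\bP^1$ is unique up to $PGL_2$ and recovers the point of $M_{0,n}$, showing $v:M_{0,n}\to\Pic^{\underline{1}}$ is birational onto its image. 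Extension to a rational map $\oM_{0,n}\dra X_\Gamma$ is by the valuative criterion, using separatedness of the compactified Jacobian to produce unique Gieseker-stable limits over a DVR.

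Finally, for (iv), the fiber of $v$ over $L$ is $\Gr(2,H^0(L))/PGL_2$, which is positive-dimensional exactly when $h^0(L)\geq 3$. In that case $L$ is pulled back from a morphism $\Sigma_\Gamma\to\bP^2$ realizing $\Gamma$ planarly, and generic fiber points arise by composing with a projection from a point of $\bP^2$, reproducing Definition \ref{def D_Gamma}; irreducibility of $D_\Gamma$ then reduces to that of the space of planar realizations. The hard part is to show that $D_\Gamma$ is the \emph{only} exceptional component meeting $M_{0,n}$ and that $v$ is contracting (i.e.\ that $v^{-1}$ extracts no divisor). The former requires proving that outside $D_\Gamma$ the generic pencil has $h^0=2$, using the \emph{strict} inequality in \eqref{CondS} to rule out accidental jumps in cohomology; the latter demands verifying that the indeterminacy locus of $v$ has codimension at least two after normalization, which entails a careful analysis of how boundary strata of $\oM_{0,n}$ map into $X_\Gamma$ via limits of line bundles on degenerating hypertree curves. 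This last step is, I expect, the principal technical obstacle.
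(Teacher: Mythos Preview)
Your outline correctly identifies the architecture of the proof, and your treatment of (ii) and of the identification of the exceptional locus inside $M_{0,n}$ with the jump locus $\{h^0\ge 3\}$ is fine. However, there are two substantive gaps.

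First, your stability argument in (i) is too optimistic. You assert that the Gieseker inequality ``simplifies to the strict convexity axiom \eqref{CondS}'', but this is not immediate: the subcurve $Y$ may contain any mixture of main (``black'') components and inserted (``white'') $\bP^1$'s, and the quantity $\#Y$ depends on which white components are included. The paper first performs a reduction step showing that one may assume every white component touched by a black component of $Y$ already lies in $Y$ (adding such a white component does not decrease the left-hand side of the inequality to be proved); only after this reduction does a careful count involving the valences $P_i$ and the relation $\sum(i-1)P_i=2d-2$ lead back to the convexity axiom. Your sketch skips both the reduction and the count.

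Second, and more seriously, you correctly flag the contracting property of $v:\oM_{0,n}\dra X_\Gamma$ as the principal obstacle, but you offer no mechanism for it. The valuative criterion gives you a rational map, not a contracting one; and ``indeterminacy locus of codimension $\ge 2$'' is neither the definition of contracting nor something you verify. The paper's method is numerical: one shows that
\[
\rho(\oM_{0,n})-\rk\Cl(X_\Gamma)=1+\bigl|\{\hbox{boundary divisors of }\oM_{0,n}\hbox{ contracted by }v\}\bigr|,
\]
which forces $v$ to be contracting with exactly one non-boundary exceptional divisor, namely $D_\Gamma$. Making this count work requires knowing exactly which boundary divisors $\delta_I$ are \emph{not} contracted. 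For this the paper enumerates the horizontal and vertical boundary divisors of $X_\Gamma=\oPicone$ (there are $2d-2+n$ horizontal ones, indexed by nodes, and the expected vertical ones over $\partial\oM_\Gamma$), checks that the relevant multidegrees are Gieseker-stable when $\Gamma$ is irreducible, and then shows via the Abel map (Lemma~\ref{dominance}) that each $\delta_{ij}$ with $\{i,j\}\not\subset\Gamma_\alpha$ and each $\delta_I$ with $I\subset\Gamma_\beta$ maps onto a divisor of $X_\Gamma$. None of this structure appears in your proposal; without it, the contracting assertion is unproved. (Irreducibility of $D_\Gamma$ itself is established earlier in the paper by an analogous rank count for the morphism $\pi$ to $\prod_\alpha\oM_{0,\Gamma_\alpha\cup\{n+1\}}$; your suggestion to deduce it from irreducibility of the space of planar realizations is circular at this stage, since planarity is only proved later.)
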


\begin{Remarks}
(a) A stable hypertree curve is a special case of a graph curve
of Bayer and Eisenbud \cite{BE}.

(b) All irreducible hypertrees for small $n$ 
were
found by Scheidwasser \cite{Sch} using 
computer search.
Up to the action of $S_n$, there are $93$ hypertrees for $n=10$, $1027$ hypertrees for $n=11$, and so on. See 
Fig.~\ref{IlyaPics}
for all hypertrees for $n<10$.

(c) There are no irreducible hypertrees for $n=5$. This reflects the fact 
that the effective cone of 
$\oM_{0,5}\simeq\Bl_4\bP^2$
is generated by boundary divisors alone, i.e.,~by the ten $(-1)$-curves.

(d) The first proofs that 
$\Eff(\oM_{0,6})$ is not generated by boundary divisors
were found by Keel and Vermeire~\cite{V}. (In particular, this shows
that, for any $n\geq6$, $\Eff(\oM_{0,n})$ is not generated by boundary divisors.)
Their description of an extremal divisor is very different from ours,
which perhaps explains why it was not generalized to all $n$ before.
We will compare the two approaches in \ref{HistoricalRemarks}.

(e) Hassett and Tschinkel \cite{HT} proved that $\Eff(\oM_{0,6})$
is generated by boundary and Keel--Vermeire divisors.
So the Conjecture is true for $n=6$.
It was proved by the first author \cite{Ca} that in fact
the Cox ring of $\oM_{0,6}$ is generated by boundary and hypertree
divisors. A pipe dream would be to prove an analogous statement for any $n$.

(f) The existence of birational contractions $X_\Gamma$
supports the conjecture of Hu and Keel \cite{HK}
that $\oM_{0,n}$ is a Mori dream space. The map $v$ of \eqref{vmap}
is the first example of a birational contraction of $\oM_{0,n}$ whose exceptional locus 
intersects the interior $M_{0,n}$. Birational contractions whose exceptional locus 
lies in the boundary have been previously constructed by Hassett \cite{Hassett}.
In particular, the map $v$ gives a (hypothetical) new Mori chamber
of $\oM_{0,n}$. It would be interesting to factor $\oM_{0,n}\dra X_\Gamma$
through a small $\bQ$-factorial modification, which perhaps
has a functorial meaning. 

(g) We take only irreducible hypertrees in Theorem~\ref{MainThOne} because
if $\Gamma$ is not irreducible, then if we define $D_\Gamma$ as above, 
any component of $D_\Gamma$ will be equal to $\pi^{-1} (D_{\Gamma'})$,
where $\pi:\,\oM_{0,n}\to\oM_{0,k}$ is a forgetful map and $\Gamma'$ is an irreducible hypertree 
on a subset $K\subset N$ (see Lemma \ref{weakness}).

(h) As Fig.~\ref{IlyaPics} suggests, the number of new extremal rays grows rapidly with~$n$.
One reason for this is the existence of spherical hypertrees, another reason
is a ``Fibonaccian'' inductive construction (Theorem~\ref{FunnyConstruction})
that multiplies irreducible non-spherical hypertrees.

(i) Keel and McKernan~\cite{KM} proved that the effective cone of the symmetrization $\oM_{0,n}/S_n$ 
is generated by boundary divisors for any~$n$. So in some sense
our hypertree divisors reflect $S_n$-monodromy.

\end{Remarks}

Let us explain the layout of the paper. We start in \ref{BNSection} by introducing Brill--Noether loci of hypertree curves
and use a trick to show that a hypertree divisor (if non-empty) is an extremal ray of the effective cone of~$\oM_{0,n}$.
In~\ref{capacity} we introduce {\em capacity}, which measures how far is a collection
of subsets from being a hypertree. We relate capacity to the dimension of the image of a product of linear projections.
In \ref{Irred and Class} we use calculations with discrepancies to show that a hypertree divisor
is non-empty and irreducible. We also (partially) compute its class. In \ref{smaps} we study a compactified Jacobian
of a hypertree curve and show that $\oM_{0,n}$ is birationally contracted to~it. In \ref{planarity}
we prove the characterization of $D_\Gamma$ via projections of points given in the Introduction: in the previous sections
we define $D_\Gamma$ in a somewhat weaker fashion as a Brill--Noether locus.
In \ref{spherical} we study spherical  and \emph{generic} hypertrees. In particular, we show that if 
a hypertree is generic then the hypertree divisor uniquely determines the hypertree, except in the case 
when the hypertree is spherical (in which case the divisor uniquely determines the triangulation). We also give an 
inductive construction of many  non-spherical generic hypertrees.
Section \ref{DetEqs} is very elementary: we use basic linear algebra to deduce
determinantal equations for hypertree divisors. As a corollary, we show that black and white
hypertrees of a triangulated sphere give the same divisors on $\oM_{0,n}$.
In Section \ref{HistoricalRemarks} we relate hypertree divisors to gonality divisors on $\oM_g$
via various gluing maps $\oM_{0,n}\to\oM_g$. Finally, in Section \ref{pull-backs} we use the program Macaulay to give 
several examples of moving divisors on $\oM_{0,n}$ which are pull-backs of extremal divisors on $\oM_{g,k}$ via maps 
$\oM_{0,n}\to\oM_{g,k}$ ($n=2g+k$) obtained by gluing pairs of markings. These divisors on $\oM_{0,n}$
are linearly equivalent to sums of boundary (thus, at least in our examples, this construction does not lead to 
any new interesting divisors on $\oM_{0,n}$).

\medskip
\noindent\textsc{Acknowledgements.}
We are grateful to Sean Keel for teaching us $\oM_g$,
to Valery Alexeev and Lucia Caporaso
for answering our questions about compactified Jacobians, 
to Igor Dolgachev, Gabi Farkas, Janos Koll\`ar, and Bernd Sturmfels for useful discussions. 

Hypertrees for $n\le11$ were classified by Ilya Scheidwasser during an REU
directed by the second author. He also performed the most difficult combinatorial calculations in \ref{smaps}.
We are grateful to Ilya for the permission to reproduce his results
and for the beautiful pictures he made. The ``Fibonacci'' construction of Theorem~\ref{FunnyConstruction} was suggested to us by 
Anna Kazanova.

Parts of this paper were written while the first author was visiting the Max-Planck Institute in Bonn, Germany
and while both authors were members at MSRI, Berkeley.
The first author was partially supported by the NSF grant  DMS-1001157.
The second author was partially supported by the NSF grants 
DMS-0701191 and DMS-1001344  and by the Sloan research fellowship.

\tableofcontents

\section{Brill--Noether Loci of Hypertree Curves}\label{BNSection}\label{DivSection}\label{fibonacci}

We fix  a hypertree $\Gamma=\{\Gamma_1,\ldots,\Gamma_d\}$
and consider a hypertree curve $\Sigma$.

\begin{Definition}
A linear system on $\Sigma$ is called {\em admissible} if it 
is globally generated and the corresponding morphism $\Sigma\to\bP^k$
sends singular points of $\Sigma$ to different points.
An invertible sheaf~$L$ is called admissible if the complete linear system $|L|$ is admissible.
\end{Definition}

We define the {\em  Brill--Noether loci} $W^r$ and $G^r$ \cite{ACGH} as follows.
First suppose that $\Gamma$ consists of triples.
Then $\Sigma$ has genus $g=n-3$ and the Picard scheme
$\Pic^{\uone}$ of line bundles of degree $1$ on each irreducible component is isomorphic to~$\bG_m^g$
(not canonically).
The Brill--Noether locus $W^r$ parametrizes admissible line bundles $L\in \Pic^{\uone}$ 
such that 
$$h^0(\Sigma,L)\ge r+1.$$
The locus $G^r$ parametrizes admissible pencils on $\Sigma_\Gamma$
such that the corresponding line bundle is in $W^r$.
So we have a natural forgetful map
$$G^r \arrow^v W^r.$$

If $\Gamma$ contains not just triples, things get a little bit more complicated.
Let's give a functorial definition that works in general.
The~space $M_\Gamma$ defined in the Introduction represents a 
functor 
$$\cM_\Gamma:\,Schemes\to Sets$$
that sends a scheme $S$ to the set of isomorphism classes of flat families $\bSi\to S$
with reduced geometric fibers isomorphic to hypertree  curves.
A~hypertree curve is connected and it is easy to compute its genus
\begin{equation}\label{genusformula}
g=n-3-\dim M_\Gamma=d-1.
\cooltag
\end{equation}
Consider the relative Picard functor  
$$\cPic^{\uone}:Schemes\to Sets$$ 
that sends a scheme $S$
to an object $\bSi$ of $\cM_\Gamma(S)$  equipped with 
an invertible sheaf on $\bSi$ of multi-degree $(1,\ldots,1)$
modulo pull-backs of invertible sheaves on~$S$. 
This functor is represented by a $\bG_m^g$-torsor 
over $M_\Gamma$. This torsor is in fact trivial.
Notice that the dimension of $\Pic^{\uone}$ is always equal to $n-3$.
Let 
$$\cG^1:\,Schemes\to Sets$$ 
be a functor that sends a scheme $S$ to the set of isomorphism classes
of 
\begin{enumerate}
\item a family $\{p:\,\bSi\to S\}$ in $\cM_\Gamma(S)$;
\item a morphism  $f:\,\bSi\to\bP^1_S$
such that (a) images of irreducible components of $\bSi^{sing}$ are disjoint
and (b) each irreducible component of $\bSi$ maps isomorphically onto $\bP^1_S$.
\end{enumerate}
Here two morphisms are considered isomorphic if they differ by isomorphisms of $S$-schemes 
both on the source and the target.
Let 
$$v:\,\cG^1\to\cPic^{\uone}$$ be the natural transformation such that
$$(\bSi\to S,f:\,\bSi\to\bP^1_S)\mapsto(\bSi\to S,f^*\cO_{\bP^1_S}(1)).$$
We will see below that $\cG^1$ is represented by $M_{0,n}$.
For any $r\ge2$, let $G^r\subset G^1$ be a closed subset
(with an induced reduced scheme structure) of points where 
$p_*(f^*\cO_{\bP^1_{G^1}}(1))$ has rank at least $r+1$ (where $(p,f)$ is the universal family of $\cG^1$).
We define $W^r\subset \Pic^{\uone}$ as a scheme-theoretic image of $G^r$.

\begin{Definition}
Let $f:\,X\to Y$ be a quasiprojective morphism  of Noetherian schemes.
The  {\em exceptional locus} $\Exc(f)$ is  a complement to the union of points in $X$ isolated in their fibers.
$\Exc(f)$ is closed \cite[4.4.3]{EGA3}.
\end{Definition}

\begin{Definition}
An extremal ray $R$ of a closed convex cone $\cC \subset \bR^s$
is called an {\em edge} if the vectorspace
$R^{\perp} \subset (\bR^s)^*$ (of linear forms that vanish
on~$R$) is generated by supporting hyperplanes for $\cC$. 
This technical condition means that $\cC$ is ``not rounded'' at $R$.
\end{Definition}

\begin{Theorem}\label{ID}\label{r-DimlFibers}\label{coolcondition}
The functor $\cG^1$ is represented by $M_{0,n}$.
The map 
$$v:\,M_{0,n}\simeq G^1\to \Pic^{\underline{1}}$$
is birational.
Its exceptional locus is~$G^2$.
The map~$v$ induces an isomorphism 
\begin{equation}\label{opossum}
M_{0,n}\setminus G^2\simeq v(M_{0,n}\setminus G^2)=W^1\setminus W^2\subset\Pic^{\underline{1}}.
\cooltag\end{equation}
Any irreducible component of $G^2$
is a divisor whose closure in $\oM_{0,n}$ generates an edge of $\oEff(\oM_{0,n})$.
The closure of the pre-image of $G^2$ in $\oM_{0,n+1}$ with respect to the forgetful map
$M_{0,n+1}\to M_{0,n}$ is contracted by a birational morphism
\begin{equation}\label{Mapppi}
\prod\limits_{j=1}^d\pi_{\Gamma_j\cup\{n+1\}}:\,
\oM_{0,n+1}\to\prod\limits_{j=1}^d\oM_{0,\Gamma_j\cup\{n+1\}}.
\cooltag\end{equation}
All other exceptional divisors of this morphism belong to the boundary.
\end{Theorem}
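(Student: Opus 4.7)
The plan is to proceed in three stages.

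\textbf{Stage 1: Representability.} I would use the push-out description of the hypertree curve $\Sigma$ to turn a morphism $f : \bSi \to \bP^1_S$ satisfying conditions (2a), (2b) in the definition of $\cG^1$ into the data of $n$ pairwise disjoint sections $\sigma_i : S \to \bP^1_S$ (one for each singular point $i \in N$), modulo the $\PGL_2$-action on the target. This is exactly a morphism $S \to M_{0,n}$, giving $\cG^1 \cong M_{0,n}$; composing with $(\bSi, f) \mapsto (\bSi, f^*\cO(1))$ then defines $v$.

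\textbf{Stage 2: Birationality of $v$ and exceptional locus.} A direct computation of $p_a(\Sigma)$ using $\delta_i = v_i - 1$ at the $v_i$-fold singularities, together with the normalization axiom, gives $p_a(\Sigma) = d - 1 = g$; Riemann--Roch then yields $\chi(L) = d - g + 1 = 2$ for every $L \in \Pic^{\uone}$. The fiber of $v$ over $L$ is the open locus of base-point-free pencils in $\Gr(2, H^0(L))$, of dimension $2(h^0(L) - 2)$: a single point when $h^0(L) = 2$, and positive-dimensional precisely on $W^2$. Since $\dim M_{0,n} = n - 3 = \dim M_\Gamma + g = \dim \Pic^{\uone}$ (again by normalization), $v$ is generically one-to-one, hence birational, with exceptional locus exactly $G^2$; Zariski's Main Theorem then upgrades $v|_{M_{0,n} \setminus G^2}$ to the isomorphism \eqref{opossum}. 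The codimension-one claim for components of $G^2$ will follow from the expected-codimension-$3$ Brill--Noether bound for $W^2 \subset \Pic^{\uone}$ combined with the two-dimensional fiber of $v$ over the generic point of each component of $W^2$.

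\textbf{Stage 3: The morphism \eqref{Mapppi} and the edge property.} Source and target of \eqref{Mapppi} both have dimension $n - 2$ by the normalization axiom. To prove birationality I would show that, generically, the collection of cross-ratios of points in $\Gamma_j \cup \{n+1\}$, $j = 1, \ldots, d$, determines the underlying $(n+1)$-pointed rational curve uniquely: after fixing $x_{n+1} = \infty$, each $\Gamma_j$ is located up to an affine transformation of the target, and the hypertree incidence structure (singular points shared between incident components) rigidifies these local ambiguities into a single global $\PGL_2$-ambiguity, which is already killed by the choice of $x_{n+1}$. The closure of the preimage of $G^2$ is contracted because, for $L \in W^2$, only $L$ itself and not the pencil yielding it can be read off from the component-wise data; by \eqref{opossum}, any further component of the exceptional locus of \eqref{Mapppi} on $M_{0,n+1}$ must lie in the boundary. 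Since \eqref{Mapppi} is proper birational between projective varieties, each irreducible exceptional divisor is extremal in $\oEff(\oM_{0,n+1})$, and each component $\ov{D} \subset \oM_{0,n}$ of $\ov{G^2}$ then inherits the edge property via the forgetful fibration $\oM_{0,n+1} \to \oM_{0,n}$, using that pullbacks of nef classes from $\prod_j \oM_{0, \Gamma_j \cup \{n+1\}}$ give the required supporting hyperplanes.

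The main obstacle I anticipate is Stage 3: verifying the birationality of \eqref{Mapppi} rigorously, isolating precisely which non-preimage-of-$G^2$ divisors appear in its exceptional locus, and carefully transporting the edge property down along the forgetful fibration. Each of these points ultimately depends on the combinatorics of the hypertree, especially the convexity inequality \eqref{CondS}, which controls both the gluing reconstruction at generic points and the structure of the boundary contributions.
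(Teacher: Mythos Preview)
Your Stage~1 matches the paper exactly.

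The real gap is in Stage~2. Equality of dimensions $\dim M_{0,n}=\dim\Pic^{\uone}=n-3$ does \emph{not} imply that $v$ is generically one-to-one: nothing you have written rules out the possibility that $W^1=W^2$, in which case every fibre of $v$ is positive-dimensional and $v$ simply fails to be dominant. To conclude birationality you must show that some $L$ in the image of $v$ has $h^0(L)=2$, and this is precisely the nontrivial content. Likewise, invoking an ``expected codimension~$3$'' Brill--Noether bound for $W^2$ is unjustified for the singular curve $\Sigma$; the paper does not use any such bound and instead obtains the divisoriality of components of $G^2$ from van der W\"arden purity applied to the exceptional locus of the birational morphism~\eqref{Mapppi}.

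The paper's route is rather different from yours and explains why the convexity axiom is essential. It introduces auxiliary loci $\tilde G^r$ (pencil plus an admissible divisor in the pencil) and $C^r$ (admissible divisor with $\cO(D)\in W^r$), identifies $\tilde G^1\cong M_{0,n+1}$ and $C^0\cong\prod_j M_{0,\Gamma_j\cup\{n+1\}}$, and builds a commutative square showing that $v$ is birational iff the morphism~\eqref{Mapppi} is birational. The latter is then proved by passing through Kapranov's model $\oM_{0,n+1}\to\bP^{n-2}$, under which~\eqref{Mapppi} becomes a product of linear projections $\bP^{n-2}\dra\prod_\alpha\bP^{|\Gamma_\alpha|-2}$ from coordinate subspaces; a direct linear-algebra argument (dominance of such products, Lemma~\ref{linalg}) shows this product is birational exactly when \eqref{CondS} and \eqref{normalizat} hold. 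Your Stage~3 sketch (``the incidence structure rigidifies the affine ambiguities'') is aimed at the same conclusion but is not an argument; you correctly anticipated this as the main obstacle, and the paper's Kapranov/linear-projection reduction is the missing tool. Once~\eqref{Mapppi} is known to be birational, the edge claim for components of $\overline{G^2}$ follows from the flat-base-change lemma you essentially describe (Lemma~\ref{basicobs}): an irreducible exceptional divisor of a birational morphism spans an edge, and this property descends along the faithfully flat forgetful map $\pi_N$.
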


\begin{Remark}
In subsequent sections we will show that if the hypertree is irreducible, then 
$G^2$ is non-empty and irreducible. 
By definition, a point in $G^2$ can be obtained by mapping a hypertree curve to $\bP^2$
and projecting its singular vertices from a point.
The definition of the divisor $D_{\Gamma}$ in the Introduction
is stronger, but eventually we will show that $D_\Gamma=\bar G^2$.
\end{Remark}

\begin{Remark}\label{more general 2}
If we consider collections $\Gamma$ satisfying only the first two conditions in 
Definition \ref{hypertree def} (i.e., the convexity and normality axioms may fail), one may similarly define 
hypertree curves and Brill-Noether loci. The map $v$ may not be birational anymore, but
parts of Theorem \ref{} still hold: the functor $\cG^1$ is represented by $M_{0,n}$ and the 
exceptional locus of the map $v$ is $G^2$. In Section \ref{capacity} we will give conditions under which
the map $v$ is birational onto its image (see Remark \ref{more general}).
\end{Remark}

\begin{proof}[Proof of Theorem~\ref{coolcondition}] 
We proceed in several steps.

\begin{Review}
Each datum $(\bSi\to S,\ f:\,\bSi\to\bP^1_S)\in\cG^1(S)$ gives rise
to an isomorphism class of a flat family over $S$ with reduced geometric fibers
given by~$\bP^1$ and with $n$ disjoint sections given by images of irreducible components of $\bSi^{sing}$.
This gives a natural transformation $\cG^1\to\cM_{0,n}$ which is in fact a natural isomorphism:
given a flat
family of marked $\bP^1$'s, we can just push-out $d$ copies of $\bP^1_S$ along sections in each $\Gamma_i$.
This gives a flat family of hypertree curves over $S$ and its map to $\bP^1_S$, i.e.,~a datum in $\cG^1(S)$.
\end{Review}

\begin{Review}
Next we define two auxilliary Brill--Noether loci, $C^r$ and $\tG^r$.
We call an effective Cartier divisor {\em admissible} if it does not contain singular points.
On the level of geometric points, 
$$C^r=\{\hbox{\rm a curve $\Sigma$, an admissible divisor}\ D\ \hbox{\rm on $\Sigma$ such that}\  \cO(D)\in W^r\},$$
$$\tilde G^r=\{(L,V)\in G^r,\ \hbox{\rm an admissible}\ D\in|V|\}.$$
These loci fit in the natural  commutative diagram of forgetful maps 
$$\begin{CD}
\tG^r @>>> C^r\\
@VVV @VV{D\mapsto\cO(D)}V \\
G^r @>>> W^r
\end{CD}$$

On the scheme-theoretic level, 
let $\bSi^{sm}$ be the smooth locus of the universal family $\bSi\to\cM_\Gamma$
with irreducible components $\bSi^{sm}_1,\ldots,\bSi^{sm}_d$.
Let 
$$\cC^0=\bSi^{sm}_1\times_{\cM_\Gamma}\ldots\times_{\cM_\Gamma}\bSi^{sm}_d$$
and let  
$$u:\,\cC^0\to\cPic^{\uone}$$
be the Abel map that sends $(p_1,\ldots,p_d)\in \cC^0(S)$
to $\cO_{\bSi}(p_1+\ldots+p_d)$. 
Geometric fibers of $u$ are open subsets of admissible divisors
in complete linear systems on $\Pic^{\uone}(\Sigma_k)$.
Let 
$$\cC^r:=u^{-1}(\cW^r)\subset\cC^0.$$

Finally, we define $\tilde\cG^1$ as a functor $Schemes\to Sets$
that sends $S$ to the datum $(\bSi\to S,\ f:\,\bSi\to\bP^1_S)\in\cG^1(S)$ and a section $s:\,S\to \bP^1_S$
disjoint from images of irreducible components of $\bSi\setminus\bSi^{sm}$.
We define $\tilde\cG^r$ as the preimage of $\cG^r$ for the forgetful map $\tilde\cG^r\to\cG^r$.
We also have the natural transformation $\tilde\cG^r\to\cC^0$
that sends $(\bSi\to S,\ f:\,\bSi\to\bP^1_S,s)$ to $f^{-1}(s(S))$.
It factors through $\cC^r$.
The same argument as above shows that $\tilde\cG^1$ is isomorphic to $\cM_{0,n+1}$
and that $\cC^0$ is isomorphic to $\prod\limits_{j=1}^d\cM_{0,\Gamma_j\cup\{n+1\}}$.
\end{Review}

\begin{Review}\label{bigdiagram}
To summarize, , we have the following commutative diagram
$$\xymatrix{\tilde\cG^1 \ar@{=}[dd] \ar[rd] \ar[rr]^V && \cC^1\ar[rr]\ar[rd] && \cC^0 \ar@{=}'[d][dd] \ar[rd]^u \\
& \cG^1\ar@{=}[dd] \ar[rr]^v && \cW^1 \ar[rr] && \cPic^1\ar[dd] \\
\cM_{0,n+1} \ar'[r][rrrr]^{\prod\limits_{j=1}^d\pi_{\Gamma_j\cup\{n+1\}}} \ar[rd]_{\pi_N} &&&&  \prod\limits_{j=1}^d\cM_{0,\Gamma_j\cup\{n+1\}} 
\ar[rd]^{\prod\limits_{j=1}^d\pi_{\Gamma_j}} \\
& \cM_{0,n} \ar[rrrr]^{\prod\limits_{j=1}^d\pi_{\Gamma_j}}  &&&&  \cM_\Gamma }
$$
where, for any subset $I\subset N$ with $|I|\ge4$,  
$$\pi_I:\,\oM_{0,n}\to\oM_{0,I}$$
is  the morphism given by dropping the points of $N\setminus I$
(and stabilizing). 
\end{Review}

\begin{Review}\label{}
It is clear from the definition that the exceptional locus of $v$ is exactly $G^2$ and that $v$ is birational
if and only if $G^1\ne G^2$. This is equivalent to $\tG^1\ne\tG^2$, which is equivalent
to $V$ being birational. This is proved in Theorem~\ref{size/explicit}.
\end{Review}

\begin{Review}
Finally, we note that $\tG^2$ is the preimage of $G^2$.
Since the closure of $\tG^2$ is in the exceptional locus of the regular morphism
\eqref{Mapppi}, 
Lemma~\ref{basicobs} below shows that 
the closure of any irreducible component of $G^2$ in $\oM_{0,n}$
is a divisor that generates an edge of $\oEff(\oM_{0,n})$.
\end{Review}
This finishes the proof of the Theorem.
\end{proof}

\begin{Lemma}\label{basicobs}
Consider the diagram of morphisms 
$$\begin{CD}
X @>f>> Y\\
@VpVV\\
Z
\end{CD}$$
of projective $\bQ$-factorial varieties.
Suppose that $f$ is birational and that $p$ is faithfully flat.
Let $D$  be an irreducible  component of $\Exc(f)$.
If $p(D)\ne Z$ and a generic fiber of $p$ along $p(D)$ is irreducible 
then $p(D)$ is a divisor that generates an extremal ray (in fact an edge) of~$\oEff(Z)$.
\end{Lemma}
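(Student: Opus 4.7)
The plan has three parts: (a) show $p(D)$ is a divisor and $p^*[p(D)]=m[D]$ for some $m\ge1$; (b) prove extremality of the ray $\bR_{\ge0}[p(D)]$ via pullback to $X$ and pushforward to $Y$; (c) establish the edge condition via an explicit construction of supporting hyperplanes.

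For (a), faithful flatness of $p$ forces $p^{-1}(p(D))$ to have pure dimension $\dim p(D)+\dim X-\dim Z$; since $D\subseteq p^{-1}(p(D))$ has dimension $\dim X-1$ and $p(D)\ne Z$, this yields that $p(D)$ is an irreducible divisor. Flatness also ensures that every component of $p^{-1}(p(D))$ dominates $p(D)$ and contributes to the generic fiber, so irreducibility of that fiber forces $p^{-1}(p(D))=D$ set-theoretically; combined with $\bQ$-factoriality of $Z$, this gives $p^*[p(D)]=m[D]$ for some positive $m$.

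For (b), suppose $p(D)\sim_{\bQ}A+B$ with $A,B$ effective $\bQ$-divisors. Pulling back yields $mD\sim_{\bQ}p^*A+p^*B$, and pushing forward by $f$ gives $f_*(p^*A)+f_*(p^*B)\sim_{\bQ}0$ since $D$ is $f$-exceptional. Because both summands are effective and an effective divisor linearly equivalent to zero on projective $Y$ is zero, both vanish, so $p^*A$ and $p^*B$ are $f$-exceptional. Then $\mathrm{div}(g):=p^*A+p^*B-mD$ is a principal divisor supported on $\Exc(f)$, so $g$ is a unit on $X\setminus\Exc(f)\cong Y\setminus f(\Exc(f))$; normality of $Y$ together with $\codim_Y f(\Exc(f))\ge2$ extend $g$ regularly to $Y$, so on projective $Y$ we get $g$ constant and $p^*A+p^*B=mD$ as actual divisors. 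Support comparison then forces $A$ and $B$ to be supported in $p(D)$, hence both proportional to $p(D)$. A standard perturbation-and-limit argument using an ample class promotes this to extremality in the closure $\oEff(Z)$.

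For (c), the edge condition requires $[p(D)]^{\perp}\subset N_1(Z)_{\bR}$ to be spanned by classes in $\oEff(Z)^{\vee}\cap[p(D)]^{\perp}$. For any nef $L_1,\ldots,L_{r}\in N^1(Y)$ with $r=\dim Y-1$, the product $(f^*L_1)\cdots(f^*L_{r})\in N_1(X)$ lies in $\oEff(X)^{\vee}\cap[D]^{\perp}$: the nef product is dual to the pseudoeffective cone, and the projection formula with $f_*D=0$ annihilates $D$. Its $p_*$-image is therefore a supporting hyperplane of $\oEff(Z)$ at $[p(D)]$. Since $[D]=\tfrac1m p^*[p(D)]\in p^*N^1(Z)$, the kernel of $p_*$ sits inside $[D]^{\perp}$, so $p_*$ restricts to a surjection $[D]^{\perp}\twoheadrightarrow[p(D)]^{\perp}$. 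The main obstacle is the final step of verifying that the classes $(f^*L_1)\cdots(f^*L_{r})$, together with $\ker p_*$, span all of $[D]^{\perp}\subset N_1(X)_{\bR}$; this is a Lefschetz-style statement on $Y$ combined with a description of $\ker p_*$ as generated by classes of curves contained in fibers of $p$.
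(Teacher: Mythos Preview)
Your parts (a) and (b) are correct and essentially match what the paper does, though your (b) is more explicit: the paper simply cites as ``well-known'' that an irreducible component $D$ of $\Exc(f)$ generates an edge of $\oEff(X)$, whereas you prove extremality of $[p(D)]$ directly via pullback--pushforward. That direct argument is fine.

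The genuine gap is in part (c). You yourself flag it: you need the nef products $(f^*L_1)\cdots(f^*L_r)$ together with $\ker p_*$ to span all of $[D]^\perp$, and you do not establish this. The ``Lefschetz-style'' statement you allude to is not standard in this generality and would require real work.

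The paper bypasses this difficulty completely. Once you know (the well-known fact) that $[D]$ is an \emph{edge} of $\oEff(X)$, the transfer to $Z$ is pure linear algebra. From your part (a) you have $p^*[p(D)]=m[D]$, and since $p$ is faithfully flat, $p^*:N^1(Z)\hookrightarrow N^1(X)$ is injective and maps $\oEff(Z)$ into $\oEff(X)$. Dually, $p_*:N_1(X)\to N_1(Z)$ is surjective and, by the projection formula, maps $\oEff(X)^\vee$ into $\oEff(Z)^\vee$. Now take any $\ell\in[p(D)]^\perp\subset N_1(Z)$ and lift it to $\tilde\ell\in N_1(X)$; then $\tilde\ell\cdot[D]=\tfrac1m\,\tilde\ell\cdot p^*[p(D)]=\tfrac1m\,\ell\cdot[p(D)]=0$, so $\tilde\ell\in[D]^\perp$. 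Since $[D]$ is an edge of $\oEff(X)$, write $\tilde\ell=\sum c_i\tilde\ell_i$ with each $\tilde\ell_i\in\oEff(X)^\vee\cap[D]^\perp$. Pushing forward gives $\ell=\sum c_i\,p_*\tilde\ell_i$ with each $p_*\tilde\ell_i\in\oEff(Z)^\vee\cap[p(D)]^\perp$, which is exactly the edge property for $[p(D)]$.

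So the fix is not to try to produce enough explicit supporting hyperplanes on $Z$ from scratch, but to inherit them from $X$ via $p_*$, using the edge property of $[D]$ upstairs as a black box.
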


\begin{proof}
$D$ is a divisor by van der W\"arden's purity theorem~\cite[21.12.12]{EGA4}.
It is well-known that it generates an edge of $\oEff(X)$.
Since $p$ is flat and $p(D)\ne Z$, $p(D)$ is an irreducible
divisor. Since $p^{-1}(p(D))$ is irreducible (e.g.~by \cite[Lem.~2.6]{T}), $p^{-1}(p(D))=D$.
It follows that $p(D)$ generates an edge of $\oEff(Z)$ because 
$\oEff(Z)$ injects in  $\oEff(X)$ by the pull-back $p^*$.
\end{proof}

\begin{Remark}
An interesting feature of this argument is that 
we study divisors $\ov{G^2}\subset\oM_{0,n}$ by pulling them to $\oM_{0,n+1}$
and then contracting the preimage by a birational morphism.
This gives a method of proving extremality of divisors by a flat base change.
In~\ref{smaps} we will contract $\ov{G^2}$ 
by a contracting birational map (but not a morphism) from $\oM_{0,n}$
to the compactified Jacobian of the stable hypertree curve $\Sigma^s$.  
\end{Remark}

\section{Capacity and Product of Linear Projections}\label{capacity}

\begin{Definition}
Let $\Gamma=\{\Gamma_\alpha\}$ be an arbitrary collection of subsets of the set $N=\{1,\ldots,n\}$
such that each subset has at least three elements.
We define its {\em capacity} as $$\Cap(\Gamma)=\max_{\Gamma'}\left\{\sum_{\beta}\big(|\Gamma'_{\beta}|-2\big)\right\},$$
where $\Gamma'$ runs through all sub-collections of $\Gamma$
that satisfy the convexity axiom~\eqref{CondS}.
Here $\Gamma'=\{\Gamma'_\beta\}$ is a sub-collection of $\Gamma$ if each 
$\Gamma'_\beta$ is a subset of some $\Gamma'_\alpha$.
For example, if $\Gamma$ is a hypertree then 
$$\Cap(\Gamma)=n-2$$ 
by the convexity and normalization axioms.
\end{Definition}

\begin{Theorem}\label{size/explicit}
Let $\Gamma=\{\Gamma_\alpha\}$ be an arbitrary collection of subsets of the set $N=\{1,\ldots,n\}$
such that each subset has at least three elements and $\Gamma_\alpha\not\subset\Gamma_\beta$ if $\alpha\neq\beta$.
The capacity of $\Gamma$ is equal to 
the dimension of the image of the map
\begin{equation}\label{MHVHMVJHVJ}
\pi_{\Gamma\cup\{n+1\}}:=\prod\limits_{j=1}^d\pi_{\Gamma_j\cup\{n+1\}}:\qquad
\oM_{0,n+1}\to\prod\limits_{j=1}^d\oM_{0,\Gamma_j\cup\{n+1\}}.
\cooltag\end{equation}
Moreover, $\pi_{\Gamma\cup\{n+1\}}$ is birational onto its image if and only if $\Gamma$ has maximum capacity 
$n-2$. In particular, 
$\pi_{\Gamma\cup\{n+1\}}$  if and only if $\Gamma$ satisfies \eqref{CondS} and \eqref{normalizat}.
\end{Theorem}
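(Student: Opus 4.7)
The plan is to reduce Theorem~\ref{size/explicit} to a tangent-space computation followed by a combinatorial identity. I would first fix an affine chart by placing $p_{n+1}=\infty$, so $\oM_{0,n+1}$ becomes birationally $\bA^n/G$ with $G=\bA^1\rtimes\bG_m$ acting diagonally, and each $\oM_{0,\Gamma_j\cup\{n+1\}}$ becomes $\bA^{\Gamma_j}/G_j$. In these coordinates $\pi_{\Gamma\cup\{n+1\}}$ is induced by the restriction $\bA^n\to\bigoplus_j\bA^{\Gamma_j}$, equivariant on each factor, and its differential at a generic $x=(x_1,\ldots,x_n)$ is the natural map
\[
k^n/\langle\mathbf 1,x\rangle\longrightarrow\bigoplus_j k^{\Gamma_j}/\langle\mathbf 1_j,x|_{\Gamma_j}\rangle,
\]
whose kernel is $\tilde K/\langle\mathbf 1,x\rangle$ with $\tilde K=\{v\in k^n:v|_{\Gamma_j}\in\langle\mathbf 1_j, x|_{\Gamma_j}\rangle\text{ for all }j\}$. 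Dualizing, the image dimension is $n-\dim\tilde K=\dim(\sum_j\tilde W_j^\perp)$, where $\tilde W_j^\perp\subset(k^n)^*$ consists of covectors supported on $\Gamma_j$ annihilating both $\mathbf 1$ and $x$.

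Next, I would establish the combinatorial identity $\dim(\sum_j\tilde W_j^\perp)=\Cap(\Gamma)$. For the lower bound, given any sub-collection $\Gamma'=\{\Gamma'_\beta\}$ satisfying \eqref{CondS}, I would show by induction on $|\Gamma'|$ that the spaces $\tilde W_{\Gamma'_\beta}^\perp\subset\tilde W_{\Gamma_{\alpha(\beta)}}^\perp$ are in direct sum; the strict gain supplied by convexity provides the room to peel off one subset at a time, and genericity of the $x_i$ rules out accidental coincidences. Choosing $\Gamma'$ to realize $\Cap(\Gamma)$ then yields $\dim(\sum_j\tilde W_j^\perp)\ge\Cap(\Gamma)$. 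For the upper bound, I would greedily refine $\{\Gamma_j\}$: set $\Gamma'_1=\Gamma_1$ and, at each subsequent step, pick a minimal $\Gamma'_\beta\subseteq\Gamma_j$ such that $\tilde W_{\Gamma'_\beta}^\perp$ meets the previous sum trivially; the process terminates in a sub-collection $\Gamma'$ whose pieces are independent and span the full sum. Minimality forces \eqref{CondS} for $\Gamma'$ and gives $\dim(\sum_j\tilde W_j^\perp)=\sum_\beta(|\Gamma'_\beta|-2)\le\Cap(\Gamma)$.

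For the birationality clause, once $\Cap(\Gamma)=n-2$ the image has dimension $n-2=\dim\oM_{0,n+1}$, so $\pi_{\Gamma\cup\{n+1\}}$ is generically finite; I would upgrade this to birationality by lifting the infinitesimal analysis to the group level. A fiber over a generic point $[x]$ corresponds to tuples $(\phi_j)\in G^d$ satisfying $\phi_j(x_i)=\phi_{j'}(x_i)$ for $i\in\Gamma_j\cap\Gamma_{j'}$, modulo the diagonal $G$-action; these relations are affine-linear in the coordinates $(a_j,b_j)$ of $\phi_j(t)=a_jt+b_j$, so the solution set in $G^d$ is an open subset of a linear subspace whose tangent space at the diagonal is $\tilde K$. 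When $\Cap(\Gamma)=n-2$ the tangent computation gives $\dim\tilde K=2$, matching the dimension of the diagonal $G\subset G^d$; the linear solution set therefore coincides with the diagonal, the generic fiber is a single point, and $\pi_{\Gamma\cup\{n+1\}}$ is birational onto its image. The final ``in particular'' now follows by dimension bookkeeping: birationality of $\pi_{\Gamma\cup\{n+1\}}$ onto the full target requires both $\Cap(\Gamma)=n-2$ and $\sum(|\Gamma_j|-2)=n-2$, which jointly force $\Gamma$ itself to witness the capacity and thus to satisfy \eqref{CondS}. The main obstacle I anticipate is the direct-sum property in step two: translating $|\bigcup_{j\in S}\Gamma_j|-2\ge\sum_{j\in S}(|\Gamma_j|-2)$ into linear independence of covectors supported on overlapping subsets requires careful bookkeeping and genuine use of the genericity of $x$, perhaps via non-vanishing of Vandermonde-like determinants.
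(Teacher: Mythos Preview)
Your approach is essentially the linear-algebra dual of the paper's, and the core content matches. The paper uses the Kapranov model $\Psi:\oM_{0,n+1}\to\bP^{n-2}$ to identify each $\pi_{\Gamma_j\cup\{n+1\}}$ with the linear projection from the span $H_{\Gamma_j}=\langle p_i\rangle_{i\notin\Gamma_j}$, and then proves two lemmas: a product of linear projections $\pi_{U_1}\times\cdots\times\pi_{U_s}$ is dominant iff the codimensions satisfy a submodular inequality, and is birational when dimensions match because its fibers are open in projective subspaces (Lemma~\ref{linalg}); specializing to the $H_{\Gamma_j}$'s, the submodular inequality becomes exactly \eqref{CondS} (Lemma~\ref{crazystuff}). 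Your tangent-space/covector computation and your affine-fiber argument for birationality are precisely these two lemmas in dual form, so the ``moreover'' clause and the lower bound $\dim\ge\Cap(\Gamma)$ go through. Also note that once you have a direct-sum sub-collection $\Gamma'$, convexity \eqref{CondS} follows for free from $\bigoplus_{\beta\in S}\tilde W_{\Gamma'_\beta}^\perp\subseteq\tilde W^\perp_{\bigcup_{\beta\in S}\Gamma'_\beta}$; minimality plays no role there.

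Where your sketch is shakier than the paper is the upper-bound greedy step. As written, ``pick a minimal $\Gamma'_\beta\subseteq\Gamma_j$ such that $\tilde W_{\Gamma'_\beta}^\perp$ meets the previous sum trivially'' is not well-posed: shrinking $\Gamma'_\beta$ only makes trivial intersection easier, so ``minimal'' is vacuous, and it is not clear that some subset $\Gamma'_\beta\subseteq\Gamma_j$ yields a space complementary to $V\cap\tilde W_{\Gamma_j}^\perp$ inside $\tilde W_{\Gamma_j}^\perp$, since the $\tilde W_I^\perp$ are specific subspaces rather than generic complements. The paper sidesteps this by dropping one index at a time from the target: if the image $Z$ is not the full product, then at a general smooth $z\in Z$ the tangent directions of the coordinate $\bP^1$-fibers span $T_z\oM_{\Gamma\cup\{n+1\}}$, so at least one lies in $T_zZ$, and dropping that index from its $\Gamma_\alpha$ preserves $\dim Z$; iterating produces a sub-collection on which the map is surjective, hence (by Lemma~\ref{crazystuff}) satisfies \eqref{CondS}, witnessing $\dim Z\le\Cap(\Gamma)$. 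Translating this ``drop one element'' step into your covector language would repair your greedy process.
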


\begin{Remark}\label{more general}
Let $\Gamma$ be an arbitrary collection of subsets of the set $N=\{1,\ldots, n\}$,  
satisfying the first two conditions in Definition \ref{hypertree def} (see Remark \ref{more general 2}).
If $\Gamma$ has  maximum capacity $n-2$, by Theorem \ref{size/explicit}  
the map $\pi_{\Gamma\cup\{n+1\}}$ is birational onto its image and it follows that 
$G^2$ is a proper subset of $G^1$.
\end{Remark}

To prove Theorem \ref{size/explicit} we need two lemmas on linear projections.

\begin{Definition}
For a projective subspace $U\subset\bP^r$, let 
$$\pi_U:\,\PP^r\dra\PP^{l(U)}$$ 
be a linear projection from $U$, where $l(U)=\codim U-1$.
\end{Definition}

\begin{Lemma}\label{linalg}
Let $U_1,\ldots,U_s\subset\PP^r$ be subspaces such that $U_i\not\subset U_j$ when $i\ne j$.
Then (a) the rational map 
$$\pi=\pi_{U_1}\times\ldots\times\pi_{U_s}:\,\PP^r\dra\PP^{l(U_1)}\times\ldots\times
\PP^{l(U_s)}$$ is dominant
if and only if 
\begin{equation}\label{dim inters}
l\left(\bigcap_{i\in S} U_i\right)\ge \sum_{i\in S}l(U_i)\quad\hbox{for any $S\subset \{1,\ldots,s\}$.}
\cooltag\end{equation}
(b) If $r=l(U_1)+\ldots+l(U_s)$ and $\pi$ is dominant then $\pi$ is birational.
\end{Lemma}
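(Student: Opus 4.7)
Identify $\bP^r=\bP(W)$ with $\dim W=r+1$, and $U_i=\bP(V_i)$ with $\dim V_i=r-c_i$, where $c_i:=l(U_i)$. Let $\phi_i: W\to W/V_i$ be the quotient, and $\phi:=\bigoplus_i\phi_i: W\to\bigoplus_iW/V_i$. By rank-nullity applied to $\phi_S:=\bigoplus_{i\in S}\phi_i$, one has $\dim\phi_S(W)=l(\bigcap_{i\in S}U_i)+1$. The map $\pi$ factors as the projectivization of $\phi$ followed by the coordinate-wise projectivization $\bP(\bigoplus_i W/V_i)\dashrightarrow\prod_i\bP(W/V_i)$, and similarly each $\pi_S$ factors through $\bP(\phi_S(W))$.

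The necessity in (a) is then immediate: dominance of $\pi$ forces dominance of each partial product $\pi_S$, and since $\pi_S$ factors through $\bP(\phi_S(W))$ of dimension $l(\bigcap_S U_i)$, we get $\sum_{i\in S}c_i\le l(\bigcap_S U_i)$.

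For sufficiency in (a), I would induct on $s$, the case $s=1$ being trivial since $\pi_{U_1}$ is surjective. For $s\ge2$, by induction $\pi':=\pi_{U_1}\times\cdots\times\pi_{U_{s-1}}$ is dominant, with generic fiber $F=\bP(\phi'^{-1}(K'))$ a linear subspace of dimension $r-\sum_{i<s}c_i$, where $K'=\bigoplus_{i<s}k\bar{\tilde q}_i$ is the coordinate-split sum of lines corresponding to a generic point of the image. It suffices to show $\pi_{U_s}|_F: F\dashrightarrow\bP^{c_s}$ is dominant, equivalently $\dim(F\cap U_s)\le r-\sum_i c_i-1$. Writing $\dim(\phi'^{-1}(K')\cap V_s)=\dim\bigcap_iV_i+\dim(K'\cap\phi'(V_s))$ and computing $\dim\phi'(V_s)=l(\bigcap_iU_i)-c_s$ by rank-nullity, this reduces to showing $\dim(K'\cap\phi'(V_s))\le l(\bigcap_iU_i)-\sum_i c_i$ for a generic coordinate-split $K'$. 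This is the main obstacle: $K'$ is not arbitrary but is constrained to split across the coordinate decomposition, so one cannot directly invoke generic position in the Grassmannian. I would resolve it using the subset conditions on sets $S\cup\{s\}$, which control the coordinate projections $\phi_S(V_s)$ of $\phi'(V_s)$; these ensure that $\phi'(V_s)$ is in sufficiently general position relative to the coordinate decomposition of $\bigoplus_{i<s}W/V_i$ for at least one coordinate-split $K'$ to achieve the expected intersection dimension, after which the generic case follows by upper semicontinuity.

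For (b), $r=\sum c_i$ and dominance of $\pi$ force the image to have dimension $r=\dim\bP^r$, so $\pi$ is generically finite. The full-set condition $l(\bigcap_iU_i)\ge r$ then forces $\bigcap_iU_i=\emptyset$ (since $l\le r$ with equality only for the empty intersection), so $\phi$ is injective. Specializing the dimension count from (a) to $r=\sum c_i$ gives $\dim\phi^{-1}(K)=1$ for a generic coordinate-split $K$, so the generic fiber of $\pi$ is a single projective point. Hence $\pi$ has degree one and is birational.
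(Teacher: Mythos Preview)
Your arguments for necessity in (a) and for (b) are correct and close to the paper's. For (b), the paper is even shorter: the fibers of $\pi$ on $\bP^r\setminus\bigcup_iU_i$ are open subsets of linear subspaces, so a generically finite dominant map of this shape is automatically birational. Your detour through injectivity of $\phi$ and the ``dimension count from (a)'' is unnecessary and, as written, circular, since it invokes the very step of (a) that you have not completed.

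The genuine problem is sufficiency in (a). You induct on $s$ and correctly reduce to showing that for a generic coordinate-split $K'$ one has $\dim\bigl(K'\cap\phi'(V_s)\bigr)\le l(\bigcap_iU_i)-\sum_ic_i$. You then say you ``would resolve it using the subset conditions on sets $S\cup\{s\}$, which control the coordinate projections $\phi_S(V_s)$'' and that these put $\phi'(V_s)$ in ``sufficiently general position''. This is not an argument, and the difficulty is real. Consider $r=3$ with $U_1,U_2,U_3$ three lines through a common point $p$ but otherwise general. All conditions for subsets $S\cup\{3\}$ with $S\subsetneq\{1,2\}$ hold, and $\phi_i(V_3)$ has the ``right'' dimension in each coordinate factor; yet the full-set condition fails and the bound on $K'\cap\phi'(V_3)$ fails with it (one computes $\dim\bigl(K'\cap\phi'(V_3)\bigr)=0$, whereas you would need $\le -1$). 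So controlling the coordinate projections $\phi_S(V_s)$ for proper $S$ is not enough; you must explain precisely how the full-set condition enters, and your sketch does not do this. Filling this in is essentially as hard as the lemma itself.

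The paper sidesteps the issue by inducting on $r$ instead of $s$: choose a \emph{general} hyperplane $H\supset U_s$ and show that the restriction of $\pi_{U_1}\times\cdots\times\pi_{U_{s-1}}$ to $H$ is dominant. The centers become $U'_i=U_i\cap H$, and one checks the intersection condition for the $U'_i$ in $H$ directly: if $W:=\bigcap_{i\in S}U_i\not\subset U_s$ then $\dim(W\cap H)=\dim W-1$ and the condition for $S$ suffices; if $W\subset U_s$ then $W=\bigcap_{i\in S\cup\{s\}}U_i$ and the condition for $S\cup\{s\}$ gives what is needed. This makes the role of the conditions involving $s$ completely explicit, and the whole verification is a two-line case split rather than an analysis of how a subspace sits relative to a coordinate decomposition.
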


\begin{proof}
Let $l_i:=l(U_i)$.
The scheme-theoretic fibers of the morphism $\PP^r\setminus\bigcup_iU_i\to\PP^{l_1}\times\ldots\times\PP^{l_s}$
are open subsets of projective subspaces. This implies~(b).
Now assume that $\pi$ is dominant but (\ref{dim inters}) is not satisfied, for
example we may assume that $W=U_1\cap\ldots\cap U_m$ has dimension
\begin{equation}\label{hohoho}
w\geq r-(l_1+\ldots+l_m).
\cooltag\end{equation}  
The projections $\pi_{U_i}$ for $i=1,\ldots,m$ factor through
the projection $\pi_W:\,\PP^r\dra\PP^{r-w-1}$.
It follows that the map:
$$\pi'=\pi_{U_1}\times\ldots\times\pi_{U_m}:\,\PP^r\dra\PP^{l_1}\times\ldots\times\PP^{l_m}$$ factors through $\pi_W$.
If $\pi$ is dominant, then so is $\pi'$, and therefore
the induced map $\PP^{r-w-1}\dra\PP^{l_1}\times\ldots\times\PP^{l_m}$ is dominant, which contradicts~\eqref{hohoho}.

Assume (\ref{dim inters}). We'll show that $\pi$ is dominant. 
We argue by induction on $r$.
Let $H$ be a general hyperplane containing $U_s$.
It suffices to prove that the restriction of $\pi_{U_1}\times\ldots\times\pi_{U_{s-1}}$ on $H$
is dominant.
Subspaces $U'_i:=U_i\cap H$ have codimension~$l_i+1$ in $H$ and, therefore, by induction assumption,
it suffices to prove that 
\begin{equation}\label{dim intersprime}
\dim\bigcap_{i\in S} U'_i<(r-1)-\sum_{i\in S}l_i\quad\hbox{for any $S\subset\{1,\ldots,r-1\}$.}
\cooltag\end{equation}
Let $W:=\bigcap\limits_{i\in S} U_i$. Let $L:=\sum\limits_{i\in S}l_i$.
By   \eqref{dim inters}, $\dim W<r-L$ and, therefore,  
$\dim H\cap W<r-L-1$ (i.e.,~we have \eqref{dim intersprime})  
unless  $W\subset U_s$. But in the latter case
$\dim\bigcap\limits_{i\in S} U'_i=\dim(U_s\cap W)<r-(l_s+L)$ by   \eqref{dim inters}.
\end{proof}

We would like to work out the case when all subspaces $U_1,\ldots,U_s$ are intersections
of subspaces spanned by subsets of points $p_1,\ldots,p_n\in\bP^{n-2}$
in linearly general position.
Let $N=\{1,\ldots,n\}$.
For any non-empty subset $I\subset N$, let $H_I=\langle p_i\rangle_{i\not\in I}$.

\begin{Lemma}\label{crazystuff}
The rational map 
$$\pi=\pi_{H_{\Gamma_1}}\times\ldots\times\pi_{H_{\Gamma_l}}:\,\bP^{n-2}\dra\bP^{|\Gamma_1|-2}\times\ldots\times\bP^{|\Gamma_l|-2}$$
is dominant if and only if \eqref{CondS} holds.
It is birational if and only if \eqref{CondS} and \eqref{normalizat} hold.
\end{Lemma}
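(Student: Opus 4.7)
The plan is to deduce Lemma \ref{crazystuff} as a direct application of Lemma \ref{linalg}, after identifying the numerical invariants appearing there with the combinatorial quantities on the left-hand side of \eqref{CondS} and \eqref{normalizat}.

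First I would compute dimensions using that $p_1,\ldots,p_n$ are in linearly general position in $\bP^{n-2}$. For any non-empty $I\subsetneq N$, the subspace $H_I=\langle p_i\rangle_{i\notin I}$ has dimension $n-|I|-1$; equivalently $\codim H_I=|I|-1$, so $l(H_I)=|I|-2$. This matches the dimension of the $I$-th factor $\bP^{|I|-2}$ of the target. Moreover, linearly general position gives the key identity
$$\bigcap_{j\in S}H_{\Gamma_j}=\langle p_i\rangle_{i\notin\bigcup_{j\in S}\Gamma_j},$$
so $l(\bigcap_{j\in S}H_{\Gamma_j})=|\bigcup_{j\in S}\Gamma_j|-2$.

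Next I would check the hypothesis $U_i\not\subset U_j$ of Lemma \ref{linalg}: since $H_{\Gamma_i}\subset H_{\Gamma_j}$ is equivalent to $\Gamma_j\subset\Gamma_i$ (again by general position), this is automatic whenever no $\Gamma_j$ is contained in another, which may be assumed without loss of generality for the statement; if one $\Gamma_j$ were contained in another, we could simply drop the larger factor without changing the image, and the combinatorial axioms would then be checked on the reduced collection. Substituting the computed values of $l$ into condition \eqref{dim inters} of Lemma \ref{linalg}(a), it becomes
$$\bigl|\bigcup_{j\in S}\Gamma_j\bigr|-2\ge\sum_{j\in S}(|\Gamma_j|-2)\quad\text{for every }S,$$
which is exactly \eqref{CondS}. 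This proves the first equivalence.

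For the second equivalence, observe that $\sum_j l(H_{\Gamma_j})=\sum_j(|\Gamma_j|-2)$, so the numerical hypothesis $r=\sum_j l(U_j)$ of Lemma \ref{linalg}(b) is precisely \eqref{normalizat}. Hence if both \eqref{CondS} and \eqref{normalizat} hold, then $\pi$ is dominant by part (a) and then birational by part (b). Conversely, if $\pi$ is birational, then it is dominant (so \eqref{CondS} holds) and the dimensions of source and target must agree, forcing \eqref{normalizat}. I do not foresee any serious obstacle: the only subtle point is the general-position computation of $\bigcap H_{\Gamma_j}$, but this is standard, and once that is in place the lemma is a dictionary between the two statements.
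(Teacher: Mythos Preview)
Your reduction to Lemma \ref{linalg} is the right idea, and your computation of $l(H_{\Gamma_j})=|\Gamma_j|-2$ is fine. The gap is in the formula for the intersection. The identity
\[
\bigcap_{j\in S}H_{\Gamma_j}=\langle p_i\rangle_{i\notin\bigcup_{j\in S}\Gamma_j}
\]
is false in general: the right-hand side is always contained in the left, but the inclusion can be strict. For instance, with $n=6$ in $\bP^4$, $\Gamma_1=\{1,2,3\}$ and $\Gamma_2=\{4,5,6\}$, the planes $H_{\Gamma_1}=\langle p_4,p_5,p_6\rangle$ and $H_{\Gamma_2}=\langle p_1,p_2,p_3\rangle$ meet in a point, while $\langle p_i\rangle_{i\notin\{1,\ldots,6\}}$ is empty. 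The correct computation (carried out in the paper by passing to the dual $\bP(W)$, $W=\{\sum x_i=0\}$, where $H_I$ is dual to $\langle x_i-x_j\rangle_{i,j\in I}$) gives
\[
l\Bigl(\bigcap_{j\in S}H_{\Gamma_j}\Bigr)=\Bigl|\bigcup_{j\in S}\Gamma_j\Bigr|-e_S-1,
\]
where $e_S$ is the number of connected components of $\bigcup_{j\in S}\Gamma_j$ under the relation ``belong to a common $\Gamma_j$, $j\in S$''.

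Consequently, Lemma \ref{linalg} does not yield \eqref{CondS} directly but rather the a priori stronger condition
\[
\Bigl|\bigcup_{j\in S}\Gamma_j\Bigr|-e_S-1\ \ge\ \sum_{j\in S}(|\Gamma_j|-2)\qquad\text{for all }S,
\]
and a separate argument is needed to show this is equivalent to \eqref{CondS}. One direction is immediate since $e_S\ge1$; for the other, one partitions $S$ according to connected components and applies \eqref{CondS} to each piece. This step is genuinely missing from your proposal, and without it the ``dictionary'' does not close.
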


\begin{proof}
For any $S\subset\{1,\ldots,l\}$, let 
$e_S$ be the number of connected components of $N$ (with respect to $\{\Gamma_i\}_{i\in S}$)
that have at least two elements.
Let 
$$\cH_S=\bigcap\limits_{i\in S}H_{\Gamma_i}.$$

Let $W\subset\bA^{n}_{x_1,\ldots,x_{n}}$ be a hyperplane $\sum x_i=0$.
In appropriate coordinates,
$\bP(W)$ is a projective space dual to $\bP^{n-2}$ and subspaces $H_I\subset\bP^{n-2}$ 
are projectively dual to projectivizations of linear subspaces $\langle x_i-x_j\rangle_{i,j\in I}$.
It follows that $\cH_S$ is projectively dual 
to a subspace $\langle e_i-e_j\rangle_{\exists k\in S:\ i,j\in\Gamma_k}$, which implies 
that 
$$
l(\cH_S)=|\bigcup_{i\in S}\Gamma_i|-e_S-1.
$$
By~Lemma~\ref{linalg}, it follows that $\pi$ is dominant if and only if
\begin{equation}\label{CondC}
|\bigcup_{i\in S}\Gamma_i|-e_S-1\ge\sum_{i\in S}(|\Gamma_i|-2)
\quad\hbox{\rm for any $S\subset\{1,\ldots,l\}$}.
\cooltag\end{equation} 

It remains to check that \eqref{CondC} and \eqref{CondS} are equivalent. 
It is clear that \eqref{CondC} implies \eqref{CondS}.
Now assume  \eqref{CondS}.
Let $I_1,\ldots,I_{e_S}$ be connected components of~$N$ (with respect to $\{\Gamma_i\}_{i\in S}$)
that have at least two elements.
This gives a partition $S=S_1\sqcup\ldots\sqcup S_{e_S}$ such that 
$I_k=\bigcup\limits_{j\in S_k}\Gamma_j$ for any~$k$. 
Applying  \eqref{CondS} for each $S_k$ gives
$$|\bigcup_{i\in S}\Gamma_i|-e_S-1\ge\sum_k\bigl(|\bigcup_{i\in S_k}\Gamma_i|-2\bigr)\ge\sum_k\sum_{i\in S_k}(|\Gamma_i|-2)=\sum_{i\in S}(|\Gamma_i|-2)$$
and this is nothing but \eqref{CondC}.
\end{proof}

\begin{proof}[Proof of Theorem~\ref{size/explicit}] 
Let $p_1,\ldots,p_n\in\bP^{n-2}$ be general points.
We have a birational morphism
$$\Psi:\,\oM_{0,n+1}\to\bP^{n-2}$$
(the Kapranov blow-up model), which is 
an iterated blow-up of $\PP^{n-2}$ along the points $p_1,\ldots, p_n$,
the proper transforms of lines connecting these points, and so on.
Moreover, we have a commutative diagram of rational maps
$$
\begin{CD}
\oM_{0,n+1} @>{\Psi}>> \bP^{n-2}\\
@V{\pi_{S\cup\{n+1\}}}VV                 @VV{p_S}V\\
\oM_{0,k+1}  @>{\Psi}>> \bP^{k-2}\\
\end{CD}$$
for each subset $S\subset N$ with $k$ elements,
where $p_S$ is a linear projection away from 
the linear span of points $p_i$ for $i\not\in S$, see \cite{Ka}.
It follows that the ``moreover'' part of the theorem is just 
a reformulation of Lemma~\ref{crazystuff}.

Let 
$$Z\subset \oM_{\Gamma\cup\{n+1\}}:=\prod\limits_{j=1}^d\oM_{0,\Gamma_j\cup\{n+1\}}$$
 be the image of $\pi_{\Gamma\cup\{n+1\}}$.
Notice that $\pi_{\Gamma'\cup\{n+1\}}$ factors through $\pi_{\Gamma\cup\{n+1\}}$
for any sub-collection $\Gamma'$. So it follows from Lemma~\ref{crazystuff}
that 
$$\dim Z\ge\Cap(\Gamma)$$
and that, to prove an opposite inequality, it suffices to show the following.
Suppose that $Z\ne \oM_{\Gamma\cup\{n+1\}}$. We claim that 
one can choose a proper sub-collection $\Gamma'$ such that $\dim p(Z)=\dim Z$, where
$$p:\,  \oM_{\Gamma\cup\{n+1\}}\to \oM_{\Gamma'\cup\{n+1\}}$$
is an obvious projection. Consider all possible maximal sub-collections, 
i.e.,~let $J$ be an indexing set obtained by taking $|\Gamma_\alpha|$ for each $\Gamma_\alpha$.
For each $j\in J$, let $\Gamma'_j$ be a sub-collection obtained by removing the corresponding index 
from the corresponding $\Gamma_\alpha$. Let $z\in Z$ be a general smooth point.
Notice that $z$ projects into $M_{0,\Gamma_\alpha\cup\{n+1\}}$ for each $\alpha$,
and so for each $j\in J$, the fiber of $p_j:\,  \oM_{\Gamma\cup\{n+1\}}\to \oM_{\Gamma'_j\cup\{n+1\}}$
passing through $z$ is a smooth rational curve. Moreover, it is easy to see that tangent vectors
to these rational curves at $z$ generate the tangent space to $\oM_{\Gamma\cup\{n+1\}}$ at $z$.
Since $Z$ is smooth at $z$, it follows that $p_j|_Z$ is generically finite for one of the projections.
\end{proof}

We have to refine Theorem~\ref{size/explicit}
to see how the map 
\begin{equation}\label{mappi}
\pi:\,\oM_{0,n+1}\to\M_{\Gamma\cup\{n+1\}}:=\prod_{\Gamma_{\alpha}}\M_{0,\Gamma_{\alpha}\cup\{n+1\}}
\cooltag\end{equation} 
affects the divisors of $\oM_{0,n+1}$. We borrow a definition
from matroid theory. 

\begin{Definition}
Let $I\subset N$ be any subset.
We define the {\em contracted collection} $\Gamma_I$ to be the collection of subsets of $I\cup\{p\}$ 
obtained from $\Gamma$ by replacing all the indices in $I^c$ with $p$ 
(and removing all subsets with less than three elements).
We define the {\em restricted collection} $\Gamma'_I$ to be the collection of subsets in $I^c$ given by 
intersecting each $\Gamma_\alpha$ with~$I^c$ (and removing subsets with less than three elements).
\end{Definition}

\begin{Lemma}\label{firstrow} 
For any hypertree $\Gamma$ we have
$$\codim\pi\big(\de_{I\cup\{n+1\}}\big)-1=n-3-\Cap(\Gamma_I)-\Cap(\Gamma'_I).$$
\end{Lemma}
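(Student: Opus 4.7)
The plan is to compute the dimension of $\pi(\delta_{I\cup\{n+1\}})$ by splitting the boundary divisor as a product and reducing each factor to a Kapranov-projection computation governed by either $\Gamma_I$ or $\Gamma'_I$. Write $\delta_{I\cup\{n+1\}}\cong A\times B$, where $A=\oM_{0,I\cup\{n+1,\bullet\}}$ and $B=\oM_{0,I^c\cup\{\bullet'\}}$ are the two sides of the splitting (with $\bullet,\bullet'$ labeling the two preimages of the node). I will restrict $\pi$ to $A\times B$ and analyze each composite $\pi_\alpha:A\times B\to\oM_{0,\Gamma_\alpha\cup\{n+1\}}$ obtained by forget-and-stabilize. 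Depending on the relative sizes of $I\cap\Gamma_\alpha$ and $I^c\cap\Gamma_\alpha$, exactly one of the following happens: (i)~$I\cap\Gamma_\alpha=\eset$, so $A$ destabilizes and $\pi_\alpha$ factors through $B$; (ii)~$|I^c\cap\Gamma_\alpha|\le1$, so $B$ destabilizes (or collapses to a point) and $\pi_\alpha$ factors through $A$; (iii)~both sides remain stable and $\pi_\alpha$ factors as a product of forgetful maps into $\delta_{(I\cap\Gamma_\alpha)\cup\{n+1\}}\cong\oM_{0,(I\cap\Gamma_\alpha)\cup\{n+1,\bullet\}}\times\oM_{0,(I^c\cap\Gamma_\alpha)\cup\{\bullet'\}}$. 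Taking the product over $\alpha$ shows that the image decomposes as $\pi(\delta_{I\cup\{n+1\}})=\pi_A(A)\times\pi_B(B)$, with the two factors sitting in disjoint ``coordinate directions'' of $\prod_\alpha\oM_{0,\Gamma_\alpha\cup\{n+1\}}$.

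Next I apply Kapranov's birational model separately to $A$ and to $B$. Using $n+1$ as the pivot, $A\to\bP^{|I|-1}$ has marked points labeled by $I\cup\{\bullet\}$, and each forgetful map on $A$ corresponds to a linear projection from the span of the forgotten points (by the commutative diagram from the proof of Theorem~\ref{size/explicit}). Setting $p=\bullet$ and tracking cases: when $\Gamma_\alpha\subset I$, the projection is from $\langle p_i\rangle_{i\in(I\cup\{p\})\setm\Gamma_\alpha}$, matching $\tilde\Gamma_\alpha=\Gamma_\alpha$; when $\alpha$ is ``mixed'', the projection is from $\langle p_i\rangle_{i\in I\setm\Gamma_\alpha}$, matching $\tilde\Gamma_\alpha=(I\cap\Gamma_\alpha)\cup\{p\}$. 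In every case the projection is exactly the one dictated by the contracted collection $\Gamma_I$ on the set $I\cup\{p\}$, with the understanding that subsets of size less than three correspond to projections to a point and contribute nothing. Lemma~\ref{crazystuff} (equivalently Theorem~\ref{size/explicit}) applied to $\Gamma_I$ therefore yields $\dim\pi_A(A)=\Cap(\Gamma_I)$. The mirror analysis on $B$ (using $\bullet'$ as the pivot, projecting from $\langle p_i\rangle_{i\in I^c\setm\Gamma_\alpha}$ for each $\alpha$ with $|I^c\cap\Gamma_\alpha|\ge2$) gives $\dim\pi_B(B)=\Cap(\Gamma'_I)$.

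Combining these, $\dim\pi(\delta_{I\cup\{n+1\}})=\Cap(\Gamma_I)+\Cap(\Gamma'_I)$. Since $\Gamma$ is a hypertree, the normalization axiom~\eqref{normalizat} gives $\dim\oM_{\Gamma\cup\{n+1\}}=\sum_\alpha(|\Gamma_\alpha|-2)=n-2$, which by Theorem~\ref{size/explicit} is also the dimension of the image $\pi(\oM_{0,n+1})$. Subtracting gives $\codim\pi(\delta_{I\cup\{n+1\}})=(n-2)-\Cap(\Gamma_I)-\Cap(\Gamma'_I)$ and hence the claimed formula. I expect the main obstacle to be purely bookkeeping: making a clean classification of the types of $\alpha$ so that the Kapranov projections on the $A$-side assemble into precisely the product map associated to $\Gamma_I$ (and analogously on the $B$-side for $\Gamma'_I$), and checking that the degenerate cases where $|I\cap\Gamma_\alpha|$ or $|I^c\cap\Gamma_\alpha|$ is very small consistently contribute nothing to dimension and are consistently removed from the contracted/restricted collections.
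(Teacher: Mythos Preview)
Your proposal is correct and follows essentially the same route as the paper: split $\delta_{I\cup\{n+1\}}\cong\oM_{0,I\cup\{p,n+1\}}\times\oM_{0,I^c\cup\{p\}}$, observe that $\pi$ restricted to this product factors as a product of two forgetful maps (one governed by $\Gamma_I$, the other by $\Gamma'_I$) followed by a closed embedding, and then invoke Theorem~\ref{size/explicit} on each factor to read off the dimensions as capacities. The paper does this in three lines by writing down $\pi_I$ and $\pi'_I$ explicitly and citing Theorem~\ref{size/explicit}, whereas you unpack the Kapranov model and Lemma~\ref{crazystuff} by hand; this extra detail is not needed but is not wrong.
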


\begin{proof}
For $I\subset N$, consider the products of forgetful maps:

\begin{equation*}
\pi_I:\M_{0,I\cup\{p,n+1\}}\ra\prod_{\Gamma_{\alpha}\subset I}\M_{0,\Gamma_{\alpha}\cup\{n+1\}}\times 
\prod_{\Gamma_{\alpha}\cap I^c\neq \eset, |\Gamma_{\alpha}\cap I|\geq2}\M_{0,(\Gamma_{\alpha}\cap I)\cup\{p, n+1\}}.
\end{equation*}

\begin{equation*}
\pi'_{I}:\M_{0,I^c\cup\{p\}}\ra\prod_{|\Gamma_{\alpha}\cap I^c|\geq3}\M_{0,(\Gamma_{\alpha}\cap I^c)\cup\{p\}},
\end{equation*}
By Theorem~\ref{size/explicit}, we have
$$
\dim\Im(\pi_I)=\Cap(\Gamma_I)
\quad\hbox{\rm and}\quad
\dim\Im(\pi'_I)=\Cap(\Gamma'_I).
$$
Note that 
$$\de_{I\cup\{n+1\}}\simeq\M_{0,I\cup\{p,n+1\}}\times\M_{0,I^c\cup\{p\}}$$
and the restriction of the map $\pi$ to 
$\de_{I\cup\{n+1\}}$
factors as the product $\pi_I\times\pi'_I$ followed by a closed embedding. 
\end{proof}

\begin{Lemma}\label{max size}
Let $\Gamma$ be an irreducible hypertree and let $I\subset N$ be a subset such that $2\leq|I|\leq n-2$
and either $I^c\subseteq\Gamma_{\beta}$ for some 
$\beta$, or $|I^c|=2$. Then 
$$\Cap(\Gamma_I)=|I\cup\{p\}|-2.$$
\end{Lemma}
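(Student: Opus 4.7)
The plan is to prove the upper bound $\Cap(\Gamma_I)\le|I|-1$ via a direct application of the convexity axiom, and match it with a lower bound by exhibiting a sub-collection of $\Gamma_I$ on $I\cup\{p\}$ whose sum is exactly $|I|-1$ and which satisfies the convexity axiom. The upper bound is immediate: for any sub-collection $\Gamma'\subseteq\Gamma_I$ satisfying the convexity axiom, applying it to the full index set of $\Gamma'$ gives $\sum_\beta(|\Gamma'_\beta|-2)\le|\bigcup_\beta\Gamma'_\beta|-2\le|I\cup\{p\}|-2=|I|-1$.

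For the lower bound, the critical input is that irreducibility of $\Gamma$, via strict convexity on pairs $\{\alpha,\beta\}$ (valid when $d>2$; the case $d=2$ is handled directly from the equality $|\Gamma_1\cap\Gamma_2|=2$ forced by normalization), yields $|\Gamma_\alpha\cap\Gamma_\beta|\le1$ for $\alpha\ne\beta$. Under the Case~1 hypothesis $I^c\subseteq\Gamma_\beta$ this forces $s_\alpha:=|\Gamma_\alpha\cap I^c|\le1$ for all $\alpha\ne\beta$. A direct count of contributions shows that $\sum_\alpha(|(\Gamma_\alpha)_I|-2)$ (summed over $\alpha$ whose image lies in $\Gamma_I$) equals $|I|-1$ whenever $|\Gamma_\beta|\ge|I^c|+1$ (the \emph{generic case}, which automatically covers Case~2 sub-case (i) since there $|\Gamma_\beta|\ge3>2=|I^c|$), and equals $|I|$ in two \emph{degenerate cases}: $\Gamma_\beta=I^c$ (which forces $|I^c|\ge3$) and Case~2 sub-case (ii) (where no $\Gamma_\alpha$ contains both elements of $I^c=\{a,b\}$).

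In the generic case take $\Gamma':=\Gamma_I$. Writing $U_S:=\bigcup_{\alpha\in S}\Gamma_\alpha$ and $L_S:=|U_S|-2-\sum_{\alpha\in S}(|\Gamma_\alpha|-2)\ge0$ (strict when $1<|S|<d$) for the slack of $\Gamma$-convexity on $S$, and setting $t_S:=|U_S\cap I^c|$, $k$ for the number of $\alpha\in S$ with $s_\alpha\ge1$ whose image is in $\Gamma_I$, $s:=\sum s_\alpha$ over those same indices, and $\chi:=[k\ge1]$, the desired convexity inequality on $S$ rewrites after bookkeeping as $L_S\ge t_S+k-s-\chi$. When $\beta\in S$ the right side reduces to $0$ using $s_\beta=|I^c|$. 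When $\beta\notin S$ we have $s=k$, so the right side is $t_S-1$; for $|S|<d-1$ apply strict convexity of $\Gamma$ to $S\cup\{\beta\}$ to get $L_S\ge|U_S\cap\Gamma_\beta|-1\ge t_S-1$, and the boundary case $|S|=d-1$ is verified directly from $L_S=|\Gamma_\beta|-2\ge|I^c|-1=t_S-1$, which is exactly where the hypothesis $|\Gamma_\beta|\ge|I^c|+1$ is used.

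In the degenerate cases, where the sum exceeds $|I|-1$ by exactly one, we shrink: choose any $\alpha_0$ with $s_{\alpha_0}=1$ and replace $(\Gamma_{\alpha_0})_I$ by $(\Gamma_{\alpha_0})_I\setminus\{p\}$. Since $p$ has valence at least $2$ in $\Gamma_I$ (by a valence count using the $\ge2$-valence axiom of $\Gamma$), the union is unchanged, the new sum equals $|I|-1$, and shrinking only relaxes convexity on all $S$, making the previously-failing top-level inequality tight. The main technical obstacle is the boundary case $|S|=d-1,\,\beta\notin S$ in the generic setting, whose delicate reliance on $|\Gamma_\beta|\ge|I^c|+1$ is exactly what motivates the separate shrinking construction in the degenerate cases.
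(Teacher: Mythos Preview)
Your proof is correct and follows essentially the same approach as the paper's. Both arguments construct the same sub-collection $\Gamma'$: take $\Gamma'=\Gamma_I$ when $I^c\subsetneq\Gamma_\beta$ (your ``generic case''), and shrink one hyperedge $(\Gamma_{\alpha_0})_I$ by removing $p$ when $\Gamma_\beta=I^c$ or $|I^c|=2$ with $I^c\not\subset\Gamma_\alpha$ (your ``degenerate cases'', the paper's case~(ii)). Your removal of $p$ from $(\Gamma_{\alpha_0})_I$ with $s_{\alpha_0}=1$ is exactly the paper's removal of the element $1\in I^c$ from $\Gamma_1$ with $\Gamma_1\cap I^c=\{1\}$, viewed before vs.\ after contraction.

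The only stylistic difference is in verifying convexity: the paper assumes \eqref{CondS} fails for a proper subset $T$ and derives a contradiction from irreducibility applied to $T$ and to $T\cup\{\beta\}$, whereas you set up explicit slack bookkeeping $L_S\ge t_S-1$ and isolate the boundary case $|S|=d-1$. Your preliminary observation $|\Gamma_\alpha\cap\Gamma_\beta|\le1$ (for $d>2$) streamlines the count of $s_\alpha$ and makes the case split cleaner; the paper uses irreducibility directly at each step without isolating this fact. Both routes hinge on the same application of strict convexity to $S\cup\{\beta\}$.
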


\begin{proof}
We construct a sub-collection $\Gamma'$ of $\Gamma_I$ that satisfies the convexity axiom and 
$\sum_{\alpha}\big(|\Gamma'_{\alpha}|-2\big)=|I|-1$. Without loss of generality, we may assume $I^c=\{1,\ldots,l\}$. 
We define $\Gamma'$ as follows: 
\begin{itemize}
\item[(i) ] If $I^c\subsetneq\Gamma_{\beta}$, let $\Gamma'=\Gamma_I$; 
\item[(ii) ] If $I^c=\Gamma_{\beta}$ or if $|I^c|=l=2$ and $I^c\nsubseteq\Gamma_{\alpha}$ for any $\alpha$: we may assume that
$1\in\Gamma_1$ (note that $\Gamma_1\cap I^c=\{1\}$). Let $\Gamma'_1=\Gamma_1\setminus\{1\}$ if $|\Gamma_1|\geq4$ 
(omit $\Gamma'_1$ otherwise), $\Gamma'_{\alpha}=(\Gamma_I)_{\alpha}$ for all $\alpha\neq1$. 
\end{itemize}

Note that in all the cases $\sum_{\alpha}\big(|\Gamma'_{\alpha}|-2\big)=|I|-1$. Hence, the condition~(\ddag) holds for the set of all indices $\alpha$ that appear in $\Gamma'$. 
Assume that (\ddag) fails for a proper subset $T$ of indices $\alpha$: 
\begin{equation}\cooltag\label{1}
|\bigcup_{\alpha\in T}\Gamma'_{\alpha}|\leq\sum_{\alpha\in T}\big(|\Gamma'_{\alpha}|-2\big)+1\leq\sum_{\alpha\in T}\big(|\Gamma_{\alpha}|-2\big)+1.
\end{equation}
Let $k=|\big(\bigcup_{\alpha\in T}\Gamma_{\alpha}\big)\cap I^c|\leq l$. Then we have 
\begin{equation}\cooltag\label{2}
|\bigcup_{\alpha\in T}\Gamma'_{\alpha}|\geq |\bigcup_{\alpha\in T}\Gamma_{\alpha}|-k+1.
\end{equation}
Since $\Gamma$ is an irreducible hypertree, we have:
\begin{equation}\cooltag\label{3}
|\bigcup_{\alpha\in T}\Gamma_{\alpha}|\geq\sum_{\alpha\in T}\big(|\Gamma_{\alpha}|-2\big)+3.
\end{equation}
By (\ref{1}), (\ref{2}), (\ref{3}) we have $k\geq3$. This is a contradiction if $|I^c|=2$. 

Assume now that $I^c\subset\Gamma_{\beta}$. Let $k'=|\big(\bigcup_{\alpha\in T}\Gamma_{\alpha}\big)\cap\Gamma_{\beta}|$.
Then $k\leq k'$.  Consider the case when $\beta\notin T$. Since $\Gamma$ is an irreducible hypertree, we have:
\begin{equation}\cooltag\label{4}
|\bigcup_{\alpha\in T}\Gamma_{\alpha}|+|\Gamma_{\beta}|-k'=
|\bigcup_{\alpha\in T}\Gamma_{\alpha}\cup\Gamma_{\beta}|\geq\sum_{\alpha\in T}\big(|\Gamma_{\alpha}|-2\big)+\big(|\Gamma_{\beta}|-2\big)+3.
\end{equation}
By (\ref{1}), (\ref{2}), (\ref{4}) it follows that 
$$\sum_{\alpha\in T}\big(|\Gamma_{\alpha}|-2\big)+1\geq \sum_{\alpha\in T}\big(|\Gamma_{\alpha}|-2\big)+k'-k+2,$$
which is a contradiction, since $k'-k\geq0$.

Consider the case when $\beta\in T$ (only possible in case (i)). As (\ddag) fails, 
\begin{equation}\cooltag\label{1'}
|\bigcup_{\alpha\in T}\Gamma'_{\alpha}|\leq\sum_{\alpha\in T}\big(|\Gamma_{\alpha}|-2\big)+1\leq\sum_{\alpha\in T, \alpha\neq\beta}\big(|\Gamma_{\alpha}|-2\big)
+\big(|\Gamma_{\beta}|-l-1\big)+1.
\end{equation}
It follows from (\ref{1'}), (\ref{2}), (\ref{3}) that 
$$\sum_{\alpha\in T}\big(|\Gamma_{\alpha}|-2\big)-l+2\geq \sum_{\alpha\in T}\big(|\Gamma_{\alpha}|-2\big)-l+4,$$
which is a contradiction. This finishes the proof.
\end{proof}

\begin{Lemma}\label{capacity I'}
For any hypertree $\Gamma$ (not necessarily irreducible), 
the collection $\Gamma'_I$  satisfies the convexity axiom~\eqref{CondS}. In particular,
\begin{equation}
\Cap(\Gamma'_I)=
\sum_{|\Gamma_{\alpha}\cap I^c|\geq3}\big(|\Gamma_{\alpha}\cap I^c|-2\big).
\cooltag\label{bratstvo}\end{equation}
If moreover, $\Gamma$ is an irreducible hypertree and if $|I^c|=2$ or if $I^c\subseteq\Gamma_\alpha$ for some $\alpha$, then $\Cap(\Gamma'_I)=|I^c|-2$.
Otherwise,
$$
\Cap(\Gamma'_I)<|I^c|-2.
$$
\end{Lemma}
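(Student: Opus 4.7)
The plan is to deduce all three claims from a single algebraic identity for the convexity slack of $\Gamma'_I$. Writing $|\bigcup_{\alpha \in S}(\Gamma_\alpha \cap I^c)| = |\bigcup_{\alpha \in S}\Gamma_\alpha| - |(\bigcup_{\alpha \in S}\Gamma_\alpha)\cap I|$ and rearranging, for every $S \subset \{1,\dots,d\}$ one obtains
\begin{equation*}
\Bigl(\bigl|\bigcup_{\alpha \in S}(\Gamma_\alpha \cap I^c)\bigr| - 2\Bigr) - \sum_{\alpha \in S}(|\Gamma_\alpha \cap I^c|-2) \;=\; (\star_S) + (\star\star_S),
\end{equation*}
where $(\star_S) = |\bigcup_{\alpha\in S}\Gamma_\alpha|-2-\sum_{\alpha\in S}(|\Gamma_\alpha|-2) \geq 0$ is the convexity slack of $\Gamma$ on $S$, and $(\star\star_S) = \sum_{\alpha\in S}|\Gamma_\alpha \cap I| - |(\bigcup_{\alpha\in S}\Gamma_\alpha)\cap I| \geq 0$ by inclusion--exclusion. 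Non-negativity of both terms immediately yields convexity of $\Gamma'_I$.

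To establish $\Cap(\Gamma'_I) = \sum_{|\Gamma_\alpha \cap I^c|\geq 3}(|\Gamma_\alpha \cap I^c|-2)$, the inequality $\geq$ is realized by taking $\Gamma'_I$ itself as a convex sub-collection. For the reverse, I would take an arbitrary convex sub-collection $\{B_\beta\}$ of $\Gamma'_I$, assign each $B_\beta$ to some $\alpha(\beta)$ with $B_\beta \subseteq \Gamma_{\alpha(\beta)} \cap I^c$, and use the fact that convexity restricts to each fiber $\{B_\beta : \alpha(\beta) = \alpha\}$; the contribution of a fiber is thus at most $|\Gamma_\alpha \cap I^c|-2$, and summing over $\alpha$ gives the desired upper bound.

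The two explicit cases are then immediate. If $|I^c|=2$, every $|\Gamma_\alpha \cap I^c| \leq 2$ and the sum is $0 = |I^c|-2$. If $I^c \subseteq \Gamma_\beta$ for some $\beta$, then applying convexity of $\Gamma$ to each pair $\{\alpha,\beta\}$ with $\alpha \neq \beta$ yields $|\Gamma_\alpha \cap \Gamma_\beta|\leq 2$, hence $|\Gamma_\alpha \cap I^c|\leq 2$; only $\Gamma_\beta \cap I^c = I^c$ contributes, giving $|I^c|-2$.

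For the strict inequality in the remaining case ($\Gamma$ irreducible, $|I^c|\geq 3$, and $I^c \not\subseteq \Gamma_\alpha$ for any $\alpha$), set $S = \{\alpha : |\Gamma_\alpha \cap I^c|\geq 3\}$ and $T = \bigcup_{\alpha \in S}(\Gamma_\alpha \cap I^c)$. If $T \subsetneq I^c$ then already $\Cap(\Gamma'_I) \leq |T|-2 < |I^c|-2$. If $T = I^c$, the hypotheses preclude $|S|\leq 1$, and strict inequality must be extracted from the displayed identity applied to $S$. When $1 < |S| < d$ it follows from irreducibility of $\Gamma$ via $(\star_S) > 0$. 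When $|S| = d$, $(\star_S) = 0$ by the normalization axiom, but $(\star\star_S) = \sum_{i \in I} v_i - |I| \geq |I| \geq 1$ using that $v_i \geq 2$ for each $i$ and that $|I|\geq 1$ (implicit from the application to boundary divisors $\delta_{I\cup\{n+1\}}$). The main obstacle is precisely this $T = I^c$, $|S|=d$ subcase, where irreducibility of $\Gamma$ plays no role and one must instead exploit the valence-$\geq 2$ condition at the elements of $I$.
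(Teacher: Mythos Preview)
Your proof is correct. The central identity
\[
\Bigl(\bigl|\textstyle\bigcup_{\alpha\in S}(\Gamma_\alpha\cap I^c)\bigr|-2\Bigr)-\sum_{\alpha\in S}(|\Gamma_\alpha\cap I^c|-2)=(\star_S)+(\star\star_S)
\]
is valid, and all the case analysis built on it goes through. The one point where care is needed --- the $|S|=d$ subcase --- you handle correctly: $(\star\star_S)=\sum_{i\in I}v_i-|I|\ge|I|$, and the assumption $I\neq\emptyset$ is indeed implicit (otherwise $\Gamma'_\emptyset=\Gamma$ and the claimed strict inequality would be false for any hypertree).

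The paper takes a different route. For convexity, it argues by induction on $|I|$: removing one element of $I^c$ at a time, it shows that the convexity slack cannot go from nonnegative to negative, because the left side drops by $1$ while the right side drops by the number of hyperedges through the removed point, which is at least $1$. For the strict inequality when $|\Gamma'_I|<d$, the paper reruns this induction starting from the \emph{strict} convexity of an irreducible $\Gamma$; when $|\Gamma'_I|=d$, it observes that equality forces $|I|=\sum_\alpha|\Gamma_\alpha\cap I|$, i.e.\ the sets $\Gamma_\alpha\cap I$ are pairwise disjoint, contradicting $v_i\ge 2$. Your $(\star\star_S)$ computation and the paper's disjointness argument are two phrasings of the same valence count. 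What your approach buys is a single transparent identity that simultaneously yields convexity and isolates exactly where irreducibility (via $(\star_S)>0$) versus the valence axiom (via $(\star\star_S)>0$) is doing the work; the paper's inductive argument is slightly more hands-on but perhaps more intuitive as ``removing points can only help convexity.''
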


\begin{proof}
Arguing by contradiction, let $S\subset \Gamma'_I$ be a subset such that 
$$
\bigl|\bigcup_{j\in S}(\Gamma'_I)_j\bigr|-2<\sum_{j\in S}(|(\Gamma'_I)_j|-2)
$$
Let $I_l=\{1,\ldots,l\}$. After renumbering, we can assume that $I=I_k$.
Since $\Gamma'_{I_0}=\Gamma'_\emptyset=\Gamma$, which satisfies~\eqref{CondS}, there exists $l$ such that 
\begin{equation}
\bigl|\bigcup_{j\in S}(\Gamma'_{I_l})_j\bigr|-2\ge\sum_{j\in S}(|(\Gamma'_{I_l})_j|-2)
\cooltag\label{zuzu1}
\end{equation}
but 
\begin{equation}
\bigl|\bigcup_{j\in S}(\Gamma'_{I_{l+1}})_j\bigr|-2<\sum_{j\in S}(|(\Gamma'_{I_{l+1}})_j|-2)
\cooltag\label{zuzu2}
\end{equation}
It follows that some subsets $(\Gamma'_{I_l})_j$ contain $l+1$, and so the LHS in \eqref{zuzu2}
is equal to the LHS in \eqref{zuzu1} minus $1$.
However, the RHS in \eqref{zuzu2}
is equal to the RHS in \eqref{zuzu1} minus the number of subsets $(\Gamma'_{I_l})_j$ that contain $l+1$.
This is a contradiction. This proves that $\Gamma'_I$  satisfies the convexity axiom~\eqref{CondS}.

Clearly, if $|I^c|=2$ or if $I^c\subseteq\Gamma_\alpha$ for some $\alpha$, then $\Cap(\Gamma'_I)=|I^c|-2$.
Assume now that $|I^c|>2$ and $I^c\nsubseteq\Gamma_\alpha$ for any $\alpha$. 
We argue by contradiction: assume that $|I^c|-2=\Cap(\Gamma'_I)$. If $|\Gamma'_I|=0$, then it follows that $|I^c|=2$.
Similarly, if $|\Gamma'_I|=1$, it follows that $I^c\subset\Gamma_\alpha$, for the unique $\alpha$ giving $\Gamma'_I$. 
Hence, we can assume that $|\Gamma'_I|>1$. 

If $|S|\neq0$, $1$, $d$, the same proof as above shows that we have
$$
\bigl|\bigcup_{j\in S}(\Gamma'_I)_j\bigr|-2>\sum_{j\in S}(|(\Gamma'_I)_j|-2)
$$

Hence, if $|\Gamma'_I|\neq d$, then $\Cap(\Gamma'_I)<|I^c|-2$. 

Assume now that $|\Gamma'_I|=d$. We have:
$$|I^c|-2=\Cap(\Gamma'_I)=\sum_{\alpha=1}^d\big(|\Gamma_{\alpha}\cap I^c|-2\big).$$
It follows that
$$|I|=\sum_{\alpha=1}^d\big(|\Gamma_{\alpha}\cap I|\big).$$

It follows that the subsets $\Gamma_{\alpha}\cap I$, for all $\alpha$, are disjoint. This is a contradiction since 
every $i\in I$ belongs to at least two subsets $\Gamma_\alpha$.
\end{proof}

\begin{Lemma}\label{contracted boundary}
The following conditions are equivalent:
\begin{itemize}
\item A boundary divisor $\de_{I\cup\{n+1\}}$ is not contracted by $\pi$.
\item $n-3=\Cap(\Gamma_I)+\Cap(\Gamma'_I)$.
\item $|I^c|=2$ or $I^c\subset\Gamma_{\alpha}$ for some $\alpha$. 
\end{itemize}
\end{Lemma}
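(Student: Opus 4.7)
The plan is to deduce all three equivalences from the two preceding lemmas (\ref{max size} and \ref{capacity I'}) together with the dimension formula in Lemma~\ref{firstrow}.

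First, for (i)~$\iff$~(ii): the boundary divisor $\delta_{I\cup\{n+1\}}$ has dimension $n-3$, and by Lemma~\ref{firstrow} its image under $\pi$ has codimension (in the image $Z$) equal to $n-3-\Cap(\Gamma_I)-\Cap(\Gamma'_I)+1$. So $\delta_{I\cup\{n+1\}}$ is not contracted precisely when $\pi$ restricted to it is generically finite, i.e.\ when $\Cap(\Gamma_I)+\Cap(\Gamma'_I)=n-3$.

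Next, I would establish the universal upper bounds
\[
\Cap(\Gamma_I)\le |I|-1,\qquad \Cap(\Gamma'_I)\le |I^c|-2.
\]
Both are immediate from the definition of capacity: in either case we are looking at sub-collections satisfying the convexity axiom on an ambient set of size $|I|+1$ or $|I^c|$ respectively, so $\sum(|\Gamma'_\beta|-2)$ is bounded by $|I\cup\{p\}|-2=|I|-1$ and by $|I^c|-2$. Summing gives $\Cap(\Gamma_I)+\Cap(\Gamma'_I)\le n-3$, with equality if and only if both individual bounds are equalities.

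Finally, for (ii)~$\iff$~(iii): Lemma~\ref{capacity I'} says that $\Cap(\Gamma'_I)=|I^c|-2$ holds iff $|I^c|=2$ or $I^c\subseteq\Gamma_\alpha$ for some $\alpha$, which is exactly condition~(iii). Under condition~(iii), Lemma~\ref{max size} gives the other required equality $\Cap(\Gamma_I)=|I|-1$. Conversely, if (ii) holds then in particular $\Cap(\Gamma'_I)=|I^c|-2$, which by Lemma~\ref{capacity I'} forces~(iii). This closes the cycle.

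I do not expect any real obstacle: the combinatorial heavy lifting has already been done in Lemmas~\ref{max size} and~\ref{capacity I'}, and the present lemma is essentially a bookkeeping reassembly of their conclusions via the upper bounds on capacity. The only mildly delicate point is being careful that condition (iii) is exactly the common hypothesis of the two preceding lemmas, so that both equalities hold simultaneously.
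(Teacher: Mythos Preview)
Your proposal is correct and follows essentially the same route as the paper. The paper invokes Lemma~\ref{firstrow} for (i)$\iff$(ii), then uses Lemmas~\ref{max size} and~\ref{capacity I'} together with the bound $\Cap(\Gamma_I)\le|I|-1$ to handle (ii)$\iff$(iii), exactly as you outline; your version just makes the pair of upper bounds and the ``both must be equalities'' step more explicit.
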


\begin{proof}
The equivalence of the first two conditions follows from Lemma~\ref{firstrow}.

If  $|I^c|=2$ or $I^c\subset\Gamma_{\alpha}$ for some $\alpha$ then
$\Cap(\Gamma_I)=|I|-1$ by Lemma~\ref{max size} and 
$\Cap(\Gamma'_I)=|I^c|-2$  by Lemma \ref{capacity I'}. It follows that $\codim\pi\big(\de_{I\cup\{n+1\}}\big)=1$ by Lemma~\ref{firstrow} .

Assume that $|I^c|>2$ and $I^c\nsubseteq\Gamma_\alpha$ for any $\alpha$. Then 
 $\Cap(\Gamma'_I)<|I^c|-2$ by Lemma \ref{capacity I'}. Since
$\Cap(\Gamma_I)\leq|I|-1$, it follows by Lemma~\ref{firstrow}  that $\codim\pi\big(\de_{I\cup\{n+1\}}\big)>1$.
\end{proof}

\section{Irreducibility of $D_\Gamma$ and its Class}\label{Irred and Class}

In this section we define $D_\Gamma$ as the closure 
of $G^2\subset M_{0,n}$ in $\M_{0,n}$. 
We will show in Section \ref{planarity} that this coincides with a stronger definition of $D_{\Gamma}$ given in 
the Introduction. 
Rather  than computing the class of $D_\Gamma$ directly, we (partially) compute the class of its pull-back $\pi_N^*D_\Gamma$, 
where $\pi_N:\M_{0,n+1}\ra\M_{0,n}$ is the forgetful map.
We will use the fact $\pi_N^*D_\Gamma$ is one of the divisors in the exceptional locus of 
the map $\pi$ of \eqref{mappi}
with other possible exceptional divisors all listed in Lemma~\ref{contracted boundary}.

\begin{Notation}
One advantage of $\M_{0,n+1}$ over $\oM_{0,n}$ is that $\Pic\oM_{0,n+1}$ has an equivariant basis 
with respect to permutations of the first $n$ indices.
Let $\Psi:\M_{0,n+1}\ra\PP^{n-2}$ be the Kapranov iterated blow-up of $\PP^{n-2}$ along points $p_1,\ldots, p_n$ 
and proper transforms of subspaces $\langle p_i\rangle_{i\in I}$ for $|I|\leq n-3$. 
Let~$E_I$ be the exceptional divisor over this subspace. 
Recall that $\Pic\oM_{0,n+1}$ is freely generated by $H:=\Psi^*\cO(1)$ and by the classes $E_I$.
\end{Notation}

We denote as usual by $v_i$  the valence of $i\in N$.

\begin{Theorem} \label{class}\label{first coeff}\label{Irreducibility}
Let $\Gamma=\{\Gamma_1,\ldots,\Gamma_d\}$ be 
an irreducible hypertree on $N$.
Then $D_\Gamma$ is non-empty, irreducible, and $v(D_\Gamma)=W^2\subset\Pic^1$ has codimension~$3$.
We have
$$\pi_N^*D_\Gamma\sim(d-1)H-\sum_{I\subset N\atop 1\le |I|\le n-3}m_IE_I,$$
where 
\begin{equation}
m_I\ge|I|-1+\left|\{\Gamma_\alpha\,|\,\Gamma_\alpha\subset I^c\}\right|-\Cap(\Gamma_I),
\cooltag\label{coefficientm_I}
\end{equation} 
\begin{equation}
m_{\{i\}}=d-v_i,
\cooltag\label{hikitty}
\end{equation}
\begin{equation}
m_{N\setminus\Gamma_\alpha}=1,
\cooltag\label{kitty1}
\end{equation}
\begin{equation}
m_{\Gamma_\alpha}=d+|\Gamma_\alpha|-\sum_{i\in \Gamma_\alpha}v_i.
\cooltag\label{himama}
\end{equation}
If $I$ is properly contained in $\Gamma_\alpha$ then
\begin{equation}
m_{N\setminus I}=0,
\cooltag\label{kitty2}
\end{equation}
\begin{equation}
m_I=d+|I|-1-\sum_{i\in I}v_i.
\cooltag\label{hipapa}
\end{equation}
\end{Theorem}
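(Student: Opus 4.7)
The plan is to leverage Theorem~\ref{coolcondition}, which identifies $D_\Gamma = \overline{G^2}$ with the exceptional divisor of the birational Brill--Noether map $v: M_{0,n} \to \Pic^{\uone}$. For non-emptiness, I would use the planar realization construction (to be established in Section~\ref{planarity}): a planar realization $\Sigma \to \bP^2$ of $\Gamma$ yields a line bundle $L = \cO_\Sigma(1)$ with $h^0(L) \ge 3$, hence $L \in W^2$, so $G^2 \neq \emptyset$. Irreducibility of $D_\Gamma$ would then follow from the irreducibility of the parameter space of pairs (planar realization, projection center) modulo $\PGL_3$, since this family dominates $G^2$.

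For $\codim W^2 = 3$, I would apply a Brill--Noether-type tangent-space computation on $\Pic^{\uone}$ adapted to the hypertree curve: the expected codimension of $W^r$ in $\Pic^{\uone}$ equals $(r+1)(g-d+r)$, which with $r=2$, total degree $d$, and $g = d-1$ is precisely~$3$. This yields $\codim W^2 \le 3$. The matching dimension count uses that the fiber of $v|_{G^2}$ over a general $L \in W^2$ is $\Gr(2, H^0(L))$, which is two-dimensional precisely when $h^0(L) = 3$, giving $\dim W^2 = \dim D_\Gamma - 2 = n-6$.

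For the class computation I would pass to the Kapranov model $\Psi: \oM_{0,n+1} \to \bP^{n-2}$ and interpret $\pi_N^* D_\Gamma$ as the strict transform of an explicit hypersurface $Z \subset \bP^{n-2}$ of degree $d-1$. The coefficient $(d-1)H$ encodes $\deg Z$, computable by intersecting with a generic line in $\bP^{n-2}$ (equivalently, a rational normal curve pencil through $p_1,\ldots,p_n$). For the multiplicities $m_I$, the key point is that by Theorem~\ref{coolcondition} the divisor $\pi_N^* D_\Gamma$ is contracted by $\pi: \oM_{0,n+1} \to \oM_{\Gamma\cup\{n+1\}}$; combining this with the image-codimension formula in Lemma~\ref{firstrow} and the classification of contracted boundary in Lemma~\ref{contracted boundary} produces, via a discrepancy comparison, the capacity lower bound~(4.1). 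The explicit equalities (4.2)--(4.6) are obtained by intersecting with well-chosen test curves transverse to exactly one exceptional divisor at a time, where Lemma~\ref{max size} and Lemma~\ref{capacity I'} force the capacity bound to be attained.

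The principal obstacle is the multiplicity analysis: justifying~(4.1) requires isolating the contribution of $\pi_N^* D_\Gamma$ from the other $\pi$-exceptional boundary divisors in the discrepancy identity, and achieving the sharp equalities (4.2)--(4.6) requires constructing explicit degenerating one-parameter families that meet $\pi_N^* D_\Gamma$ with the prescribed orders. The capacity equalities of Section~\ref{capacity}, especially Lemma~\ref{max size}, are precisely what make this combinatorial matching possible.
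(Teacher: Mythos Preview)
Your argument for non-emptiness and irreducibility is circular. You propose to use the existence of a planar realization (Section~\ref{planarity}) to produce a point of $W^2$ and then to argue irreducibility from the parameter space of planar realizations; but the proof of Theorem~\ref{PlanarTh} explicitly invokes Theorem~\ref{Irreducibility} (it needs $D_\Gamma=\overline{G^2(\Gamma)}$ to be an irreducible divisor in order to compare it with $D_{\Gamma'}$). Even setting aside the circularity, the irreducibility of the parameter space of planar realizations modulo $\PGL_3$ is not obvious a priori: the locus of $n$-tuples in $\bP^2$ with the prescribed collinearities could in principle have several components, and you have given no mechanism to exclude this.

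The paper's route is entirely different and avoids both problems. It proves non-emptiness and irreducibility simultaneously by a Picard-number count: since $\pi:\oM_{0,n+1}\to\oM_{\Gamma\cup\{n+1\}}$ is birational, the number of irreducible exceptional divisors equals $\rho(\oM_{0,n+1})-\rho(\oM_{\Gamma\cup\{n+1\}})$; one then subtracts the number of contracted boundary divisors (classified in Lemma~\ref{contracted boundary}) and finds that exactly one exceptional component remains, namely $\pi_N^*D_\Gamma$. This is Lemma~\ref{irreducible}. For $\codim W^2=3$, the paper does \emph{not} use a Brill--Noether expected-codimension formula (which is not available off the shelf for non-Gorenstein curves like $\Sigma_\Gamma$). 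Instead it is a by-product of the class computation: from $K_{\oM_{0,n+1}}-\pi^*K_{\oM_{\Gamma\cup\{n+1\}}}=c\,\cE+\sum a_I\delta_I$ one sees that $c$ divides every $m_I$, and since $m_{N\setminus\Gamma_\alpha}=1$ by~\eqref{kitty1} one gets $c=1$; then $\codim\pi(\cE)=2$ and hence $\codim W^2=3$.

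Your outline for the class formulas (discrepancy comparison for the inequality~\eqref{coefficientm_I}, test curves for the equalities~\eqref{hikitty}--\eqref{hipapa}) is essentially the paper's approach; note that~\eqref{kitty1} and~\eqref{kitty2} come for free from Lemma~\ref{contracted boundary} (those boundary divisors are not $\pi$-exceptional), while~\eqref{hikitty},~\eqref{himama},~\eqref{hipapa} are obtained by intersecting $\pi_N^*D_\Gamma$ with fibers of $\pi_N$ over $M_{0,n}$ and over generic points of $\delta_{\Gamma_\alpha}$.
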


\begin{proof}
By Theorem \ref{size/explicit}, the map $\pi$ of \eqref{mappi}
is a birational morphism. By~Theorem~\ref{coolcondition}, its 
exceptional locus consists of $\cE:=\pi_N^*D_\Gamma$
and the boundary divisors $\de_{I\cup\{n+1\}}$ contracted by~$\pi$
(where $I\subset N$, $1\le |I|\le n-2$). 

\begin{Lemma}\label{irreducible}
$D_\Gamma$ is non-empty and irreducible.
\end{Lemma}

\begin{proof}
It suffices to show that $\pi_N^*D_\Gamma$ is non-empty and irreducible.
We compare ranks of the Neron--Severi groups and use the fact that 
$$\rho(\oM_{0,n+1})-\rho\left(\prod_{\Gamma_{\alpha}}\M_{0,\Gamma_{\alpha}\cup\{n+1\}}\right)$$
is equal to the number of irreducible components in $\Exc(\pi)$.
We have
$$\rho(\oM_{0,n+1})=2^n-1-{n(n+1)\over 2}$$
and
$$\rho\left(\prod_{\Gamma_{\alpha}}\M_{0,\Gamma_{\alpha}\cup\{n+1\}}\right)
=\sum_{\alpha=1}^d \left(2^{|\Gamma_\alpha|}-1-{|\Gamma_\alpha|(|\Gamma_\alpha|+1)\over 2}\right)
$$
The total number of boundary divisors of $\oM_{0,n+1}$ is $2^n-n-2$. By Lemma~\ref{contracted boundary}, the number of 
boundary components \emph{not} contracted by $\pi$ is
$$\frac{n(n-1)}{2}+\sum_{\alpha=1}^d \left(2^{|\Gamma_\alpha|}-1-|\Gamma_\alpha|-{|\Gamma_\alpha|(|\Gamma_\alpha|-1)\over 2}\right)$$
It follows after some simple manipulations that the number of irreducible components of $\cE$
is exactly one.
\end{proof}

\begin{Review}\label{canonical computations}
Next we compare the canonical classes. We have
\begin{equation}
K_{\M_{0,n+1}}-\pi^*K_{\M_{\Gamma\cup\{n+1\}}}=c\cE+\sum_{\de_{I\cup\{n+1\}}\in\Exc(\pi)}a_I\de_{I\cup\{n+1\}}
=c\cE+\sum_{I\subset N\atop 1\le |I|\le n-3} a_IE_I,\cooltag\label{discrrr}
\end{equation}
for some positive integers $a_I$ and $c$, see \cite[p. 53]{KoM}.
Here we use the fact that if $|I|=n-2$ then $\de_{I\cup\{n+1\}}$
is not an exceptional divisor in the Kapranov model, but a proper transform of the hyperplane in $\bP^{n-2}$
that passes through all $p_i$, $i\in I$.
These divisors are not in $\Exc(\pi)$ by Lemma~\ref{contracted boundary}.
\end{Review}

\begin{Review}
We use the following basic property of discrepancies:
$$c\ge\codim \pi(\cE)-1\quad\hbox{\rm and}\quad
a_I\ge\codim\pi\big(\de_{I\cup\{n+1\}}\big)-1.$$
By Lemma~\ref{firstrow}, it follows that
\begin{equation}
a_I\ge n-3-\Cap(\Gamma_I)-\Cap(\Gamma'_I).
\cooltag\label{dududu}
\end{equation}
\end{Review}

\begin{Review}\label{canonical computations}
Next we compute the canonical classes. Standard calculations give
$$K_{\M_{0,n+1}}=-(n-1)H+\sum_I \big(n-2-|I|\big)E_I$$
and 
\begin{gather*}
\pi^*_{\Gamma_\alpha\cup\{n+1\}}K_{\M_{\Gamma_{\alpha}\cup\{n+1\}}}=-(|\Gamma_{\alpha}|-1)\left(H-\sum_{I\cap\Gamma_{\alpha}=\eset}E_I\right)+\\
+\sum_{I'\subset \Gamma_{\alpha}\atop 1\leq|I'|\leq|\Gamma_{\alpha}|-3}\big(|\Gamma_{\alpha}|-2-|I'|\big)\sum_{I''\subset N\setminus\Gamma_{\alpha}} E_{I'\cup I''}.
\end{gather*}
Combining these formulas together gives 
$$m_I\ge|I|-1-\Cap(\Gamma_I)-\Cap(\Gamma'_I)+$$
$$
+\sum_{I\cap\Gamma_{\alpha}=\eset}(|\Gamma_{\alpha}|-1)+
\sum_{1\leq|\Gamma_{\alpha}\cap I|\leq|\Gamma_{\alpha}|-3}
\big(|\Gamma_{\alpha}|-2-|\Gamma_{\alpha}\cap I|\big).
$$
This formula along with \eqref{bratstvo} imply formula \eqref{coefficientm_I}.
\end{Review}


\begin{Lemma}\label{calculationsss1}\label{calculationsssss}
Formulas  \eqref{hikitty}--\eqref{hipapa} hold.
\end{Lemma}

\begin{proof}
Using \eqref{coefficientm_I} it is easy to see that
the LHS of any of these formulas is greater than or equal to the RHS.

Since boundary divisors $\de_{N\cup\{n+1\}\setminus\Gamma_\alpha}$
and $\de_{N\cup\{n+1\}\setminus I}$ are not in the exceptional locus of $\pi$,
formulas \eqref{kitty1} and \eqref{kitty2} follow from \eqref{discrrr} and from the calculations of the canonical classes above.

By projection formula, $\pi_N^*D_\Gamma\cdot C=0$ for any curve $C$ in the fiber of $\pi_N$.
Let $C$ be a general fiber. Then $\Psi(C)$ is a rational normal curve in $\bP^{n-2}$,
and therefore $H\cdot C=n-2$. We have $E_i\cdot C=\delta_{i,n+1}\cdot C=1$ and any other boundary
divisor $E_I$ intersects $C$ trivially.
It follows that 
$$0=\pi_N^*D_\Gamma\cdot C=(n-2)(d-1)-\sum_{i=1}^nm_i\le (n-2)(d-1)-\sum_{i=1}^n(d-v_i)=$$
$$(n-2)(d-1)-nd+\sum_{i=1}^nv_i=(n-2)(d-1)-nd+(n-2)+2d=0.$$
It follows that $m_i=d-v_i$ for any $i$.

Now let $C$ be the curve in the fiber of $\pi_N$ over a general point in $\de_{\Gamma_\alpha}$
such that the $(n+1)$-st marked point moves along the component with points marked by $I\subset\Gamma_\alpha$.
Then $H\cdot C=|I|-1$, $\de_{i,n+1}\cdot C=1$ if $i\in I$ and~$0$ otherwise,
$\de_{I\cup\{n+1\}}\cdot C=-1$, $\de_{N\cup\{n+1\}\setminus I}\cdot C=1$, and other boundary divisors
intersect $C$ trivially. Since we already know that $m_i=d-v_i$ by the above, 
and that $m_{N\setminus I}=1$ (if $I=\Gamma_\alpha$) and $0$ otherwise by \eqref{kitty1} and \eqref{kitty2},
a simple calculation gives $m_I$.
\end{proof}

Finally, we claim that 
$$c=1,\quad \codim\pi(\cE)=2,\quad \hbox{\rm and}\quad \codim v(D_\Gamma)=3.$$
Indeed, $c$ obviously divides all coefficients $m_I$ 
but some of them are equal to $1$ by \eqref{kitty1}. So $c=1$.
Since $\cE$ is exceptional and $c\ge\codim\pi(\cE)-1$, we have $\codim\pi(\cE)=2$.
It follows by Theorem~\ref{ID} that the map $\cE\to\pi(\cE)$ is generically a $\bP^1$-bundle,
i.e.~$W_2\ne W_3$. Since $G^2\to W^2$ has $2$-dimensional fibers
outside of $W^3$, we have the formula  
$\codim v(D_\Gamma)=3$.
\end{proof}

The reader is perhaps disappointed that we do not give a closed formula for the class of 
a hypertree divisor $D_\Gamma$.
The difficulty of computing this class stems from the fact
that $\pi$ has (exponentially) many exceptional boundary divisors
and the discrepancy of a boundary divisor $\delta_{I}\subset\oM_{0,n+1}$ for the map $\pi$ \eqref{mappi}
is not always equal to $\codim\pi(\delta_{I})-1$.
However, there is one case when they are equal, namely
when $\codim\pi(\delta_{I})=2$. 
This happens quite often: see for example Lemma~\ref{qqqcheck}, which is used in Theorem~\ref{amazing}
to recover a hypertree from its class.

\begin{Definition}\label{wheel}
A triple $\{i,j,k\}\subset N$ is called a {\em wheel} of an irreducible hypertree $\Gamma$, if it is not contained in 
any hyperedge, but there are hyperedges $\Gamma_{\alpha_1}$, $\Gamma_{\alpha_2}$, $\Gamma_{\alpha_3}$ of $\Gamma$
such that $\{i,j\}\subset\Gamma_{\alpha_1}$, $\{j,k\}\subset\Gamma_{\alpha_2}$, $\{i,k\}\subset\Gamma_{\alpha_3}$.
\end{Definition}


\begin{Lemma}\label{nextfix}
Suppose $\Gamma$ contains only triples, with $\{i,j,k\}$ not one of them and not a wheel, with the property that
$$\Cap(\Gamma_{N\setminus\{i,j,k\}})=n-4$$
(which is equivalent to $\codim\pi(\delta_{i,j,k})=2$).
Then we have equality in \eqref{coefficientm_I}.
\end{Lemma}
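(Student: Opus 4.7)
The plan is to establish $m_{N\setminus\{i,j,k\}}=0$, which is the content of the claimed equality in \eqref{coefficientm_I} for $I=N\setminus\{i,j,k\}$. Under the hypothesis $\Cap(\Gamma_{N\setminus\{i,j,k\}})=n-4$, and using that $\{i,j,k\}$ is not a hyperedge (so no $\Gamma_\alpha$ is contained in $\{i,j,k\}$), the right-hand side of \eqref{coefficientm_I} becomes $(n-3)-1+0-(n-4)=0$. Via the relation $a_I = m_I + (n-2-|I|) - 2\Cap(\Gamma'_I)$ derived in \ref{canonical computations}, this is equivalent to showing the general discrepancy inequality $a_{N\setminus\{i,j,k\}}\ge\codim\pi(\delta_{\{i,j,k\}})-1=1$ is actually an equality.

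The strategy is to analyze the generic fiber of the restriction $\pi|_{\delta_{\{i,j,k\}}^{(n+1)}}$. Under the product decomposition
$$
\delta_{\{i,j,k\}}^{(n+1)}\cong \oM_{0,\{i,j,k,p\}}\times \oM_{0,(N\setminus\{i,j,k\})\cup\{n+1,p\}}
$$
of the boundary divisor into small side and big side, each factor map $\pi_{\Gamma_\alpha\cup\{n+1\}}$ depends on $|\Gamma_\alpha\cap\{i,j,k\}|$: for $|\Gamma_\alpha\cap\{i,j,k\}|\in\{0,1\}$, after forgetting the irrelevant markings the small-side component carries at most two marks and contracts, so the map factors through the big side; for $|\Gamma_\alpha\cap\{i,j,k\}|=2$, both components remain stable with exactly three marks each, so the map is constant at a fixed boundary point. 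The case $|\Gamma_\alpha\cap\{i,j,k\}|=3$ does not arise since $\{i,j,k\}$ is not a hyperedge. Hence $\pi|_{\delta_{\{i,j,k\}}^{(n+1)}}$ factors through the big side, and on the big side it is exactly the map $\pi_{\Gamma_{N\setminus\{i,j,k\}}\cup\{n+1\}}$ attached to the contracted collection $\Gamma_{N\setminus\{i,j,k\}}$ on $(N\setminus\{i,j,k\})\cup\{p\}$.

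By the hypothesis $\Cap(\Gamma_{N\setminus\{i,j,k\}})=n-4$, which is the maximum capacity available for a collection on $n-2$ elements, Theorem \ref{size/explicit} ensures that this big-side map is birational onto its image. Consequently, the generic fiber of $\pi|_{\delta_{\{i,j,k\}}^{(n+1)}}$ over a point of $\pi(\delta_{\{i,j,k\}})$ is precisely the small side $\oM_{0,\{i,j,k,p\}}\cong\bP^1$, a smooth irreducible rational curve. Since $\pi$ is a birational morphism of smooth projective varieties with a smooth $\bP^1$ fiber over the codimension-$2$ image $\pi(\delta_{\{i,j,k\}})$, the Jacobian determinant of $\pi$ vanishes to order exactly $1$ along $\delta_{\{i,j,k\}}^{(n+1)}$, so $a_{N\setminus\{i,j,k\}}=1$ and hence $m_{N\setminus\{i,j,k\}}=0$.

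The principal obstacle is the last step: rigorously identifying $\pi$ near a general point of $\delta_{\{i,j,k\}}^{(n+1)}$ as (analytically locally) a smooth blowup of the codimension-$2$ subvariety $\pi(\delta_{\{i,j,k\}})$, thereby forcing the discrepancy to attain its lower bound. This uses the Kapranov normal-bundle structure $E_{N\setminus\{i,j,k\}}\cdot F=-1$ on a generic fiber $F$ together with the birational-onto-image statement of Theorem \ref{size/explicit} just established.
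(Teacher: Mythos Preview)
Your setup is correct: you correctly identify that the claim is $m_{N\setminus\{i,j,k\}}=0$, equivalently that the discrepancy $a_{N\setminus\{i,j,k\}}$ attains its lower bound $1$; and you correctly show, using the capacity hypothesis and Theorem~\ref{size/explicit}, that $\pi|_{\delta_{ijk}}$ has generic fiber $F\cong\bP^1$ with $\delta_{ijk}\cdot F=-1$.

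The gap is in the final step. The conditions ``generic fiber $\bP^1$'' and ``$E\cdot F=-1$'' are \emph{not} sufficient to force the discrepancy of $E$ to equal $1$. Here is a counterexample: let $Y=\bA^4$, blow up the plane $P=\{z=w=0\}$ to get $X_1$ with exceptional $E_1$ (discrepancy $1$), then blow up a section $\sigma:P\to E_1$ to get $X$ with exceptional $E_2$. One computes $K_X=\pi^*K_Y+\tilde E_1+2E_2$, so $E_2$ has discrepancy $2$; yet $\pi(E_2)=P$ has codimension $2$, the generic fiber $F_2$ of $\pi|_{E_2}$ is $\bP^1$, and $E_2\cdot F_2=-1$. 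The point is that $\tilde E_1$ also maps to $P$ and meets $F_2$: in the identity $-1=K_X\cdot F_2=a_{E_2}(E_2\cdot F_2)+a_{\tilde E_1}(\tilde E_1\cdot F_2)=-a_{E_2}+1\cdot 1$, the extra term from $\tilde E_1$ is what pushes $a_{E_2}$ above $1$. In general, $E\cdot F=-1$ only gives $a_E=1+\sum_{E'\ne E}a_{E'}(E'\cdot F)\ge 1$, and equality requires that \emph{no other exceptional divisor meets $F$}, equivalently that no other exceptional divisor has image equal to $Z=\pi(\delta_{ijk})$.

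This last statement is exactly the content of the paper's proof, and it is the hard part. The paper argues case by case according to how many of the pairs $\{i,j\},\{j,k\},\{i,k\}$ lie in a hyperedge (cases X, Y, Z), ruling out first $\cE=\pi_N^*D_\Gamma$ and then all other boundary divisors $\delta_{I'}$ as candidates for $\pi(\delta_{I'})=Z$. In case~Z this relies on Theorem~\ref{PlanarTh} (that a generic planar realization has no extra collinearities), and cases X, Y require a careful combinatorial analysis together with an extension of $v$ across the relevant boundary (Lemma~\ref{moonwalk2}). Once uniqueness of the exceptional divisor over $Z$ is known, one can slice by generic hyperplane sections down to a birational morphism of smooth surfaces with a single exceptional curve over a point, where the discrepancy is forced to be $1$. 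Your argument does not supply this uniqueness, and the ``local blowup'' assertion in your last paragraph does not follow from the normal-bundle computation alone.
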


\begin{proof}
We know that $\codim\pi(\delta_{i,j,k})=2$ and we are claiming that the discrepancy of $\pi$ at $\delta_{i,j,k}$ is equal to $1$.
It will be enough to show that that no other divisor of $\oM_{0,n+1}$ has the same image as $\delta_{ijk}$ under $\pi$.
Indeed, then we can cut by hypersurfaces in a very ample linear system on the target
of $\pi$ to reduce the discrepancy calculation to the case of a birational
morphism of smooth surfaces with a unique exceptional divisor over a point, in which case the discrepancy is equal to $1$
by standard factorization results for birational morphisms of smooth surfaces \cite[5.3]{Ha}.
 
We write $a\lra b$ if vertices $a,b\in N$ belong to some $\Gamma_\alpha$.
Up to symmetries, there are three possible cases.

\begin{itemize}
\item[{\bf (X)}] $i\lra j$ and $j\lra k$.
\item[{\bf (Y)}] $i\lra j$ but $i\not\lra k$ and $j\not\lra k$.
\item[{\bf (Z)}] $i\not\lra j$, $j\not\lra k$, and $i\not\lra k$.
\end{itemize}

Notice that $\pi(\delta_{i,j,k})$ belongs to the boundary $\oM_{\Gamma\cup\{n+1\}}\setminus M_{\Gamma\cup\{n+1\}}$ in cases (X) and (Y).
So in these cases $\pi(\delta_{i,j,k})$ can not be equal to $\pi(\cE)$, where $\cE=\pi_N^*(D_\Gamma)$
is the only exceptional divisor of $\pi$ intersecting the interior $M_{0,n+1}$. 
In case (Z), the image of  $\delta_{i,j,k}$ intersects the interior $M_\Gamma$, but we claim that
in this case $\pi(\delta_{i,j,k})\ne\pi(\cE)$ as well.
Arguing by contradiction, suppose $\pi(\delta_{i,j,k})=\pi(\cE)$. 
Notice that the rational map $v:\,\oM_{0,n}\dra\bG_m^{n-3}$ of Theorem~\ref {ID}
is defined at the generic point of $\delta_{ijk}\subset\oM_{0,n}$ in case Z:
just take a map $\Sigma\to\bP^1$ that collapses points $i,j,k$ to the same point
and pull-back $\cO_{\bP^1}(1)$ (a similar analysis will be given below, see Lemma \ref{moonwalk2}).
Since $W^2\subset \bG_m^{n-3}$ has codimension $3$, 
a generic line bundle in $W^2$ has $h^0=3$ (see Thm. \ref{class}).
Passing to an open subset in $\oM_{0,n}$
containing the generic point of $\delta_{ijk}$, we have that $v(\delta_{ijk})=W^2$.
But this implies that, in a planar realization that corresponds to a generic line bundle in $W^2$,
points $i,j,k$ are collinear.
We will show in Theorem~\ref{PlanarTh} that this is not the case.

So it remains to check the statement for boundary divisors only, i.e.,~to 
show that if $\pi(\delta_{ijk})=\pi(\delta_I)$ and $n+1\not\in I$ then $I=\{i,j,k\}$. 
Let $\Gamma'\subset\Gamma$ be a subset of all triples other 
than the triple containing $\{i,j\}$ (in cases (X) and (Y)) and the triple containing $\{j,k\}$ (in case (X)).
Consider the morphism
$$\pi':\oM_{0,n+1}\to\oM_{\Gamma'\cup\{n+1\}}:=\prod_{\Gamma_\alpha\in\Gamma'}\oM_{\Gamma_\alpha\cup\{n+1\}}.$$
Then we have $\pi'(\delta_{ijk})=\pi'(\delta_I)$ has dimension $n-4$ and intersects the interior $M_{\Gamma'}$.
So $I$ has the following properties: 
\begin{itemize}
\item $i,j,k\in I$ in case (X);  $i,j\in I$ in case (Y).
\item $I$ contains $s$ whole triples from $\Gamma'$ and $q$ ``separate'' points in $N$ not related by $\lra$ to any other point in $I$.
\end{itemize}

From $\delta_I\simeq\oM_{0,|I|+1}\times\oM_{0,(n+1)-|I|+1}$
we have the following easy estimate
$$n-4=\dim\pi'(\delta_I)\le s+\dim\oM_{0,(n+1)-|I|+1}=n-|I|+s-1,$$
and therefore 
\begin{equation}\cooltag\label{moonwalk}
|I|\leq s+3.
\end{equation}

Consider the case (X). Since $$n-4=\dim\pi(\de_{i,j,k})=\Cap((\Gamma')_{N\setminus\{i,j,k\}})$$ it follows that
for any subset $T$ ($|T|\geq2$) of triples from $\Gamma'$, 
$$|\bigcup_{\alpha\in T}\Gamma_\alpha|\geq |T|+4,$$
if $i,j,k\in \bigcup_{\alpha\in T}\Gamma_{\alpha}$. 

Assume $s\geq2$. If $i,j,k\in \bigcup_{\Gamma_\alpha\subset I}\Gamma_{\alpha}$ then
we have:
$$|I|-q\geq|\bigcup_{\Gamma_\alpha\subset I}\Gamma_\alpha|\geq s+4,$$
which contradicts (\ref{moonwalk}).

If one of $i,j,k$, say $i$, is not in $\bigcup_{\Gamma_\alpha\subset I}\Gamma_{\alpha}$ then 
$$|I\setminus\{i\}|-q\geq|\bigcup_{\Gamma_\alpha\subset I}\Gamma_\alpha|\geq s+3,$$
which again contradicts (\ref{moonwalk}).

Assume $s=1$. Let $\Gamma_1$ be the unique triple in $\Gamma'$ contained in $I$. We have 
$|I|=q+3$ and by (\ref{moonwalk}) $|I|\leq 4$. It follows that $q=0$ or $1$. Since $i,j,k\in I$ it follows 
that at least two of the indices $i,j,k$  are in $\Gamma_1$, which is a contradiction.

Consider now the cases (Y),  (Z).
We have a usual diagram of morphisms
$$\begin{CD}
M_{0,n+1} @>{\pi'}>> M_{\Gamma'\cup\{n+1\}}\\
@V{\pi_N}VV @VV{D\mapsto\cO(D)}V \\
M_{0,n} @>v>> \Pic^{\uone}(\Sigma').
\end{CD}$$

\begin{Lemma}\label{moonwalk2}
The morphism $v$ can be extended to generic points of $\delta_{ijk}$ and~$\delta_I$ as follows:
Let $C$ be a fiber of the universal family over a general point of $\delta_{ijk}$ (resp., $\delta_I$).
On one component $C_1$ we have points $i,j,k$ (resp.~$I$) and the attaching point $p$, while on the other component $C_2$
we have points $N\setminus\{i,j,k\}$ (resp.~$N\setminus I$) and the attaching point $q$.
This gives a morphism $f:\,\Sigma'\to\bP^1$ obtained by sending points in 
$N\setminus\{i,j,k\}$ (resp.~$N\setminus I$) to the corresponding points of the second component of $C$
and by sending points in $i,j,k$ (resp.~$I$) to the point $q$. Consider the line bundle $L=f^*\cO_{\bP^1}(1)$.
The line bundle $L$ has degree $0$ on the components $\Gamma_\alpha\subset I$. Each such component $\Gamma_\alpha$
can be identified with $C_1$, thus we can twist $L$ by 
$\cO_{\Sigma'}(p)$, which gives a line bundle in $\tilde L\in\Pic^{\uone}(\Sigma)$. 
\end{Lemma}

\begin{proof}
Take $|\Gamma'|$ copies of of the universal family $\M_{0,n+1}$ over $\M_{0,n}$, indexed by triples in $\Gamma'$. Let $\bf X$ be the push-out 
of these families, glued along sections, as prescribed by $\Gamma'$. (The fiber of $\bf X$ over a point in $M_{0,n}$ is $\Sigma$.) 
Let $U$ be the open in $\M_{0,n}$ which is the union of $M_{0,n}$  and $\de_{i,j,k}$ (resp. $\de_I$), not containing any other boundary strata.
Let ${\bf X}^0$ be the preimage of $U$ in ${\bf X}$.

There are maps $u: {\bf X}^0\ra U\times\Sigma$ (given by stabilization) and  $f:{\bf X}^0\ra U\times\PP^1$ (obtained by contracting the points in $I$).
Let $M=f^*\O_{\PP^1}(1)$ and $L=u_*(M)$. It follows from a local calculation in \cite{F} that 
$L$ is invertible and for $m\in U$ we have $L_m\in \Pic^{\uone}(\Sigma)$ satisfying the Lemma. 
\end{proof}

After shrinking $\oM_{0,n}$ to an open subset containing generic points of $\delta_{ijk}$ and $\delta_I$, 
this gives 
$$v(\delta_{ijk})=v(\delta_I).$$

In case (Z), $v(\delta_{ijk})\not\subset W^2$,
i.e. a general line bundle $L$ in $v(\delta_{ijk})$ has $h^0=2$ and it induces a map $f:\Sigma\ra\PP^1$ that collapses only the points
$i, j, k$ to the point $q$. Since $v(\delta_{ijk})=v(\delta_I)$, the map $f$ collapses the points in $I$ to the point $q$. It follows that $I=\{i,j,k\}$. 
This finishes case (Z).

In case (Y), since $\pi'(\delta_{ijk})$ has codimension~$1$, the map
$\pi'|_{\delta_{ijk}}$ generically has $1$-dimensional fibers, this implies that
$v(\delta_{ijk})\not\subset W^3$,
i.e. a general line bundle in $v(\delta_{ijk})$ has $h^0=3$ and gives an admissible map $f:\Sigma'\to\bP^2$
such that points $i,j,k$ belong to a line $H\subset\bP^2$. 
The corresponding point of $\oM_{0,n}$ is obtained by projecting $\Sigma'$ from a general point of $H$.
Note that the points  in $N\setminus\{i,j,k\}$ will be mapped to distinct points via this projection, hence no points in 
$N\setminus\{i,j,k\}$ will lie on the line $H$.

The same analysis for $\delta_I$ combined with the fact that $v(\delta_{ijk})=v(\delta_I)$ 
shows that via the map $f$ the points in $I$ are collinear. Since in Case (Y) $i,j\in I$, it follows that
the points in $I$ lie on $H$. This implies $I=\{i,j,k\}$.
\end{proof}

Finally, we analyze hypertrees that are not irreducible. Recall that we denote by $D_{\Gamma}$ the closure of $G^2(\Gamma)$ in $\M_{0,n}$.
 
\begin{Lemma}\label{weakness}
If $\Gamma$ is not irreducible and $D_\Gamma\neq\emptyset$, then for every irreducible $D$ component of $D_\Gamma$
there exists an irreducible hypertree $\Gamma'$
on  a subset $N'\subset N$ such that 
$$D=\pi^{-1} (D_{\Gamma'}),$$
where $\pi:\,\oM_{0,n}\to\oM_{0,N'}$ is a forgetful map.
\end{Lemma}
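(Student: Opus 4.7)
The plan is to identify, for each irreducible component $D$ of $D_\Gamma$, an irreducible hypertree $\Gamma'$ on a subset $N'\subseteq N$ such that $D=\pi^{-1}(D_{\Gamma'})$, where $\pi\colon\oM_{0,n}\to\oM_{0,N'}$ is the forgetful map. I would argue by induction on $n$, using the Brill--Noether description $D_\Gamma=\overline{G^2(\Gamma)}$ from Theorem~\ref{coolcondition} throughout.

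Since $\Gamma$ is not irreducible, there is a subset $S\subsetneq\{1,\ldots,d\}$ with $1<|S|<d$ for which equality holds in~\eqref{CondS}. I would choose $S$ minimal with this property. Set $N_1=\bigcup_{j\in S}\Gamma_j$, let $N'\subseteq N_1$ consist of those indices contained in at least two of the $\Gamma_j$ with $j\in S$, and put $\Gamma'_j:=\Gamma_j\cap N'$ for $j\in S$. I claim that $\Gamma'=\{\Gamma'_j\}_{j\in S}$ is an irreducible hypertree on $N'$: normalization reduces to the equality assumption on $S$ (after stripping the leaf vertices in $N_1\setminus N'$), convexity is inherited from $\Gamma$, and irreducibility is forced by the minimality of $S$. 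If the stripped collection happens to still be reducible, we apply the argument inductively on $|N|$.

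To prove $D=\pi^{-1}(D_{\Gamma'})$, the inclusion $D\subseteq\pi^{-1}(D_{\Gamma'})$ follows from examining a general point of $D\cap M_{0,n}$: it corresponds to an admissible $L\in\Pic^{\uone}(\Sigma_\Gamma)$ with $h^0(L)\ge 3$, hence a map $\Sigma_\Gamma\to\bP^2$; restricting to the sub-curve $\Sigma^S\subseteq\Sigma_\Gamma$ indexed by $S$ gives an admissible system on $\Sigma^S$ with $h^0\ge 3$ (via the Mayer--Vietoris sequence for the nodal gluing of $\Sigma^S$ with its complement), and projection from the same centre places the image in $G^2(\Gamma')\subseteq D_{\Gamma'}$. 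For the reverse inclusion, $\pi^{-1}(D_{\Gamma'})$ is irreducible of dimension $n-4$ (using $\dim D_{\Gamma'}=|N'|-4$ by Theorem~\ref{class} applied to $\Gamma'$, plus the general fibre dimension $n-|N'|$ of $\pi$); since $D$ is contained in $\pi^{-1}(D_{\Gamma'})$ and is itself a divisor (as a component of the exceptional locus of the birational map $v$), the two must coincide by irreducibility.

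The hardest step will be the combinatorial verification that $\Gamma'$ is genuinely an irreducible hypertree on $N'$: the leaf-stripping must preserve both axioms and the strictness of convexity that defines irreducibility. A secondary subtlety is that different components of $D_\Gamma$ may correspond to different minimal choices of $S$, so the correspondence $D\mapsto(\Gamma',N')$ is not canonical from the combinatorics of $\Gamma$ alone---it depends on the generic-point data of $D$, specifically on which sub-curve supplies the extra global section of $L$ in the Mayer--Vietoris decomposition. One must also confirm that every component of $D_\Gamma$ has codimension exactly one in $\oM_{0,n}$, which is not immediate from the abstract definition of $D_\Gamma$ as the closure of a Brill--Noether locus when $\Gamma$ is not irreducible.
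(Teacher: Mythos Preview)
Your argument has a genuine gap at the step where you claim the restriction of $L$ to the sub-curve $\Sigma^S$ has $h^0\ge 3$. Mayer--Vietoris does not give this: the restriction map $H^0(\Sigma,L)\to H^0(\Sigma^S,L|_{\Sigma^S})$ can have image of dimension only $2$. Geometrically, under the map $\Sigma\to\bP^2$ afforded by $|L|$, the sub-curve $\Sigma^S$ may land on a line. In that case the projected point of $M_{0,n}$ does \emph{not} lie in $\pi^{-1}(G^2(\Gamma'))$, so your inclusion $D\subseteq\pi^{-1}(D_{\Gamma'})$ fails for that component $D$.

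The paper's proof handles exactly this case. If $D\subseteq\pi^{-1}(D_{\Gamma'})$ one is done; otherwise, for a general point of $D$ the sub-curve indexed by $S$ maps to a line in $\bP^2$, so all vertices in $\bigcup_{j\in S}\Gamma_j$ are collinear. One then replaces the hyperedges $\{\Gamma_j\}_{j\in S}$ by the single hyperedge $\Gamma_0:=\bigcup_{j\in S}\Gamma_j$ to form a new hypertree $\Gamma''$ with strictly fewer hyperedges, checks $D\subseteq\pi^{-1}(D_{\Gamma''})$, and applies induction on $d$ (not on $n$). Your final paragraph gestures at ``which sub-curve supplies the extra global section'', but in your argument the set $S$ is chosen purely combinatorially before looking at $D$, so there is no mechanism to adapt $S$ to the component; the paper's dichotomy and the passage to $\Gamma''$ are what make the induction go through.
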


\begin{proof}
If $\Gamma'$ is an irreducible hypertree, then $D_{\Gamma'}$ is an irreducible divisor in $\oM_{0,N'}$
intersecting the interior. Since $\pi$ is flat with irreducible fibers along points in $M_{0,N'}$, $\pi^{-1}(D_\Gamma')$ is irreducible.
Hence, it is enough to prove $D\subset\pi^{-1} (D_{\Gamma'})$. Note, since $D_{\Gamma}\neq\emptyset$, we have $G^2(\Gamma)\neq\emptyset$.

We argue by induction on $d$. Let $S\subset\{1,\ldots,d\}$ be a subset such that \eqref{CondS} is an equality.
We may assume that $S$ is minimal with this property. Let $d'=|S|$, let $\Gamma'$ be a collection of $\Gamma_i$ for $i\in S$.
Let $N'=\cup_{i\in S}\Gamma_i$. Then $\Gamma'$ is almost a hypertree: all axioms are satisfied except possibly for the second axiom:
it could happen that there exists an index $i\in N'$ that belongs to only one subset $\Gamma'_j$.
In this case we can remove $i$ from $N'$ (and remove $\Gamma'_j$ from $\Gamma'$ if $|\Gamma'_j|=3$).
Continuing in this fashion, we get a subset $N'\subset N$ and a hypertree $\Gamma'$ on it. By minimality of $S$, 
$\Gamma'$ is irreducible.

Let $D$ be a component of $D_{\Gamma}$ (i.e., the closure of a component of $G^2(\Gamma)$). If 
$D\subseteq\pi^{-1} (D_{\Gamma'})$, then we are done. Assume now that 
$D$ is not contained in $\pi^{-1} (D_{\Gamma'})$. Then a dense open in $D$ is disjoint 
from $\pi^{-1} (G^2(\Gamma'))$; hence, a general element in $D\cap G^2(\Gamma)$ is obtained via projection from a map
$\Sigma\ra\PP^r$ ($r\geq2$) that maps $\Sigma'$ to a line. 
Let $$\Gamma''=(\Gamma\setminus\Gamma')\cup\{\Gamma_0\},\quad\hbox{where }
\Gamma_0=\bigcup_{\Gamma_i\in\Gamma'}\Gamma_i.$$

If there exists an index $i\in N$ that belongs to only one subset $\Gamma''_i$, we remove it. Let $N''$ be the remaining 
set if indices. It is easy to check that $\Gamma''$ is a hypertree on $N''$. Moreover, our assumptions imply that 
$D\subseteq\pi^{-1} (D_{\Gamma''})$. By our induction assumption, any component of $D_\Gamma''$ is the preimage by a 
forgetful map of some $D_{\tilde{\Gamma}}$ for some irreducible hypertree  $\tilde{\Gamma}$. 
\end{proof}

\section{Compactified Jacobians of Hypertree Curves}\label{smaps}

Our goal in this section is to prove Theorem~\ref{jacobianmain}: 
if $\Gamma$ is an irreducible hypertree then the hypertree divisor $D_\Gamma\subset\oM_{0,n}$
is contracted by a contracting birational map to the compactified Jacobian.

We start by considering any hypertree, not necessarily irreducible.
We extend the universal stable hypergraph curve $\Sigma^s/M_\Gamma$ to a curve over
$\oM_\Gamma$ in an obvious way. Let $\Sigma^s_\Omega$ be one of the geometric fibers.

\begin{Definition}
A coherent sheaf on $\Sigma^s_\Omega$ is called {\em Gieseker semi-stable} (resp. {\em Gieseker stable})
if it is torsion-free, has rank~$1$ at generic points of $\Sigma^s_\Omega$,  
and is semi-stable (resp. stable) with respect to the canonical polarization~$\omega_{\Sigma^s_\Omega}$.
\end{Definition}

The {\em compactified Jacobian} \cite{OS,Ca} $\oPic/\oM_\Gamma$
parametrizes  gr-equivalence classes of Gieseker semi-stable sheaves.
By \cite{Si}, it is functorial: consider the functor 
$$\ocPic:\, Schemes\to Sets$$
that assigns to a scheme $S$ the set of coherent sheaves on $\Sigma^s_S$ flat over $S$
and such that its restriction to any geometric fiber $\Sigma^s_\Omega$ is Gieseker semi-stable.
Then there exists a natural transformation 
$\ocPic\to h_{\oPic}$ 
which has the universal property: for any scheme $T$, any natural 
transformation     
$\ocPic\to h_T$ factors through a unique morphism $\oPic\to T$.

Over each geometric point of $\oM_\Gamma$,  
$\oPic$ is a stable toric variety of $\Pic^{\uzero}(\Sigma_\Omega)$ and its normalization
is a disjoint union of toric varieties.

\begin{Proposition}\label{stabledegree}\label{GiesekerThm}
A pull-back of an invertible sheaf in $\Pic^{\uone}(\Sigma_\Omega)$ is Gieseker stable on~$\Sigma_\Omega^s$.
\end{Proposition}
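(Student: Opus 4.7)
By the Caporaso balancing form of Gieseker stability with respect to $\omega_{\Sigma^s}$, I need to show that for every proper subcurve $Y\subsetneq\Sigma^s_\Omega$,
$$\bigl|\,2(d-2)\deg_Y L-d\,(\omega_{\Sigma^s}\!\cdot Y)\,\bigr|<(d-2)\,k_Y,$$
where $k_Y=|Y\cap Y^c|$, using $p_a(\Sigma^s)=d-1$ and $\deg L=d$. I would parametrize $Y$ by a pair $(S,T)$, with $S\subseteq\{1,\ldots,d\}$ indexing the original components $C_j\subset Y$ and $T\subseteq\{i\in N:v_i>2\}$ indexing the inserted $\bP^1$s $E_i\subset Y$. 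Since $\deg L|_{C_j}=1$, $\deg L|_{E_i}=0$, $\omega_{\Sigma^s}\cdot C_j=|\Gamma_j|-2$ and $\omega_{\Sigma^s}\cdot E_i=v_i-2$, the inequality becomes a purely combinatorial statement in $(S,T)$ and in $a_i:=|\{j\in S:i\in\Gamma_j\}|$. The symmetry $Y\mapsto Y^c$ swaps the two signs of the absolute value, so it suffices to prove one side, say $d\,\omega_{\Sigma^s}\!\cdot Y-2(d-2)|S|<(d-2)k_Y$; and by additivity of both sides on topologically disjoint unions (subcurves sharing no node), one may further assume $Y$ is connected.

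The boundary case $S=\emptyset$, in which $Y$ is a disjoint union of inserted components, reduces to $\sum_{i\in T}(v_i-d)<0$. I would handle it via the preliminary lemma that $v_i<d$ for every vertex of an irreducible hypertree with $d\geq 3$: otherwise a vertex $i_0$ lying in every $\Gamma_j$, combined with the strict convexity $|\Gamma_{j_1}\cap\Gamma_{j_2}|\leq 1$, forces $\Gamma_{j_1}\cap\Gamma_{j_2}=\{i_0\}$, so that every vertex $i\neq i_0$ would have valence at most one, contradicting the valence axiom.

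For the main range $1\leq|S|\leq d-1$, the two combinatorial inputs are the strict convexity
$\sum_{i:\,a_i\geq 2}(a_i-1)<2(|S|-1)$ (valid for $1<|S|<d$ when $\Gamma$ is irreducible) and the size bound $|\Gamma_j|\leq d-1$ (immediate from $|\Gamma_j\cap\Gamma_{j'}|\leq 1$ plus $v_i\geq 2$). My strategy is to expand each side of the target inequality as a sum of per-index contributions $\phi(i;S,T)$ depending on the triple $(v_i,a_i,i\in T?)$ and to compare the total against $4(d-1)|S|$. The main obstacle is the interaction between $S$ and $T$: for each $i$ with $v_i>2$, the choice of whether $E_i\subset Y$ perturbs both $\omega_Y$ and $k_Y$, and the inequality must be verified for all $2^{|\{i:v_i>2\}|}$ choices of $T$ for each fixed $S$. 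I expect to resolve this by checking that toggling $i$ in or out of $T$ changes the quantity $d\omega_Y-2(d-2)|S|-(d-2)k_Y$ monotonically with a sign controlled by $v_i-d$ and $a_i$, so that only the extremal choices of $T$ need to be analyzed; the resulting inequality in $S$ alone is then a direct consequence of strict convexity together with $|\Gamma_j|\leq d-1$.
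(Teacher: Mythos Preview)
Your plan is essentially the paper's proof. Both start from the same balancing inequality, use the $Y\leftrightarrow Y^c$ symmetry to fix a sign, and then make the key ``toggling'' observation that the quantity $d\,\omega_Y-2(d-2)|S|-(d-2)k_Y$ is maximized by including every white component $E_i$ adjacent to a black component of $Y$; the paper phrases this as ``we can assume all white lines hit by a black component in $Y$ are also in $Y$''. Your toggling computation (the change is $2\bigl(v_i-d+(d-2)a_i\bigr)$, positive for $a_i\ge 1$ and negative for $a_i=0$) is exactly this reduction.

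Two points need correcting. First, the proposition is stated (and proved in the paper) for \emph{any} hypertree, not only irreducible ones; your invocations of strict convexity and of $|\Gamma_j\cap\Gamma_{j'}|\le 1$ are unnecessary. In particular, $v_i<d$ holds for every hypertree: if $v_{i_0}=d$ then $\sum_{i\ne i_0}v_i=\bigl(\sum_j|\Gamma_j|\bigr)-d=(n-2+2d)-d=n+d-2<2(n-1)$, contradicting $v_i\ge 2$. Second, the combinatorial input you name for the final step is not the right one: $|\Gamma_j|\le d-1$ plays no role. After the toggling reduction the expression becomes
\[
2(d-1)\sum_{a_i\ge1}(a_i-1)\;+\;2\sum_{a_i\ge1}(v_i-1)\;-\;4(d-1)|S|,
\]
and what you need is the pair of inequalities $\sum_{a_i\ge1}(a_i-1)\le 2(|S|-1)$ (convexity) and $\sum_{a_i\ge1}(v_i-1)\le \sum_{i\in N}(v_i-1)=2(d-1)$ (the global valence identity from normalization). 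At least one of these is strict for $1\le|S|<d$: if the first is an equality and every point of $N$ is covered, then $\sum_{j\notin S}|\Gamma_j|=2(d-|S|)$, impossible since each $|\Gamma_j|\ge 3$. This is precisely the paper's endgame, where the two ingredients appear as $\sum p_i\ge\sum(i-2)b_i+2$ and $\sum(i-1)p_i\le 2d-2$.
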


\begin{proof}
Let $X=\Sigma^s$ be a stable hypertree curve.
We call an irreducible component of $X$ black if it is a proper transform
of a component of~$\Sigma$. Otherwise we call it white.
It is well-known that slope stability on reducible curves
reduces to the following {\em Gieseker's basic inequality}.
For any proper subcurve $Y\subset X$, we have
\begin{equation}\label{Gieseker_Ineq}
\left|b(Y)-b(X){m(Y)\over m(X)}\right|<{1\over2}\#Y.
\cooltag\end{equation}
Here, 
$$b(S)=\deg L|_S,\quad m(S)=\deg\omega_{\Sigma^s}|_S,\quad\hbox{\rm and}\quad 
\#Y:=|Y\cap\overline{ X\setminus Y}|.$$
In our case, $b(S)$  is just  the number of black components in $S$, and we have
$$m(X)=2g-2=2d-4.$$
We denote $m:=m(Y)$, $b=b(Y)$, and have to show that 
\begin{equation}\label{Gieseker_Ineq_2}
|(2d-4)b-dm|<(d-2)\#Y.
\cooltag\end{equation}

\begin{Review} 
It is easy to see that the complementary subcurve $Y^c:=\overline{ X\setminus Y}$
satisfies \eqref{Gieseker_Ineq} if and only if $Y$ does.
Hence, by interchanging $Y$ with $Y^c$, we can assume that 
\begin{equation}\label{condition_yyy}
dm-(2d-4)b\geq0
\cooltag\end{equation}
and try to show that
\begin{equation}\label{Gieseker_Ineq_3}
dm-(2d-4)b-(d-2)\#Y<0
\cooltag\end{equation}
\end{Review}

\begin{Review}
Consider a white component $w_1$ of $\Sigma^s$ which is not in $Y$ 
but such that at least one adjacent black component is in $Y$. 
Enumerate the black components in $Y$ 
intersecting $w_1$ as $b_1, b_2, ..., b_i$, and the rest as $b_{i+1}, ..., b_k$, with $1\leq i\leq k$ (and $k\geq 3$).  
We claim that adding $w_1$ to $Y$ does not decrease the left hand side in \eqref{Gieseker_Ineq_3}
and increases the left hand side in \eqref{condition_yyy}.
We~only need to show that $dm-(d-2)\#Y$ increases. Adding $w_1$ to $Y$ increases 
$m$ by $k-2$. If the original value of $\#Y$ is $x+i$, where $i$ is the contribution from $w_1$ intersecting $b_1$ through $b_i$, then the value after adding $w_1$ to $Y$ is $x+k-i$. Hence, $\#Y$ increases by $k-2i$.
Then the difference of values of the left hand side is
$$d(k-2)-(d-2)(k-2i)=(d-2)i+k-d\ge (d-2)+3-d>0.$$
Hence we can assume that all white lines hit by a black component in $Y$ are also in $Y$:
by showing \eqref{Gieseker_Ineq_3} in this situation, we show \eqref{Gieseker_Ineq_3} in general.
\end{Review}

\begin{Review}
Let $P_i$ be the number of singular points of $\Sigma$ of valence~$i$.
Then 
$$\sum P_i=n\quad\hbox{\rm and}\quad\sum iP_i=2d+n-2.$$
This is because $\sum iP_i$ is the total number of times a singular point is hit by a
component in $\Sigma$. This is equal to $\sum |\Gamma_\alpha|$, which by the 
normalization axiom equals $2d+n-2$. So we have
\begin{equation}\label{Gieseker_Ineq_6}
\sum(i-1)P_i=2d+n-2-n=2d-2.
\cooltag\end{equation}
\end{Review}

Let $p_i$ be the number of singular points of $\Sigma$ 
of valence $i$ hit by the image of $Y$.
Then \eqref{Gieseker_Ineq_6} implies that
\begin{equation}\label{Gieseker_Ineq_5}
\sum_i(i-1)p_i\leq2d-2.
\cooltag\end{equation}
with strict inequality if $Y$ does not cover all the points in $N$.

Let $b_i$ be the number of black components in $Y$ with $i$ singular points.
By the convexity axiom, we have $\sum_ip_i\geq \sum_i(i-2)b_i+2$. If $Y$ covers all the points
in $N$ then we claim that this inequality is strict: otherwise, as $Y$ is a proper subcurve of $X$,
the convexity axiom would be violated when we consider the components of $Y$ and one extra 
component that is not in $Y$. This inequality together with \eqref{Gieseker_Ineq_5} (at least one being strict) 
implies that 
\begin{equation}\label{trururururu}
\sum_i (i-d)p_i+(d-1)\sum_i(i-2)b_i<0.
\cooltag\end{equation}

Let $l'_i$ be the number of isolated white components with $i$ singularities 
(i.e., those not hit by any black components in $Y$). Since we obviously have
$\sum(i-d)l'_i\le 0$, \eqref{trururururu} implies that
\begin{equation}\label{Gieseker_Ineq_4}
\sum_i (i-d)p_i+\sum_i(i-d)l'_i+(d-1)\sum_i(i-2)b_i<0.
\cooltag\end{equation}

We claim that this inequality is equivalent to \eqref{Gieseker_Ineq_3}.
Let $l_i$ be the number of white components in $Y$ with $i$ singular points.
Then 
$$m=\sum(i-2)b_i+\sum(i-2)l_i=\sum(i-2)b_i+\sum(i-2)l_i'+\sum(i-2)p_i,$$
since $\sum(i-2)l_i'$ is the contribution to $\sum(i-2)l_i$ by isolated white components in $Y$ and 
$\sum(i-2)p_i$ is the contribution by white components hit by black components in $Y$,
which we can assume are all in $Y$.
We also have
\begin{equation}\label{PndY_Eq}
\#Y=\sum il'_i+\sum ip_i-\sum ib_i,
\cooltag\end{equation}
where 
$\sum il'_i$ is the contribution to $\#Y$ by isolated white components in $Y$,
$\sum ip_i$ is the total number of times a point in the image of $Y$ in $\Sigma$
is hit by a black component (not necessarily in $Y$) and $\sum ib_i$ is the total number of times a black component
 in $Y$ hits one of these points, so their difference is the contribution to $\#Y$ by everything except isolated white components.

So we have
$$dm-(2d-4)b-(d-2)\#Y=$$
$$
2\sum(i-d)p_i+2\sum(i-d)l_i'+2(d-1)\sum (i-2)b_i<0$$
by \eqref{Gieseker_Ineq_4}.
\end{proof}

\begin{Corollary}
$\Pic^{\uone}\subset\oPic$.
\end{Corollary}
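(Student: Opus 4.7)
The plan is to deduce this corollary directly from Proposition~\ref{GiesekerThm} together with the defining universal property of $\oPic$. Recall that $\oPic$ corepresents (via the universal property stated just after the definition of $\oPic$) the functor $\ocPic$ of $S$-flat families of torsion-free generic-rank-one sheaves on $\Sigma^s$ whose geometric fibers are Gieseker semi-stable with respect to $\omega_{\Sigma^s}$.

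Any element of $\Pic^{\uone}$ is in particular an invertible sheaf $L$ on the relevant fiber of $\Sigma^s/\oM_\Gamma$, and is therefore automatically torsion-free of generic rank one. Proposition~\ref{GiesekerThm} shows that $L$ is moreover Gieseker \emph{stable}, hence in particular semi-stable. Applying this fiberwise to the universal line bundle on $\Sigma^s\times_{\oM_\Gamma}\Pic^{\uone}$, I obtain an object of $\ocPic(\Pic^{\uone})$, and the universal property of $\oPic$ then produces a morphism
\[
\iota\colon\Pic^{\uone}\longrightarrow\oPic
\]
of $\oM_\Gamma$-schemes.

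It remains to check that $\iota$ is an inclusion. Gr-equivalence classes of Gieseker stable sheaves are singletons, so two elements of $\Pic^{\uone}$ mapping to the same point of $\oPic$ must be isomorphic as sheaves on $\Sigma^s$, hence equal in $\Pic^{\uone}$; thus $\iota$ is injective on points. Openness of the stable locus inside the semi-stable locus upgrades this to the statement that $\iota$ is an open embedding. The only point requiring mild care is a bookkeeping check that the chosen multi-degree $\uone$ lands in a single component of $\oPic$ (so that no distinct polystable limits can coincide with invertible sheaves of the given multi-degree); this follows at once from the fact that multi-degree is locally constant on flat families and from the strict inequality in \eqref{Gieseker_Ineq_2} proved in Proposition~\ref{GiesekerThm}, which rules out the strictly semi-stable boundary on the $\Pic^{\uone}$-stratum.
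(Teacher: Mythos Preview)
Your proposal is correct and follows exactly the approach the paper intends: the paper states the corollary with no proof, treating it as immediate from Proposition~\ref{GiesekerThm} together with the universal property of $\oPic$ recalled just before. You have simply spelled out the details (stability $\Rightarrow$ semi-stability $\Rightarrow$ object of $\ocPic$ $\Rightarrow$ morphism to $\oPic$; stability $\Rightarrow$ gr-equivalence is isomorphism $\Rightarrow$ injectivity) that the paper leaves implicit.
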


\begin{Review}\label{CaporasoStuff}
Let $\oPicone$ be the normalization of the closure of $\Pic^{\uone}$ in $\oPic$.
It compactifies the $\bG_m^g$-torsor $\Pic^{\uone}$ over $M_\Gamma$ by adding boundary divisors
of two sorts, {\em vertical} and {\em horizontal}.
Vertical boundary divisors are divisors over the boundary of $\oM_\Gamma$.
The boundary divisors of $\oM_\Gamma$ are parametrized by subsets
$I\subset \Gamma_\alpha$ with $|I|,|\Gamma_\alpha\setminus I|>1$.
The corresponding hypertree curve $\Sigma'$ generically has $d+1$ irreducible component, 
with the $\alpha$'s component broken into a nodal curve with two components, 
$C_\alpha^1$ (with singular points indexed by $I$) and $C_\alpha^2$ 
(with singular points indexed by $\Gamma^\alpha\setminus I$).
There could be two corresponding vertical boundary divisors. Generically
they parametrize line bundles on $\Sigma'$ that have degree $1$ on $C_\alpha^1$ and degree $0$
on $C_\alpha^2$ (resp.~degree $0$ on $C_\alpha^1$ and degree $1$
on $C_\alpha^2$) and degree $1$ on the remaining components.
Notice that apriori it is not clear that these loci are non-empty divisors:
one has to check that these line bundles are Gieseker semi-stable.

Horizontal boundary divisors are toric (over a geometric point of $M_\Gamma$) and can be described as follows.
Choose a node in $\Sigma^s$ and let $\hat\Sigma$ be a curve obtained from $\Sigma^s$
by inserting a strictly semistable $\bP^1$ at the node.
Start with the multidegree $\uone$ and 
choose a multidegree $\hat d$ on $\hat\Sigma$ such that the degree on
the extra $\bP^1$ is $1$ and the degree on one of the neighboring black components 
is lowered from $1$ to $0$ (lowering the degree on a white component would lead to an unstable sheaf).
The corresponding Gieseker semi-stable sheaves on $\Sigma^s$ are push-forwards of invertible sheaves $\hat F$ on $\hat\Sigma$
of a given Gieseker semi-stable multidegree with respect to the stabilization morphism $\hat\Sigma\to\Sigma^s$.
Note that this creates a sheaf which is not invertible at the node.
An easy count shows that potentially this gives as many as $2d-2+n$ horizontal divisors.
\end{Review}

\begin{Lemma}
If $\Gamma$ is an irreducible hypertree then $\oPicone$ has a maximal possible number of horizontal ($2d-2+n$)
and vertical boundary divisors.
\end{Lemma}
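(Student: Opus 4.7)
My plan is to exhibit, for each candidate vertical and each of the $2d-2+n$ candidate horizontal boundary divisors described in Review \ref{CaporasoStuff}, a Gieseker semi-stable sheaf in the corresponding locus, so that non-emptiness is established; codimension one in $\oPicone$ over $\oM_\Gamma$ is then built into the parametrization in Review \ref{CaporasoStuff}.

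For vertical divisors, fix $\alpha$ and a splitting $\Gamma_\alpha = I \sqcup J$ with $|I|, |J| \geq 2$. Over the generic point of the corresponding boundary stratum of $\oM_\Gamma$, the $\alpha$-th component of $\Sigma$ breaks as $C_\alpha = C_\alpha^1 \cup C_\alpha^2$, producing a new curve $\Sigma'$. I would consider both line bundles on $\Sigma'$ of total multidegree $\uone$ assigning $(1,0)$ or $(0,1)$ to the pair $(C_\alpha^1, C_\alpha^2)$, and verify their Gieseker semi-stability by rerunning the argument of Proposition \ref{GiesekerThm} with $\Gamma$ replaced by the collection $\Gamma'$ obtained by replacing $\Gamma_\alpha$ with $I \cup \{*\}$ and $J \cup \{*\}$ (where $*$ is a new index for the node $C_\alpha^1 \cap C_\alpha^2$). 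This $\Gamma'$ inherits the convexity and normalization axioms from $\Gamma$; since semi-stability requires only the weak form of \eqref{Gieseker_Ineq}, the proof of Proposition \ref{GiesekerThm} carries over.

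For horizontal divisors, given a node $p$ of $\Sigma^s$ and an adjacent black component $b_0$, I would insert a strictly semi-stable $\bP^1$ at $p$ to form $\hat\Sigma$ and take any invertible sheaf $\hat F$ on $\hat\Sigma$ of multidegree $\uone$ modified by lowering the degree of $b_0$ from $1$ to $0$ and raising the inserted $\bP^1$ from $0$ to $1$. The push-forward $L = \pi_* \hat F$ to $\Sigma^s$ is torsion-free of generic rank $1$ and non-invertible at $p$, and its Gieseker semi-stability is equivalent to that of $\hat F$ on $\hat\Sigma$, which follows by the same computation as Proposition \ref{GiesekerThm} with the asymmetry from $b_0$ and the inserted $\bP^1$ tracked through \eqref{Gieseker_Ineq_3}. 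A count of (node, adjacent black component) pairs gives $2P_2 + \sum_{v_i \geq 3} v_i$; using the normalization axiom, this total equals $2d-2+n$ as asserted.

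The main technical obstacle is the case analysis in the modified Gieseker inequality. For vertical divisors, the asymmetry between $C_\alpha^1$ and $C_\alpha^2$ forces a split into cases depending on whether the test subcurve $Y$ contains one, both, or neither of these components, and one must check that the strict inequality from irreducibility of $\Gamma$ is never needed beyond what $\Gamma'$ preserves (recall $\Gamma'$ is typically not irreducible, so one must settle for weak inequality). For horizontal divisors, the analogous care must be taken around subcurves containing $b_0$ or the inserted $\bP^1$, with equality in \eqref{Gieseker_Ineq_3} occurring precisely at the subcurves exhibiting the expected strict semi-stability of $L$.
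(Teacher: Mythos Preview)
Your overall strategy (check Gieseker semi-stability for each candidate multidegree, then conclude non-emptiness) and your count of $2d-2+n$ horizontal divisors are both correct and match the paper's approach.

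There is, however, a genuine error and a misframing of the key point.  First, your claim that $\Gamma'$ inherits the normalization axiom is false: adding the new vertex $*$ gives $|N\cup\{*\}|-2=n-1$, while replacing $\Gamma_\alpha$ by $I\cup\{*\}$ and $J\cup\{*\}$ leaves $\sum_\beta(|\Gamma'_\beta|-2)=n-2$ unchanged.  So $\Gamma'$ is not a hypertree, and Proposition~\ref{GiesekerThm} cannot be invoked for it; the detour through $\Gamma'$ buys nothing.

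Second, and more importantly, you have the role of irreducibility backwards.  You write that one must check that the strict inequality from irreducibility ``is never needed beyond what $\Gamma'$ preserves.''  In fact irreducibility of $\Gamma$ is exactly what is needed, and this is the paper's one-sentence proof: the modified multidegree (vertical or horizontal) differs from $\uone$ by at most $1$ on a single component, so in the chain of inequalities of Proposition~\ref{GiesekerThm} the quantity $b(Y)$ is off by at most $1$ from the value used there.  Irreducibility means the convexity axiom \eqref{CondS} is \emph{strict} by at least $1$ for all $S$ with $1<|S|<d$, and this extra unit of slack propagates through \eqref{trururururu}--\eqref{Gieseker_Ineq_4} to absorb the deficit.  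The paper summarizes this as ``a stronger (by~$1$) inequality satisfied by an irreducible hypertree compensates for the difference (by~$1$) in the multidegree.''  You should drop $\Gamma'$ entirely and rerun the argument of Proposition~\ref{GiesekerThm} directly on $\Sigma^s$ (resp.\ $\hat\Sigma$) with the modified multidegree, invoking irreducibility of $\Gamma$ at the step where convexity is used.
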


\begin{proof}
This is a numerical question: one has to check that the corresponding multidegrees are Gieseker-stable.
The proof is parallel to the proof of Proposition~\ref{GiesekerThm}: a stronger (by~$1$)
inequality satisfied by an irreducible hypertree compensates for the difference (by $1$) 
in the multidegree. We omit this calculation.
\end{proof}

\begin{Example}
The papers \cite{OS} and \cite{Al} contain a recipe for presenting
the polytope of $\oPicone$ as a slice of the hypercube. We won't go into the details here but let us give our favorite example.
Let $\Sigma$ be the Keel--Vermeire curve with $4$ components
indexed by $\{1,2,3,4\}$.
Then the polytope is the rhombic dodecahedron of  Fig.~\ref{garnet}.
\begin{figure}[htbp]
\includegraphics[width=2in]{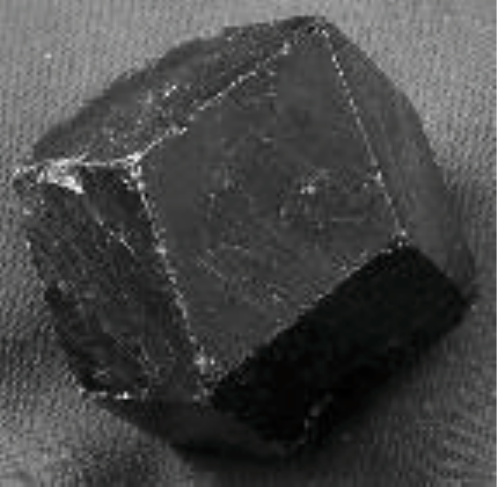}
\caption{\small Compactified Jacobian of the Keel--Vermeire curve.
}\label{garnet}
\end{figure}
The normals to its faces are given by roots $\alpha_{ij}=\{e_i-e_j\}$ of the root system $A_3$, where $i,j\in\{1,2,3,4\}$, 
$i\ne j$.
To describe a pure sheaf from the corresponding toric  codimension~$1$ stratum,
consider a quasi-stable curve $\Sigma_{ij}$
obtained by inserting a $\bP^1$ at the node of $\Sigma$ where $i$-th and $j$-th 
components intersect. Now just pushforward to $\Sigma$ an invertible sheaf
that has degree $1$ on this $\bP^1$ and at any component of $\Sigma_{ij}$ other than the proper transform
of the $i$-th component of $\Sigma$ (where the degree is $0$). 
\end{Example}

Now we can prove Theorem~\ref{jacobianmain}.

\begin{proof}
Our proof is parallel to the proof of irreducibility of $D_\Gamma$ in Lemma~\ref{irreducible}.
Consider the birational map $v:\,\oM_{0,n}\dra\oPicone$. Note that $\oPicone$ is in general not
$\bQ$-factorial. The map $v$ contracts only one divisor intersecting $M_{0,n}$, namely $D_\Gamma$.
The map $v$ is necessarily  contracting if
$$\rho(\oM_{0,n})-\rho'(\oPicone)=1+|\{\hbox{\rm boundary divisors contracted by $v$}\}|.$$
(Here $\rho'(X)$ denotes the rank of the class group $\Cl(X)$). 
Computation of this number shows that it suffices to check that the following boundary divisors
are {\em not} contracted by $v$: 
\begin{itemize}
\item $\delta_{ij}$ for $\{i,j\}\not\subset\Gamma_\alpha$;
\item $\delta_{I}$ for $I\subset \Gamma_\beta$.
\end{itemize}
We use the commutative diagram of rational maps
(with $v$ and the Abel map not everywhere defined)

$$\begin{CD}
\oM_{0,n+1} @>{\pi}>> \prod_\alpha\oM_{0,\Gamma_\alpha\cup\{n+1\}}\\
@V{\pi_N}VV @VV{u}V \\
\oM_{0,n} @>v>> \oPicone
\end{CD}$$

We lift boundary divisors of $\oM_{0,n}$ defined above to boundary divisors
$\delta_{ij}$ and $\delta_I$ of $\oM_{0,n+1}$, respectively.
By Lemma~\ref{contracted boundary}, these divisors are not contracted by $\pi$.
Notice that $\pi(\delta_{ij})$ and $\pi(\delta_{\Gamma_\beta})$ are not boundary
divisors of $\oM_{\Gamma_\cup\{n+1\}}$ and are therefore mapped to $\Pic^{\uone}$.
We will prove in Lemma \ref{dominance} that $v(\delta_{ij})$ and $v(\delta_{\Gamma_\beta})$
are divisors in $\oPicone$ (but not boundary divisors).

Next we consider $\delta_I$ such that $1<|I|<|\Gamma_\alpha|-1$.
This divisor is mapped to a divisor 
$$\{\delta_I\subset\oM_{0,\Gamma_\alpha\cup\{n+1\}}\}\times\prod_{\beta\ne\alpha}\oM_{0,\Gamma_\beta\cup\{n+1\}}\subset
\oM_{\Gamma_\cup\{n+1\}}.$$
Note that the Abel map can be extended to the interior of this divisor and maps it 
to the corresponding vertical boundary divisor of  $\oPicone$. By Lemma \ref{dominance} (iii), this map is dominant.

Finally, consider $\delta_{\Gamma_\alpha\setminus\{i\}}$.
This divisor is mapped to a divisor 
$$\{\delta_{i,n+1}\subset\oM_{0,\Gamma_\alpha\cup\{n+1\}}\}\times\prod_{\beta\ne\alpha}\oM_{0,\Gamma_\beta\cup\{n+1\}}\subset
\oM_{\Gamma_\cup\{n+1\}}.$$
This divisor maps onto the horizontal boundary divisor
that corresponds to the $i$-th node of the $\alpha$-th irreducible component (use Lemma \ref{dominance} (iv)).

\begin{Lemma}\label{dominance}
The Abel map restricted to $\pi(\delta_I)$ generically has one-dimensional fibers if $I$ is either contained in some $\Gamma_\beta$ or if 
$|I|=2$.

\end{Lemma}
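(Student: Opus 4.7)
The plan is to exploit the commutative square $u\circ\pi = v\circ\pi_N$ established in the proof of Theorem~\ref{jacobianmain}. Write $\tilde\delta_I$ for the divisor $\delta_I\subset\oM_{0,n+1}$ appearing in the statement (so the $(n+1)$-st marking lies on the component opposite~$I$) and $\delta_I$ for its image $\pi_N(\tilde\delta_I)\subset\oM_{0,n}$. Under our hypothesis on $I$, Lemma~\ref{contracted boundary} applied with $J=N\setminus I$ (so that $J^c=I$ satisfies $|J^c|=2$ or $J^c\subset\Gamma_\beta$) shows that $\tilde\delta_I$ is \emph{not} contracted by $\pi$. Since $\pi_N|_{\tilde\delta_I}:\tilde\delta_I\to\delta_I$ is surjective with generic fibre $\bP^1$ (obtained by moving the $(n+1)$-st point along the component opposite~$I$), the commutative diagram forces the generic fibre of $u|_{\pi(\tilde\delta_I)}$ to be one-dimensional precisely when $\dim v(\delta_I)=n-4$. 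It therefore suffices to extend $v$ over a generic point of $\delta_I\subset\oM_{0,n}$ and show that the extension is generically injective.

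For $|I|=2$, say $I=\{i,j\}$ with $\{i,j\}\not\subset\Gamma_\alpha$ for any $\alpha$ (the case $\{i,j\}\subset\Gamma_\alpha$ falls under the other hypothesis and may be assumed), no component $C_\alpha$ of the hypertree curve $\Sigma$ contains both singular points indexed by $i,j$. At a general point of $\delta_{ij}\subset\oM_{0,n}$, with target stable curve $T=T_1\cup T_2$ and $i,j\in T_1$, I fix a morphism $T\to\bP^1$ collapsing $T_1$ to a single point $q$ and identifying $T_2$ with $\bP^1$; this induces a morphism $f:\Sigma\to\bP^1$ with $f(i)=f(j)=q$ and all other singular points mapped to distinct points. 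Each $f|_{C_\alpha}$ is an isomorphism, so $L=f^*\cO(1)\in\Pic^{\uone}$ is well-defined and extends~$v$. Generic injectivity is immediate: $h^0(L)=2$, so $|L|$ recovers $f$ up to $\PGL_2$, and hence the configuration $\{f(k)\}_{k\in N}$ with the single collision $f(i)=f(j)$, which is precisely the $\delta_{ij}$-datum.

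For $I\subset\Gamma_\beta$ I propose to degenerate $\Sigma$ to a nodal curve $\Sigma'$ by inserting extra nodes in every component whose markings straddle the partition $I\sqcup(N\setminus I)$ in a way that would prevent a consistent map to~$T$; on $\Sigma'$ I define $f:\Sigma'\to T$ by mapping each sub-component isomorphically to whichever of $T_1,T_2$ contains its markings. The line bundle $f^*\cO_T(1)$, after twisting by divisors on the ``$T_1$-side'' sub-components to restore multidegree~$\uone$, yields a point of the vertical boundary divisor of $\oPicone$ associated to the splitting $(\beta,I)$ described in~\ref{CaporasoStuff}. Generic injectivity will come from reading off the combinatorics of the break from the stratum together with the configuration on~$T$ from the line bundle; together these recover the original point of~$\delta_I$.

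The main obstacle is the combinatorial bookkeeping in the second case: as $|\Gamma_\alpha\cap I|$ and $|\Gamma_\alpha\cap(N\setminus I)|$ vary across components, the choice of which components to break versus which to stabilize becomes delicate, and one must verify in each configuration that after the prescribed twisting the resulting sheaf has multidegree~$\uone$, is Gieseker-stable (cf.~Proposition~\ref{GiesekerThm}), and varies in the appropriate stratum of~$\oPicone$ with the full $n-4$ dimensions as the $\delta_I$-point moves. The irreducibility of~$\Gamma$, via the preceding lemma guaranteeing the maximal collection of vertical boundary divisors, supplies the necessary structural input. Once $\dim v(\delta_I)=n-4$ is established in both cases, the dimension identity $(n-3)-(n-4)=1$ yields the one-dimensional generic fibres of $u|_{\pi(\tilde\delta_I)}$ and completes the proof.
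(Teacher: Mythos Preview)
Your framework via the square $u\circ\pi=v\circ\pi_N$ is sound, and for $|I|=2$ the extension of $v$ to a generic point of $\delta_{ij}\subset\oM_{0,n}$ by $L=f^*\cO(1)$ is the right construction (compare Lemma~\ref{moonwalk2}). The genuine gap is the claim that generic injectivity is ``immediate'' because $h^0(L)=2$: this equality is exactly what must be proved. A~priori $v(\delta_{ij})$ could fall inside $W^2$, in which case $|L|$ contains many pencils and $L$ no longer determines~$f$. The dimension count $\dim W^2=n-6$ versus $\dim\delta_{ij}=n-4$ does not help without first bounding $\dim v(\delta_{ij})$ from below, which is precisely what you are trying to establish. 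For $I\subset\Gamma_\beta$ you give only a sketch; the route through degenerating $\Sigma$, twisting, and Gieseker--stability checks is substantially harder than necessary and is not carried out.

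The paper bypasses both problems by a single reduction that simultaneously fills your gap. Set $\bar N=(N\setminus I)\cup\{p\}$ and let $\bar\Gamma$ be the contracted collection on $\bar N$ obtained by replacing every index in $I$ by~$p$. The hypothesis on $I$ together with irreducibility of $\Gamma$ is exactly the input to Lemma~\ref{max size}, which yields $\Cap(\bar\Gamma)=|\bar N|-2$; then Theorem~\ref{size/explicit} and Remark~\ref{more general} give that the product-of-forgetful-maps $\bar\pi$ for $\bar\Gamma$ is birational onto its image and that the associated Abel map $\bar u$ has one-dimensional generic fibres on $\Im\bar\pi$. One then identifies the interior $\delta_I^o\subset\oM_{0,n+1}$ with $\tilde G^1_{\bar\Gamma}$ (times an $M_{0,\Gamma_\beta\cup\{p\}}$-factor when $I=\Gamma_\beta$, and analogously in the remaining sub-cases) and checks that under the collapsing map $\rho:\Sigma\to\bar\Sigma$ the restrictions of $\pi$ and $u$ correspond to $\bar\pi$ and $\bar u$ via pull-back by~$\rho^*$. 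No degeneration of $\Sigma$ is required, and your missing step $h^0(L)=2$ becomes the statement $G^2(\bar\Gamma)\subsetneq G^1(\bar\Gamma)$, which is precisely what maximum capacity provides.
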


\begin{proof}
Let $\ov{N}=(N\setminus I)\cup\{p\}$. Denote by $\delta^o_I$ the interior of the boundary $\delta_I$:
$$\delta^o_I\cong M_{0,I\cup\{p\}}\times M_{0,\ov N\cup\{p\}}.$$

Let  $\ov\Gamma$ be the collection of subsets of 
$\ov{N}$ obtained by identifying the points in $I$ with $p$ (and throwing away any subsets with fewer than three elements). 
Note that $\ov\Gamma$ has maximum capacity (moreover, in case (ii), (iii), (iv) $\ov\Gamma$ is a hypertree on $\ov N$)
and therefore, Remark \ref{more general} applies. 

We denote by $\ov\Sigma$ the corresponding hypertree curve and by $\Pic^{\uone}_{\ov\Gamma}$ 
the relative Picard scheme of line bundles of multi-degree $(1,\ldots,1)$. Similarly, we let $G^1_{\ov\Gamma}$, 
$\tilde{G}^1_{\ov\Gamma}$, etc be the corresponding Brill-Noether loci. 
We will use the usual commutative diagram of morphisms (with $\ov\pi$ the product of forgetful maps
and $\ov{u}$ the Abel map corresponding to $\ov\Gamma$):

$$\begin{CD}
M_{0,\ov N\cup\{p\}} @>{\ov\pi}>> \prod_\alpha M_{0,\ov\Gamma_\alpha\cup\{p\}}\\
@V{\ov\pi_N}VV @VV{\ov{u}}V \\
\oM_{0,\ov N} @>>> \Pic^{\uone}_{\ov\Gamma}
\end{CD}.$$

The main observation that we will use is that generically along the image of $\ov\pi$, the Abel map $\ov{u}$ has one-dimensional fibers.

Consider first the case when $|I|=2$ for $I\not\subset\Gamma_\alpha$ for any $\alpha$. We have:
$$\de^o_I\cong M_{\ov N\cup\{p,n+1\}}\cong \tilde{G}^1_{\ov\Gamma}.$$

A point  $[m]$ in $M_{0,\ov N\cup\{p,n+1\}}$ corresponds via the above isomorphism to a morphism
$\ov{f}:\ov\Sigma\ra\PP^1$ and an admissible section $\ov{s}$ of $\ov{f}^*\O(1)$. By abuse of notation
we consider $[m]$ as a point of $\de^o_I$. Then $\pi([m])\in M_{\Gamma\cup\{n+1\}}$ corresponds 
to a pair $(\Sigma,s)$, where $s$ is an admissible section such that
$s=\rho^*\ov{s}$, where $\rho:\Sigma\ra \ov\Sigma$ is the map that collapses $i, j$ to $p$.
Case (i) now follows from the following commutative diagram:
$$\begin{CD}
M_{0,\ov N\cup\{p\}} @>{\ov{\pi}}>> \ov\pi(M_{0,\ov N\cup\{p\}}) @>{\ov{u}}>> \ov{u}(\ov\pi(M_{0,\ov N\cup\{p\}}))\\
@V{\cong}VV @V{\cong}V{\rho^*}V  @VV{\rho^*}V\\
\de^0_I @>{\pi}>>\pi(\de^0_I) @>{u}>>u(\pi(\de^0_I)).
\end{CD}$$
as the vertical maps (which are given by pull-back by $\rho$) are bijective.

Consider now case when $I=\Gamma_\beta$. A point  $[m]$ in $M_{0,\ov N\cup\{p\}}\cong \tilde{G}^1_{\ov\Gamma}$ corresponds to a morphism
$\ov{f}:\ov\Sigma\ra\PP^1$ and an admissible section $\ov{s}$ of $\ov{f}^*\O(1)$. We have:
$$\de^o_I\cong M_{\ov N\cup\{p,n+1\}}\times M_{\Gamma_\beta\cup\{p\}}\cong \tilde{G}^1_{\ov\Gamma}\times M_{\Gamma_\beta\cup\{p\}}.$$

If $([m],[m'])\in\de^o_I$ then the point $\pi([m],[m'])$ in $M_{\Gamma\cup\{n+1\}}$ corresponds 
to a pair $(\Sigma,s)$ with the following properties: there is a morphism  
$$\rho:\Sigma\ra\ov\Sigma$$ that collapses the component $\Sigma_\beta$ to $p$, 
and if $q\in\Sigma_\beta$ is determined by $[m']\in M_{\Gamma_\beta\cup\{p\}}$, then 
$s=\rho^*\ov{s}+q$ is the corresponding admissible section. Note, if we fix a point in 
the image of $\pi(\de^o_I)$ via the Abel map, this fixes the element $[m']$, and thus $q$. 
Case (ii) now follows from a similar commutative diagram (in the diagram above, take products with 
$M_{\Gamma_\beta\cup\{p\}}$ in the first row).

The remaining cases are similar. 
\end{proof}
\end{proof}

\section{Planar Realizations of Hypertrees}\label{planarity}

To distinguish between the Brill-Noether loci of different collections of subsets, we denote by $G^2(\Gamma)$ the Brill-Noether locus 
$G^2$ corresponding to a collection of subsets $\Gamma=\{\Gamma_1,\ldots, \Gamma_d\}$.
Recall that an element of $G^2(\Gamma)$ can be obtained by composing a morphism $\Sigma\ra\PP^2$
with a linear projection $\bP^2\dra\bP^1$, such that the morphism 
\begin{itemize}
\item has degree $1$ on each component of $\Sigma$,
\item separates points in $N$.
\end{itemize}
So basically we choose $N$ different points in $\bP^2$  
such that for each $\alpha$, the points in 
$\Gamma_{\alpha}$ are collinear. By Theorem \ref{Irreducibility}, 
if $\Gamma$ is an irreducible hypertree,  $G^2(\Gamma)$ is an irreducible 
subvariety of codimension $1$ in $\M_{0,n}$. Recall that a hypertree $\Gamma$ has a 
\emph{planar realization} if there exists a map $\Sigma\ra\PP^2$ such that 
all points in $N$ are distinct and the points in a subset $S\subset N$ 
with  are collinear {\em if 
and only if} $S\subset\Gamma_{\alpha}$ for some $\alpha$. 
Clearly, this is an open condition on $G^2(\Gamma)$. We prove that
this open set is non-empty:

\begin{Theorem}\label{PlanarTh}
Any  irreducible hypertree has a planar realization. 
\end{Theorem}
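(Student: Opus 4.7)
The plan is to produce a strict planar realization as a generic element of the irreducible divisor $D_\Gamma = \overline{G^2(\Gamma)}$. By Theorem~\ref{Irreducibility}, $G^2(\Gamma)$ is nonempty and irreducible of dimension $n-4$, and the rational map $v$ sends it onto $W^2 \subset \Pic^{\underline{1}}$ of codimension $3$. For generic $L \in W^2$ we have $h^0(L) = 3$, so the associated admissible morphism $\Sigma \to \mathbb{P}^2$ collapses each component of $\Sigma$ onto a line and sends the $n$ singular nodes to distinct points $p_1, \ldots, p_n \in \mathbb{P}^2$; the $\Gamma$-collinearities then hold automatically, so we obtain a weak planar realization of $\Gamma$.

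For each subset $S \subset N$ with $|S| \geq 3$ and $S \not\subset \Gamma_\alpha$ for any $\alpha$, let $Z_S \subset G^2(\Gamma)$ be the closed sublocus on which the weak realization exhibits the additional $S$-collinearity. Since there are only finitely many such $S$ and $G^2(\Gamma)$ is irreducible, it suffices to show $Z_S \subsetneq G^2(\Gamma)$ for each $S$; any point of the complement $G^2(\Gamma) \setminus \bigcup_S Z_S$ then yields a strict planar realization. A useful reformulation: parametrize planar realizations of $\Gamma$ by $d$ lines in $\mathbb{P}^{2*}$ with concurrency at every vertex of valence $\geq 3$, which via the identity $\sum v_i = 2d + n - 2$ from the proof of Proposition~\ref{GiesekerThm} gives a moduli $Y_\Gamma$ of expected dimension $n - 6$, matching $\dim G^2(\Gamma) - 2$. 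Adjoining the line through the $S$-points introduces $2$ parameters and $|S|$ incidence conditions, giving expected dimension $n - 4 - |S| < n - 6$ for the moduli $Y_{\Gamma \cup \{S\}}$ of $(\Gamma \cup \{S\})$-realizations.

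The main obstacle is to show that this expected dimension drop is actually attained, i.e., that the $S$-collinearity is not forced at every $\Gamma$-realization. Here the irreducibility of $\Gamma$ is essential: if the $S$-collinearity were forced, then the enlarged collection $\Gamma \cup \{S\}$ --- which retains capacity $n - 2$ via the sub-collection $\Gamma$ but strictly violates normalization \eqref{normalizat} --- would parametrize the same moduli as $\Gamma$. Theorem~\ref{size/explicit} applied to $\Gamma \cup \{S\}$ shows that the map $\pi_{(\Gamma \cup \{S\}) \cup \{n+1\}}$ is birational onto an image of codimension $|S|-2 \geq 1$ in $\prod_\alpha \overline{M}_{0,\Gamma_\alpha \cup \{n+1\}} \times \overline{M}_{0, S \cup \{n+1\}}$; tracing through the exceptional-locus analysis of Theorem~\ref{coolcondition}, the hypothetical coincidence of moduli would isolate a proper subcollection $\{\Gamma_j\}_{j \in T}$ with $1 < |T| < d$ for which the augmented collection $\{\Gamma_j\}_{j \in T} \cup \{S\}$ saturates the convexity inequality \eqref{CondS} as an equality, contradicting the strict convexity axiom imposed on the irreducible hypertree $\Gamma$. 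Hence $Z_S \subsetneq G^2(\Gamma)$ for every forbidden $S$, and the theorem follows.
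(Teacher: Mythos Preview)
Your opening is correct: by Theorem~\ref{Irreducibility}, $G^2(\Gamma)$ is nonempty and irreducible, a generic $L\in W^2$ has $h^0(L)=3$, and the associated $\Sigma\to\bP^2$ gives a weak planar realization; the question is whether the closed loci $Z_S$ of ``accidental'' collinearities exhaust $G^2(\Gamma)$. Up to here you and the paper agree.

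The genuine gap is the final paragraph. You acknowledge that the expected-dimension count is not a proof (``The main obstacle is to show that this expected dimension drop is actually attained''), but your resolution of this obstacle is not an argument. The sentence ``tracing through the exceptional-locus analysis of Theorem~\ref{coolcondition}, the hypothetical coincidence of moduli would isolate a proper subcollection $\{\Gamma_j\}_{j\in T}$ \ldots'' does not explain any mechanism by which such a $T$ is produced from the hypothesis $Z_S=G^2(\Gamma)$. Theorem~\ref{coolcondition} and Theorem~\ref{size/explicit} concern the map $\pi_{\Gamma\cup\{n+1\}}$ and its exceptional locus; they say nothing about how an extra forced collinearity in $W^2$ forces a convexity equality on a proper subcollection. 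Moreover, even granting that some augmented collection $\{\Gamma_j\}_{j\in T}\cup\{S\}$ saturates \eqref{CondS}, this does not by itself contradict irreducibility of $\Gamma$, which is a statement about subcollections of $\Gamma$ alone; you would still need $S\subset\bigcup_{j\in T}\Gamma_j$ and $1<|T|<d$, neither of which you have established.

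The paper's proof does not attempt this direct route. Instead, assuming a forced extra triple $\Gamma_0=\{a,b,c\}$, it constructs a \emph{new} hypertree $\Gamma'$ (by deleting one vertex from a hyperedge through $a$ and adjoining $\Gamma_0$), verifies that $\Gamma'$ is a genuine hypertree, observes that $D_\Gamma$ must then be an irreducible component of $D_{\Gamma'}$, and derives a contradiction by comparing the explicit divisor-class coefficients $m_I$ of Theorem~\ref{class} for $\Gamma$ against the discrepancy bounds for $\Gamma'$. The contradiction comes from numerics (valences and coefficients of specific $E_I$), not from a direct violation of the convexity axiom. Your approach would need a concrete replacement for this step.
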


\begin{proof}
Let $\Gamma=\{\Gamma_1,\ldots,\Gamma_d\}$ be an irreducible hypertree. Assume $\Gamma$ does not have a planar realization. 
It follows that there is a triple 
$$\Gamma_0=\{a,b,c\}\subset N,$$ 
not contained in any $\Gamma_{\alpha}$, 
such that the points in $\Gamma_0$ are collinear for any $\Sigma\ra\PP^2$ 
that gives a point of $G^2(\Gamma)$. 

Let $\tilde{\Gamma}=\Gamma\cup\{\Gamma_0\}$. 
By our assumption, $G^2(\Gamma)=G^2(\tilde{\Gamma})$. 
We may assume $a\in\Gamma_1$. Since $\Gamma_1$ does not contain $\Gamma_0$, we may assume 
$b\notin\Gamma_1$. Let $\Gamma'_1=\Gamma_1\setminus\{a\}$.  
Construct a new collection of subsets 
$\Gamma'$:
\begin{itemize}
\item[(A) ] If $|\Gamma_1|=3$, let $\Gamma'=\{\Gamma_2,\ldots,\Gamma_d,\Gamma_0\}$
\item[(B) ] If $|\Gamma_1|\geq4$, let $\Gamma'=\{\Gamma'_1,\Gamma_2,\ldots,\Gamma_d,\Gamma_0\}$.
\end{itemize}

\begin{Claim}\label{Gamma'}
The collection of subsets $\Gamma'$ is a hypertree.
\end{Claim}

\begin{proof}[Proof of Claim \ref{Gamma'}]
We prove that $\Gamma'$ satisfies the convexity axiom \eqref{CondS}.  As $\Gamma$ is an irreducible hypertree, for any 
$S\subset\{2,\ldots,d\}$ we have:
\begin{gather*}
\bigl|\Gamma_0\cup\bigcup_{j\in S}\Gamma_j\bigr|\geq \bigl|\bigcup_{j\in S}\Gamma_j\bigr|\geq
\sum_{j\in S}(|\Gamma_j|-2)+3=\\=\sum_{j\in S}(|\Gamma_j|-2)+(|\Gamma_0|-2)+2.
\end{gather*}

Similarly, if $S\subsetneq\{2,\ldots,d\}$ we have
\begin{gather*}
\bigl|\Gamma'_1\cup\Gamma_0\cup\bigcup_{j\in S}\Gamma_j\bigr|\geq \bigl|\Gamma'_1\cup\bigcup_{j\in S}\Gamma_j\bigr|\geq\\
\geq \bigl|\Gamma_1\cup\bigcup_{j\in S}\Gamma_j\bigr|-1\geq
\sum_{j\in S}(|\Gamma_j|-2)+(|\Gamma_1|-2)+3-1=\\=\sum_{j\in S}(|\Gamma_j|-2)+(|\Gamma'_1|-2)+(|\Gamma_0|-2)+2.
\end{gather*}

It is easy to see that $\Gamma'$ satisfies the normalization axiom \eqref{normalizat}. It follows that
$\Gamma'$ is a hypertree (possibly not irreducible).
\end{proof}

We use our working definition of $D_{\Gamma}$, 
namely $D_{\Gamma}=\ov{G^2(\Gamma)}$. Similarly, $D_{\Gamma'}=\ov{G^2(\Gamma')}$.
By Theorem \ref{coolcondition} and Theorem \ref{size/explicit} $D_\Gamma'$ is a divisor in $\M_{0,n}$ (possibly reducible) and the map
$$\pi':\,\oM_{0,n+1}\to\M_{\Gamma'\cup\{n+1\}}:=\prod_{\Gamma'_{\alpha}}\M_{0,\Gamma'_{\alpha}\cup\{n+1\}}$$ 
is a birational morphism whose exceptional locus consists of $\pi^{-1}_N(D_{\Gamma'})$ and boundary divisors in $\M_{0,n+1}$ 
contracted by $\pi'$.

By Theorem \ref{Irreducibility}, $D_{\Gamma}=\ov{G^2(\Gamma)}$ is an irreducible divisor in $\M_{0,n}$. 
In addition, we have $G^2(\tilde{\Gamma})\subseteq G^2(\Gamma')$ and by assumption $G^2(\tilde{\Gamma})=G^2(\Gamma)$. 
It follows that  $D_{\Gamma}$ is an irreducible component of $D_{\Gamma'}=\ov{G^2(\Gamma')}$. 

Let $\cE_1,\ldots,\cE_s$ be the irreducible components of ${\pi^{-1}_N}D_{\Gamma'}$. We may assume $\cE_1={\pi^{-1}_N}D_{\Gamma}$. 
By Theorem  \ref{Irreducibility}, we have:
$$\cE_1={\pi^{-1}_N}D_{\Gamma}=(d-1)H-\sum_{I\subset N\atop 1\le |I|\le n-3} m_IE_I,$$
where $m_I$  satisfies the inequality (\ref{coefficientm_I}). By (\ref{hikitty}) we have $m_{\{i\}}=d-v_i$.

\begin{Notation}
Let $d'$ be the number of hyperedges in $\Gamma'$. (Hence, $d'=d$ in Case (A) and $d'=d+1$ in Case (B).)
Denote by $v'_i$ the valence of $i\in N$ in $\Gamma'$. 
\end{Notation}

\begin{Lemma}\label{weir}
The classes of the divisors $\cE_i$ are subject to the following relation:
\begin{equation}\cooltag\label{johnny}
\sum_{i=1}^sc_i\cE_i=(d'-1)H-\sum_{I\subset N\atop 1\le |I|\le n-3} m'_IE_I-\sum_{|I|=n-2\atop \de_{I\cup\{n+1\}}\in\Exc(\pi')} a'_I\de_{I\cup\{n+1\}},
\end{equation}
where  $c_1,\ldots,c_s$ are positive integers, $a'_I$ is the discrepancy of the divisor 
$\de_{I\cup\{n+1\}}$ with respect to the map $\pi'$ and the integers $m'_I\geq0$ satisfy the following inequality:
\begin{equation}\cooltag\label{m'_I}
m'_I\geq |I|-1-\Cap((\Gamma')_I)+\left|\{\Gamma'_\alpha\,|\,\Gamma'_\alpha\subset I^c\}\right|.
\end{equation}

In particular, we have: $$m'_i\geq d'-v'_i.$$
\end{Lemma}

\begin{proof}
Note that formula (\ref{discrrr}) still holds (the map $\pi'$ is birational):
\begin{equation}
\sum_{i=1}^sc_i\cE_i=K_{\M_{0,n+1}}-{\pi'}^*K_{\M_{\Gamma\cup\{n+1\}}}-\sum_{\de_{I\cup\{n+1\}}\in\Exc(\pi')}a'_I\de_{I\cup\{n+1\}}.
\cooltag\label{discrrr part2}
\end{equation}

For the purpose of the Lemma, we ignore the terms $a'_I\de_{I\cup\{n+1\}}$ for $|I|=n-2$ in the above formula.
Then the lemma follows from \ref{canonical computations}, combined with the inequality $a'_I\geq\codim\pi'(\de_{I\cup\{n+1\}})-1$ 
and Lemma \ref{capacity I'}.
\end{proof}

We compare the coefficient of $H$ in both sides of the equation (\ref{johnny}). Recall that the coefficient of $H$ in $\cE_1$ is $d-1$.

Consider first Case (A). Since the degree of $H$ is at least $d-1$ in the left hand-side, and at most $d-1$ on the right, it follows that 
$a'_I=0$ for all $|I|=n-2$ and  moreover $s=1$, $c_1=1$, i.e., we have:
$$\cE_1={\pi^{-1}_N}D_{\Gamma}=(d-1)H-\sum_{I\subset N\atop 1\le |I|\le n-3} m'_IE_I.$$

It follows that $m_i=m'_i$ for all $i$. By Lemma \ref{weir}, $m'_i\geq d-v'_i$. By (\ref{hikitty}) $m_i=d-v_i$. This leads to a contradiction,
since $v'_i<v_i$ for all $i\in \Gamma_1\setminus\{a,b,c\}$ (we use here the assumption that $\Gamma_1\setminus\{a,b,c\}\neq\emptyset$). 

Consider now Case (B). The coefficient of $H$ on the right hand-side of (\ref{johnny}) is at most $d$, while the  
the coefficient of $H$ in $\cE'_1$ is $d-1$. If $s>1$, it follows that $s=2$ and $\cE_2$ is an irreducible divisor that has $H$-degree $1$. 
From the Kapranov blow-up model of $\M_{0,n+1}$ one can see that either $\cE_2$  is a boundary divisor or $h^0(\cE_2)>1$. 
This is a contradiction, since $\cE_2$ is a divisor that intersects the interior of $\M_{0,n+1}$ and moreover, it
is an exceptional divisor for the birational map $\pi'$. The same argument shows that $c_1=1$. 

Moreover, we must have $a'_{I_0}=1$, for some $|I_0|=n-2$ (with $a'_I=0$ for all $I\neq I_0$). Let $\{u,v\}=I_0^c$.
We have:
$$\cE_1={\pi^{-1}_N}D_{\Gamma}=dH-\sum_{I\subset N\atop 1\le |I|\le n-3} m'_IE_I-\de_{u,v}.$$

In particular, $m_i=m'_i-1$ for all $i\neq u,v$ and $m_i=m'_i$ if $i\in\{u,v\}$. 
Note that $v'_b=v_b+1$, $v'_c=v_c+1$, while $v'_i=v_i$ for all $i\neq b,c$. 
By Lemma \ref{weir}, $m'_i\geq d'-v'_i=d+1-v'_i$ for all $i$. Since $m_i=d-v_i$ for all $i$, it follows that 
if $i\neq\{b,c\}$ then  $i\neq\{u,v\}$, i.e., $\{u,v\}=\{b,c\}$. We have:
\begin{equation}\cooltag\label{lady}
\cE_1={\pi^{-1}_N}D_{\Gamma}=dH-\sum_{I\subset N\atop 1\le |I|\le n-3} m'_IE_I-\de_{b,c}.
\end{equation}

We consider the coefficients $m_I$ and $m'_I$ for $I=\Gamma'_1$. By (\ref{hipapa}), we have
\begin{equation}\cooltag\label{hipapa2}
m_I=d+|I|-1-\sum_{i\in I}v_i.
\end{equation}
By Lemma \ref{weir}, we have: 
\begin{equation}\cooltag\label{gaga}
m'_I\geq |I|-1-\Cap((\Gamma')_I)+\left|\{\Gamma'_\alpha\,|\,\Gamma'_\alpha\subset I^c\}\right|.
\end{equation}
Recall that we assume $b\notin\Gamma_1$. Note that 
\begin{equation}\cooltag\label{poker}
\left|\{\Gamma_\alpha\,|\,\Gamma_\alpha\subset I^c\}\right|=d-\sum_{i\in I} v_i+|I|-1.
\end{equation}
We will compare $\left|\{\Gamma_\alpha\,|\,\Gamma_\alpha\subset I^c\}\right|$ with
$\left|\{\Gamma'_\alpha\,|\,\Gamma'_\alpha\subset I^c\}\right|$.

We consider two cases. First, assume $c\notin\Gamma_1$.  Then $(\Gamma')_I=\{I\}$. Hence, 
$\Cap((\Gamma')_I)=|I|-2$. Since $\Gamma_{0}=\{a,b,c\}\subset I^c$ it follows that:
$$m'_I\geq 1+\left|\{\Gamma'_\alpha\,|\,\Gamma'_\alpha\subset I^c\}\right|=2+\left|\{\Gamma_\alpha\,|\,\Gamma_\alpha\subset I^c\}\right|=
d+|I|+1-\sum_{i\in I} v_i,$$
which contradicts (\ref{hipapa2}) since by (\ref{lady}) $m_I=m'_I-1$.

Now assume $c\in\Gamma_1$. By (\ref{poker}) and from $\Gamma_{0}\nsubseteq I^c$ it follows that
$$m'_I\geq 1+\left|\{\Gamma'_\alpha\,|\,\Gamma'_\alpha\subset I^c\}\right|=1+\left|\{\Gamma_\alpha\,|\,\Gamma_\alpha\subset I^c\}\right|=
d+|I|-\sum_{i\in I} v_i.$$ This contradicts (\ref{hipapa2}), since by (\ref{lady}), $m_I=m'_I$.
\end{proof}

\section{Spherical and not so Spherical Hypertrees}\label{spherical}

\begin{Theorem}\label{heroic}
Let $\cK$ be an even (i.e.,~bicolored) triangulation of a sphere with $n$ vertices.
Then  its collection of black (resp.~white) triangles $\Gamma$ (resp.~$\Gamma'$)
is a hypertree. It is irreducible
if and only if $\cK$ is not a connected sum of two triangulations.
\end{Theorem}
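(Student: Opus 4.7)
My plan is to verify the hypertree axioms for the collection $\Gamma$ of black triangles---the white case being symmetric---with convexity as the substantive axiom, and then to analyze the equality case to characterize irreducibility via connected sums. First, since every edge of the triangulation is shared between one black and one white triangle, we have $E = 3B = 3W$, and Euler's formula $V - E + F = 2$ for the sphere gives $B = W = n - 2$. Each vertex has even valence $v_i \geq 4$, hence lies in $v_i/2 \geq 2$ black triangles. The first two hypertree axioms are immediate, and normalization reads $\sum_j (|\Gamma_j| - 2) = B = n - 2$.

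For the convexity axiom, fix $S \subset \{1, \ldots, B\}$ with $|S| = s$ and $U = \bigcup_{j \in S} \Gamma_j$; we must show $|U| \geq s + 2$. Consider the bipartite incidence graph $T_S$ with vertex set $S \sqcup U$, with $s + |U|$ vertices and $3s$ edges. Placing each triangle-node at the barycenter of its triangle and connecting it to its three vertex-nodes by segments inside that triangle embeds $T_S$ in $S^2$, so $T_S$ is planar. Since two distinct black triangles never share an edge (edges are shared between opposite colors), they share at most one vertex, so $T_S$ has no $4$-cycle, hence girth at least $6$. For each connected component with $s_i$ triangles and $|U_i|$ vertex-nodes: if it is a tree, then $|U_i| = 2s_i + 1 \geq s_i + 2$; otherwise the standard planar girth inequality $E \leq g(V-2)/(g-2)$ at $g=6$ gives $3 s_i \leq \frac{3}{2}(|U_i|+s_i-2)$, that is $|U_i| \geq s_i + 2$. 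Summing over components yields $|U| \geq s + 2 c(X_S) \geq s + 2$, so $\Gamma$ is a hypertree.

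For the irreducibility criterion, the easy direction is that a connected sum $\cK = \cK_1 \# \cK_2$ makes $\Gamma$ reducible: taking $S$ to be the black triangles of $\cK_1$ gives $|S| = n_1 - 2$ and $|U| = n_1$, so $|U| = |S| + 2$, with $1 < |S| < B$ since $n_1, n_2 \geq 6$ (the smallest even triangulation of $S^2$ is the octahedron). For the converse, suppose $|U| = s + 2$ with $1 < s < B$. The bound $|U| \geq s + 2 c(X_S)$ forces $c(X_S) = 1$, so $X_S = \bigcup_{j \in S} \Gamma_j$ is connected. Planar Euler applied to its $1$-skeleton $H$ (with $s+2$ vertices and $3s$ edges) yields $F = 2s$ faces in $S^2$, namely the $s$ triangle interiors and $c' = s$ complementary faces. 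Summing boundary walks gives $2E = 6s$; subtracting the $3s$ contributed by the triangle faces leaves $3s$ for the $s$ complementary faces, each of boundary length at least $3$, hence exactly a $3$-cycle in $H$.

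A $3$-cycle in $H$ bounding a complementary face is either (A) the boundary of a white triangle of $\cK$ whose three black neighbors all lie in $S$, or (B) a separating non-facial $3$-cycle in the $1$-skeleton of $\cK$ (the boundary of a black triangle outside $S$ is impossible, since its three edges are shared with white triangles and hence do not lie in $H$). If every complementary face were of type (A), the complement of $X_S$ would consist of exactly $s$ triangles, contradicting the fact that it contains $2B - s > s$ triangles. Hence at least one complementary face $R^*$ is of type (B), bounded by a separating $3$-cycle $C$ that partitions $S^2$ into two closed disks. Capping these disks by a black and a white triangle along $C$ produces bicolored triangulations $\cK_1, \cK_2$ of $S^2$ realizing $\cK = \cK_1 \# \cK_2$; the boundary vertices have even valence in each $\cK_i$ because on each side of $C$ the alternating black-white fan of triangles incident to such a vertex begins and ends with white triangles and hence contains an odd number of triangles. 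The main obstacle will be this last structural step, extracting the separating triangle from the equality case and verifying that the two resulting pieces are themselves valid even triangulations.
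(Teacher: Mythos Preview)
Your proof is correct and takes a genuinely different route from the paper's.

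For the convexity axiom, the paper computes $\chi(\Delta)$ for the union $\Delta$ of chosen black faces, bounds $h_1(\Delta)$ via Alexander duality using the observation that each component of $S^2\setminus\Delta$ has boundary length divisible by $3$ (so there are at most $k$ components), and reads off $|U|\ge k+2$. You instead realize the bipartite incidence graph $T_S$ inside $S^2$, observe it is simple planar of girth $\ge 6$ (since two black faces share at most one vertex), and apply the girth bound. Both are short; yours avoids any algebraic topology, while the paper's argument also yields Lemma~\ref{e(D)} as a byproduct, which is reused later.

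For the converse of the irreducibility criterion, the paper uses the equality analysis of the same duality argument to show $S^2\setminus\Delta$ has exactly $k$ components, each bounded by a $3$-cycle, and one of them is not a white face. You instead apply Euler's formula to the $1$-skeleton $H$ of $X_S$ to count $2s$ faces, of which $s$ are the selected black triangles, forcing each of the $s$ complementary faces to be bounded by a $3$-cycle. This is clean; note that your ``degree $\ge 3$'' step is justified because $H$ is bridgeless (every edge lies on a triangle of $S$), so every face boundary walk of length $3$ in this simple graph is an honest $3$-cycle.

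Two small points of presentation. First, your sentence about the fan at a boundary vertex beginning and ending with \emph{white} triangles is literally correct only on the $D_1$-side; on the $D_2$-side the fan begins and ends with the chosen black triangles. Either way the fan has odd length and the valence is even, so the conclusion stands. Second, your assertion that the octahedron is the smallest even triangulation is correct (the unique $5$-vertex triangulation, the triangular bipyramid, has non-bipartite dual), and this is exactly what guarantees $1<|S|<d$ in the easy direction.
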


\begin{proof}  
Let $d$ (resp.~$d'$) be the number of triangles in $\Gamma$ (resp.~$\Gamma'$). 
Since $\cK$ has $3d=3d'$ edges, we have  $d=d'$. By Euler's formula,
$$n-3d+2d=2,$$
and therefore $d=n-2$.

\begin{Review}\label{slick} 
Take any $k$ black triangles $\Gamma_1,\ldots,\Gamma_k$ and let $\Delta\subset S^2$ be their
union.
As~a simplical complex, $\Delta$ has $k$ faces,
$3k$ edges, and $|\Gamma_1\cup\ldots\cup\Gamma_k|$ vertices.
Since $h_2(\Delta)=0$, we have
$$\chi(\Delta)=h_0(\Delta)-h_1(\Delta)=\bigl|\bigcup_{i=1}^k\Gamma_i\bigr|-2k.$$
Abusing notatation, let $S^2\setminus \Delta$ denote the simplicial complex
obtained by removing interiors of triangles in $\Delta$. Let $D$ be the closure of a 
connected component of the set $S^2\setminus \Delta$ with vertices removed. Note that
$D$ is not necessarily a polygon  (it is not necessarily simply connected), 
but its boundary edges are well-defined. Their number $e(D)$
is equal to three times the number of white triangles inside $D$
minus three times the number of black triangles inside $D$.
It follows that $3|e(D)$. Then the number of edges in 
$\partial(S^2\setminus \Delta)=\partial\Delta$ equals
\begin{equation}\cooltag\label{meyers}
3k=\sum e(D_i)\geq 3h_0(S^2\setminus \Delta).
\end{equation}
This implies that 
\begin{equation}\hbox{\rm 
$h_0(S^2\setminus \Delta)\le k$
for any union $\Delta$ of $k$ black (resp.~white) faces,}
\cooltag\label{Lenin}\end{equation}
\end{Review}

\begin{Review}[$\Gamma$ satisfies the convexity axiom \eqref{CondS}]

By Alexander duality, 
$$h_1(\Delta)=h_0(S^2\setminus\Delta)-1\le k-1,$$
and we have
$$\bigl|\bigcup_{i=1}^k\Gamma_i\bigr|-2
=2k+h_0(\Delta)-h_1(\Delta)-2\ge$$
$$\ge
2k+h_0(\Delta)+1-k-2\ge k.
$$
It follows that $\Gamma$  is a hypertree.
\end{Review}

\begin{Review}
Suppose that $\Gamma$ is not irreducible. 
Then one can find a subset of $k$ black triangles $\Gamma_1,\ldots,\Gamma_k$
as above with $1<k<n-2$ such that all inequalities above are equalities, i.e.
$$h_0(S^2\setminus\Delta)=k,\quad h_0(\Delta)=1.$$

Hence, $S^2\setminus\Delta$ has $k$ connected components $D_1,\ldots,D_k$. Moreover,
using (\ref{meyers}) we have $e(D_i)=3$ for all $i$. Some (but not all) of the $D_i$'s are just white triangles $\cK$, 
others are unions of black and white triangles. But all of them are simpy-connected polygons, since 
$h_1(S^2\setminus\Delta)=0$ by Alexander duality (hence, $S^2\setminus\Delta$ is simply connected).

Now it is clear that we are done:
Let $D$ be one of the connected components of $S^2\setminus\Delta$
which is not a white triangle. But the boundary of $D$ is a triangle, and it is clear that $\cK$
is a connected sum of two triangulations  $\cK_1$ and $\cK_2$
glued along the boundary of $D$.
Namely, $\cK_1$ is formed by removing all triangles inside $D$ and gluing a white triangle along the boundary of $D$ instead.
And $\cK_2$ is formed by removing all triangles not in $D$ and gluing a black triangle along the boundary of $D$ instead.
\end{Review}

And the other way around, if $\cK$ is a connected sum of  
triangulations $\cK_1$ and $\cK_2$ then $\Gamma$ is not irreducible:
just take the set $S$ to be the set of all black triangles of $\cK_1$.
\end{proof}

The proof of Theorem \ref{heroic} shows the following:
\begin{Lemma}\label{e(D)} If $\cK$ is an even triangulation of a sphere and $D$ is 
a polygon such that any triangle inside $D$ adjacent to a boundary edge of $D$ is white (equivalently,  
$D$ is one of the connected components of the complement to a union of black triangles in $\cK$) 
then the number of edges of $D$ is divisible by $3$. Moreover, $\cK$ is irreducible if and only if 
whenever the number of edges of $D$ is three, then $D$ is a white triangle or the complement of a black triangle.
\end{Lemma}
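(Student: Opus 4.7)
The plan is to extract the lemma almost directly from the analysis already performed in the proof of Theorem~\ref{heroic}, with the divisibility statement being an independent double count.

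For $3 \mid e(D)$: let $w$ and $b$ denote the numbers of white, respectively black, triangles inside $D$, and $i$ the number of interior edges of $D$. By bicoloredness, every interior edge of $D$ separates one white from one black triangle inside $D$; by the hypothesis on $D$, every boundary edge of $D$ is adjacent on the $D$-side to a white triangle (and hence to no black triangle of $D$). Counting incidences (triangle inside $D$, edge of that triangle) separately by color then gives $3w = i + e(D)$ and $3b = i$, and subtraction yields $e(D) = 3(w - b)$.

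For the ``only if'' direction of the irreducibility criterion, suppose $\cK$ is not irreducible and pick $k$ black triangles $\Gamma_{i_1}, \ldots, \Gamma_{i_k}$ with $1 < k < n-2$ for which \eqref{CondS} is an equality; set $\Delta = \Gamma_{i_1} \cup \ldots \cup \Gamma_{i_k}$. Tracing the chain of inequalities in \ref{slick} and its continuation in the proof of Theorem \ref{heroic}, equality forces $h_0(\Delta) = 1$ and $h_0(S^2 \setminus \Delta) = k$, and every component $D_j$ of $S^2 \setminus \Delta$ is a simply-connected polygon with $e(D_j) = 3$. Since the $n - 2$ white triangles of $\cK$ are partitioned among the $D_j$'s and $k < n - 2$, at least one $D_j$ contains more than one triangle and hence is not a single white triangle; since $k > 1$, this same $D_j$ is also not the complement of a single black triangle. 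Any such $D_j$ witnesses the contrapositive.

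For the converse, suppose $D$ has $e(D) = 3$ and is neither a single white triangle nor the complement of a single black triangle. Following the last step of the proof of Theorem \ref{heroic}, cut $\cK$ along the triangle $\partial D$ and cap the two resulting bicolored disks by a new white triangle on the $S^2 \setminus D$ side and a new black triangle on the $D$ side, producing bicolored triangulations $\cK_1$ and $\cK_2$ with $\cK = \cK_1 \# \cK_2$. The hypothesis that $D$ is not the complement of a single black triangle forces $\cK_1$ to have more than two triangles, and the hypothesis that $D$ is not a single white triangle forces $\cK_2$ to have more than two triangles, so the connected sum is non-trivial. The final sentence of the proof of Theorem \ref{heroic}, applied with $S$ equal to the black triangles of $\cK_1$, then gives that $\cK$ is not irreducible. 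The whole argument is essentially routine book-keeping on top of the topological analysis already in \ref{slick}; the only mildly subtle point is verifying, in paragraph 2, that equality in \eqref{CondS} really pins down the components of $S^2 \setminus \Delta$ as triangular simply-connected polygons, which is where all the topology in \ref{slick} is used.
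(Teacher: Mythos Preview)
Your proof is correct and follows exactly the approach the paper intends: the paper states this lemma with no proof beyond ``The proof of Theorem~\ref{heroic} shows the following'', and you have faithfully extracted the relevant pieces of that proof. The edge-counting $3w=i+e(D)$, $3b=i$ is precisely the paper's observation that $e(D)$ equals three times the white minus three times the black count, and the two directions of the irreducibility criterion are lifted directly from the reducibility and connected-sum paragraphs of Theorem~\ref{heroic}.

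One small wording issue: you have the labels ``only if'' and ``converse'' swapped. In paragraph~2 you assume $\cK$ is not irreducible and produce a bad $D$; that is the contrapositive of the \emph{if} direction ($Q\Rightarrow P$), not of the only-if direction. In paragraph~3 you assume a bad $D$ exists and deduce a non-trivial connected sum; that is the contrapositive of the \emph{only if} direction ($P\Rightarrow Q$). The mathematics is unaffected.
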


We will prove in Corollary~\ref{coinciwb} that white and black hypertrees of any irreducible even triangulation
give the same divisor on $\oM_{0,n}$. We will now show that
under a mild genericity assumption there are no other hypertrees that give the same divisor.

\begin{Definition}\label{generic hypertree}
Let $\Gamma$ be an irreducible hypertree composed of  triples. We call $\Gamma$ 
\emph{generic} if for any triple $\{i,j,k\}\subset N$ that is not a hyperedge or a wheel (see~\ref{wheel}), we have
\begin{equation}\label{sjxzvbhkjzdfbk}
\Cap(\Gamma_{N\setminus\{i,j,k\}})=n-4,
\cooltag\end{equation}
where $\Gamma_{N\setminus\{i,j,k\}}$ is the collection of triples obtained from $\Gamma$
by identifying vertices $i$, $j$, and $k$ (and removing triples which contain two of the points $i,j,k$).
\end{Definition}

\begin{Theorem}\label{amazing}
Let $\Gamma$, $\Gamma'$ be generic hypertrees. 
If  $D_\Gamma=D_{\Gamma'}$ then $\Gamma'$ is irreducible and there exists a bicolored triangulation $\cK$ of~$S^2$
such that $\Gamma$ is its collection of black faces and~$\Gamma'$ is its collection of white faces.
In this case $\Gamma$ uniquely determines the triangulation.
\end{Theorem}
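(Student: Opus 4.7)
The strategy is to reconstruct the combinatorics of $\Gamma$ from the class $D_\Gamma$ and to show that, when $\Gamma' \ne \Gamma$ has the same class, $\Gamma$ and $\Gamma'$ fit together as the black and white triangles of a bicolored triangulation of $S^2$. The only input is Theorem~\ref{class} applied to both sides of the identity $\pi_N^* D_\Gamma = \pi_N^* D_{\Gamma'}$.

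First I would match the basic numerical invariants. Comparing the coefficient of $H$ gives $d = d'$, so by~\eqref{normalizat} both hypertrees live on the same set of size $n = d+2$; the singleton coefficients~\eqref{hikitty} give $v_i = v_i'$ for every $i \in N$. I would then analyse the triple coefficients $m_{\{i,j,k\}}$, setting $s := v_i+v_j+v_k$. By~\eqref{himama}, if $\{i,j,k\}$ is a hyperedge of $\Gamma$ then $m_{\{i,j,k\}} = d+3-s$. By the genericity condition~\eqref{sjxzvbhkjzdfbk} together with Lemma~\ref{nextfix}, if $\{i,j,k\}$ is neither a hyperedge nor a wheel of $\Gamma$, then equality holds in~\eqref{coefficientm_I} and $m_{\{i,j,k\}} = 4-s+\varepsilon_{ijk}$ with $\varepsilon_{ijk} \in \{0,1,2\}$. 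For $d \ge 4$ these two ranges are disjoint, so the set $\cT$ of triples $\{i,j,k\}$ with $m_{\{i,j,k\}} \ge d+3-s$ is precisely the union of the hyperedges and the wheels of $\Gamma$, and by symmetry also the union of the hyperedges and wheels of $\Gamma'$.

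The main obstacle is the wheel case. For a wheel $\{i,j,k\}$ of $\Gamma$ the lower bound in~\eqref{coefficientm_I} already equals the hyperedge value $d+3-s$, so on a single-hypertree side the coefficient $m_{\{i,j,k\}}$ does not separate hyperedges from wheels. The duality with $\Gamma'$ resolves this: if $\{i,j,k\}$ is a $\Gamma$-hyperedge but not a $\Gamma'$-hyperedge, then $\{i,j,k\} \in \cT = \cT_{\Gamma'}$ forces $\{i,j,k\}$ to be a $\Gamma'$-wheel, and by the definition of wheel all three of its pairs lie in $\Gamma'$-hyperedges. Lemma~\ref{qqqcheck} (cited just before the statement) supplies the exact discrepancy computation in the remaining codimension-$2$ boundary cases needed to make this dichotomy clean. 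By the symmetric argument, every $\Gamma'$-hyperedge is either a $\Gamma$-hyperedge or has all three of its pairs lying in $\Gamma$-hyperedges. Consequently the collection of pairs that lie in some hyperedge is the same for $\Gamma$ and for $\Gamma'$, and irreducibility together with the triples-only assumption forces each such pair to lie in exactly one hyperedge of each hypertree.

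Finally, I would assemble the simplicial complex $\cK$ with vertex set $N$, edge set the common pair collection (of size $3d$), and $2$-cells $\Gamma$ (coloured black) and $\Gamma'$ (coloured white). Euler's formula $(d+2) - 3d + 2d = 2$ gives $\chi(\cK) = 2$, and the alternation of colours around each vertex, forced by $v_i = v_i'$ together with the fact that each edge lies in exactly one black and one white face, identifies $\cK$ with an even triangulation of $S^2$. Irreducibility of $\Gamma$ together with Theorem~\ref{heroic} rules out a connected-sum decomposition, so $\Gamma'$ is irreducible as well, and $\cK$ (and hence $\Gamma'$) is uniquely recovered from $\Gamma$.
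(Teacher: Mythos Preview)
Your overall strategy is the right one, but there is a genuine error in the way you read the class of $D_\Gamma$. Lemma~\ref{nextfix} does \emph{not} compute $m_{\{i,j,k\}}$; it computes $m_{N\setminus\{i,j,k\}}$. Indeed, the boundary divisor $\delta_{i,j,k}\subset\oM_{0,n+1}$ equals $\delta_{(N\setminus\{i,j,k\})\cup\{n+1\}}=E_{N\setminus\{i,j,k\}}$ in the Kapranov model, so the discrepancy statement in Lemma~\ref{nextfix} gives equality in~\eqref{coefficientm_I} for $I=N\setminus\{i,j,k\}$. With this correction the dichotomy becomes clean: $m_{N\setminus\{i,j,k\}}=1$ for hyperedges by~\eqref{kitty1}, $m_{N\setminus\{i,j,k\}}\ge 1$ for wheels by the bound~\eqref{coefficientm_I}, and $m_{N\setminus\{i,j,k\}}=0$ for all other triples by genericity and Lemma~\ref{nextfix}. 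This is exactly how the paper proves $\Gamma\cup\Xi=\Gamma'\cup\Xi'$. By contrast, for $I=\{i,j,k\}$ one only has the lower bound $m_{\{i,j,k\}}\ge d+2-s$, which is just one below the hyperedge value $d+3-s$ from~\eqref{himama}, and no equality statement is available; so your separation via $m_{\{i,j,k\}}$ is not justified. (Also, Lemma~\ref{qqqcheck} is about generic triangulations producing generic hypertrees; it contains no discrepancy computation.)

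The second gap is in the assembly of $\cK$. You take \emph{all} of $\Gamma$ as black faces and \emph{all} of $\Gamma'$ as white faces and plug into Euler's formula, but you have not shown $\Gamma\cap\Gamma'=\emptyset$, nor that the link of every vertex is a single cycle (equal valences plus ``each edge in one black and one white face'' only gives that the link is a disjoint union of even cycles). The paper handles both issues simultaneously and rather differently: it starts from a single face $\Gamma_1\in\Gamma\setminus\Gamma'$ and grows a bi-colored complex in which every black face is a $\Gamma$-hyperedge \emph{and} a $\Gamma'$-wheel (hence automatically not in $\Gamma'$), and symmetrically for white. It then passes to the normalization $\bar\cK$ and uses the convexity axiom for the subsets $S\subset\Gamma$ and $S'\subset\Gamma'$ of faces actually appearing to get $\chi(\bar\cK)\ge 2$, with strict inequality unless all hyperedges of both hypertrees are used. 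Since $\chi\le 2$ for a closed surface, this forces $\bar\cK=\cK$ to be $S^2$ and, a posteriori, $\Gamma\cap\Gamma'=\emptyset$. Your Euler count $(d+2)-3d+2d=2$ is the endpoint of that argument, not a substitute for it.
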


Theorem \ref{amazing} and Lemmas \ref{pull-back}, \ref{new} give a lower bound on the number of extremal rays of the effective cone of
$\oM_{0,n}$, namely, the number of generic non-spherical irreducible hypertrees plus half of the number of generic spherical 
irreducible hypertrees (on all subsets of $N$).

\begin{Lemma}\label{pull-back}
Let $\Gamma$ be an irreducible hypertree on a subset $K$ of $N$ and consider the forgetful map $\pi_K: \oM_{0,n}\ra\oM_{0,K}$.
Then $\pi_K^{-1}D_\Gamma$ generates an extremal ray of $\oEff(\oM_{0,n})$.
\end{Lemma}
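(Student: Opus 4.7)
The plan is to derive the extremality of $\pi_K^{-1}(D_\Gamma)$ in $\oEff(\oM_{0,n})$ from the extremality of $D_\Gamma$ in $\oEff(\oM_{0,K})$ (already established by Theorem \ref{MainThOne} applied to $K$ in place of $N$), by showing that the pullback map $\pi_K^*$ identifies $\oEff(\oM_{0,K})$ with a face of $\oEff(\oM_{0,n})$.

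First I would verify that $\pi_K^{-1}(D_\Gamma)$ is irreducible: by Theorem \ref{Irreducibility} the divisor $D_\Gamma$ is irreducible and meets the interior $M_{0,K}$, and over $M_{0,K}$ the forgetful map $\pi_K$ is flat with irreducible generic fibers (open configuration spaces of $n-|K|$ ordered points on a smooth rational curve), so the preimage is irreducible.

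For extremality, I would factor $\pi_K$ into a tower of universal curves $\pi\colon\oM_{0,m+1}\to\oM_{0,m}$ and prove inductively that $\pi^*\oEff(\oM_{0,m})$ is a face of $\oEff(\oM_{0,m+1})$. The subspace $\pi^*N^1(\oM_{0,m})\subset N^1(\oM_{0,m+1})$ is cut out by the linear forms pairing with curves contracted by $\pi$; such curves lie in general fibers $\bP^1$ of $\pi$, hence move in covering families and are nef on $\oEff(\oM_{0,m+1})$, so the intersection $\oEff(\oM_{0,m+1})\cap\pi^*N^1(\oM_{0,m})$ is a face. To identify this face with $\pi^*\oEff(\oM_{0,m})$, it suffices to show that any effective class in the intersection is a pullback of an effective class: given $A\equiv\pi^*\alpha$ effective, one uses the tautological section $\sigma_i\colon\oM_{0,m}\to\oM_{0,m+1}$ placing the $(m+1)$-st marked point at the $i$-th existing one. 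Its image $\sigma_i(\oM_{0,m})=\delta_{\{i,m+1\}}$ is never a component of $\pi^*D$ for any divisor $D\subset\oM_{0,m}$, because $\pi^*\delta_I=\delta_I+\delta_{I\cup\{m+1\}}$ and the singleton $\{i\}$ is not a valid boundary index. Hence $\sigma_i$ is generically transverse to $A$, and $\sigma_i^*A=\alpha$ is effective.

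With the face property in hand, extremal rays of $\oEff(\oM_{0,m})$ map under $\pi^*$ to extremal rays of the face $\pi^*\oEff(\oM_{0,m})$, which are extremal rays of the ambient cone $\oEff(\oM_{0,m+1})$. Iterating through the forgetful tower then yields that $\pi_K^*[D_\Gamma]=[\pi_K^{-1}(D_\Gamma)]$ generates an extremal ray of $\oEff(\oM_{0,n})$. The most delicate step is the descent of effectivity via the section $\sigma_i$, which rests on the observation that the section lands in a boundary component that never appears as a component of any pullback divisor.
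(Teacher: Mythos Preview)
Your approach has a genuine gap: the claim that $\pi^*\oEff(\oM_{0,m})$ is a face of $\oEff(\oM_{0,m+1})$ is false. Indeed, for any boundary divisor $\delta_I\subset\oM_{0,m}$ (which generates an extremal ray of $\oEff(\oM_{0,m})$) one has $\pi^*\delta_I=\delta_I+\delta_{I\cup\{m+1\}}$ in $\oM_{0,m+1}$, a sum of two linearly independent boundary classes, hence not extremal. Already for $m=4$ the class $\pi^*[\mathrm{pt}]=[F]=[\delta_{12}]+[\delta_{34}]$ sits in the relative interior of a $2$-face of $\oEff(\oM_{0,5})$.

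The error is in the sentence ``such curves lie in general fibers $\bP^1$ of $\pi$, hence move in covering families.'' The universal curve $\pi$ has reducible fibers over the boundary of $\oM_{0,m}$, and the irreducible components of those special fibers are contracted curves that do \emph{not} sweep out $\oM_{0,m+1}$; for instance $\delta_{12}\subset\oM_{0,5}$ is a fiber component over $\delta_{12}\in\oM_{0,4}$ with $\delta_{12}^2=-1$. These extra contracted classes are needed to cut out $\pi^*N^1(\oM_{0,m})$ (the relative Picard number is $\rho(\oM_{0,m+1})-\rho(\oM_{0,m})>1$), but they do not define supporting hyperplanes of $\oEff$. Consequently $\oEff(\oM_{0,m+1})\cap\pi^*N^1(\oM_{0,m})$ need not be a face, and even when it happens to coincide with $\pi^*\oEff(\oM_{0,m})$ that cone need not be extremal.

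The paper proceeds differently and in a way specific to hypertree divisors: it extends $\Gamma$ to a hypertree $\tilde\Gamma$ on all of $N$ by adjoining one triple through each new vertex, so that $\Gamma\subset\tilde\Gamma$. Then the product-of-forgetful-maps morphism $\pi_{\tilde\Gamma\cup\{n+1\}}$ on $\oM_{0,n+1}$ is birational and factors through the corresponding map for $\Gamma$; since the latter already contracts the pullback of $D_\Gamma$ to codimension~$\ge 2$, the divisor $\pi_N^{-1}\pi_K^{-1}D_\Gamma$ is exceptional for $\pi_{\tilde\Gamma\cup\{n+1\}}$, and Lemma~\ref{basicobs} gives extremality of $\pi_K^{-1}D_\Gamma$ in $\oEff(\oM_{0,n})$. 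This argument uses the contraction provided by the hypertree structure rather than any general face property of forgetful pullbacks.
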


\begin{proof}
We may assume without loss of generality that $K=\{1,\ldots,k\}$ for $k\leq n$. By Theorem \ref{irreducible}, the divisor $D_\Gamma$ is irreducible. 
Therefore, the divisor $\pi_K^{-1}D_\Gamma$ is irreducible. Moreover, $\pi_{\Gamma\cup\{n+1\}}(D_\Gamma)$ has codimension
at least two in $\oM_{\Gamma\cup\{n+1\}}$. It is enough to construct a hypertree $\tilde{\Gamma}$ on the set $N$ such that 
$\pi_K^{-1}D_\Gamma$ is in the exceptional locus of  $\pi_{\tilde{\Gamma}\cup\{n+1\}}$. This will be the case if for example
$\Gamma\subseteq\tilde{\Gamma}$. Let $\tilde{\Gamma}=\Gamma\cup\{\Gamma'_1,\ldots, \Gamma'_{n-k}\}$ where
$$\Gamma'_1=\{k+1,1,2\},\quad\Gamma'_2=\{k+2,1,3\},\ldots, \Gamma'_{n-k}=\{n,1,n-k\}.$$
\end{proof}

\begin{Lemma}\label{new}
Let $\Gamma$ be an irreducible hypertree on $N$. If for some forgetful maps $\pi:\oM_{0,\tilde{N}}\ra\oM_{0,N}$ and  
$\pi':\oM_{0,\tilde{N}}\ra\oM_{0,N'}$ for subsets $N$ and $N'$ of $\tilde{N}$, we have 
$$\pi^{-1}(D_\Gamma)={\pi'}^{-1}(D_{\Gamma'}),$$ for some irreducible hypertree $\Gamma'$ on $N'\subseteq \tilde{N}$, then $N=N'$, 
$D_\Gamma=D_{\Gamma'}$.
\end{Lemma}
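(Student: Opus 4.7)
The strategy is to first reduce to $\tilde N = N \cup N'$ by pulling back the hypothesis along the flat surjective forgetful map $\oM_{0,\tilde N}\to \oM_{0,N\cup N'}$ (which induces an injection on divisor classes), and then to assume for contradiction that $N \ne N'$, pick $i\in N\setminus N'$ (the other case being symmetric), and derive a contradiction from the uniqueness modulo $\PGL_3$ of planar realizations guaranteed by Theorem~\ref{PlanarTh}.

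Since $\pi'$ forgets $i$, it factors through $\alpha : \oM_{0,\tilde N}\to \oM_{0,\tilde N\setminus\{i\}}$, and so $\pi^{-1}(D_\Gamma)={\pi'}^{-1}(D_{\Gamma'})$ is a union of fibers of $\alpha$. For a general $x=(q_j)_{j\in N}\in D_\Gamma$ and any $y\in\pi^{-1}(x)$, the fiber $\alpha^{-1}(\alpha(y))$ --- a rational curve along which only the marking $i$ varies --- maps under $\pi$ (which keeps $i\in N$) onto the full fiber of $\tau_i:\oM_{0,N}\to \oM_{0,N\setminus\{i\}}$ through $x$. Hence $D_\Gamma$ is $\tau_i$-saturated: $D_\Gamma=\tau_i^{-1}(D'')$ for some divisor $D''$, and the generic fiber of $\tau_i|_{D_\Gamma}$ is $1$-dimensional.

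I would contradict this by showing that $\tau_i|_{D_\Gamma}$ is in fact generically finite. By Theorem~\ref{class}, $v(D_\Gamma)=W^2\subset\Pic^{\uone}$ has codimension~$3$, so a generic $L\in W^2$ has $h^0(L)=3$ and the induced morphism $\Sigma\to\bP^2$ is unique up to $\PGL_3$; together with the projection center $p\in\bP^2$, Theorem~\ref{PlanarTh} produces a planar realization $(\{p_j\}_{j\in N},p)$ of the general $x$, unique modulo $\PGL_3$. A parameter count shows that the planar-realization moduli has dimension $n-4=\dim\oM_{0,N\setminus\{i\}}$, and fixing $(q_j)_{j\ne i}$ leaves, modulo the $3$-dimensional $\PGL_3$-stabilizer of $(p,\mathrm{proj}_p)$, a $0$-dimensional fiber. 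Hence any nearby $x'\in\tau_i^{-1}(\tau_i(x))\cap D_\Gamma$ has a planar realization differing from $(\{p_j\},p)$ by some $g\in\PGL_3$ fixing $p$ and $\mathrm{proj}_p$. Any such $g$ preserves every fiber of $\mathrm{proj}_p$, whence $q_i'=\mathrm{proj}_p(g(p_i))=\mathrm{proj}_p(p_i)=q_i$, a contradiction. The main technical obstacle is verifying that this dimension count is sharp --- equivalently, that the planar-realization moduli map dominates $\oM_{0,N\setminus\{i\}}$ for irreducible $\Gamma$ --- which one handles by an argument parallel to Section~\ref{capacity}. With $N = N'$ established, $\pi=\pi'$, and the injectivity of flat pull-back on divisors yields $D_\Gamma=D_{\Gamma'}$.
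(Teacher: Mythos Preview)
Your reduction (steps up through showing that $D_\Gamma$ would have to be $\tau_i$-saturated, i.e.\ $D_\Gamma=\tau_i^{-1}(D'')$ for some divisor on $\oM_{0,N\setminus\{i\}}$) is correct and clean. The problem is the contradiction step. You want to show that $\tau_i|_{D_\Gamma}$ is generically finite, and since $\dim D_\Gamma=n-4=\dim\oM_{0,N\setminus\{i\}}$ this is equivalent to dominance. But your $\PGL_3$-stabilizer argument does not establish dominance: the $3$-dimensional stabilizer of $(p,\mathrm{proj}_p)$ fixes \emph{all} $q_j$, including $q_i$, so its orbits sit inside the fibers of the map to $M_{0,N}$, not merely to $M_{0,N\setminus\{i\}}$. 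If $D_\Gamma$ were $\tau_i$-saturated (the scenario you are trying to exclude), the fibers of $\{\text{realizations}+p\}\to\tau_i(D_\Gamma)$ would be $9$-dimensional, strictly larger than a single $\PGL_3$-orbit, and your conclusion that nearby $x'$ is $\PGL_3$-related to $x$ fails. So the argument is essentially circular: it presupposes the dominance it is meant to prove. You correctly flag this as ``the main technical obstacle,'' but the appeal to ``an argument parallel to Section~\ref{capacity}'' is too vague---that section concerns products of forgetful maps from $\oM_{0,n+1}$, not the restriction of a single forgetful map to $D_\Gamma$.

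The paper's proof avoids this entirely by a direct class computation. Pull everything back to $\oM_{0,|\tilde N|+1}$ and write the class in the Kapranov model with respect to the extra marking. By Theorem~\ref{class} (specifically \eqref{hikitty}), the coefficient of $E_{\{i\}}$ in the pullback of $D_\Gamma$ is $d-v_i\le d-2$ for $i\in N$ (since $v_i\ge2$) and is $d-1$ for $i\in\tilde N\setminus N$ (since forgetting $i$ just pulls back the hyperplane class). Thus the set $\{i:\ \text{coeff of }E_{\{i\}}=d-1\}$ equals $\tilde N\setminus N$, and likewise equals $\tilde N\setminus N'$ using $\Gamma'$. Equality of the divisors gives equality of classes, hence $N=N'$ and then $D_\Gamma=D_{\Gamma'}$. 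This is a two-line argument once Theorem~\ref{class} is in hand, and it is what you should use---ironically, it is also the most natural way to close the gap in your own approach, since the class formula immediately shows $D_\Gamma$ is not a pullback under $\tau_i$.
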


\begin{proof}
Consider the divisor class of the pull-back $D$ of $\pi^{-1}(D_\Gamma)$ to $\oM_{0,|\tilde{N}|+1}$ in the Kapranov model 
with respect to the $|\tilde{N}|+1$ marking. Using Theorem \ref{class}, we have
$$D=(d-1)H-\sum_{i\in \tilde{N}\setminus N}(d-1)E_i-\sum_{i\in N}(d-v_i)E_i\ldots,$$
where $v_i\geq2$ is the valence of $i$ in $\Gamma$. If $$\pi^{-1}(D_\Gamma)={\pi'}^{-1}(D_{\Gamma'}),$$
then $d=d'$ and by reading off the coefficients of $E_i$ that are equal to $d-1$, it follows that $N=N'$ and $D_\Gamma=
D_{\Gamma'}$. 
\end{proof}

\begin{proof}[Proof of Theorem~\ref{amazing}]
Comparing the classes of $D_\Gamma$ and $D_\Gamma'$ given in Theorem~\ref{class}, we see that
$d=d'=n-2$, i.e.,~$\Gamma'$ is also composed of triples, and for each $i\in N$, the hypertrees $\Gamma$ and $\Gamma'$
have the same valences $v_i$.

Let $\Xi$ (resp.~$\Xi'$) be the collection of wheels of $\Gamma$ (resp.~$\Gamma'$). We claim that
$$\Gamma\cup\Xi=\Gamma'\cup\Xi'.$$

Let $m$ (resp., ~$m'$) be coefficients of the class of $D_\Gamma$ (resp.~$D_{\Gamma'}$), as in 
Theorem~\ref{class}. Then by (\ref{kitty1}) and (\ref{coefficientm_I}) $m'_{N\setminus\{i,j,k\}}\geq1$ for any 
triple $\{i,j,k\}$ that is a hyperedge or a wheel in $\Gamma'$. But since $\Gamma$ is a generic hypertree,
using Lemma \ref{nextfix}, we have $m_{N\setminus\{i,j,k\}}=0$ for any triple $\{i,j,k\}$ which is not a hyperedge or a wheel. 
This proves that $\Gamma'\cup\Xi'\subset\Gamma\cup\Xi$. Since both $\Gamma$, $\Gamma'$ are generic hypertrees, this proves the claim.

Suppose $\Gamma\ne\Gamma'$.
Without loss of generality, we can assume that
$$\Gamma_1\in\Gamma\setminus\Gamma'.$$

We are going to construct a finite bi-colored $2$-dimensional polyhedral complex~$\cK$ inductively, 
as the union of complexes $\cK_1\subset\cK_2\subset\ldots$.
On each step, any black face of $\cK_i$ is going to be a hyperedge in $\Gamma$
and a wheel in $\Gamma'$, and vice versa for white faces.

Let's define $\cK_1$. Its vertices are indexed by $\Gamma_1$. Since $\Gamma_1$ is a wheel in $\Gamma'$,
it can be identified with a triangle in a unique way, where edges of the triangle are precisely intersections 
(with two elements) of $\Gamma_1$ with hyperedges of $\Gamma'$.
So we let $\cK_1$ be this polygon, colored black.

Next we define an inductive step. Suppose $\cK_n$ is given.
Take a face $X$. Then $X$ is either black or white. The construction is absolutely symmetric,
so let's suppose that $X$ is black. 
Then the set of vertices of $X$ is a hyperedge in $\Gamma$ and a wheel in $\Gamma'$.
Moreover, we will make sure that, in our inductive construction, edges of $X$ are exactly
intersections (with two elements) of $X$ with hyperedges of $\Gamma'$.
Notice that this holds for $\cK_1$. 

Let $\{a,b\}\subset X$ be an edge
that is not an edge of some white face.
If any edge of $X$ is also an edge of some white face then discard $X$, and try another face.
If we can not find a face with an edge that is not an edge of some face of an opposite color
then the algorithm stops.

Since $X$ is a wheel of $\Gamma'$, $\{a,b\}$ is the intersection of $X$ with a unique hyperedge $Y$ of $\Gamma'$.
This will be our next face. Since $a,b\in X$, $X$ is a unique hyperedge in $\Gamma$ containing $a,b$.
So $Y$ must be a wheel in $\Gamma$. Therefore, we can identify $Y$ with vertices of a triangle
such that its edges are identified with ($2$-pointed) 
intersections of $Y$ with hyperedges in $\Gamma$.
For example, $(a,b)$ will be one of these edges. We define $\cK_{n+1}$ as $\cK_n$ with $Y$ added 
as a new white polygon. 

We have to check that $\cK_{n+1}$ is a bi-colored polygonal complex, i.e.~that any two faces
of $\cK_{n+1}$ share at most two vertices, and if they share exactly 
two vertices, then in fact they share an edge and are colored differently.
So let $Z$ be a face of $\cK_n$ such that $|Z\cap Y|>1$ but $Z\ne Y$. 
Then $Z$ can not be a hyperedge of $\Gamma'$, so $Z$ is a black face.
Since $Z$ is a wheel of $\Gamma'$, $Z\cap Y$ is an edge of $Z$.
And since $Y$ is a wheel in $\Gamma$, $Z\cap Y$ is an edge of $Y$. 

At some point this algorithm stops. Let $\cK$ be the resulting polygonal complex.
Let $\{\Gamma_i\,|\,i\in S\}$ (resp.~ $\{\Gamma'_i\,|\,i\in S'\}$) be the collection of its black faces
(resp.~white faces) for some $S\subset\{1,\ldots,n-2\}$ (resp.~$S'\subset\{1,\ldots,n-2\}$).
Let 
$$e=\sum_{i\in S}|\Gamma_i|=\sum_{i\in S'}|\Gamma'_i|$$
be the number of edges of $\cK$ and let
$$v=|\cup_{i\in S}\Gamma_i|=|\cup_{i\in S'}\Gamma'_i|$$
be the number of its vertices.
Finally, let $f=f_b+f_w$ be the number of its faces, where $f_b=|S|$ (resp.~$f_w=|S'|$)
is the number of black faces (resp.~white faces).

Notice that apriori $\cK$ is not necessarily homeomorphic to a closed surface,
because at some vertices of $\cK$ several sheets can come together.
At these points, the link of $\cK$ is homeomorphic to the disjoint union of several circles.
Let $\bar\cK\to\cK$ be the ``normalization'' obtained by separating these sheets.
Then $\bar\cK$ is homeomorphic to a closed surface.
Let $\bar v\ge v$ be the number of vertices in $\bar\cK$.
We have
$$2(\bar v-e+f)\ge 2v-2e+2f_b+2f_w=$$
$$=|\cup_{i\in S}\Gamma_i|+|\cup_{i\in S'}\Gamma'_i|
-\sum_{i\in S}|\Gamma_i|-\sum_{i\in S'}|\Gamma'_i|+2|S|+2|S'|=$$
$$=|\cup_{i\in S}\Gamma_i|-\sum_{i\in S}(|\Gamma_i|-2)+|\cup_{i\in S'}\Gamma'_i|
-\sum_{i\in S}(|\Gamma_i|-2)\ge4$$
since $\Gamma$ and $\Gamma'$ are hypergraphs. Since they are strong hypergraphs,
the inequality is strict unless $S=S'=\{1,\ldots,n-2\}$.
It follows that 
$$\chi(\bar\cK)\ge2,$$
and the inequality is strict unless $S=S'=\{1,\ldots,n-2\}$.
But the Euler characteristic can not be bigger than~$2$, with the equality if and only if $\bar\cK$ is a sphere.
It follows that $\bar\cK=\cK$ is a bi-colored triangulation of a $2$-sphere which uses all hyperedges in $\Gamma$
as black faces and all hyperedges in $\Gamma'$ as white faces.

It remains to show that $\Gamma$ uniquely determines the triangulation. It is enough to show that
$\Gamma'$ is the set of all wheels $\Xi$ of $\Gamma$. Since $\Gamma'\subseteq\Xi$,
it is enough to show that there are no wheels in $\Gamma$ other than the set of white triangles in $\cK$.
Assume there exists a wheel $\{i,j,k\}$ which is not a white triangle. 
Let $D$ be one of the two polygons on the sphere bordered by this wheel. 
By switching between the two polygons, we may assume that at least two of the three edges of
$D$ are bordered by two black triangles which lie outside of $D$. If the remaining black triangle bordering 
$D$ lies on the outside of $D$, this contradicts the irreducibility 
of $\Gamma$. If the remaining black triangle lies on the inside of $D$, then we obtain a 
polygon bordered by white triangles and having $4$ edges, which contradicts Lemma \ref{e(D)}.
\end{proof}

We now construct both spherical and non-spherical generic hypertrees.

\begin{Definition}\label{generictriang}
Let $\cK$ be an even triangulation. Let $D$ be a polygon such that any triangle inside $D$
adjacent to a boundary edge of $D$ is white. 

We call $\cK$ a \emph{generic triangulation} if:
\begin{itemize}
\item $\cK$ is irreducible, i.e.,~if $D$ has three edges then $D$ is a white triangle
or the complement of a black triangle (see Lemma \ref{e(D)}).
\item If $D$ has $6$ edges then $D$ is either a hexagon $A$ or $B$ or the complement of a hexagon $A'$ or $B'$
from the following picture:
\begin{figure}[htbp]\label{kprime}
\includegraphics[width=2in]{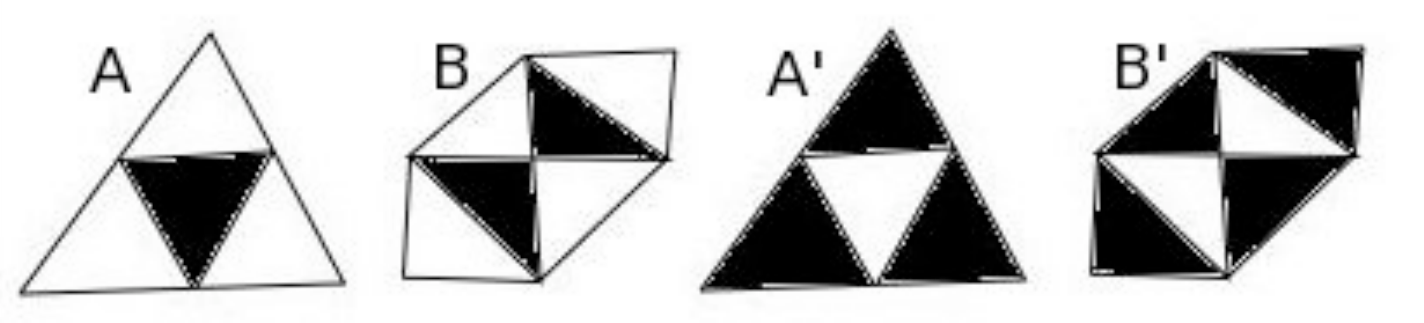}
\end{figure}
\end{itemize}
\end{Definition}

\begin{Remark}
Genericity means that vertices are sprinkled on the sphere sufficiently densely.
We didn't try to give a combinatorial classification 
of generic triangulations, although this is perhaps possible.
But to give a flavor of what's going on, suppose $\cK$ is any even triangulation
and let $\cK'$ be a ``quadrupled''  even triangulation obtained by the following procedure:
\begin{figure}[htbp]
\includegraphics[width=3in]{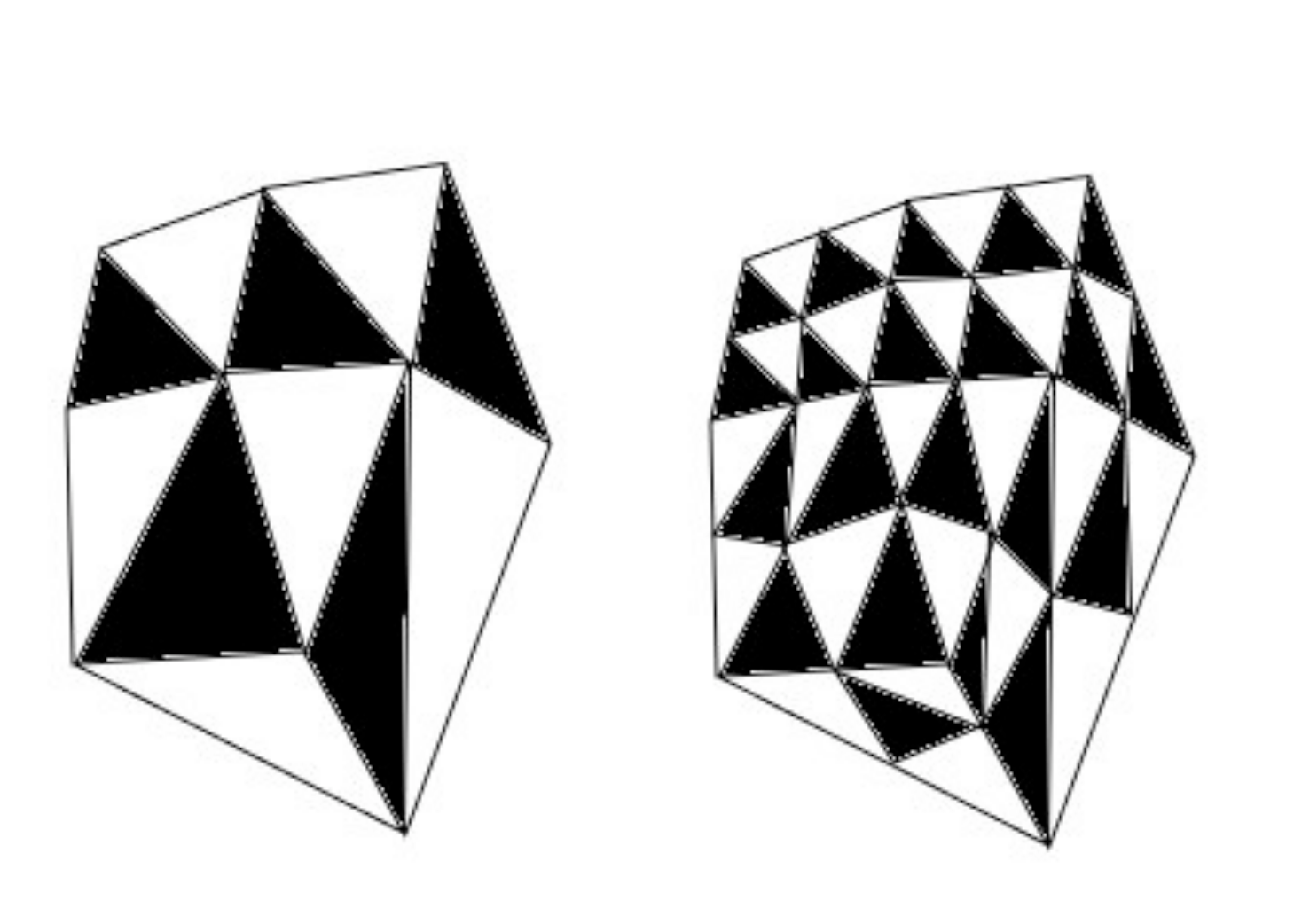}
\caption{Quadrupling a triangulation}\label{kprime}
\end{figure}
take all vertices in $\cK$ and add a midpoint of 
any edge of $\cK$  as a new point of~$\cK'$.
For any black (resp.~white) triangle $T=\{a,b,c\}$ of $\cK$, the triangulation $\cK'$ has black (resp.~white)
triangles $\{a,b',c'\}$, $\{a',b,c'\}$, and $\{a',b',c\}$
and a white (resp.~black) triangle $\{a',b',c'\}$, where $a',b',c'$ are new points in $T$ opposite to vertices $a,b,c$,
see Figure~\ref{kprime}.
It is not hard to see that after quadrupling $\cK$ several times the triangulation becomes generic.
Indeed, any closed path with $6$ edges will happen either in the region
of the triangulation that looks like a standard $A_2$-triangulated $\bR^2$, in which case~$D$ is a hexagon $A$ or $A'$,
or this path loops around a vertex of valence $\ne6$. In this case the valence must be equal to $4$, and we have a hexagon 
$B$ or $B'$. In fact, quadrupling just once is enough \cite{Har}. 
\end{Remark}

\begin{Lemma}\label{qqqcheck}
Let $\cK$ be a generic triangulation and let $\Gamma$ be its collection of black triangles.
Then $\Gamma$ is a generic hypertree, except when $n=8$ and $\cK$ is the triangulation 
given by the bipyramid (see Section \ref{HistoricalRemarks}).
\end{Lemma}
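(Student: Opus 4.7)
The plan is as follows. Since a generic triangulation $\cK$ is in particular irreducible, Theorem~\ref{heroic} immediately gives that $\Gamma$ is an irreducible hypertree composed of triples, with $d=n-2$ hyperedges. It remains to verify condition~\eqref{sjxzvbhkjzdfbk}: for every triple $\{i,j,k\}\subset N$ that is neither a hyperedge nor a wheel of~$\Gamma$, one must show $\Cap(\Gamma_{N\setminus\{i,j,k\}})=n-4$. Since $n-4$ is the maximum possible capacity on the $n-2$ identified vertices, it suffices to exhibit a sub-collection of $\Gamma_{N\setminus\{i,j,k\}}$ of size exactly $n-4$ that covers all $n-2$ vertices and satisfies the convexity axiom~\eqref{CondS}.

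The central idea, mirroring the proof of Theorem~\ref{heroic}, is to translate convexity into a topological statement on the sphere. For any sub-collection $T$ of the black triangles of $\cK$, let $\Delta_T\subset S^2$ be the corresponding closed $2$-subcomplex. Distinct black triangles in a bi-colored triangulation share no edge, so $\Delta_T$ has exactly $3|T|$ edges, and Alexander duality yields
$$\Bigl|\bigcup_{j\in T}\Gamma_j\Bigr|-2-|T|\;=\;h_0(\Delta_T)+|T|-h_0(S^2\setminus\Delta_T)-1.$$
A convexity failure for the identified collection at some sub-sub-collection corresponds, modulo the correction caused by identifying $\{i,j,k\}$ and removing the $r$ triples that contain two of them, to an original $T\subset\Gamma$ whose excess in~\eqref{CondS} is at most~$1$. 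By the analysis in the proof of Theorem~\ref{heroic} together with Lemma~\ref{e(D)}, any such $T$ has each connected component $D$ of $S^2\setminus\Delta_T$ with at most $6$ boundary edges (a multiple of~$3$); by the genericity of $\cK$ (Definition~\ref{generictriang}), any \emph{simple} hexagonal component must be one of the four types $A$, $B$, $A'$, $B'$. The only alternative is a pinched $6$-edge region bounded by a figure-eight, arising from a \emph{bowtie} of two black triangles sharing exactly one vertex --- necessarily of valence~$4$ in~$\cK$.

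Given this classification, I construct the required $S'$ by removing from $\Gamma_{N\setminus\{i,j,k\}}$ exactly $2-r$ well-chosen triples, where $r\in\{0,1,2\}$ is the number of black triangles containing two of $\{i,j,k\}$ (equivalently, the number of pairs of $\{i,j,k\}$ that are edges of $\cK$), and then verify convexity case by case against the topological list above. The four hexagonal cases $A$, $B$, $A'$, $B'$ can be dispatched directly, since identifying $\{i,j,k\}$ to a single point strictly increases the excess as soon as at least one of $i,j,k$ lies in the interior of the hexagonal region, while regions disjoint from $\{i,j,k\}$ are unaffected by the identification. The main obstacle is the bowtie case, which is \emph{not} controlled by the genericity of $\cK$: one has to argue combinatorially that in any generic triangulation other than the hexagonal bipyramid, there are enough ``free'' triples available that $S'$ can be chosen to contain at most one of the two black triangles of every problematic bowtie. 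The hexagonal bipyramid with $n=8$ is the unique generic triangulation where the small number of triples ($d=6$) together with the abundance of valence-$4$ equatorial vertices prevents such a choice, producing the stated exception; for that triangulation a direct check exhibits a triple $\{i,j,k\}$ for which $\Cap(\Gamma_{N\setminus\{i,j,k\}})<n-4$.
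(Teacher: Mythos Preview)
Your overall skeleton matches the paper's: drop two black triangles, show the remaining $n-4$ satisfy~\eqref{CondS} after identifying $i,j,k$, and reduce a hypothetical failure to a $6$-edge complementary region that must be one of the hexagons $A,B,A',B'$. However, the execution has real gaps.

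First, the sentence ``identifying $\{i,j,k\}$ to a single point strictly \emph{increases} the excess as soon as at least one of $i,j,k$ lies in the interior'' is false: identification can only lower $\bigl|\bigcup_{j\in T}\Gamma_j\bigr|$, so the excess $\bigl|\bigcup\bigr|-|T|-2$ drops (by $2$ when all of $i,j,k$ lie in $\bigcup$). This is exactly the difficulty, not a helpful monotonicity, and it is why the paper has to prove that whenever $i,j,k\in\Delta$ the union has at least $s+4$ vertices \emph{before} identification.

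Second, and more seriously, you never say \emph{which} $2-r$ triangles to remove. In the paper this choice is the whole proof: in Case~Y one removes, in the $A'$ situation, the black triangle \emph{inside} the hexagon adjacent to~$k$; in Case~Z with $A'$ one removes the interior black triangle adjacent to~$i$ together with another triangle at~$i$; when some vertex has valence~$2$ one removes both triangles at that vertex. These tailored choices are precisely what force a contradiction when one assumes the unique $6$-edge component $D$ equals $A$, $B$, the outside of $A'$, or the outside of~$B'$: in each case one of the removed triangles is forced to lie where it cannot. Your ``can be dispatched directly'' skips all of this, and the $n=8$ bipyramid exception emerges in the paper only inside this hexagon analysis (Case~X, $D=$ outside of~$A'$), not from a separate ``bowtie'' phenomenon.

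Finally, the ``bowtie/figure-eight'' obstruction you flag does not arise: the complementary $6$-edge component $D$ is by construction \emph{edge}-connected (components are taken after deleting vertices), and a pinched pair of $3$-gons meeting at a vertex is not edge-connected, so $D$ really is a polygon and Definition~\ref{generictriang} applies. The genuine work is the hexagon case analysis you left out, not this case.
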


\begin{Remark}
The genericity assumption in Lemma \ref{qqqcheck} is necessary: the bipyramid 
is easily seen to not be a generic triangulation for $n>8$ (for example 
there are many loops with $6$ edges with black triangles on one side of it that pass once
through the north pole and once through the south pole).
We will show in \ref{HistoricalRemarks} that the corresponding divisor 
is a pull-back of the ``Brill--Noether divisor'' for a certain map $\oM_{0,n}\to\oM_{n-3}$,
and consequently its symmetry group is much larger than the dihedral group.
This divisor can be realized by various hypertrees $\Gamma'$
obtained from $\Gamma$ by permuting equatorial points.
By Lemma \ref{qqqcheck}, the bipyramid for $n=8$ is the only generic triangulation 
that does not correspond to a generic hypertree.  
\end{Remark}

\begin{proof}[Proof of Lemma~\ref{qqqcheck}]
We write $a\lra b$ if vertices $a$ and $b$ are connected by an edge. 
Up to symmetries, there are three possible cases.

{\bf Case X:} $ij$ and $jk$ are both edges of black triangles.
These triangles are removed in $\Gamma_{N\setminus\{i,j,k\}}$. 

{\bf Case Y:} $ij$ is an edge of a black triangle,
which will be removed in $\Gamma_{N\setminus\{i,j,k\}}$,
but $i\not\lra k$ and $j\not\lra k$. We also remove a black triangle adjacent to $k$ as follows:
if $i,j,k$ are vertices of the hexagon $A'$, then we remove the black triangle
inside the hexagon adjacent to $k$; in all other cases, we remove a random 
black triangle adjacent to $k$.

{\bf Case Z:} $i\not\lra j$, $j\not\lra k$, and $i\not\lra k$.
In this case we remove two black triangles adjacent to the same point (it could be $i$, $j$, or $k$) 
according to the following rules. If one of the points $i$, $j$, or $k$ has valence $2$ (see Definition \ref{valence}),
then we remove both triangles adjacent to this point (any of $i,j,k$ is going to work).
If each of the points $i,j,k$ has valence more than $2$, but these points are 
vertices of the hexagon $A'$, then we remove the black triangle
inside the hexagon adjacent to $i$ and any other black triangle adjacent to $i$.
In any other case we just remove two random black triangles adjacent to $i$.

We claim that the remaining $n-4$ triangles $\tilde\Gamma$ form a hypertree if we identify $i=j=k$.
Let $S\subset \tilde\Gamma$ be a proper subset of $s$ black triangles, with $1< s< n-4$.
It is enough to 
show that $S$ covers at least $s+2$ vertices (after we identify $i=j=k$).
Let $\Delta$ be the union of triangles in $S$ before the identification.
Since $\Gamma$ is irreducible, $\Delta$ contains at least $s+3$ vertices of $N$.
So~it suffices  to prove the following:

\begin{Claim}
If $i,j,k\in\Delta$ then $\Delta$ contains at least $s+4$ vertices of~$N$.
\end{Claim}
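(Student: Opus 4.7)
The plan is to argue by contradiction, supposing $|\Delta\cap N|=s+3$ (so irreducibility of $\Gamma$ is saturated on the subset $S$). The first step is a topological analysis of $\Delta$ analogous to \ref{slick}. Because $\cK$ is bicolored, no two black triangles share an edge, so $\Delta$ has $s$ faces and $3s$ distinct edges; with $s+3$ vertices this gives $\chi(\Delta)=3-s$. Combining this with Alexander duality $h_0(S^2\setminus\Delta)=h_1(\Delta)+1$, the identity $\sum_D e(D)=3s$, and the constraints $e(D)\geq 3$ and $3\mid e(D)$ (Lemma~\ref{e(D)}), a short case analysis on $h_0(\Delta)$ shows that only $h_0(\Delta)=1$ is possible: the case $h_0(\Delta)=2$ would force every component of $S^2\setminus\Delta$ to be a single white triangle (since a ``complement of a single black triangle'' component requires $s=1$), yielding $2s=2d$ against $s<d-2$; the case $h_0(\Delta)\geq 3$ produces more components than $\sum_D e(D)=3s$ allows. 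So $\Delta$ is connected and $S^2\setminus\Delta$ has $s-1$ components---one of them, call it $D_0$, with $e(D_0)=6$, and the remaining $s-2$ each a single white triangle (by Lemma~\ref{e(D)} and $s\geq 2$).

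Next I would invoke the genericity hypothesis. By Definition~\ref{generictriang}, the $6$-edge component $D_0$ must be one of the four distinguished configurations $A$, $B$, $A'$, or $B'$. The two removed triangles $T_1,T_2\in\Gamma\setminus\tilde\Gamma$ are black and not in $\Delta$, so (being black) neither can lie in a single-white-triangle component, and both must therefore lie topologically inside $D_0$. The vertices $i,j,k\in\Delta$ must lie on $\partial D_0$, since every interior vertex of $D_0$ is a vertex of no triangle in $\Delta$.

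The final step is a case-by-case check. In each of Cases X, Y, Z the pair $(T_1,T_2)$ and the incidence pattern at $\{i,j,k\}$ are prescribed (with a special selection rule in Cases Y and Z for $A'$-hexagons), and one must find them all realized inside $D_0$ with $i,j,k$ on $\partial D_0$. Reading off the hexagon pictures of Definition~\ref{generictriang}, I expect to verify directly that no such realization exists: either $D_0$ does not contain two suitably incident black triangles with two of their vertices on $\partial D_0$, or, for $D_0=A'$, the refined selection forces one of $T_1,T_2$ to lie inside the small hexagon $H=S^2\setminus D_0$, contradicting $T_i\subset D_0$. The main obstacle is this final combinatorial enumeration---in particular the $A'$ case, which is precisely why the definition of $\tilde\Gamma$ in Cases Y and Z carries those special rules; it is the same structural obstruction that forces the bipyramid for $n=8$ to be excluded in the statement of Lemma~\ref{qqqcheck}.
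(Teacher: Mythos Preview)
Your approach is the paper's. Both assume for contradiction that $\Delta$ has exactly $s+3$ vertices, use the Euler-characteristic/Alexander-duality bookkeeping from \ref{slick} to force $S^2\setminus\Delta$ to consist of $s-2$ white triangles together with a single six-edged region $D_0$ containing both removed black triangles, and then invoke Definition~\ref{generictriang} to pin $D_0$ down as one of $A,B,A',B'$. The paper phrases the reduction as an equivalent claim about components of the complement with $\ge 9$ or $\ge 6$ edges; your explicit case split on $h_0(\Delta)$ is a slightly more verbose route to the same place.

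Two remarks. First, the assertion $i,j,k\in\partial D_0$ does not follow from $i,j,k\in\Delta$ alone: when $D_0$ is the inside of a hexagon, $\Delta$ occupies the large exterior and $i,j,k$ could a priori sit anywhere there. The missing input is that in Cases~X and~Y the removed triangles carry $i,j,k$ among their vertices, so $i,j,k\in D_0\cap\Delta=\partial D_0$; in Case~Z this fails, and the paper argues instead via the valence of the interior vertex of $B'$. Second, the enumeration you defer is most of the work, and it is not symmetric across the four hexagons: $A$ is immediate (only one interior black triangle), $B$ takes a short boundary-position argument, but the $A'$/Case~X sub-case forces the paper to invoke genericity a second time (on an auxiliary six-edge loop built from the white triangles adjacent to $i,j,k$) and is exactly where the $n=8$ bipyramid slips through.
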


By \eqref{slick}, this claim is equivalent to the following more simple:

\begin{Claim}
The complement $S^2\setminus\Delta$ contains either  a connected component with at least $9$ sides
or at least two connected components with at least $6$ sides each (by~\ref{slick},  the number of sides
 is always divisible by $3$).
\end{Claim}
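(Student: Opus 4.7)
The plan is to work with the equivalent side-count formulation and argue by contradiction via a case analysis on the number of boundary edges of the components of $S^2\setminus\Delta$. The equivalence between the two claim formulations uses that two black triangles of $\cK$ never share an edge, so $\partial\Delta$ consists of exactly $3s$ edges distributed among the components $D_j$ of $S^2\setminus\Delta$ as $\sum_j e(D_j)=3s$, each $e(D_j)$ a positive multiple of $3$ by Lemma \ref{e(D)}; combined with the Euler characteristic calculation of \ref{slick} and Alexander duality $h_1(\Delta)=h_0(S^2\setminus\Delta)-1$, this yields
\[
|V(\Delta)|=2s+h_0(\Delta)-h_0(S^2\setminus\Delta)+1.
\]
I then assume for contradiction that every component $D_j$ satisfies $e(D_j)\le 6$ and at most one satisfies $e(D_j)=6$; since $s>1$, Lemma \ref{e(D)} forces any $3$-edge component to be a single white triangle (the ``complement of a black triangle'' alternative would require $s=1$).

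In the first case, every component has $3$ boundary edges, giving $k=s$ components, each a single white triangle, which together with the $s$ black triangles of $\Delta$ exhaust $\cK$; since $\cK$ has exactly $n-2$ white triangles, $s=n-2$, contradicting $s<n-4$. In the second case, exactly one component $D$ has $6$ boundary edges while the remaining $s-2$ are single white triangles. By the genericity condition of Definition \ref{generictriang}, $D$ is of one of four distinguished shapes: a small hexagon $A$ or $B$, or the complement of a small hexagon $A'$ or $B'$. For each shape I would tabulate the interior black and white triangle counts $b_D, w_D$; in the ``small hexagon'' subcases, the relation $s=n-2-b_D$ combined with the boundedness of $b_D$ forces $s\ge n-4$, again contradicting $s<n-4$. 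In the ``complement'' subcases, $\Delta$ and the $s-2$ isolated white triangles must precisely tile the interior of the small hexagon $A'$ or $B'$; combining this restrictive triangle count with the constraint $i,j,k\in\Delta$ and the explicit interior triangulation of $A'$ or $B'$ produces a contradiction.

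I expect the main obstacle to be the explicit verification in the second case, where the interior triangulations and adjacency patterns of each of the four generic hexagons $A, B, A', B'$ must be examined in turn. The role of the $i,j,k\in\Delta$ assumption is most delicate in the ``complement'' subcases, where it prevents $\Delta$ from being a small configuration avoiding the three specified vertices. The bipyramid exception in Lemma \ref{qqqcheck} signals that the genericity hypothesis is used in an essential way, and care is needed to ensure the case analysis truly exhausts all possibilities for a generic $\cK$.
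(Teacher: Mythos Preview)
Your overall architecture matches the paper's: argue by contradiction, use that $3$-edge components are single white triangles (via Lemma~\ref{e(D)} and $s>1$), and then invoke the genericity hypothesis to pin down the unique $6$-edge component $D$ as one of the four hexagon types. Your Case~1 and your counting argument for the small hexagons $A$ and $B$ are correct; in fact, your identity $s=n-2-b_D$ disposes of hexagon~$B$ more cleanly than the paper does, since $b_B=2$ forces $s=n-4$, contradicting $s<n-4$ directly.

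The gap is in the ``complement'' subcases. You propose to derive a contradiction from $i,j,k\in\Delta$ together with the explicit interior triangulation of $A'$ or $B'$, but this is not enough on its own. What the paper actually uses here is that the two \emph{removed} black triangles (the ones deleted from $\Gamma$ to form $\tilde\Gamma$) lie in $S^2\setminus\Delta$ and hence in $D$, i.e.\ \emph{outside} the small hexagon. The removed triangles were chosen in the X/Y/Z case split with exactly this endgame in mind: in cases Y and Z one of the removed triangles was deliberately taken inside the hexagon $A'$ (when applicable), which immediately contradicts its lying in $D$; for $B'$, in cases X and Y the removed triangles contain $i,j,k$ and sit outside, forcing $i,j,k$ onto the boundary of $B'$ and contradicting that one of them must be the interior vertex; in case Z the valence-$2$ rule forces the removed triangles to be the two black triangles inside $B'$, again contradicting that they lie in $D$. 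The sole surviving case (X with $A'$) is where the delicate analysis leading to the $n=8$ bipyramid exception takes place. Your sketch never invokes the removed triangles, so it cannot reproduce these contradictions; the constraint $i,j,k\in\Delta$ by itself does not rule out, say, $s=b_{A'}$ with $n$ large. You should rewrite the complement subcases to track where the two removed triangles land and to exploit the specific removal rules laid out in the X/Y/Z setup.
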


We argue by contradiction.
Note that we remove two triangles, and a connected component of $S^2\setminus\Delta$
that contains any of them has at least six edges. Therefore both removed triangles belong to the same connected component, call it $D$,
with six edges (and all other connected components are white triangles). Recall that $i,j,k\in\Delta$ and $\Delta\subset S^2\setminus D$.

We know how all hexagons look like: $D$ must either be the inside of a hexagon $A$ or $B$ or the ``outside" of a hexagons $A'$ or $B'$.
The hexagon~$A$ is excluded because it contains only one black triangle.

The hexagon $B$ contains two black triangles inside, so they must be the removed triangles. Since $i,j,k\in\Delta$, it 
must be that $i,j,k$ are on the boundary of the hexagon. In cases $X$ and $Y$ the removed triangles contain $i,j,k$; hence, two opposite
vertices of the hexagon $B$ are excluded. In this case it follows that one of $i,j,k$ is connected by an edge to the other two. This is only possible 
in case $X$. But in case $X$ the removed triangles have in common only $j$; hence, $j$ must be strictly inside the hexagon, which is a contradiction.
Finally, the case $Z$ is impossible because the removed triangles have in common $i$, therefore $i$ must be 
the point strictly inside the hexagon, which is a contradiction.

Suppose that $D$ is the outside of the hexagon $A'$ or~$B'$. Since the removed triangles are contained in $D$, it follows that
no two of $i,j,k$ can be connected by a black triangle inside the hexagon. 

Assume $D$ is the outside of the hexagon $A'$. Then $i,j,k$ are the three vertices of $A'$ with no two of them connected by an edge.
In cases $Y$ and $Z$ one of the removed triangles is inside $A'$, which is a contradiction. 

Assume we are in case $X$. Let $a$ (resp. $b$, resp. $c$) be the 
middle vertex (on the boundary of $A'$) between $i$, $j$ (resp. $j$, $k$, resp. $i$, $k$). We claim that $i,j,a$ and  $j,k,b$ must form white triangles. 
(Assume $\{i,j,a\}$ is not a white triangle. Consider the polygon $Q$ bordered by the white triangles that contain the edges $\{i,a\}$, $\{j,a\}$, $\{i,j\}$. 
Then  $Q$ has either $3$ or $4$ edges. This is a contradiction. The other case is identical.)

Consider the complement of the polygon $P$ bordered by black triangles $\{i,a,c\}$, $\{k,b,c\}$ and the two black triangles adjacent to the edges $\{i,j\}$ and $\{j,k\}$.
Since $\cK$ is a generic triangulation, it must be that $P$ is either the complement of one of the hexagons $A'$ or $B'$ (in which case $P$ contains two 
vertices strictly in its interior, while $A'$, $B'$ do not; hence, a contradiction) or $P$ is the inside of one of the hexagons $A$ or $B$. This completely determines the 
triangulation; in case $A$ we must have $n=8$, while in case $B$ we have $n=9$. It is easy to see that in case $A$ this gives the bipyramid triangulation, and that case
$B$ cannot happen for an irreducible hypertree.

Assume now that $D$ is the outside of the hexagon $B'$. Since no two of $i,j,k$ can be connected by a black triangle inside the hexagon and since $i,j,k$ are inside the 
hexagon, it follows that the point strictly in the interior of $B'$ must be one of $i,j,k$. In cases $X$, $Y$ since $i,j,k$ belong to the removed triangles, which are in $D$, 
it follows that $i,j,k$ are on the boundary of $B'$, which is a contradiction. In case $Z$, note that since the valence of the interior point is $2$, the removed triangles must be the two black triangles inside $B'$, which is a contradiction.

\end{proof}

\begin{Review}
W. Thurston \cite{Th} suggests an approach for the classification of triangulations of the sphere based on hyperbolic geometry. 
Moreover, he gives a complete classification for triangulations with $v_i=2$ or $3$ for all vertices $i$.
It would be interesting to see how irreducible and generic triangulations fit in his classification.
\end{Review}

We have not tried to classify all non-spherical hypertrees.
It is easy to see that just choosing a random collection of triples
is not going to work: one of the results in the theory of random hypergraphs
is that they are almost surely disconnected. 
It is easy to see that disconnected hypertrees do not satisfy 
\eqref{CondS}. But perhaps one can enumerate all hypertrees inductively, using some
simple ``add a vertex'' procedures. Here is an example of such a construction.
The number of irreducible hypertrees produced this way 
grows very rapidly as $n$ goes to infinity.

\begin{Review}\textsc{Construction.}\label{Fibonacci}
Suppose that $\Gamma'$ is an irreducible hypertree on $N$ with triples only.
After renumbering, we can assume that $n$ belongs to only two triples,
namely to $\Gamma'_{n-3}$ and $\Gamma'_{n-2}$.
Suppose also that $n-1\in\Gamma'_{n-2}$.
We~define $n-1$ triples for $k=n+1$ as follows: $\Gamma_i:=\Gamma'_i$ for $i=1,\ldots,n-3$;
if $\Gamma'_{n-2}=\{i,n-1,n\}$ then we define $\Gamma_{n-2}:=\{i,n-1,n+1\}$;
and we define $\Gamma_{n-1}:=\{a,n,n+1\}$, where $a$ is any index in $N\setminus(\Gamma_{n-2}\cup\Gamma_{n-3})$, see Fig.~\ref{constructionne}.
\begin{figure}[htbp]
\includegraphics[width=4in]{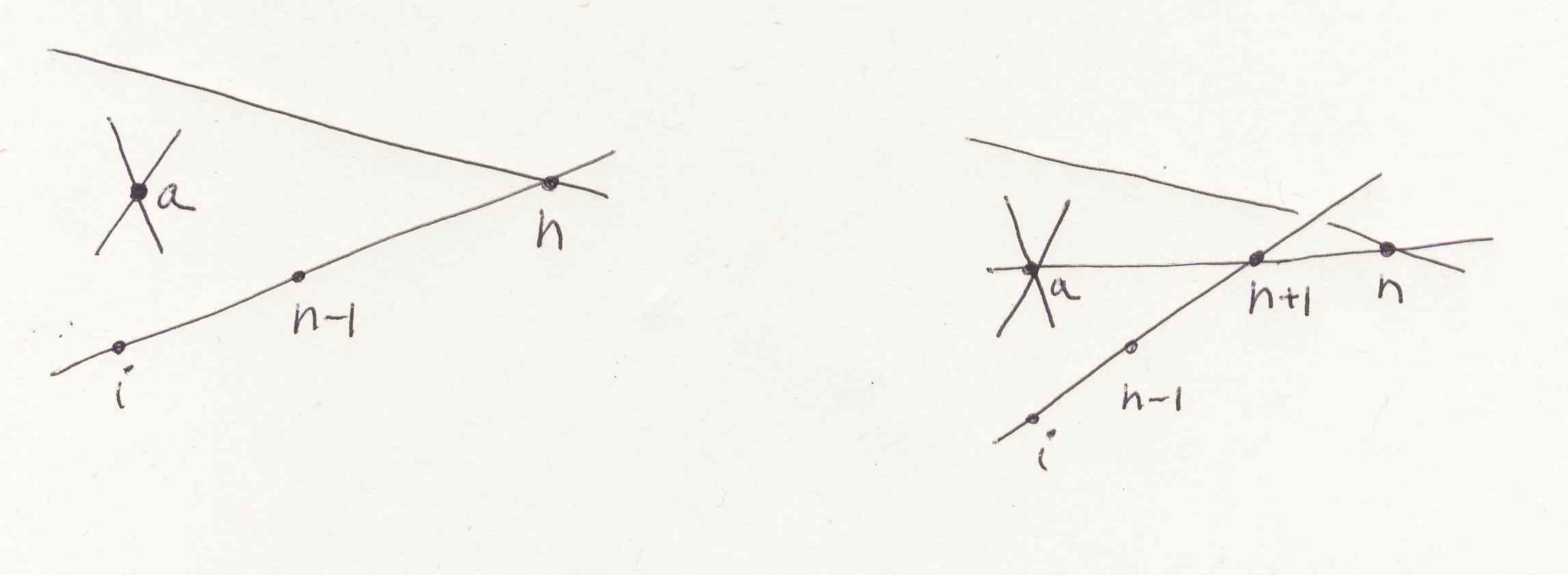}
\caption{}\label{constructionne}
\end{figure}
\end{Review}

\begin{Proposition}\label{FunnyConstruction} 
$\Gamma$ is an irreducible hypertree.
\end{Proposition}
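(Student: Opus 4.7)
The plan is to verify directly that $\Gamma$ satisfies each of the four axioms in Definition \ref{hypertree def}, plus the strict inequality needed for irreducibility. Only the convexity axiom \eqref{CondS} (with strictness) is nontrivial; the other axioms are immediate from the construction.

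First I would dispatch the easy parts. Each $\Gamma_j$ has three elements by construction. The valences are: $v(n+1)=2$ (from $\Gamma_{n-2}$ and $\Gamma_{n-1}$); $v(n)=2$ (from $\Gamma_{n-3}=\Gamma'_{n-3}$ and $\Gamma_{n-1}$); $v(n-1)$ equals its valence in $\Gamma'$ (since $\Gamma_{n-2}$ contains $n-1$ in place of $\Gamma'_{n-2}$); $v(a)=v_{\Gamma'}(a)+1\ge 3$; and all other valences agree with those in $\Gamma'$. So every element has valence $\ge 2$. Finally, $\Gamma$ has $n-1$ triples on $n+1$ points, so $\sum_j(|\Gamma_j|-2)=n-1=(n+1)-2$, proving \eqref{normalizat}. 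Note also that $a$ exists: strict convexity of $\Gamma'$ applied to $\{n-3,n-2\}$ forces $\Gamma'_{n-3}\cap\{i,n-1\}=\emptyset$, so $|\Gamma'_{n-3}\cup\{i,n-1\}|=5$, leaving at least one choice for $a\in\{1,\ldots,n\}\setminus(\Gamma'_{n-3}\cup\{i,n-1\})$.

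The heart of the proof is strict convexity: for any $S\subset\{1,\ldots,n-1\}$ with $1<|S|<n-1$, I want $|\bigcup_{j\in S}\Gamma_j|\ge|S|+3$. I would split into four cases according to $S\cap\{n-2,n-1\}$. Write $A=\bigcup_{j\in S\cap\{1,\ldots,n-3\}}\Gamma'_j$.

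\emph{Case A} ($S\subset\{1,\ldots,n-3\}$): Here $\bigcup_{j\in S}\Gamma_j=\bigcup_{j\in S}\Gamma'_j$, and since $|S|\le n-3<n-2=d'$, strict convexity of $\Gamma'$ gives what we need. \emph{Case B} ($n-2\in S$, $n-1\notin S$): Let $S''=(S\setminus\{n-2\})\cup\{n-2\}=S$, viewed inside $\Gamma'$. A direct comparison shows $\bigcup_{j\in S}\Gamma_j=\bigl(\bigcup_{j\in S''}\Gamma'_j\bigr)\cup\{n+1\}\setminus\{n\}$ when $n-3\notin S$, and $|\bigcup_{j\in S}\Gamma_j|=|\bigcup_{j\in S''}\Gamma'_j|+1$ when $n-3\in S$. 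In the former situation strict convexity of $\Gamma'$ for $S''$ transfers directly; in the latter the $+1$ gain handles the edge case $|S|=n-2$, where $\bigcup_{j=1}^{n-3}\Gamma'_j=N'$ (since every vertex of $\Gamma'$ has valence $\ge 2$ and hence appears outside $\Gamma'_{n-2}$). \emph{Case C} ($n-1\in S$, $n-2\notin S$): Now $\bigcup_{j\in S}\Gamma_j=A\cup\{a,n,n+1\}$, and one uses $a\notin\Gamma'_{n-3}\cup\{i,n-1,n\}$ to see $a\notin A$ in the borderline subcase $S=\{n-3,n-1\}$; elsewhere strict convexity of $\Gamma'$ applied to $S\setminus\{n-1\}$ gives even more room. \emph{Case D} (both $n-2,n-1\in S$): Set $S^*=(S\cap\{1,\ldots,n-3\})\cup\{n-2\}$; then $\bigcup_{j\in S}\Gamma_j=\bigl(\bigcup_{j\in S^*}\Gamma'_j\bigr)\cup\{a,n+1\}$ with $n+1$ fresh. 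If $1<|S^*|<n-2$, strict convexity of $\Gamma'$ yields $|\bigcup_{j\in S^*}\Gamma'_j|\ge|S^*|+3=|S|+2$, and the extra $n+1$ gives the required strict $|S|+3$; the only boundary case is $S=\{n-2,n-1\}$, where direct enumeration (again using $a\notin\{i,n-1,n\}$) gives $5>|S|+2=4$.

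The main obstacle is the Case B2 borderline $|S|=n-2$, because there the required strict inequality in $\Gamma$ saturates exactly one higher than the corresponding non-strict inequality in $\Gamma'$; one must argue separately that $\bigcup_{j=1}^{n-3}\Gamma'_j$ covers all of $N'$. The only ingredient needed for this is that every vertex of $\Gamma'$ has valence $\ge 2$ and that $n$ exhausts its two occurrences in $\Gamma'_{n-3}\cup\Gamma'_{n-2}$, both of which are hypotheses of the construction. Once this case is settled, the other boundary computations are routine, and irreducibility of $\Gamma$ follows.
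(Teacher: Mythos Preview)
Your proof is correct and follows essentially the same approach as the paper's: a four-way case split on $S\cap\{n-2,n-1\}$, each time reducing to the irreducibility of $\Gamma'$ and gaining the extra vertex $n+1$. The paper's treatment of your Case~B is slightly leaner---it uniformly uses $|\bigcup_{j\in S}\Gamma_j|\ge|\bigcup_{j\in S\setminus\{n-2\}}\Gamma'_j|+1$ (from $n+1$) and then applies strict convexity of $\Gamma'$ to $S\setminus\{n-2\}$, which avoids your sub-split on whether $n-3\in S$ and the separate handling of $|S|=n-2$---but your version is equally valid.
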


\begin{proof}
Suppose $I\subset\{1,\ldots,n-1\}$, $1<|I|<n-1$.
Consider several cases. If $I\subset\{1,\ldots,n-3\}$ then 
$$|\bigcup_{i\in I}\Gamma_i|=|\bigcup_{i\in I}\Gamma'_i|\ge |I|+3,$$
and we are done.
If $I=I'\cup\{n-2\}$ (resp.~$I=I'\cup\{n-1\}$), where $I'\subset\{1,\ldots,n-3\}$, then
$$|\bigcup_{i\in I}\Gamma_i|\ge|\bigcup_{i\in I'}\Gamma'_i|+1,$$
because $n+1$ belongs to the first union but does not belong to the second union.
So again we are done  unless $|I'|=1$, in which case the claim is easy.

It remains to consider the case 
$I=I'\cup\{n-2, n-1\}$, where $I'\subset\{1,\ldots,n-3\}$ (and note that $|I'|<n-3$).
If $I'$ is empty then the claim is easy. Otherwise, let
$I''=I'\cup\{n-2\}$. Then $1<|I''|<n-2$ and so
$$|\bigcup_{i\in I''}\Gamma'_i|\ge|I''|+3.$$
But $$\bigcup_{i\in I}\Gamma_i\supseteqq\bigcup_{i\in I''}\Gamma'_i\sqcup\{n+1\}.$$
So $\Gamma$ is an irreducible hypertree.
\end{proof}
 
\begin{Lemma}
Let $\Gamma'$ be an irreducible hypertree composed of triples and let $\Gamma$ be the irreducible 
hypertree  obtained from $\Gamma'$ by the inductive construction \ref{Fibonacci}. If $\Gamma'$ is generic, then $\Gamma$ 
is generic.
\end{Lemma}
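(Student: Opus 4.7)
The plan is to verify the genericity condition for $\Gamma$ triple-by-triple by exploiting the explicit relation to $\Gamma'$: namely, $\Gamma$ is obtained from $\Gamma'$ by replacing $\Gamma'_{n-2}=\{i,n-1,n\}$ with $\Gamma_{n-2}=\{i,n-1,n+1\}$ (swap $n\leftrightarrow n+1$) and adjoining the fresh triple $\Gamma_{n-1}=\{a,n,n+1\}$. Fix a triple $T\subset N=\{1,\ldots,n+1\}$ that is neither a hyperedge nor a wheel of $\Gamma$; the goal is to exhibit a sub-collection of $\Gamma_{N\setminus T}$ of $n-3$ triples satisfying the convexity axiom~\eqref{CondS}, thus forcing $\Cap(\Gamma_{N\setminus T})=n-3$. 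I~split the analysis according to $T\cap\{n,n+1\}$.

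When $T\subset\{1,\ldots,n-1\}$, the pair-incidences of $T$ inside hyperedges of $\Gamma$ and of $\Gamma'$ literally coincide (the only pair of $\{1,\ldots,n-1\}$ contained in $\Gamma_{n-2}$ is $\{i,n-1\}$, matching the situation in $\Gamma'_{n-2}$, and no pair of $\{1,\ldots,n-1\}$ sits in $\Gamma_{n-1}$), so $T$ is neither a hyperedge nor a wheel of $\Gamma'$ either. Genericity of $\Gamma'$ then supplies a sub-collection $\mathcal{S}'\subset\Gamma'_{N'\setminus T}$ of size $n-4$ satisfying~\eqref{CondS}. I~would extend it to a size-$(n-3)$ sub-collection $\mathcal{S}\subset\Gamma_{N\setminus T}$ by substituting $\Gamma_{n-2}$ for $\Gamma'_{n-2}$ wherever the latter appears in $\mathcal{S}'$, and then adjoining the new triple $\Gamma_{n-1}$ modulo $T$. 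Each~\eqref{CondS} inequality for $\mathcal{S}$ reduces either to the corresponding one for $\mathcal{S}'$ (when $\Gamma_{n-1}$ is absent, using that the swap $n\leftrightarrow n+1$ is a relabelling that preserves all cardinalities) or gets an extra $+1$ on the vertex side because $\Gamma_{n-1}$ contributes $\{n,n+1\}$ to the support, neither of which lies in $T$. The case $T\cap\{n,n+1\}$ is a single vertex is analogous after applying the swap $n\leftrightarrow n+1$ to $T$ and transferring the generic sub-collection of $\Gamma'$ for the swapped triple $T'$; one must check that this swap preserves the wheel/hyperedge status of $T$, which follows from the same pair-incidence bookkeeping, using that the new triple $\Gamma_{n-1}$ provides the one extra triple of capacity.

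The main obstacle is the remaining case $T=\{n,n+1,v\}$ with $v\in\{1,\ldots,n-1\}$, in which both $\Gamma_{n-2}$ and $\Gamma_{n-1}$ contain two elements of $T$ and are therefore deleted in forming $\Gamma_{N\setminus T}$. Only the hyperedges $\Gamma'_1,\ldots,\Gamma'_{n-3}$ survive, with $n$ and $v$ both identified to $p$. The assumption that $T$ is not a wheel of $\Gamma$ forces either $v\notin\Gamma_{n-3}$ (so that $\{v,n\}$ lies in no hyperedge, since $n$ appears only in $\Gamma_{n-3}$ and $\Gamma_{n-1}$) or $v\notin\{i,n-1\}$ (so that $\{v,n+1\}$ lies in no hyperedge, since $n+1$ appears only in $\Gamma_{n-2}$ and $\Gamma_{n-1}$). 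In either branch I~would produce the required sub-collection by invoking the genericity of $\Gamma'$ on a suitable auxiliary triple inside $\{1,\ldots,n-1\}$ that captures the relevant constraint near $v$ and $n$, and use the choice $a\notin\Gamma_{n-3}\cup\Gamma_{n-2}$ built into the construction~\ref{Fibonacci} to guarantee the extra vertex budget needed to reach the maximal capacity $n-3$. This case is where the interplay between the induction hypothesis and the combinatorics of the construction is tightest, and it is the part that will require the most careful bookkeeping of pair-incidences.
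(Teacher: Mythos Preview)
Your treatment of the case $T\subset\{1,\ldots,n-1\}$ is correct and matches the paper's argument essentially line for line. The problem lies in the other two cases.

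Your reduction ``swap $n\leftrightarrow n+1$ and apply genericity of $\Gamma'$ to $T'$'' breaks down because the swap need not preserve hyperedge status. Concretely, write $\Gamma'_{n-3}=\{u,v,n\}$. Then $T=\{u,v,n+1\}$ has $|T\cap\{n,n+1\}|=1$. One checks easily (using $a\notin\Gamma'_{n-3}\cup\Gamma'_{n-2}$ and $|\Gamma'_{n-3}\cap\Gamma'_{n-2}|=1$) that $T$ is neither a hyperedge nor a wheel of $\Gamma$, so it must be handled. But the swapped triple $T'=\{u,v,n\}=\Gamma'_{n-3}$ \emph{is} a hyperedge of $\Gamma'$, so genericity of $\Gamma'$ says nothing about $T'$. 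The analogous failure occurs for $T=\Gamma'_{n-2}=\{i,n-1,n\}$ in the sub-case $n\in T$, $n+1\notin T$: here $T$ is neither a hyperedge nor a wheel of $\Gamma$, yet $T$ itself is a hyperedge of $\Gamma'$. Your sketch for the final case $T=\{n,n+1,v\}$ is likewise built on ``invoking the genericity of $\Gamma'$ on a suitable auxiliary triple,'' which runs into the same obstruction.

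The paper sidesteps this entirely: whenever $T$ contains $n$ or $n+1$, it does \emph{not} use genericity of $\Gamma'$ at all. Instead it observes that the first $n-3$ (or $n-4$) hyperedges of $\Gamma$ avoid $n+1$ (resp.\ $n$), so after identifying the three points of $T$ the union loses at most one vertex; the strict inequality from \emph{irreducibility} of $\Gamma'$ (namely $|\bigcup_{\alpha\in S}\Gamma_\alpha|\ge |S|+3$ for $1<|S|<n-2$) then yields the required convexity bound directly, and one appends a single surviving triple from $\{\Gamma_{n-2},\Gamma_{n-1}\}$ to reach the correct count. Genericity of $\Gamma'$ is invoked only in the case $n,n+1\notin T$, exactly where your argument already works.
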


\begin{proof}
Let $\{i,j,k\}$ be a triple in $N\cup\{n+1\}$ that is not a triple, nor a wheel in $\Gamma$. Denote by $\tilde{\Gamma}_{\alpha}$
the triple $\Gamma_{\alpha}$ obtained by identifying $i,j,k$ with $p$, with the convention that we drop the $\tilde{\Gamma}_{\alpha}$'s
that are not triples. We prove that we can find $n-3$ triples $\tilde{\Gamma}_{\alpha}$ that satisfy (\ref{CondS}).

Consider the case when $n+1\in\{i,j,k\}$, say $k=n+1$: If $\{i,j\}$ is contained in some $\Gamma_{\alpha}$ 
for  $\alpha\in\{1,\ldots,n-3\}$, say $\Gamma_{n-3}$, then we take as our $n-3$ triples $\tilde{\Gamma}_1,\ldots,\tilde{\Gamma}_{n-4}$ and one of
$\tilde{\Gamma}_{n-2}$, $\tilde{\Gamma}_{n-1}$ (one that is a triple; for example, since  $\Gamma_{n-2}\cap\Gamma_{n-1}=\{n+1\}$ and
$\{i,j,n+1\}$ is not a wheel in $\Gamma$, one of $\Gamma_{n-2}$, 
$\Gamma_{n-1}$ does not contain $i,j$). If  $\{i,j\}$ is not contained in any of $\Gamma_1,\ldots, \Gamma_{n-3}$, we take 
$\tilde{\Gamma}_1,\ldots, \tilde{\Gamma}_{n-3}$ as our triples. The result follows from the fact that for any 
$T\subset\{1,\ldots,n-3\}$, since $n+1\notin \cup_{\alpha=1}^{n-3}\Gamma_\alpha$, we have
$$\big|\bigcup_{\alpha\in T}\tilde{\Gamma}_{\alpha}\big|\geq\big|\bigcup_{\alpha\in T}\Gamma_{\alpha}\big|-1\geq|T|+3-1=|T|+2.$$
Adding one of $\tilde{\Gamma}_{n-2}$, $\tilde{\Gamma}_{n-1}$ adds the index $n+1$ to the union, and condition (\ref{CondS}) is still satisfied.
The case when $n\in\{i,j,k\}$, $n+1\notin\{i,j,k\}$ is similar: we take $\tilde{\Gamma}_1,\ldots, \tilde{\Gamma}_{n-4}$ 
(note, $n\notin\cup_{\alpha=1}^{n-4}\Gamma_{\alpha}$) and $\tilde{\Gamma}_{n-1}$
as our triples.   

Consider now the case when $n, n+1\notin\{i,j,k\}$. Then $\{i,j,k\}$ is not a triple, nor a wheel in $\Gamma'$. Since $\Gamma'$ is a 
generic hypertree, there are $n-4$ triples from $\Gamma'$ which after identifying identifying $i,j,k$ with $p$ satisfy (\ref{CondS}).

If the $n-4$ triples are also triples in $\Gamma$ (i.e., $\Gamma'_{n-2}$ is not among them), then adding one $\Gamma_{n-1}$ to them
will do. Assume the contrary. Since $|\tilde{\Gamma'}_{n-2}|=3$, then $|\tilde{\Gamma}_{n-2}|=3$.
We claim that the remaining $n-5$ triples and $\tilde{\Gamma}_{n-2}$, $\tilde{\Gamma}_{n-1}$ will do the job. Let $T$ be a subset of
the $n-5$ remaining triples. Clearly, $\{\tilde{\Gamma}_{\alpha}\}_{\alpha\in T}$ satisfy  (\ref{CondS}). Adding one of 
$\tilde{\Gamma}_{n-2}$, $\tilde{\Gamma}_{n-1}$ to  $\{\tilde{\Gamma}_{\alpha}\}_{\alpha\in T}$ will not violate (\ref{CondS}). 
But $\{\tilde{\Gamma}_{\alpha}\}_{\alpha\in T}$, $\tilde{\Gamma}_{n-2}$, $\tilde{\Gamma}_{n-1}$ also satisfy (\ref{CondS}):
$$\big|\bigcup_{\alpha\in T}\tilde{\Gamma}_{\alpha}\cup\tilde{\Gamma}_{n-2}\cup\tilde{\Gamma}_{n-1}\big|\geq
\big|\bigcup_{\alpha\in T}\tilde{\Gamma}_{\alpha}\cup\tilde{\Gamma'}_{n-2}\cup\{n+1\}\big|\geq
(|T|+1)+2+1.$$
This finishes the proof.
\end{proof}

\section{Determinantal Equations} \label{DetEqs}

In this section we give simple determinantal equations of hypertree divisors in $\oM_{0,n}$
and then use them to show that black and white hypertrees of a spherical hypertree give the same divisor in $\oM_{0,n}$.

We consider only the case when hyperedges are triples. Fix a hypertree
$$\Gamma=\{\Gamma_1,\ldots,\Gamma_{n-2}\}$$
on the set $\{1,\ldots,n\}$. 
We work in ``homogeneous coordinates'' on $M_{0,n}$, i.e.,~we represent
a point of $M_{0,n}$ by $n$ roots $x_1,\ldots,x_n$ of a binary  $n$-form.

\begin{Proposition}
Let $A$ be an $(n-2)\times n$ matrix with the following rows\break (well-defined up to sign) : if $\Gamma_\alpha=\{i,j,k\}$ then 
$$A_{\alpha i}=x_j-x_k,\quad A_{\alpha j}=x_k-x_i,\quad A_{\alpha k}=x_i-x_j.$$
Then $D_\Gamma$ is given by the vanishing of any $(n-3)\times(n-3)$ minor of $A$
obtained by deleting a row and three columns with non-zero entries in that row.
\end{Proposition}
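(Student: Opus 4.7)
The plan is to reinterpret $Ay=0$ as a planar collinearity condition. Viewing $y=(y_1,\ldots,y_n)$ as heights and forming points $p_i=(x_i,y_i)\in\bA^2$, the $\alpha$-th row of $A$ applied to $y$ is (up to sign) exactly the standard $3\times 3$ collinearity determinant for the triple $\{p_i\}_{i\in\Gamma_\alpha}$. Thus $y\in\ker A$ precisely when $\{p_i\}$ realizes all hyperedge collinearities of $\Gamma$, and the vertical projection of $\{p_i\}$ to the $x$-axis returns the original marked point $[x_1:\ldots:x_n]$. In these terms, $[x]\in D_\Gamma$ iff such a $y$ exists which is ``genuinely two-dimensional'', i.e.~not in the span of the trivial solutions.

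First I would observe that $\ker A$ always contains $\mathbf{1}=(1,\ldots,1)$ (rows of $A$ sum to zero) and $x=(x_1,\ldots,x_n)$ (by the direct identity $(x_j-x_k)x_i+(x_k-x_i)x_j+(x_i-x_j)x_k=0$); these account for configurations collapsed onto a horizontal line or onto the diagonal $y=x$. Since $\mathbf{1}$ and $x$ are linearly independent on $M_{0,n}$, one has $\rk(A)\le n-2$ throughout, and by the reinterpretation above
\[
[x]\in D_\Gamma \iff \dim\ker A\ge 3 \iff \rk(A)\le n-3.
\]

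The key step is then a linear-algebra identification of the chosen minor with the rank drop. Fix a row $\alpha$ and columns $\{i,j,k\}=\Gamma_\alpha$, and set
\[
V=\{y\in\bC^n:y_i=y_j=y_k=0\},\qquad W=\{w\in\bC^{n-2}:w_\alpha=0\},
\]
both of dimension $n-3$. Since the $\alpha$-th row of $A$ is supported on columns $i,j,k$, the restriction $A|_V$ lands in $W$, and in the standard bases its matrix is precisely $A'$. Two observations then finish the step: (a) if $\det A'=0$, any nonzero $y\in V\cap\ker A$ cannot lie in $\langle\mathbf{1},x\rangle$, because $(1,1,1)$ and $(x_i,x_j,x_k)$ are linearly independent in $\bC^3$ (the markings are distinct), so $\dim\ker A\ge 3$; conversely, (b) the evaluation map $\ker A\to\bC^3$, $v\mapsto(v_i,v_j,v_k)$, lands in the $2$-dimensional span of $(1,1,1)$ and $(x_i,x_j,x_k)$, so $\dim\ker A\ge 3$ forces $V\cap\ker A$ to be nonzero, and hence $\det A'=0$.

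It then remains to upgrade the equivalence to the set-theoretic equality $D_\Gamma=\{\det A'=0\}$ in $M_{0,n}$. The inclusion $D_\Gamma\subseteq\{\det A'=0\}$ is immediate: a strict planar realization supplies a suitable $y\notin\langle\mathbf{1},x\rangle$. For the reverse, any $[x]$ with $\det A'([x])=0$ admits such a $y$, so $(x_i,y_i)$ is a weak planar realization of $\Gamma$; by Theorem~\ref{PlanarTh} strict planar realizations exist and form a dense open subset of the main component of the weak-realization parameter space, whose image in $M_{0,n}$ is $D_\Gamma$. Combined with the irreducibility of $D_\Gamma$ (Lemma~\ref{irreducible}), this rules out any other divisorial component in the vanishing of $\det A'$. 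The main obstacle I expect is exactly this last point---ruling out extra components and confirming that the strict planar realizations are dense in the weak ones along every component of $\{\det A'=0\}$ meeting $M_{0,n}$---which is precisely where Theorem~\ref{PlanarTh} and the irreducibility of $D_\Gamma$ play their role.
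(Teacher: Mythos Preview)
Your argument is essentially the same as the paper's: interpret $Ay=0$ as the collinearity condition for the points $(x_i,y_i)$, identify the trivial $2$-plane $\langle\mathbf{1},x\rangle\subset\ker A$, and note that setting $y_i=y_j=y_k=0$ reduces the question to the specified $(n-3)\times(n-3)$ minor. Your evaluation-map argument in step~(b) makes precise what the paper states informally (``we can force $y_i=y_j=y_k=0$ as these points have to lie on a line anyway'').

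One remark: your last paragraph is more cautious than necessary. On $M_{0,n}$ the $x_i$ are distinct, so for any $y\notin\langle\mathbf{1},x\rangle$ the points $(x_i,y_i)$ are automatically distinct and not all collinear; hence the resulting map $\Sigma\to\bP^2$ is admissible and already lands in $G^2$. Thus $\{\det A'=0\}\cap M_{0,n}=G^2$ directly, without appealing to Theorem~\ref{PlanarTh} or to irreducibility. The paper likewise does not invoke these.
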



\begin{Example}
Consider the only hypertree for $n=7$ with hyperedges 
$$\Gamma=\{712,734,756,135,246\}.$$
Then we have
$$
A=\left[\begin{matrix}
x_2-x_7&x_7-x_1&0&0&0&0&x_1-x_2\cr
0&0&x_4-x_7&x_7-x_3&0&0&x_3-x_4\cr
0&0&0&0&x_6-x_7&x_7-x_5&x_5-x_6\cr
x_3-x_5&0&x_5-x_1&0&x_1-x_3&0&0\cr
0&x_4-x_6&0&x_6-x_2&0&x_2-x_4&0\cr
\end{matrix}\right]
$$
and $D_\Gamma$ is given by equation
$$-x_7^{2} {x_2} {x_3}+{x_7} {x_1} {x_2}
      {x_3}+x_7^{2} {x_1} {x_4}-{x_7} {x_1} {x_2}
      {x_4}-{x_7} {x_1} {x_3} {x_4}+{x_7} {x_2}
      {x_3} {x_4}+$$
$$x_7^{2} {x_2} {x_5}-{x_7} {x_1}
      {x_2} {x_5}-x_7^{2} {x_4} {x_5}+{x_1} {x_2}
      {x_4} {x_5}+{x_7} {x_3} {x_4} {x_5}-{x_2}
      {x_3} {x_4} {x_5}-x_7^{2} {x_1} {x_6}$$
$$+{x_7}
      {x_1} {x_2} {x_6}+x_7^{2} {x_3} {x_6}-{x_1}
      {x_2} {x_3} {x_6}-{x_7} {x_3} {x_4}
      {x_6}+{x_1} {x_3} {x_4} {x_6}+{x_7} {x_1}
      {x_5} {x_6}$$
$$-{x_7} {x_2} {x_5} {x_6}-{x_7}
      {x_3} {x_5} {x_6}+{x_2} {x_3} {x_5}
      {x_6}+{x_7} {x_4} {x_5} {x_6}-{x_1} {x_4}
      {x_5} {x_6}=0$$
\end{Example}

\begin{proof}
This is very simple. Fix different points $x_1,\ldots,x_n\in\bA^1$. The condition that these points
can be obtained by projecting a hypertree curve is as follows: there exist $y_1,\ldots,y_n\in\bA^1$
such that a triple of points 
$$(x_i,y_i),\ (x_j,y_j),\ (x_k,y_k)$$
lie on the line for any hyperedge $\Gamma_\alpha=\{i,j,k\}$.
This can be expressed by vanishing of the determinant
$$
-\det\left[\begin{matrix} 
	1	 & x_i  & y_i \cr
	1       & x_j  & y_j \cr
	1       & x_k & y_k \cr
\end{matrix}\right]=
y_i(x_j-x_k)+y_j(x_k-x_i)+y_k(x_i-x_j)=0.
$$
This gives a homogeneous system of linear equations on $y_i$'s with the matrix of coefficients $A$.
Notice that it has a $2$-dimensional subspace of trivial solutions (obtained by placing all points $p_i$ along some
line on the plane). Thus the condition that there exists a planar realization of $\Gamma$ with projections
$x_1,\ldots,x_n$ is given by vanishing of any non-trivial $(n-3)\times (n-3)$ minor.
For example, fix a row that corresponds to $\Gamma_\alpha=\{i,j,k\}$.
We can force $y_i=y_j=y_k=0$ as this points have to lie on a  line anyways.
Then we get a system of linear equations on the remaining $n-3$ variables and the condition
is that this system has a non-trivial solution. This gives a minor as in the statement of the theorem.
\end{proof}

These equations do not explain why black and white hypertrees of the spherical hypertree
yield the same divisor: their matrices $A_B$ and $A_W$ will be vastly different. For example, 
one can check that $\rk A_B\ne \rk A_W$ for some values of variables $x_1,\ldots,x_n$.      
Fortunately, $D_\Gamma$ has another determinantal equation.

\begin{Proposition}
Let $G$ be a $2$-dimensional simplicial complex
with simplices $\Gamma_1,\ldots,\Gamma_{n-2}$ (oriented arbitrarily).
Then $H_1(G,\bZ)=\bZ^{n-3}$. We choose a generating set of paths $P_1,\ldots,P_{m}$
(one can take $m=n-3$), where
$$P_i=(\alpha_i^1\to\alpha_i^2\ldots\to\alpha_i^r\to\alpha_i^{r+1}=\alpha_i^1)$$
is a path in  $\{1,\ldots,n-2\}$ such that $\Gamma_{\alpha_i^j}\cap\Gamma_{\alpha_i^{j+1}}\ne\emptyset$ for any $j$.
Consider an $m\times(n-2)$-matrix $B$ such that $B_{i\alpha}=0$ if $\alpha\not\in P_i$ and 
$B_{i\alpha}=x_k-x_l$ if $\alpha=\alpha_i^s\in P_i$, where $k=\Gamma_{\alpha_i^s}\cap\Gamma_{\alpha_i^{s+1}}$ and
$l=\Gamma_{\alpha_i^s}\cap\Gamma_{\alpha_i^{s-1}}$.
Then $D_\Gamma$ is given by vanishing of any non-trivial $(n-3)\times(n-3)$-minor of $B$.
\end{Proposition}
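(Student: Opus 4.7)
The plan is to reinterpret a planar realization of $\Gamma$ using ``slope/intercept'' coordinates for each line and then show that the rows of $B$ are exactly the compatibility relations obtained after eliminating the intercepts around closed cycles of triangles.

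First I would verify the topological claim. Because $\Gamma$ is irreducible and composed of triples, the strict convexity axiom at $|S|=2$ forces $|\Gamma_\alpha\cap\Gamma_\beta|\le 1$ for $\alpha\ne\beta$, so no $1$-simplex of $G$ lies in two triangles. Hence $G$ is a connected $2$-complex with $f_0=n$, $f_1=3(n-2)$, $f_2=n-2$, the boundary $\partial_2:C_2\to C_1$ is injective (so $H_2(G)=0$), and $\chi(G)=n-3(n-2)+(n-2)=4-n$ gives $h_1(G)=n-3$.

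Next, place the projection center at infinity so a realization is just $p_i=(x_i,y_i)\in \bA^2$ with each $\Gamma_\alpha$ collinear. For each triangle $\Gamma_\alpha$ write the containing line as $y=s_\alpha x+t_\alpha$, so that $y_l=s_\alpha x_l+t_\alpha$ for $l\in\Gamma_\alpha$; the compatibility $y_l$ as computed from two triangles containing $l$ becomes
$$(s_\alpha-s_\beta)\,x_i+(t_\alpha-t_\beta)=0\quad\hbox{for each $i\in\Gamma_\alpha\cap\Gamma_\beta$.}$$
Let $V\subset\bA^{2(n-2)}$ be the solution set of this compatibility system. There is always the ``trivial'' $2$-dimensional family corresponding to the affine gauge $y_i\mapsto y_i+a+bx_i$, i.e.\ $t_\alpha\mapsto t_\alpha+a$ and $s_\alpha\mapsto s_\alpha+b$. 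A point lies in $D_\Gamma$ exactly when a genuinely new solution appears, i.e.\ when $\dim V\ge 3$.

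Now I would compute the effect of eliminating $\vec t$. Telescoping the compatibility equations around a cyclic path $P=(\alpha_1,\dots,\alpha_r,\alpha_1)$ with shared vertices $k_s\in\Gamma_{\alpha_s}\cap\Gamma_{\alpha_{s+1}}$ yields
$$0=\sum_s\bigl[(s_{\alpha_s}-s_{\alpha_{s+1}})x_{k_s}+(t_{\alpha_s}-t_{\alpha_{s+1}})\bigr]=\sum_s s_{\alpha_s}\,(x_{k_s}-x_{k_{s-1}}),$$
which is precisely the $P$-th row of $B\vec s$. Conversely, given $\vec s$ annihilated by all $P_i$ in a generating family, the prescribed $\vec t$-differences $t_\alpha-t_\beta=-(s_\alpha-s_\beta)x_i$ are an exact $1$-cocycle on the compatibility graph, so $\vec s$ lifts to an element of $V$. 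Consider the projection $\pi:V\to\bA^{n-2}$, $(\vec s,\vec t)\mapsto\vec s$. Its kernel is $\{(\vec 0,c\vec 1)\}$, one-dimensional, and the argument above gives $\pi(V)=\ker B$. Hence $\dim V=1+\dim\ker B$, and $\dim V\ge 3$ iff $\dim\ker B\ge 2$ iff $\rk B\le n-4$ iff every $(n-3)\times(n-3)$ minor of $B$ vanishes. Combined with the irreducibility of $D_\Gamma$ established in Theorem~\ref{Irreducibility}, any non-trivial such minor cuts out $D_\Gamma$.

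The main obstacle is the ``generating'' property: verifying that the number $m$ of cycle relations needed is exactly $n-3$ and that one can realize them by paths $P_i$ of the form described. The dimension count already gives the upper bound $\rk B\le n-3$ (since $\pi(V)$ contains the line $k\vec 1$), and matching $m=h_1(G)=n-3$ follows by interpreting the cycle relations as $1$-cocycles on the graph whose vertices are the triangles and whose edges record pairs sharing a vertex: after choosing a spanning tree in each vertex-star (which imposes no new conditions because the star is a clique of identical equations) one recovers a $1$-complex whose first Betti number equals $h_1(G)$. This step is combinatorial but routine given the edge-count $3(n-2)$, and once it is in place the determinantal description is immediate.
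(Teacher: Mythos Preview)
Your proof is correct and follows the paper's approach: both trade the heights $y_i$ for slopes as the unknowns and read off the rows of $B$ as cycle-consistency conditions (the paper phrases this as path-integrating heights along chains of lines, which is exactly your elimination of~$\vec t$). Your last-paragraph worry dissolves once you observe that the row for $P$ equals $\langle\psi,\gamma_P\rangle$ for the $1$-cochain $\psi([ij])=s_\alpha(x_j-x_i)$ on $G$ itself, which is \emph{closed} (it kills each $\partial\Gamma_\alpha$), so vanishing on generators of $H_1(G)$ already forces $\psi=dy$ and recovers the heights $y_i$ directly---no auxiliary compatibility graph is needed.
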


\begin{Corollary}\label{coinciwb}
Black and white hypertrees of a spherical hypertree give the same divisor on $\oM_{0,n}$.
\end{Corollary}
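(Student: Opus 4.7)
The plan is to exploit the determinantal description of $D_\Gamma$ from the previous Proposition and show that, for an irreducible even triangulation $\cK$ of $S^2$ with black hypertree $\Gamma$ and white hypertree $\Gamma'$, the defining polynomials of $D_\Gamma$ and $D_{\Gamma'}$ agree up to sign in the homogeneous coordinates $x_1,\dots,x_n$. The key structural input is that the two simplicial complexes $G$ and $G'$ attached to $\Gamma$ and $\Gamma'$ share the same $1$-skeleton, namely that of $\cK$ itself, because every edge of $\cK$ borders exactly one black and one white triangle; moreover $G\cup G'=\cK$, so $G$ and $G'$ are complementary pieces of $S^2$.

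First I would pick a convenient basis for $H_1(G,\bZ)\cong\bZ^{n-3}$: the boundary of each white triangle $\Gamma'_\beta=\{i,j,k\}$ is a $1$-cycle in $G$ (the $2$-cell $\Gamma'_\beta$ being absent), and the relation $[S^2]=\sum_\alpha\Gamma_\alpha+\sum_\beta\Gamma'_\beta$ shows that any $n-3$ of the $n-2$ such cycles form a basis. Each such cycle is realised by the path of black triangles $\Gamma_{\beta_i}\to\Gamma_{\beta_j}\to\Gamma_{\beta_k}\to\Gamma_{\beta_i}$ demanded by the Proposition, where $\Gamma_{\beta_\ell}$ is the unique black triangle sharing with $\Gamma'_\beta$ the edge opposite to vertex $\ell$; consecutive triangles in this path share exactly one vertex of $\Gamma'_\beta$, since two black triangles of $\cK$ cannot share an edge. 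Substituting into the Proposition's formula, I would find that the row of $B_\Gamma$ corresponding to $\Gamma'_\beta$ has entry at column $\Gamma_\alpha$ equal to $\pm(x_s-x_t)$ precisely when $\Gamma'_\beta$ and $\Gamma_\alpha$ share the edge $\{s,t\}$ in $\cK$, and zero otherwise.

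Carrying out the symmetric construction for $\Gamma'$ yields a matrix $B_{\Gamma'}$ whose entry at row $\Gamma_\alpha$, column $\Gamma'_\beta$ is again $\pm(x_s-x_t)$ for the same shared edge. Extending both $B_\Gamma$ and $B_{\Gamma'}$ to square $(n-2)\times(n-2)$ matrices $\tilde B_\Gamma,\tilde B_{\Gamma'}$ by adjoining the omitted basis cycle, I would verify that $\tilde B_{\Gamma'}$ agrees with $\tilde B_\Gamma^{\mathsf T}$ up to diagonal sign changes on rows and columns; this reduces to the observation that matching entries of the two matrices are labelled by the same shared edges of $\cK$. Consequently any $(n-3)\times(n-3)$ minor of $B_\Gamma$ agrees up to a global sign with an $(n-3)\times(n-3)$ minor of $B_{\Gamma'}$; since $\det M=\det M^{\mathsf T}$, the two determinantal polynomials cut out the same divisor, i.e.\ $D_\Gamma=D_{\Gamma'}$. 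The main technical obstacle will be the sign bookkeeping in the transposition identity, requiring compatible orientations of all ``around a triangle'' paths relative to a fixed orientation of $S^2$; however, since Theorem~\ref{Irreducibility} guarantees that $D_\Gamma$ and $D_{\Gamma'}$ are both irreducible divisors, it actually suffices to establish mere proportionality of the two polynomial equations, and this follows at once from the shared pattern of nonzero entries without tracking signs precisely.
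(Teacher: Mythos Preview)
Your proposal is correct and follows essentially the same route as the paper: use the cycle-matrix $B$ from the second Proposition, take boundaries of white triangles as generators of $H_1(G,\bZ)$, observe that the resulting matrix has entry $\pm(x_s-x_t)$ at position $(w,b)$ exactly when the white triangle $w$ and black triangle $b$ share the edge $\{s,t\}$, and conclude that swapping black and white transposes the matrix up to signs. The paper handles the sign bookkeeping slightly more cleanly than you do, by orienting all triangles coherently with a fixed orientation of $S^2$; this pins down the entries exactly (not just up to sign) and gives the precise identity $B_{\Gamma'}=-B_\Gamma^{\mathsf T}$, so the minors literally agree up to a global sign without any appeal to irreducibility. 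Your fallback via irreducibility is fine but unnecessary once you fix orientations, and your remark that ``the shared pattern of nonzero entries'' alone suffices is not quite right: you really do need that the matrices differ by diagonal sign matrices (which you have), since arbitrary entrywise sign flips need not preserve determinants up to scalar.
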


\begin{proof}[Proof of the Corollary]
Notice that $G$ is just the ``black'' part of the bi-colored triangulation of the sphere.
We orient all black and white triangles according to an orientation of the sphere.
We can choose a generating set of $H_1(G,\bZ)$ to be given by cycles around white triangles.
Then $B$ has the following very simple form. It has rows indexed by white triangles, columns
indexed by black triangles. The entry $B_{wb}$ is equal to zero if triangles $w$ and $b$
do not share an edge, and $B_{wb}=x_i-x_j$ if $w$ and $b$ intersect along the edge $[ij]$
(oriented according to the orientation of $b$). Now notice that the corresponding matrix
for the white hypertree is just the minus transposed matrix $-B^t$.
\end{proof}

\begin{Example}
Consider the spherical hypertree from Example~\ref{10vertexexample}.
The matrix $B$ looks as follows (we skip zero entries):
$$\left[\begin{matrix}
x_1-x_3&&&x_2-x_1&&&&x_3-x_2\cr
x_4-x_1&&x_8-x_4&x_1-x_8&&&&\cr
x_3-x_4&x_5-x_3&x_4-x_5&&&&&&\cr
&&x_5-x_8&&x_8-x_{10}&x_{10}-x_5&&&\cr
&x_6-x_5&&&&x_5-x_9&x_9-x_6&\cr
&&&&x_{10}-x_7&x_9-x_{10}&x_7-x_9&\cr
&x_3-x_6&&&&&x_6-x_7&x_7-x_3\cr
&&&x_8-x_2&x_7-x_8&&&x_2-x_7\cr
\end{matrix}
\right]$$

\end{Example}

\begin{proof}[Proof of the Proposition]
This is similar to the proof of the previous Proposition: we fix $x_1,\dots,x_n\in\bA^1$ but now instead of using
$y_1,\ldots,y_n$ as variables, we use slopes of the lines $k_1,\ldots,k_{n-2}$ as variables.
To reconstruct the planar realization, we choose a height $y_1$ arbitrarily and then consecutively compute the remaining ``heights'' $y_i$:
if $y_i$ is already known and $i$ and $j$ are connected by a line with slope $k_\alpha$ then of course
$$y_j=y_i+k_\alpha(x_j-x_i),$$
which gives $y_j$. We only have to show that heights of points thus obtained do not depend on a sequence of lines
that connects them to the first point. But this precisely means that for each cycle
of lines $$\alpha^1\to\alpha^2\ldots\to\alpha^r\to\alpha^{r+1}=\alpha^1,$$
the relative height of the last point with respect to the first point is equal to $0$,
i.e.,~that slopes satisfy the system of linear equations with matrix $B$.
Throwing away a trivial solution when all slopes are equal, we get the equation of $D_\Gamma$
as a minor of $B$ of codimension~$1$ (note that rows and columns of $B$ add to zero.)
\end{proof}

\section{Hypertrees and Brill--Noether Divisors on $\oM_g$}\label{HistoricalRemarks}

Consider the Keel--Vermeire divisor on $\oM_{0,6}$.
According to our description, $D_\Gamma$ is the locus of projections of vertices of the 
complete quadrilateral. This is a spherical hypertree with
the triangulation given by an octahedron.
There are two hypertrees (black and white)
that give the same divisor.
The total number of  Keel--Vermeire divisors on $\oM_{0,6}$ is $15$.
They are parameterized by markings of the octahedron, i.e.,~by tri-partitions of $\{1,\ldots,6\}$ into pairs.
For example, Figure~\ref{octahedron} corresponds to a $3$-partition $(12)(34)(56)$.

\begin{figure}[htbp]
\includegraphics[width=4in]{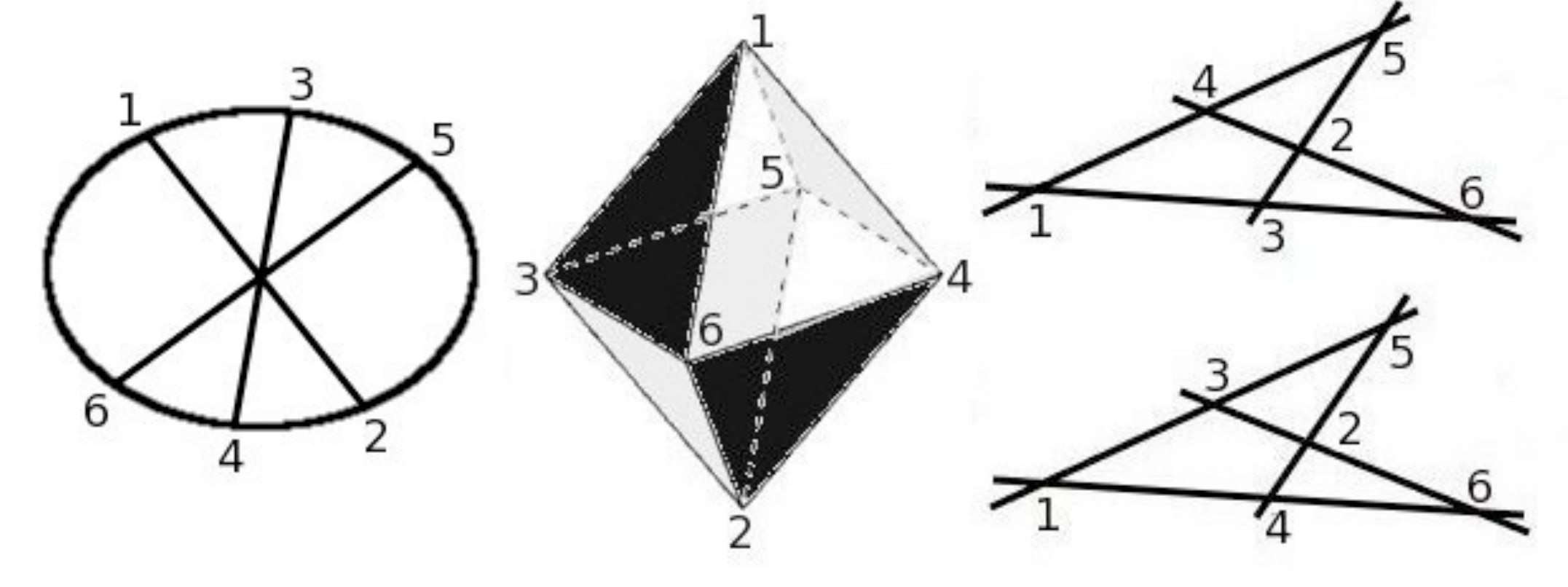}
\caption{The Keel-Vermeire divisor in $\oM_{0,6}$}
\label{octahedron}
\end{figure}

Now let us explain the left-hand-side of Figure~\ref{octahedron}.
For any tri-partition, consider the map $\oM_{0,6}\to\oM_3$ obtained by gluing
points in pairs
$$
\begin{matrix}
M_{0,6} & \to &\oM_3\cr
\includegraphics[height=0.3in]{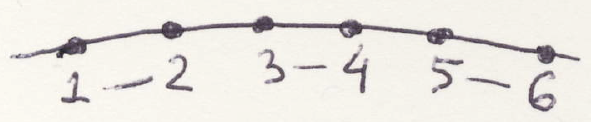}&\raise0.1in\hbox{$\mapsto$}&\includegraphics[height=0.3in]{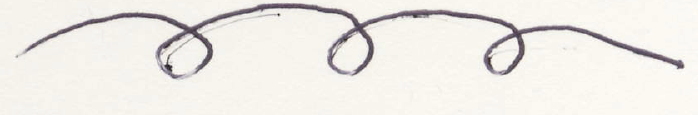}
\end{matrix}
$$

Keel defined a divisor $D_K\subset M_{0,6}$ as the pull-back of the 
hyperelliptic locus in~$\oM_3$. This locus is divisorial. 
By the theory of admissible covers~\cite{HM}, a hyperelliptic involution
on the general genus $3$ curve in the limit induces an involution 
of $\bP^1$ that exchanges points in the pairs $(12)$, $(34)$, and $(56)$.
Quotient by this involution is a degree $2$ map $\bP^1\to\bP^1$,
which can be realized by embedding $\bP^1$ in $\bP^2$
as a plane conic and projecting it from a point.
It follows that $D_K\subset M_{0,6}$ is the locus of $6$ points on a conic such that
chords connecting pairs of points $(12)$, $(34)$, and $(56)$ are concurrent. 

It is quite amazing that these two descriptions give the same divisor:

\begin{Proposition} $D_K=D_\Gamma$.
\end{Proposition}

\begin{proof}
Passing to the projectively dual picture, let
$A_1,A_2,A_3,A_4\in\bP^2$ be general points
and let $L\subset\bP^2$ be a general line. Let $\{L_{ij}\}$ be $6$ lines connecting pairs of points $A_i$, $A_j$. 
The claim is that there exists an involution of $L$ that permutes $L\cap L_{ij}$ and $L\cap L_{i'j'}$ if
$\{i,j\}\cup\{i',j'\}=\{1,2,3,4\}$. More precisely, we prove that $D_\Gamma\subset D_K$. Since $D_K$ 
is an irreducible divisor (this is easy to see by the above description), the Proposition follows. 

The proof is illustrated in Figure~\ref{CremonaProof}.
Let $T:\,\bP^2\dra\bP^2$ be the standard Cremona transformation with the base locus $\{A_1,A_2,A_3\}$.
Then $T$ contracts lines $L_{23}$, $L_{13}$, and $L_{12}$  to points $A_1',A_2',A_3'$. Let $A_4'=T(A_4)$.

\begin{figure}[htbp]
\includegraphics[width=4in]{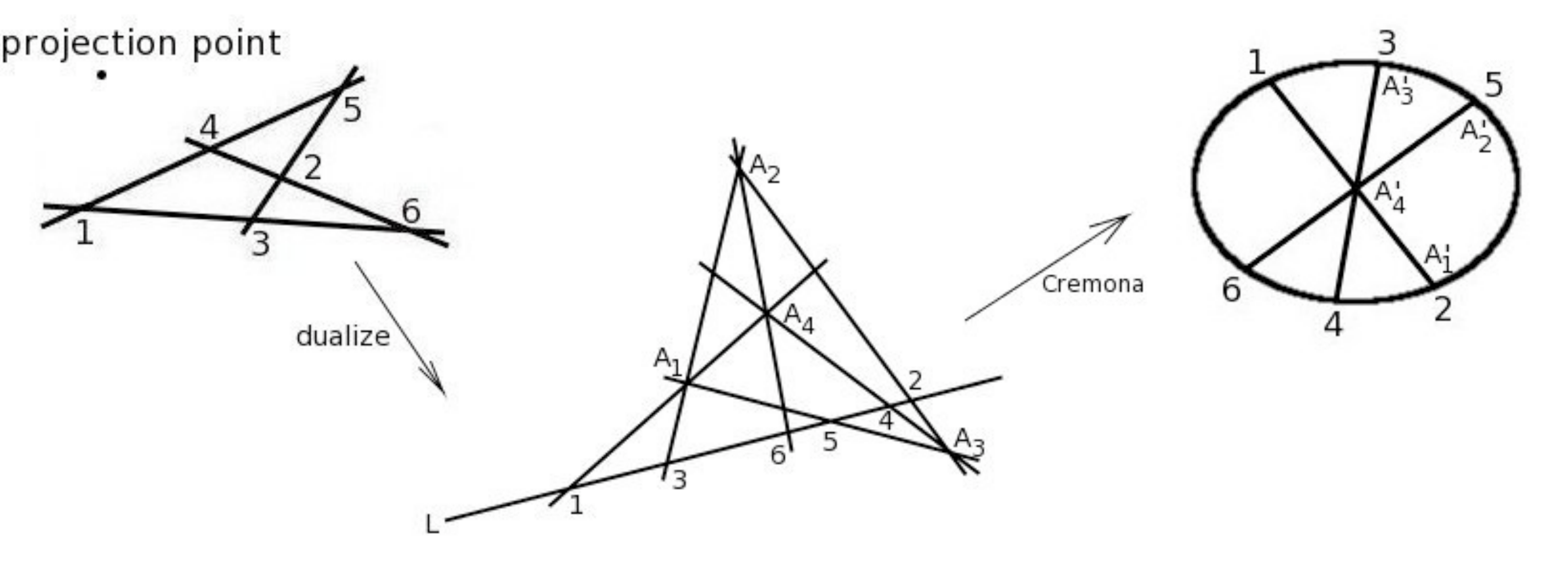}
\caption{$D_K=D_\Gamma$}
\label{CremonaProof}
\end{figure}
Notice that $T(L)=C$ is a conic that passes through $A_1',A_2',A_3'$.
These points are images of points $L\cap L_{23}$, $L\cap L_{13}$, and $L\cap L_{12}$, respectively.
For any $i=1,2,3$, the map $T$ sends the line $L_{i4}$ to the line that passes through $A_i'$ and $A'_4$.
So the diagonals connecting $A_i'$ to $T(L_{i4}\cap L)$ are concurrent.
\end{proof}

\begin{Remark}
The equation of $D_K$ was found by Joubert \cite[1867]{J}.
A point of $M_{0,6}$ is given by $6$ roots
$x_1,\ldots,x_6$ of a binary sextic. 
Put them on the Veronese conic. This gives 
$6$ points $p_i = (1,x_i,x_i^2)$.
The equation of the line $\langle p_i,p_j\rangle$ is
$${1\over x_j-x_i}\det\left[\begin{matrix} X & Y &  Z\cr
1 & x_i & x_i^2\cr 1 & x_j  & x_j^2\cr\end{matrix}\right]=X(x_ix_j)-Y(x_i+x_j)+Z = 0.$$
The condition that the three lines are concurrent is
\begin{equation}
\det\left[\begin{matrix} 
	x_1x_2	 & x_1+x_2 & 1 \cr
	x_3x_4  & x_3+x_4 & 1 \cr
	x_5x_6  & x_5+x_6 & 1 \cr
\end{matrix}\right]=0.
\cooltag\label{ddd}\end{equation}	
After some calculations, this gives
\begin{equation}
(14)(36)(25)+(16)(23)(45)=0,
\cooltag\label{trieq}
\end{equation}
where we use the classical bracket notation $(ij)=x_i-x_j$.
The equation for~$D_\Gamma$ is of course the same, see~\ref{DetEqs} and~\cite[p.~93]{St}.
\end{Remark}

\begin{Remark}
In fact $D_K$ was known earlier.
Cayley \cite[1856]{C} studied Hilbert functions
of graded algebras (using a different language) and computed the Hilbert function
of the algebra of invariants  of binary sextics:
$$h\left(k[\Sym^6k^2]^{\SL_2}\right)={1-x^{30}\over (1-x^2)(1-x^4)(1-x^6)(1-x^{10})(1-x^{15})}.$$ 
This lead him to the (correct) 
prediction that this algebra 
is generated by invariants $A,B,C,D,E$ of degrees $2,4,6,10,15$ with a single relation
$$E^2=f(A,B,C,D)$$ 
for some polynomial $f$. Salmon \cite[1866]{S} computed these invariants  
and proved (page 210) that $E$ has very simple meaning:
$E=0$ if and only if roots of the sextic are in involution!
We are not specifying a tri-partition here, so any of the 15 tri-partitions can occur.
Salmon computes (page~275) an expression of $E$ in
terms of roots of the sextic: $E$ is a product of $15$ determinants \eqref{ddd},
one determinant for each tri-partition $(ij)(kl)(mn)$.
\end{Remark}

One can ask if there are other hypertree divisors with similar ``dual'' meaning
as pull-backs of Brill--Noether (or perhaps Koszul) divisors on~$\oM_g$.
We will show that this is so
for the easiest spherical hypertree one can draw: the bipyramid.
We will leave it to the reader to find further examples.

Let $n=2k+2$. A hypertree curve is illustrated in Figure~\ref{bipyramid} (for $n=12$).
\begin{figure}[htbp]
\includegraphics[width=4in]{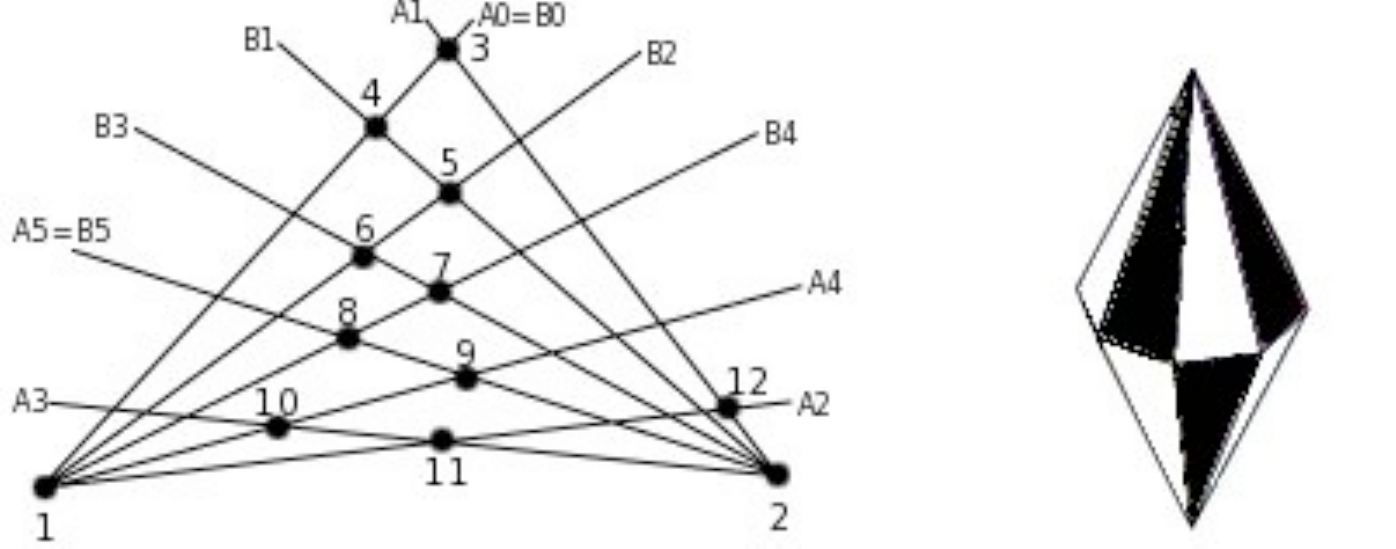}
\caption{Bipyramid hypertree for $n=12$}
\label{bipyramid}
\end{figure}
We label lines by $A_0=B_0$,\quad $A_i$, $B_i$ ($i=1,\ldots,k-1$),\quad and $A_k=B_k$.
The labels are chosen so that the point $1$ (resp., the point $2$) belongs to the lines $A_i$, $B_i$ for $i$
even (resp., odd) and such that the points $4,5,\ldots,3+(n-2/2)$ are obtained by intersecting lines 
$$A_0=B_0\to B_1\to\ldots\to B_k=A_k$$
while the points $4+(n-2/2), \ldots, n, 3$ 
are obtained by intersecting
$$B_k=A_k\to A_{k-1}\to\ldots\to A_0=B_0.$$

The bipyramid determines a tri-partition 
$$\{1,\ldots,n\}=\{1,2\}\cup X\cup Y$$ 
into two poles (in our example $1$ and $2$) and two ``alternating'' subsets $X$ and $Y$ of the equator
with $|X|=|Y|=k$.
In our example $k=5$,
$$X=\{3,5,7,9,11\}\quad\hbox{\rm and}\quad Y=\{4,6,8,10,12\}.$$

Let $D_\Gamma\subset\oM_{0,n}$ be the corresponding hypertree divisor.

\begin{Definition}
Consider the map
$$
\begin{matrix}
M_{0,2k+2} & \arrow^\psi &\oM_{2k-1}\cr
\includegraphics[height=0.5in]{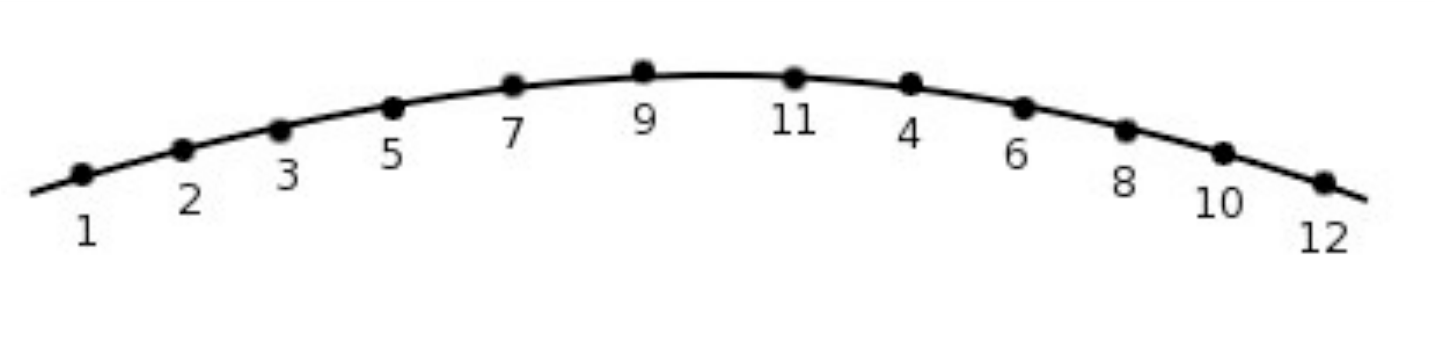}&\raise0.2in\hbox{$\mapsto$}&\includegraphics[height=0.5in]{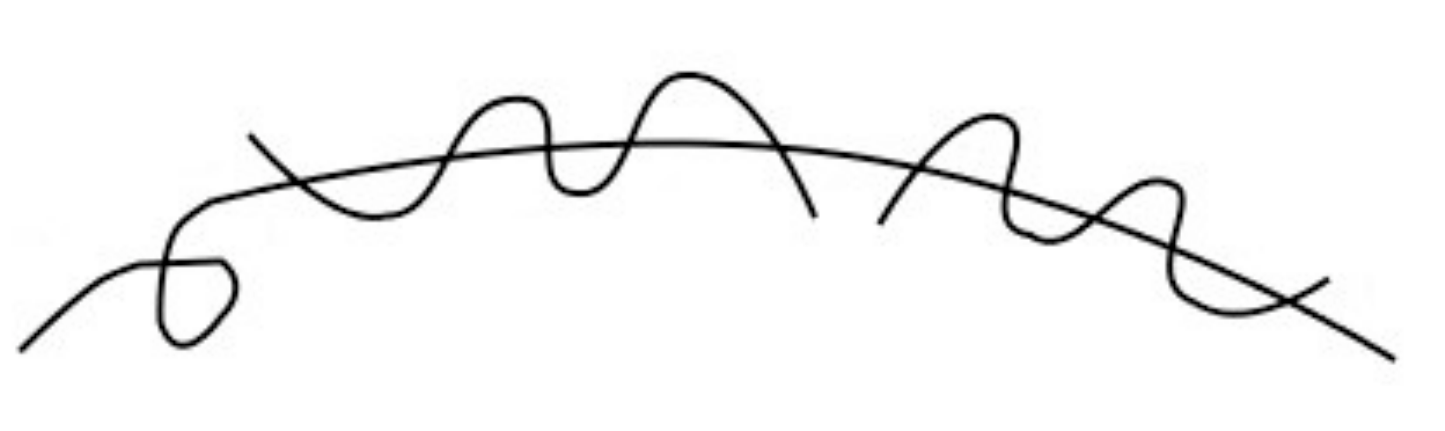}
\end{matrix}
$$
obtained by gluing the poles of $\bP^1$ and then gluing to it two copies of $\bP^1$ with $k$ marked points on each
one 
along points of parts $A$ and $B$ of the tri-partition.
Let $D_K\subset M_{0,2k+2}$ be the pull-back of the Brill--Noether divisor in $\oM_{2k-1}$ that parameterizes $k$-gonal 
curves\footnote{If $k\ge4$ then the attached $\bP^1$'s (and hence the map $\psi$) are not uniquely defined. However,
we will see that $D_K$ does not depend on any choices.}. 
\end{Definition}

\begin{Proposition} $D_K=D_\Gamma$.
\end{Proposition}

\begin{Remark}
It is well-known that the Brill-Noether divisor on $\oM_{2k-1}$ is extremal for $k\leq5$. However, for $k\geq6$ the 
Brill-Noether divisor is not extremal \cite{Gabi}. The Proposition shows that nevertheless, the proper transform of
the Brill-Noether divisor via the map $\psi$ is an extremal divisor on $\oM_{0,2k+2}$. See also Remark \ref{restrictions of contractions}
and the examples in Section \ref{pull-backs}.
\end{Remark}

\begin{proof}
Using theory of admissible covers, we can identify $D_K$ with a locus in $M_{0,n}$ such that the corresponding
$\bP^1$ with $n$ marked points admits a $g^1_k$ with members $X$, $Y$, and $Z$ such that $1,2\in Z$.
In other words, $D_K$ parameterizes $n$-tuples $\{p_1,\ldots,p_n\}$ of points on a rational normal curve 
$$C\subset\bP^k$$
such that 
\begin{equation}\label{tripartway}
\langle p_1,p_2\rangle\cap\langle p_i\rangle _{i\in X}\cap \langle p_i\rangle _{i\in Y}\ne\emptyset.
\cooltag\end{equation}

It is not hard to see that $D_K$ is irreducible ($D_K$ can be parametrized by an open
in $(\PP^1)^{n-1}$, as the markings $p_i$ for $i=1,\ldots, n-1$ determine $p_n$).
So it remains to show that $D_\Gamma\subset D_K$.
Consider $n$ points $p_1,\ldots,p_n\in\bP^1$ obtained by projecting vertices of a hypertree from Figure~\ref{bipyramid} 
(assume all lines $A_i, B_i$ are distinct).
We claim that if we put these points on a rational normal curve $C\subset\bP^k$, the condition
\eqref{tripartway} is going to be satisfied.

In a projectively dual plane, we get a configuration of $n$ lines in $\bP^2$ depicted in Figure~\ref{baraban}.
\begin{figure}[htbp]
\includegraphics[width=4in]{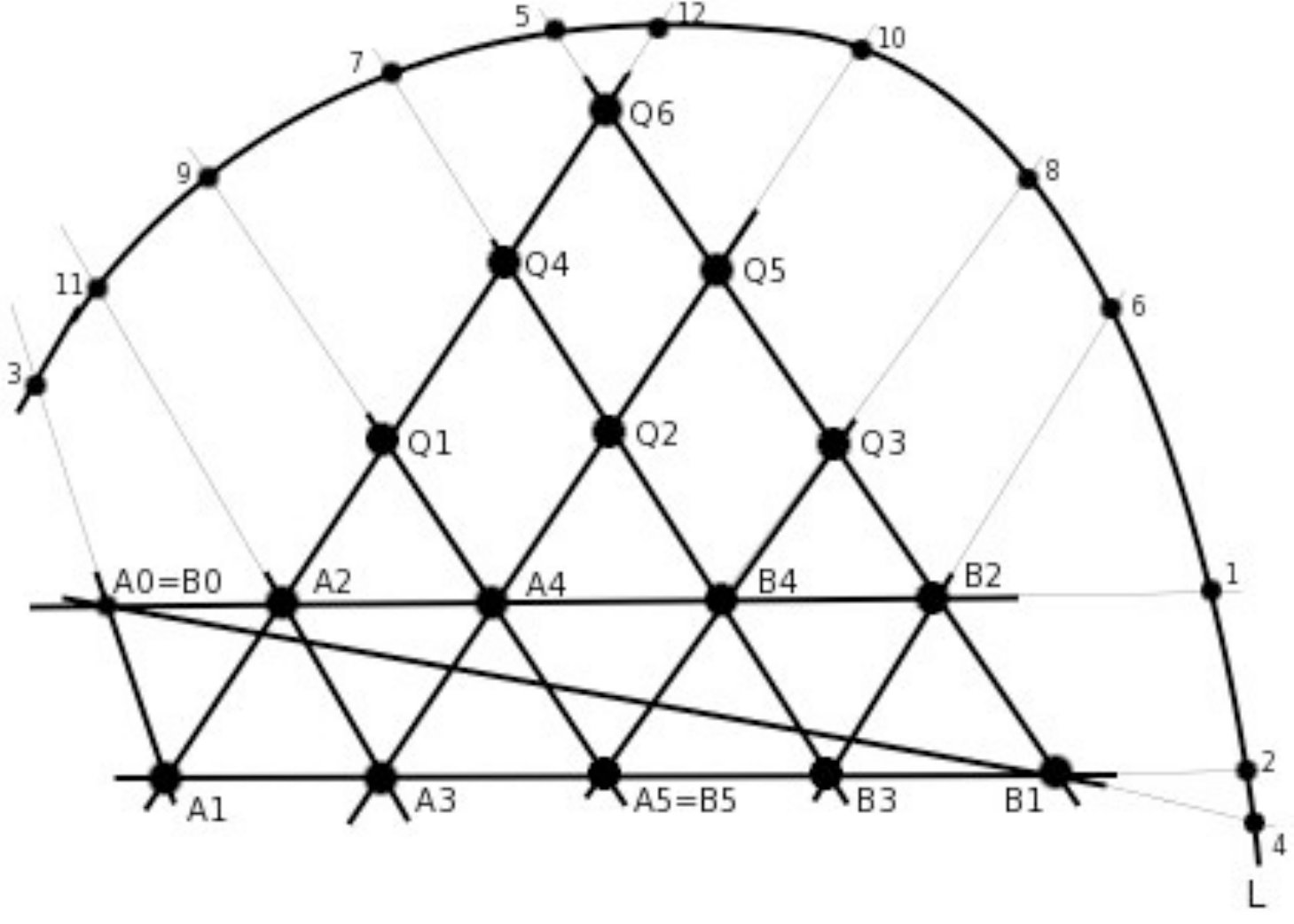}
\caption{Dual configuration of a bipyramid ($n=12$)}
\label{baraban}
\end{figure}
Let us explain what's new in this picture. The line $L$ is projectively dual to the focus of projection
(we draw $L$ as a curve because we are about to identify it with a rational normal curve in $\bP^k$).
The definition of points $Q_1,\ldots,Q_{(k-1)(k-2)\over 2}$ is clear from the picture.

Let $S$ be the blow-up of $\bP^2$ in points $A_i$ ($i>0$), $B_i$ ($i>0$), and all $Q_i$. 
We do not blow-up $A_0=B_0$, so basically we blow-up a ``triangular number'' of points arranged in the triangular grid.
One has to be slightly careful though because this arrangement of lines has moduli,
and in particular there are no ``horizontal'' lines containing points in the grid other than lines $1$ and $2$.
For example, there is in general no line containing $Q_1$, $Q_2$, and $Q_3$.

Consider a divisor
$$D=kH-A_1-\ldots-A_k-B_1-\ldots-B_{k-1}-Q_1-\ldots-Q_{(k-1)(k-2)\over 2}$$
on $S$. The following effective divisors are linearly equivalent to $D$:
$$D_1=L_3\cup\ldots\cup L_{2k+1}\quad\hbox{\rm and}\quad D_2=L_4\cup\ldots\cup L_{2k+2},$$
where $L_i$ is a proper transform of a line number~$i$.
It follows that the linear system $|D|$ has no  fixed components, and therefore it defines 
a rational map
$$\Psi:\,S\dra\bP^k$$
regular outside of points of intersection of $D_1$ and $D_2$.
In fact $\Psi$ is regular at $A_0=B_0$ because $|D|$ also contains
$$
L_2\cup L_6\cup L_8\ldots\cup L_{2k+2}
$$
which does not contain $A_0=B_0$. The following argument proves that the dimension of the linear system $|D|$ is $k$ and that 
the restriction of $|D|$ to $L$ cuts a complete linear system: If $|D|$
contains a member $\tilde D$ that contains $L$ as a component, a simple analysis using Bezout theorem 
shows that $\tilde D\supset D_1\cup D_2$, which is impossible
(see a similar analysis below). 

We see that $\Psi(L)=C\subset\bP^k$ is a rational normal curve.
Notice that hyperplanes $\langle p_i\rangle _{i\in X}$ and $\langle p_i\rangle _{i\in Y}$ are cut out by divisors $D_1$ and $D_2$
and that these divisors have another point in common on $S$, namely $A_0=B_0$.
Finally, let's consider the line $\langle p_1,p_2\rangle$. Any hyperplane containing this line
corresponds to a divisor $\tilde D$ in $|D|$ that contains points $1$ and $2$. By Bezout theorem,
$\tilde D$ contains the line $L_2$, and then the residual divisor $\tilde D- L_2$ contains the line $L_1$
(again by Bezout theorem).
It follows that $\tilde D$ also contains a point $A_0=B_0$ and therefore
$$
\Psi(A_0)\in\langle p_1,p_2\rangle\cap\langle p_i\rangle _{i\in A}\cap \langle p_i\rangle _{i\in B}.
$$
QED
\end{proof}

The bipyramid divisor
is very exceptional for its symmetries: the symmetry group of a bipyramid 
is a binary dihedral group $\tilde D_k$ but the corresponding 
divisor in $\oM_{0,2k+2}$ is a pull-back from $\oM_{0,2k+2}/S_2\times S_k\times S_k$.

\section{Pull-backs of divisors from $\oM_g$}\label{pull-backs}

We will consider pull-backs of several ``geometric" divisors on $\oM_{g,k}$ (for special values of $g$ and $k$) via 
maps $$\rho:\oM_{0,n}\ra\oM_{g,k}, \quad n=2g+k$$ obtained by identifying $g$ pairs of markings on a rational stable curve. 
We give some evidence that in general this does not lead to any new interesting divisors on $\oM_{0,n}$. This is in 
contrast with the case $n=6$ (when we obtain the Keel-Vermeire divisors) 
and the cases in Section \ref{HistoricalRemarks} (where we pull-back via different types of maps).

\begin{Remark}\label{restrictions of contractions}
In our examples we will pull back divisors $E$ on $\oM_{g,k}$ which are extremal in $\Eff(\oM_{g,k})$. 
Moreover, in the examples \ref{ex1} and \ref{ex2} the divisor $E$ is contracted to a point by a birational 
contraction $\oM_{g,k}\dra Y$ \cite{Jensen}. It is natural to ask whether the restriction
to $\oM_{0,n}$ is still a birational contraction onto the image, which would imply that the components of $\rho^*E$  are
extremal in $\Eff(\oM_{0,n})$. This turns out not to be  true in general, as in our examples we prove that the pull-back of $E$ to 
$\oM_{0,n}$ is not extremal.
\end{Remark}

\begin{Review}
We will consider proper transforms of divisors $E$ in $\oM_{0,n}$ via $\rho$:
\begin{equation}\cooltag\label{D}
D=\ov{\rho^{-1}(E)\cap M_{0,n}}.
\end{equation}

The main reason for considering the proper transform is to avoid having boundary components contained in the pull-back $\rho^*E$. 
(This actually happens: see Example \ref{ex2} for an instance of this.)

To compute the class of the divisor $D$ we will consider its pull-back to the Fulton-MacPherson configuration space \cite{FM}. 
Denote by $\PP^1[n]$ (resp., $\bA^1[n]$), the Fulton-MacPherson space of $n$ points on $\PP^1$ (resp., on $\bA^1$).
The space $\PP^1[n]$ is isomorphic to the Kontsevich moduli space \cite{FP} of stable maps $\oM_{0,n}(\PP^1,1)$. 
There are forgetful morphisms: 
$$\tilde{\phi}:\PP^1[n]\ra\oM_{0,n},\quad \phi:=\tilde{\phi}_{|\bA^1[n]}: \bA^1[n]\ra\oM_{0,n}.$$
\end{Review}

\begin{Notation}
Let $D_I$ ($|I|\geq2$) be the boundary divisor whose general point corresponds to a stable map $f: C\ra\PP^1$, with $C$ a 
rational curve with two components $C_1$, $C_2$, with markings from $I$ on $C_1$ and markings from $I^c$ on 
$C_2$ and such that $f$ has degree $0$ on $C_1$ and  degree $1$ on $C_2$.
\end{Notation}

\begin{Lemma}\label{FM space}
A divisor class $D$ on $\oM_{0,n}$ is determined by its pull-back $\phi^*D$ as follows:
If  in the Kapranov model of $\oM_{0,n}$ with respect to the $n$-th marking we have
$D=dH-\sum m_I E_I$, then 
$$\phi^*D=-dD_{\{1,\ldots, n-1\}}-\sum_{3\leq|I|<n-1, n\not\in I} m_{\{1,\ldots,n-1\}\setminus I}D_I+ (\hbox{ sum of } D_I \hbox{ with } n\in I)$$
\end{Lemma}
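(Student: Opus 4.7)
The plan is to reduce to a separate computation of $\phi^*H$ and $\phi^*E_I$ and then combine by linearity. Both computations hinge on understanding how $\phi$ interacts with boundary strata.

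For $\phi^*E_I$: each exceptional divisor $E_I$ in the Kapranov model is the boundary divisor $\delta_{I\cup\{n\}}$ of $\oM_{0,n}$. To compute $\phi^*\delta_{I\cup\{n\}}$ I would determine which divisors $D_J\subset\bA^1[n]$ map into $\delta_{I\cup\{n\}}$ under $\phi$. For $D_J$ with $|J|,|J^c|\geq 2$ the morphism sends $D_J$ onto the boundary stratum $\delta_J\subset\oM_{0,n}$, so $\phi(D_J)\subset\delta_{I\cup\{n\}}$ iff $J=I\cup\{n\}$ or $J=\{1,\ldots,n-1\}\setminus I$. For $D_J$ with $|J^c|=1$ the $\bA^1$-component is contracted in stabilization and $\phi(D_J)$ is dense in $\oM_{0,n}$, hence does not contribute. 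A transversality check for the generically finite map $\phi|_{D_J}:D_J\to\delta_J$ gives each boundary divisor with multiplicity one:
$$\phi^*E_I \;=\; D_{I\cup\{n\}} + D_{\{1,\ldots,n-1\}\setminus I}.$$

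For $\phi^*H$: using $H=\Psi^*\cO_{\bP^{n-3}}(1)$, I would analyze the composition $\Psi\circ\phi:\bA^1[n]\to\bP^{n-3}$, which on the open part $(\bA^1)^n\setminus\Delta$ is essentially given by the ``barycentric'' formula $[\tfrac{1}{x_n-x_1}:\cdots:\tfrac{1}{x_n-x_{n-1}}]$ (with image lying in a linear $\bP^{n-3}\subset\bP^{n-2}$). A divisor $D_J\subset\bA^1[n]$ with $J\subset\{1,\ldots,n-1\}$ and $|J|\leq n-2$ maps to a boundary $\delta_J\subset\oM_{0,n}$ not contained in a generic hyperplane pullback, so its coefficient is zero; the potentially nonzero coefficients occur only for the divisors mapping dominantly onto $\oM_{0,n}$, namely $D_{\{1,\ldots,n-1\}}$ and the $D_{\{1,\ldots,n\}\setminus\{j\}}$ for $j<n$ (which all carry $n$ in their index). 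To isolate the coefficient of $D_{\{1,\ldots,n-1\}}$ as $-1$, I would invoke the principal-divisor relations
$$\mathrm{div}(x_i-x_j) \;=\; \sum_{J\ni i,j} D_J \;\sim\; 0 \quad\text{in }\Pic(\bA^1[n]),$$
valid for every pair $i,j\in\{1,\ldots,n\}$. A single application of such a relation for a pair $i,j\in\{1,\ldots,n-1\}$ reorganizes the direct calculation into the form $-D_{\{1,\ldots,n-1\}}$ plus a remainder supported on the divisors $D_I$ with $n\in I$; this gives the claim $\phi^*H=-D_{\{1,\ldots,n-1\}}+(\hbox{sum of }D_I\hbox{ with }n\in I)$.

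Substituting into $\phi^*D=d\phi^*H-\sum_I m_I\phi^*E_I$, the terms $m_I D_{I\cup\{n\}}$ collect into the placeholder ``sum with $n\in I$'', while the terms $m_I D_{\{1,\ldots,n-1\}\setminus I}$ become, under the bijection $J=\{1,\ldots,n-1\}\setminus I$ (which sends $1\leq|I|\leq n-4$ to $3\leq|J|\leq n-2$), the middle sum in the Lemma; the leading $-d\,D_{\{1,\ldots,n-1\}}$ arises from $d$ times the coefficient $-1$ in $\phi^*H$. The main obstacle is Step 2: verifying that the coefficient of $D_{\{1,\ldots,n-1\}}$ in $\phi^*H$ is exactly $-1$. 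The naive pullback of a generic hyperplane gives an effective divisor supported away from $D_{\{1,\ldots,n-1\}}$ (since this divisor maps dominantly onto $\oM_{0,n}$); the negative coefficient arises only after passing to $\Pic(\bA^1[n])$ via the $\mathrm{div}(x_i-x_j)$ relations, and the combinatorial bookkeeping of this reorganization under the iterated Fulton--MacPherson blow-up is the delicate point of the argument.
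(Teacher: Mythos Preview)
Your approach matches the paper's: pull back each $E_I=\delta_{I\cup\{n\}}$ via $\tilde\phi^*\delta_J=D_J+D_{J^c}$, then use the principal-divisor relations $\mathrm{div}(x_i-x_j)=\sum_{I\ni i,j}D_I\sim 0$ in $\Pic(\bA^1[n])$ to handle~$H$. One correction: the restriction $\phi|_{D_J}:D_J\to\delta_J$ is not generically finite, since $\phi$ itself has three-dimensional fibers (open $\PGL_2$-orbits); the multiplicity-one conclusion is correct, but the right justification is that $\phi$ is smooth, so pullbacks of reduced divisors are reduced.

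For $\phi^*H$, a cleaner route than analyzing $\Psi\circ\phi$ directly is to use the standard boundary expression $H=\psi_n=\sum_{J:\,n\in J,\ i,j\notin J}\delta_J$ in $\Pic(\oM_{0,n})$ (for any fixed $i,j<n$), pull back termwise to get $\sum_J(D_J+D_{J^c})$, and then apply the single relation $\mathrm{div}(x_i-x_j)\sim 0$: the terms $D_{J^c}$ are exactly the $D_K$ with $i,j\in K\subset\{1,\dots,n-1\}$ and $|K|\le n-2$, and the relation converts their sum into $-D_{\{1,\dots,n-1\}}$ plus terms with $n\in K$. This gives $\phi^*H=-D_{\{1,\dots,n-1\}}+(\text{terms with }n\in I)$ without the delicate bookkeeping you flag.
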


\begin{proof}
This is an easy calculation using the relations between the 
boundary divisors $D_I$ (see \cite{FM}) and the fact that $\tilde{\phi}^*\de_I=D_I+D_{I^c}$.  
Since the divisors $$\{D_I\}_{|I|\geq3, n\not\in I}, \quad \{D_I\}_{|I|\geq2, n\in I}$$
are linearly independent, it follows that $D$ is determined by $\phi^*D$.
\end{proof}

\begin{Review}\label{E'}
Let $x_1,\ldots, x_n$ be coordinates on $\bA^n$ and let:
$$U=\bA^n\setminus \bigcup_{i\neq j}(x_i=x_j).$$
Clearly, we have $\phi(U)\subset M_{0,n}$. 
If $D$ is an effective divisor on $\oM_{0,n}$ such that all components of $D$ intersect $M_{0,n}$, then 
all components of $\phi^*D$ intersect $U$. We will consider divisors $D$ as in (\ref{D}).
Hence, any component of $\phi^*D$ is a component of the closure
$$E'=\ov{\phi^{-1}(\rho^{-1}(E))\cap U}$$

In the cases that we study (Examples \ref{ex1} and \ref{ex2}) the divisor $E'$ is 
irreducible and linearly equivalent to a sum of boundary divisors.

The space $\bA^1[n]$ can be described as the blow-up of $\bA^n$ along diagonals in increasing order of dimension.
The divisor $D_I$ is the exceptional divisor corresponding to the diagonal  
$\Delta_I$ where coordinates $x_i$ for $i\in I$ are equal. 

If $F$ is an irreducible polynomial in $x_1,\ldots,x_n$ defining 
$E'$ in $U$ and having multiplicity $n_I$ along the $\Delta_I$,
the class of $E'$ in $\bA^1[n]$  is given by:
$$E'=-\sum n_I D_I.$$
\end{Review}

\begin{Example}\label{ex1}
Let $\rho:\oM_{0,7}\ra\oM_{3,1}$ be the map that identifies pairs of markings $(12)$, $(34)$, $(56)$. Let 
$E$ be the closure  in $\oM_{3,1}$ of the locus corresponding to pairs $(C,p)$ with $C$ a smooth genus $3$ curve 
and $p\in C$ a Weierestrass point. Every smooth curve $C$ has a finite number of Weierstrass points; hence, $E$ is a 
divisor in $\oM_{3,1}$. 

Consider now an integral nodal curve $C$. Let $w_C$ be the dualizing sheaf  and let
$w_1,\ldots, w_g$ be a basis for $\H^0(C,w_C)$. Locally at a smooth point $p$ of $C$
we have $w_i=f_i(t)dt$, where $t$ is a local parameter at $p$, $f_i$ is a regular function.
Just as for smooth curves, the point $p$ is called a Weierstrass point if and only if the 
Wronskian of the functions $f_1,\ldots, f_g$ vanishes at $p$. 
(See \cite{LW} for the general definitions of Weierstrass points on singular curves.)

Let $(C,p)\in\rho(M_{0,7})$. As there can only be finitely many Weierstrass points on $C$, it follows that 
$C$ has a Weierstrass point at $p$ if and only if $(C,p)$ belongs to $E$.

Let $t$, $x_i,y_i$ ($1\leq i\leq3$) be coordinates on $\bA^7$, with the pairs ($x_i$, $y_i$) the markings 
that get identified. The locus $E'\cap U$ (see \ref{E'}) parametrizes data $$(\bA^1,x_1,x_2, x_3, y_1,y_2, y_3,t)$$ 
for which  the corresponding tri-nodal curve has a Weierstrass point at $t$. 

Consider the basis of (local) differentials $f_1dt$, $f_2dt$, $f_3 dt$, where
$$f_i(t)=\frac{1}{(t-x_i)(t-y_i)}\quad (1\leq i\leq 3).$$
The divisor $E'$ is defined in $U$ by the vanishing of the Wronskian:
$$
\left|\begin{matrix}
f_1(t)&f_2(t)&f_3(t)\cr
f_1'(t)&f_2'(t)&f_3'(t)\cr
f_1''(t)&f_2''(t)&f_3''(t)\cr
\end{matrix}\right|.
$$

Moreover, it is easy to see that the determinant doesn't change if we replace the rational functions $f_i$ with 
the polynomials:
\begin{equation}\cooltag\label{canonical coordinates}
g_i(t)=f_i(t)\prod_{i=1}^3(t-x_i)(t-y_i).
\end{equation}

Using the program Macaulay we compute the multiplicities of this determinant along the 
diagonals $\Delta_I\subset \bA^{7}$. Using Lemma \ref{FM space}, we compute the class of the divisor 
$D=\phi(E')\subset\oM_{0,7}$ in the Kapranov model with respect to the $7$-th marking to be:
$$ 
D=3H-\sum_{i=1}^6 E_i-E_{12}-E_{34}-E_{56}.
$$

The divisor  $D$ is clearly big, as $H$ is big and we have $D=H+Q$, where $Q$ is the proper transform of any 
quadric in $\PP^4$ that contains the lines determined by the pairs of points $(12)$, $(34)$, $(56)$.
\end{Example}

\begin{Example}\label{ex3}
Consider the map $\rho:\oM_{0,7}\ra\oM_{3,1}$ as in Example \ref{ex1}.
Let $E$ be the closure  in $\oM_{3,1}$ of the locus corresponding to pairs $(C,p)$ with $C$ a 
smooth genus $3$ curve and $p\in C$ a point on a bitangent, i.e., $w_C=\O_C(2p+2q)$ for some $q\in C$.

Let $t$, $x_i,y_i$ ($1\leq i\leq3$) be coordinates on $\bA^7$, with the pairs ($x_i$, $y_i$) the markings 
that get identified. The locus $E'\cap U$ (see \ref{E'}) parametrizes data $$(\bA^1,x_1,x_2, x_3, y_1,y_2, y_3,t)$$ 
for which  the corresponding tri-nodal curve has the tangent at $t$ tangent at some other point. Consider the canonical 
map $$g:\bA^1\ra\PP^2,\quad t\mapsto (g_1(t), g_2(t), g_3(t)),$$
where $g_1,g_2,g_3$ are as in (\ref{canonical coordinates}). Consider the function in $t,s\in\bA^1$:
$$m(t,s)=\frac{1}{(s-z)^2}
\left|\begin{matrix}
g_1(t)&g_2(t)&g_3(t)\cr
g_1(s)&g_2(s)&g_3(s)\cr
g_1'(t)&g_2'(t)&g_3'(t)\cr
\end{matrix}\right|.
$$

For a fixed $t\in\bA^1$, the equation $m(t,s)=0$ computes the points of intersection of the tangent line at $t$ 
with the curve $g(\bA^1)$. The function $m(t,s)$ is quadratic in $s$:
$$m(t,s)=As^2+Bs+C,\quad A=\frac{\partial^2 m}{\partial^2 s}(t,0),\quad B=\frac{\partial m}{\partial s}(t,0),\quad C=m(t,0)$$

The divisor $E'$ is defined in $U$ by the vanishing of the discriminant 
$$\De=B^2-4AC.$$

As in Example \ref{ex1}, using the program Macaulay and Lemma \ref{FM space}, we compute the class of the divisor 
$D=\phi(E')\subset\oM_{0,7}$ (in the Kapranov model with respect to the $7$-th marking) to be:
$$ 
D=8H-4\sum_{i=1}^6 E_i-2\sum_{i\neq j\in\{1,\ldots,6\}} E_{ij}-2E_{123}-2E_{456}.
$$

The divisor $D$ is clearly big, as $H$ is big and we have 
$$D=2(H+\de_{ij}+\de_{lm}+\de_{kn}),$$ 
for any $\{i,j,k\}=\{1,2,3\}$, $\{l,m,n\}=\{4,5,6\}$.
\end{Example}

\begin{Example}\label{ex2}
Let $\rho:\oM_{0,10}\ra \oM_5$ be the map that identifyies the pairs of points $(11'), (22'), (33'), (44'), (55')$. Let
$E$ be the Brill-Noether divisor of trigonal curves in $\oM_5$. By the calculations in \cite{HM}, \cite{EH}, 
the class of $E$ is equal to
$$8\lambda-\delta_{irr}-4\delta_1-6\delta_2.$$
Using standard formulas for pull-backs of tautological classes, it is easy to compute
the class of its pull-back $\rho^*E$.
To preserve all symmetries of this divisor in the notation, we give the formula for the pull-back
$\pi_{11}^*\rho^*E$ to $\oM_{0,11}$ in the Kapranov model with respect to 
the $11$-th marking. The class is given by  
$$
\pi_{11}^*\rho^*E=20H-16\sum E_1- 12\sum E_{12} - 12\sum E_{11'}-9\sum E_{123}-$$
$$-6\sum E_{121'}-7\sum E_{1234}-4\sum E_{1231'}-6\sum E_{121'2'}-$$
$$-6\sum E_{12345}-3\sum E_{12341'}-$$
$$-3\sum E_{123451'}-2\sum E_{1231'2'3'}-\sum E_{123451'2'}+2\sum E_{12341'2'3'},$$
To explain the notation, the sums are taken over all permutations that preserve the number of pairs from 
$(11'), (22'), (33'), (44'), (55')$. For example:
$$\sum E_1=\sum_{i=1}^5 E_i+\sum_{i=1}^5 E_{i'},\quad\sum E_{11'}=\sum_{1\leq i\leq5 } E_{ii'},$$ 
$$\sum E_{12}=\sum_{i\neq j, 1\leq i,j\leq5 } E_{ij}+E_{i'j'}+E_{ij'},$$
$$\sum E_{123}=\sum_{1\leq i,j,k\leq5, i\neq j,k, j\neq k} E_{ijk}+E_{i'j'k'}+E_{ijk'}+E_{ij'k'},\quad\hbox{etc.}$$

It is clear from this formula that $\rho^*E$ is reducible and contains the main component (that intersects $M_{0,10}$)
as well as some boundary divisors. These boundary divisors can be easily determined using the method
of admissible covers, but computing the corresponding multiplicities is a bit subtle.

As in the previous examples, we can nevertheless compute the class of the main component. 
Let 
$$x_1,\ldots, x_5,y_1,\ldots, y_5$$
be the coordinates on $\bA^{10}$, with the pairs ($x_i$, $y_i$) the markings that get identified. 
Using the theory of admissible covers, the locus $E'\cap U$ (see \ref{E'}) parametrizes data $(\bA^1,x_1,\ldots, x_5, y_1,\ldots, y_5)$ for which  the 
corresponding nodal curve in $\oM_5$ has a $g^1_3$, i.e., the chords determined by the pairs $(x_i, y_i)$ have a common 
transversal when considering $\bA^1\subset\PP^3$ as a twisted cubic  via the Veronese embedding.
Consider the Grassmannian of lines in $\PP^3$:
$$\bG(1,3)\subset \PP(\wedge^2k^4)$$ 
together with its Pl\"ucker embedding. By \cite{St}~Exmple 3.4.6, five lines in $\PP^3$ with  Pl\"ucker 
coordinates $L_1,\ldots, L_5$ have a common transversal if and only if the determinant 
$$
\left|\begin{matrix}
0&L_1.L_2&L_1.L_3&L_1.L_4&L_1.L_5\cr
L_2.L_1&0&L_2.L_3&L_2.L_4&L_2.L_5\cr
L_3.L_1&L_3.L_2&0&L_3.L_4&L_3.L_5\cr
L_4.L_1&L_4.L_2&L_4.L_3&0&L_4.L_5\cr
L_5.L_1&L_5.L_2&L_5.L_3&L_5.L_4&0\cr
\end{matrix}\right|
$$
vanishes. Here $L_i.L_j$ denotes the wedge product in $\wedge^2 k^4)$. Let 
$L_i$ be the chord determined by the pair $(x_i,y_i)$. The lines $L_i$ connect points on 
the twisted cubic, which we can parametrize as $(1, t, t^2, t^3)$. It follows that the wedge product 
$L_i.L_j$ is the Vandermonde determinant:
$$
\left|\begin{matrix}
1&1&1&1\cr
x_i&y_i&x_j&y_j\cr
x_i^2&y_i^2&x_j^2&y_j^2\cr
x_i^3&y_i^3&x_j^3&y_j^3\cr
\end{matrix}\right|.
$$
As the terms $(y_i-x_i)$, $(y_j-x_j)$ can be factored out, we are left with a degree $20$
polynomial $F$ that defines $E'$ in $U$. Using the program Macaulay
it is easy to see that $F$ is irreducible and we can compute the multiplicities of $F$ along the 
diagonals $\Delta_I\subset\bA^{10}$. Using Lemma \ref{FM space}, it follows that the class of the pull-back 
$\pi_N^*D$ of the divisor $D=\phi(E')\subset\oM_{0,10}$ to $\oM_{0,11}$ (in the Kapranov model with respect to 
the $11$-th marking) is given by: 
$$
\pi_N^*D=20H-16\sum E_1- 12\sum E_{12} - 12\sum E_{11'}-9\sum E_{123}-$$
$$-8\sum E_{121'}-7\sum E_{1234}-5\sum E_{1231'}-6\sum E_{121'2'}-$$
$$-6\sum E_{12345}-3\sum E_{12341'}-3\sum E_{1231'2'}-$$
$$-3\sum E_{123451'}-\sum E_{12341'2'}-2\sum E_{1231'2'3'}-\sum E_{123451'2'}.$$

The divisor  $\pi_N^*D$ is linearly equivalent to a sum of boundary divisors: consider the sum of the $20$ hyperplanes
determined by choosing a pair of points $\{x_i, x_j\}$, or $\{y_i, y_j\}$ 
and taking the hyperplane passing through the remaining points (these are the boundary divisors 
$\de_{x_i,x_j}$ and $\de_{y_i,y_j}$). It is easy to see that all the multiplicities of this union 
of hyperplanes are larger than the multiplicities in the formula for $\pi_N^*D$.

It follows that $\pi_N^*D$  is a moving divisor. Since any effective divisor  linearly equivalent to $\pi_N^*D$  is a pull-back
by $\pi_N$ from $\oM_{0,10}$, it follows that $D$ is a moving divisor.
\end{Example}

\end{document}